\newtheorem{theorem}{Theorem}[section]
\newtheorem{lemma}[theorem]{Lemma}
\newtheorem{proposition}[theorem]{Proposition}
\newtheorem{definition}[theorem]{Definition}
\newtheorem{remark}[theorem]{Remark}
\newcounter{as}[section]
\newtheorem{asser}[as]{Assertion}
\newcommand{\mc}[1]{{\mathcal #1}}
\newcommand{\mf}[1]{{\mathfrak #1}}
\newcommand{\bb}[1]{{\mathbb #1}}
\newcommand{\bs}[1]{{\boldsymbol #1}}
\newcommand{\<}{\langle}
\renewcommand{\>}{\rangle}
\renewcommand{\Cap}{{\rm cap}}
\begin{document}

\title[Metastability in non-reversible diffusion processes] {
  Dirichlet's and Thomson's principles for non-selfadjoint elliptic
  operators with application to non-reversible metastable diffusion
  processes.}

\begin{abstract}
  We present two variational formulae for the capacity in the context
  of non-selfadjoint elliptic operators. The minimizers of these
  variational problems are expressed as solutions of boundary-value
  elliptic equations.  We use these principles to provide a sharp
  estimate for the transition times between two different wells for
  non-reversible diffusion processes.  This estimate permits to
  describe the metastable behavior of the system.
\end{abstract}

\author{C. Landim, M. Mariani, I. Seo}

\address{\noindent IMPA, Estrada Dona Castorina 110, CEP 22460 Rio de
  Janeiro, Brasil and CNRS UMR 6085, Universit\'e de Rouen, France.
  \newline e-mail: \rm \texttt{landim@impa.br} }

\address{\noindent Faculty of Mathematics, National Research University Higher School of Economics, 6 Usacheva St., 119048 Moscow, Russia. \newline e-mail: \rm
  \texttt{mmariani@hse.ru}}

\address{\noindent Department of Mathematical Sciences, Seoul National University Gwanak-Ro 1, Gwanak-Gu 08826, Seoul, Republic of Korea. \newline
  e-mail: \rm \texttt{insuk.seo@snu.ac.kr} }

\keywords{Non-reversible diffusions, Potential theory, Metastability,
  Dirichlet's principle, Thomson principle, Eyring-Kramers formula}

\maketitle

\section{Introduction}
\label{sec-1}

This article is divided in two parts. In the first one, we present two
variational formulae which extend the classical Dirichlet's and
Thomson's principles to non-selfadjoint elliptic operators. In the
second one, we use these formulae to describe the metastable behavior
of a non-reversible diffusion process in a double-well potential
field.

Fix a smooth, bounded, domain (open and connected) $\Omega \subset\bb
R^d$, $d\ge 2$, and a smooth function $f\colon \bb R^d\to\bb R$. Denote by
$\Omega_f$ the set of functions $v\colon \overline{\Omega} \to \bb R$ such
that $v = f$ on $\partial \Omega$, the boundary of $\Omega$.  The
classical Dirichlet's principle \cite{K12, AH96} states that the energy
\begin{equation*}
\int_{\Omega} \Vert \nabla u(\bs{x}) \Vert^2 \, d\bs{x}
\end{equation*}
is minimized on $\Omega_f$ by the harmonic function on $\Omega$ which
takes the value $f$ at the boundary, that is, by the solution of
\begin{equation}
\label{1-1}
\text{$\Delta u \;=\; 0 $ on $\Omega$  and $u=f$ on $\partial\Omega$.}
\end{equation}

When $\Omega = \mc D\setminus \overline{\mc B}$, where $\mc B\subset
\mc D\subset \bb R^d$ are smooth domains, and $f=1$, $0$ on $\mc B$,
$\mc D^c$, respectively, the minimal energy is called the capacity. In
electrostatics, it corresponds to the total electric charge on the
conductor $\partial \mc B$ held at unit potential and grounded at $\mc
D^c$. It is denoted by $\Cap_\mc D(\mc B)$ and can be represented, by
the divergence theorem, as
\begin{equation}
\label{1-2}
\Cap_\mc D(\mc B) \;=\;
-\, \int_{\partial \mc B} \frac{\partial h}{\partial \bs n_\mc B} \, d \sigma \;,
\end{equation}
where $h$ is the harmonic function which solves \eqref{1-1}, $\bs n_\mc B$
is the outward normal vector to $\partial \mc B$, and $\sigma$ the surface
measure at $\partial \mc B$.

These results have long been established for self-adjoint operators of
the form $(Lu)(\bs{x}) = e^{V(\bs{x})} \nabla \cdot [ e^{-V(\bs{x})} \bb S(\bs{x}) \nabla
u (\bs{x})]$, provided $\bb S(\bs x)$, $\bs x\in\bb R^d$, are smooth, positive-definite, symmetric
matrices, and $V$ is a smooth potential. They have been extended, more
recently, by Pinsky \cite{p1, p95} to the case in which the operator
$L$ is not self-adjoint. In this situation, the minimization formula
for the capacity, mentioned above, has to be replaced by a minmax
problem.

The first main result of this article provides two variational
formulae for the capacity \eqref{1-2} in terms of divergence-free
flows. In contrast with the minmax formulae, the first optimization
problem is expressed as an infimum, while the second one is expressed
as a supremum, simplifying the task of obtaining lower and upper
bounds for the capacity.

Analogous Dirichlet's and Thomson's principles were obtained by
Gaudilli\`ere and Landim \cite{GL} (the Dirichlet's principle) and by
Slowik \cite{Slo} (the Thomson principle) for continuous-time Markov
chains. \smallskip

In the second part of the article, we use the formulae for the
capacity to examine the metastable behavior of a non-reversible
diffusion in a double well potential.

Let $U\colon \bb R^d\to \bb R$ be a smooth, double-well potential which
diverges at infinity, and let $\bb M$ be a non-symmetric,
positive-definite matrix. We impose in Section~\ref{sec3} further assumptions on $U$. Denote by $\bb{M}^{\dagger}$ the transpose of $\bb
M$ and by $\bb S = (\bb{M}+\bb{M}^{\dagger})/2$ its symmetric part.
Consider the diffusion $X^\epsilon_t$, $\epsilon>0$, described by the
SDE
\begin{equation}
\label{1-3}
dX_{t}^{\epsilon}\;=\;-\bb{M} \, (\nabla U) (X_{t}^{\epsilon})\, dt
\,+\,\sqrt{2\epsilon}\, \bb{K}\, dW_{t}\;,
\end{equation}
where $W_{t}$ is a standard $d$-dimensional Brownian motion, and
$\bb{K}$ is the symmetric, positive-definite square root of $\bb{S}$, i.e.,
$\bb{S}=\bb K \bb K$.

Assume that $U$ has two local minima, denoted by $\bs m_1$, $\bs m_2$,
separated by a single saddle point $\bs \sigma$, and that $U(\bs m_2)
\le U(\bs m_1)$. The stationary state of $X^\epsilon_t$, given by
$\mu_\epsilon(d\bs{x}) \sim \exp\{- U(\bs{x})/\epsilon\}\, d\bs{x}$, is concentrated
in a neighborhood of $\bs m_2$ when the previous inequality is strict.

If $X_{t}^{\epsilon}$ starts from a neighborhood of $\bs m_1$, it
remains there for a long time in the small noise limit $\epsilon\to 0$
until it overcomes the potential barrier and jumps to a neighborhood
of $\bs m_2$ through the saddle point $\bs \sigma$.  Denote by
$\tau_\epsilon$ the hitting time of a neighborhood of $\bs m_2$.  The
asymptotic behavior of the mean value of $\tau_\epsilon$ as
$\epsilon\to 0$ has been the object of many studies.

The Arrhenius' law \cite{a89} asserts that the mean value is
logarithmic equivalent to the potential barrier: $\lim_{\epsilon\to 0}
\epsilon \log \bb E_{\bs m_1} [\tau_\epsilon] = U(\bs \sigma) - U(\bs
m_1) =:\Delta U$, where $\bb E_{\bs m_1}$ represents the expectation
of the diffusion $X^\epsilon_t$ starting from $\bs m_1$. The
sub-exponential corrections, known as the Eyring-Kramers formula
\cite{e35, k40}, have been computed when the matrix $\bb M$ is
symmetric and the potential non-degenerate at the critical
points. Assuming that the Hessian of the potential is positive
definite at $\bs m_1$ and that it has a unique negative eigenvalue at
$\bs \sigma$, denoted by $- \lambda$, while all the others are
strictly positive, the sub-exponential prefactor is given by
\begin{equation*}
\bb{E}_{\bs{m}_{1}}\left[\tau_{\epsilon}\right]\;=\;
\left[1+o_{\epsilon}(1)\right]\, \frac{2\pi}{\lambda}
\,\frac{\sqrt{-\det\left[(\text{Hess }U) \, (\bs{\sigma})\right]}}
{\sqrt{\det\left[(\text{\rm Hess }U)\, (\bs{m}_{1})\right]}}\,\,
e^{\Delta U/\epsilon}\;,
\end{equation*}
where $o_{\epsilon}(1)\to 0$ as $\epsilon$ vanishes.

This estimate appears in articles published in the 60's.  A rigorous
proof was first obtained by Bovier, Eckhoff, Gayrard, and Klein
\cite{BEGK1} with arguments from potential theory, and right after by
Helffer, Klein and Nier \cite{hkn} through Witten Laplacian
analysis. We refer to Berglund \cite{b13} and Bouchet and Reygner
\cite{BR} for an historical overview and further references.

Recentlty, Bouchet and Reygner \cite{BR} extended the Eyring-Kramers
formula to the non-reversible setting. They showed that in this context
the negative eigenvalue $- \lambda$ has to be replaced by the unique
negative eigenvalue of $(\text{Hess }U) \, (\bs{\sigma}) \, \bb M$.

We present below a rigorous proof of this result, based on the
variational formulae obtained for the capacity in the first part of
the article, and on the approach developed by Bovier, Eckhoff,
Gayrard, and Klein \cite{BEGK1} in the reversible case. This estimate
permits to describe the metastable behavior of the diffusion
$X^\epsilon_t$ in the small noise limit.  Analogous results have been
derived for random walks in a potential field in \cite{LS1, LS2}.

\section{Notation and Results}
\label{sec0}

We start by introducing the main assumptions. We frequently refer to
\cite{gt} for results on elliptic equations and to \cite{f, p95} for
results on diffusions.

\subsection{A Dirichlet's and a Thomson's principle}
Fix $d\ge 2$, and denote by $C^k(\bb R^d)$, $0\le k\le \infty$, the
space of real functions on $\bb R^d$ whose partial derivatives up to
order $k$ are continuous. Let $\bb M_{m,n}$, $1\le m,n\le d$, be
functions in $C^2(\bb R^d)$ for which there exists a finite constant
$C_0$ such that
\begin{equation}
\label{3-1}
\sum_{m,n=1}^d \bb M_{m,n}(\bs{x})^2 \;\le\; C_0 \quad\text{for all
  $x\in\bb R^d$.}
\end{equation}
Denote by $\bb M(\bs{x})$ the matrix whose entries are $\bb
M_{m,n}(\bs{x})$. Assume that the matrices $\bb M(\bs{x})$, $\bs x\in \bb R^d$, are
uniformly positive-definite: There exist $0<\lambda<\Lambda$ such that
for all $\bs x$, $\bs \xi\in \bb R^d$,
\begin{equation}
\label{19}
\lambda \, \Vert\bs \xi \Vert^2 \;\le\;
\bs \xi \cdot \bb M (\bs{x})\bs  \xi \le \Lambda \, \Vert\bs \xi \Vert^2\;,
\end{equation}
where $\eta\cdot \xi$ represents the scalar product in $\bb R^d$, and
$\Vert x\Vert$ the Euclidean norm.


Let $V$ be a function in $C^3(\bb R^d)$ such that $\int_{\bb R^d}
\exp\{-V(\bs{x})\}\, d\bs{x} <\infty$, and assume, without loss of generality,
that $\int_{\bb R^d} \exp\{-V(\bs{x})\}\, d\bs{x} = 1$. Denote by $\mu$ the
probability measure on $\bb R^d$ defined by $\mu(d\bs{x}) = \exp\{-V(\bs{x})\}
d\bs{x}$.

Denote by $\mathcal L$ the differential operator which acts on functions in
$C^2(\bb R^d)$ as
\begin{equation}\label{gen1}
(\mathcal L f) (\bs{x}) \;=\; e^{V(\bs{x})} \, \nabla \cdot
\big\{ e^{-V(\bs{x})} \bb M(\bs{x}) (\nabla f)(\bs{x}) \big\}\;.
\end{equation}
In this formula, $\nabla g$ represents the gradient of a function
$g\colon\bb R^d \to \bb R$ and $\nabla \cdot \Phi$ the divergence of a
vector field $\Phi \colon\bb R^d \to \bb R^d$.  The previous formula can
be rewritten as
\begin{equation}
\label{15}
(\mathcal L f) (\bs{x}) \;=\; \sum_{j=1}^d b_j(\bs{x}) \, \partial_{x_j} f (\bs{x})
\;+\; \sum_{j,k=1}^d \bb S_{j,k} (\bs{x}) \, \partial^2_{x_j, x_k} f (\bs{x}) \;,
\end{equation}
where the drift $\bs b=(b_1, \dots, b_d)$ is given by
\begin{equation}
\label{drift1}
b_j(\bs{x}) \;=\; \sum_{k=1}^d \Big\{  \partial_{x_k} \bb M_{k,j}(\bs{x})
\;-\; (\partial_{x_k} V) (\bs{x}) \, \bb M_{k,j}(\bs{x}) \Big\}\;,
\end{equation}
and where $\bb S(\bs{x}) = (1/2) [\bb M(\bs{x}) + \bb M^\dagger(\bs{x})]$ represents the symmetric part of the matrix $\bb
M (\bs{x})$,
$M^\dagger(\bs{x})$ being the transpose of $\bb M(\bs{x})$.

Denote by $\mc B(r)\subset \bb R^d$, $r>0$, the open ball of radius $r$ centered at the
origin, and by $\partial \mc B(r)$ its boundary. We assume that
\begin{equation}
\label{18}
\lim_{n\to\infty} \inf_{\bs z\not\in \mc B(n)} V(\bs z) \;=\; \infty\;,
\end{equation}
and that there exist $r_1>0$, $c_1>0$ such that
\begin{equation}
\label{20}
(\mathcal L V)(\bs{x}) \;\le\; -\, c_1
\end{equation}
for all $\bs x$ such that $\Vert \bs x\Vert \ge r_1$. By \eqref{15},
this last condition can be rewritten as
\begin{align*}
& \sum_{j,k=1}^d (\partial_{x_j} \bb M_{j,k})(\bs{x}) \, (\partial_{x_k} V) (\bs{x})
\;+\; \sum_{j,k=1}^d \bb S_{j,k} (\bs{x}) \, \partial^2_{x_j, x_k} V (\bs{x})
\;+\; c_1 \\
&\qquad \;\le\;  (\nabla V)(\bs{x}) \cdot \bb S(\bs{x}) (\nabla V)(\bs{x})
\end{align*}
for all $\bs x$ such that $\Vert \bs x\Vert \ge r_1$.

It follows from the first condition in \eqref{18} that $V$ is bounded
below by a finite constant: there exists $c_2\in \bb R$ such that
$V(\bs{y}) \ge c_2$ for all $\bs y\in\bb R^d$. Of course, $\bb M(\bs{x}) = \bb I$,
where $\bb I$ represents the identity matrix, and $V(\bs{x}) = \Vert
x\Vert^2 + c$ satisfy all previous hypotheses for an appropriate
constant $c$.

The regularity of $\bb M$ and $V$, and assumptions \eqref{3-1},
\eqref{19} are sufficient to guarantee the existence of smooth
solutions of some Dirichlet problems. By \cite[Theorem~6.1.3]{p95}, these conditions together with \eqref{18}, \eqref{20} yield that the process whose generator is given by $\mc L$ is positive recurrent.

\smallskip\noindent{\bf Elliptic equations.}  Fix a domain (open and connected set) $\Omega \subseteq\bb R^d$. Denote by $C^k(\Omega)$, $k\ge 0$, the space of functions on $\Omega$ whose partial derivatives
up to order $k$ are continuous, and by $C^{k,\alpha}(\Omega)$, $0<\alpha <1$, the space of function in $C^k(\Omega)$ whose $k$-th order partial derivatives are H\"older continuous with exponent $\alpha$.

Denote by $\overline{\Omega}$ the closure of $\Omega$ and by $\partial
\Omega$ its boundary.  The domain $\Omega$ is said to have a
$C^{k,\alpha}$-boundary, if for each point $\bs x\in\partial\Omega$,
there is a ball $\mc B\subset \bb R^d$ centered at $\bs x$ and a
one-to-one map $\psi$ from $\mc B$ onto $\mc C\subset \bb R^d$ such
that
\begin{align*}
&\psi(\mc B \cap \Omega) \subset \{\bs
z\in\bb R^d : z_d >0\}\;,\;\;\;\psi(\mc B \cap \partial\Omega) \subset \{\bs
z\in\bb R^d : z_d =0\}\;,\\
&\qquad \psi \in C^{k,\alpha} (\mc B)\;,\;\mbox{and}\;\;\; \psi^{-1} \in
C^{k,\alpha} (\mc C)\;.
\end{align*}
Denote by $L^2(\Omega)$ the space of functions $f\colon\Omega \to \bb R$
endowed with the scalar product $\<\,\cdot\,, \,\cdot\,\>_\mu$ given
by
\begin{equation*}
\< f\,,\, g\>_\mu \;=\; \int_{\Omega} f\, g\, d\mu\;,
\end{equation*}
and by $W^{1,2}(\Omega)$ the Hilbert space of weakly differentiable
functions endowed with the scalar product $\<f,\,g\>_1$ given by
\begin{equation*}
\< f ,\, g\>_1 \;=\; \int_{\Omega}  f \, g \, d \mu \;+\;
\int_{\Omega} \nabla f \cdot \nabla g \, d \mu \;.
\end{equation*}

Fix $0<\alpha<1$, a function $\mf g$ in $L^2(\Omega) \cap
C^{\alpha}(\overline{\Omega})$ and a function $\mf b$ in
$W^{1,2}(\Omega)\cap C^{2,\alpha}(\overline{\Omega})$.  Assume that
$\Omega$ has a $C^{2,\alpha}$-boundary.  It follows from assumptions
\eqref{3-1}, \eqref{19} and Theorems 8.3, 8.8 and 9.19 in \cite{gt}
that the Dirichlet boundary-value problem
\begin{equation}
\label{22}
\begin{cases}
(\mathcal L u) (\bs{x}) \;=\; -\, \mf g(\bs{x}) &  x\in \Omega \;, \\
u (\bs{x}) \;=\; \mf b(\bs{x}) &   x\in \partial \Omega \;.
\end{cases}
\end{equation}
has a unique solution in $W^{1,2}(\Omega)\cap
C^{2,\alpha}(\overline{\Omega})$. Moreover, by the maximum principle,
\cite[Theorem~8.1]{gt}, if $\mf g =0$,
\begin{equation}
\label{25}
\inf_{x\in \partial \Omega} \mf b(\bs{x}) \;\le\;
\inf_{y \in \Omega} u(\bs{y}) \;\le\;
\sup_{y \in \Omega} u(\bs{y}) \;\le\;
\sup_{x \in \partial \Omega} \mf b (\bs{x}) \;.
\end{equation}
The proofs of Theorems 8.3 and 8.8 in \cite{gt} require simple
modifications since it is easier to work with $\mu(d\bs{x})$ as reference
measure than the Lebesgue measure.

\smallskip\noindent{\bf Dirichlet's and Thomson's principles.}
We will frequently assume that a pair of sets $\mc A$, $\mc B$ with
$C^1$-boundaries fulfill the following conditions. Denote by $d(\mc A,\,\mc B)$
the distance between the sets $\mc A$, $\mc B$, $d(\mc A,\,\mc B) = \inf\{\Vert \bs x-\bs y\Vert
: \bs x\in \mc A \,,\, \bs y\in \mc B\}$, and by $\sigma (\partial \mc A)$ the measure of
the boundary of $\mc A$.

\smallskip
\noindent{\bf Assumption S.} The sets $\mc A$, $\mc B$ are bounded domains
of $\bb R^d$ with $C^{2,\alpha}$-boundaries, for some $0<\alpha<1$,
and finite perimeter, $\sigma(\mc A)<\infty$,
$\sigma(\mc B)<\infty$. Moreover, $d(\mc A,\,\mc B)>0$, and the set $\Omega =
(\overline{\mc A}\cup \overline{\mc B})^c$ is a domain. \smallskip

Denote by $h_{\mc A,\mc B}$ the unique solution of the Dirichlet
problem \eqref{22} with $\Omega = (\overline{\mc A}\cup \overline{\mc
  B})^c$, $\mf g=0$, and $\mf b$ such that $\mf b (\bs{x}) = 1$, $0$
if $x\in \partial \mc A$, $\partial \mc B$, respectively. The function
$h_{\mc A,\mc B}$ is called the {\sl equilibrium potential} between
$\mc A$ and $\mc B$.  Similarly, denote by $h^*_{\mc A,\mc B}$ the
solution to \eqref{22} with $\bb M$ replaced by its transpose $\bb
M^\dagger$ and the same functions $\mf g$ and $\mf b$.

The capacity between $A$ and $B$, denoted by $\Cap (\mc A,\,\mc B)$,
is defined as
\begin{equation}
\label{06}
\Cap (\mc A,\,\mc B) \;=\; \int_{\partial \mc A} \big[ \bb M(\bs{x}) \, \nabla
h_{\mc A,\mc B}(\bs{x}) \big] \cdot \bs n_\Omega(\bs{x})  \, e^{-V(\bs{x})} \sigma (d\bs{x})\;,
\end{equation}
where $\sigma(d\bs{x})$ represents the surface measure on the boundary
$\partial \Omega$ and $\bs n_\Omega$ represents the outward normal
vector to $\partial \Omega$ (and, therefore, the inward normal vector
to $\partial \mc A \cup \partial \mc B$). Since $\partial A$ is the
$1$-level set of the equilibrium potential $h_{\mc A,\mc B}$ which, by
the maximum principle, is bounded by $1$. Therefore $\nabla h_{\mc
    A,\mc B} = c (\bs{x}) \, \bs n_{\Omega}(\bs{x})$ for some nonnegative scalar function $c\colon \partial \mc A \to \bb R$, so that
\begin{equation*}
\bb M (\bs{x}) \, (\nabla h_{\mc A,\mc B}) (\bs{x})
\cdot \bs n_{\Omega}(\bs{x})
\;=\; \bb S(\bs{x}) \, (\nabla h_{\mc A,\mc B})
(\bs{x}) \cdot \bs n_{\Omega}(\bs{x}) \ge 0\;.
\end{equation*}
The capacity can also be expressed as
\begin{equation*}
\Cap (\mc A,\,\mc B) \;=\;
-\, \int_{\partial \mc A} \frac{\partial h_{\mc A,\mc B}}{\partial v}
\, e^{-V} \, d \sigma \;,
\end{equation*}
where $v(\bs{x}) = \bb M^\dagger (\bs{x}) \, \bs n_\mc A(\bs{x})$.

In Section~\ref{sec1} we develop a more systematic approach to the
study of capacities, and we introduce here just the basic notation
used in the Propositions~\ref{prop1}-\ref{prop2} below. For vector
fields $\varphi \colon \Omega \to \bb R^d$ define the quadratic form
\begin{equation*}
\< \varphi \,,\, \varphi \> \;=\; \int_{\Omega} \varphi(\bs{x}) \cdot \bb
S(\bs{x})^{-1} \varphi(\bs{x}) \, e^{V(\bs{x})}\, d\bs{x}\;.
\end{equation*}
Let $\mathcal F = \mathcal F_{\mc A,\mc B}$ be the Hilbert space of
vector fields $\varphi$ such that $\< \varphi \,,\, \varphi
\><+\infty$ up to a.e.\ identification, with the scalar product
induced by polarization. By assumption \eqref{19}, $\varphi \in
\mathcal F$ iff $\int_\Omega \Vert \varphi (\bs{x})\Vert^2 e^{V(\bs
  x)}d\bs x <\infty$.

For $c\in \bb R$, let $\mathcal F^{(c)}$ be the space of vector fields
$\varphi \in \mathcal F$ of class $C^1(\overline{\Omega})$ such that
\begin{gather}
(\nabla \cdot \varphi) (\bs{x}) \;=\; 0 \quad\text{for $\bs x\in \Omega$}\;, \label{gg1} \\
-\, \int_{\partial A} \varphi (\bs{x}) \cdot \bs n_\Omega (\bs{x}) \,
\sigma(d\bs{x}) \;=\; c \;=\; \int_{\partial B} \varphi (\bs{x})
\cdot \bs n_\Omega(\bs{x}) \, \sigma(d\bs{x}) \;.\label{gg2}
\end{gather}
The reason for the minus sign is due to the convention that $\bs
n_\Omega(\bs{x})$ is the inward normal to $\partial \mc A$. The integrals over
$\partial \mc A$, $\partial \mc B$ are well defined because $\varphi$ is
continuous, and $\mc A$, $\mc B$ have finite perimeter. On the other hand, the
integral over $\partial \mc B$ must be equal to minus the integral over
$\partial \mc A$ because $\varphi$ is divergence free on $\Omega$.

For a function $f \colon \overline{\Omega} \to \bb R$ in
$C^2(\overline{\Omega})\cap W^{1,2}(\Omega)$, denote by
$\Phi_f$, $\Phi^*_f$, $\Psi_f$ the elements of $\mathcal F$ given by
\begin{equation}
\label{2-3}
\Psi_f \;=\; e^{-V} \bb S \nabla f\;, \qquad
\Phi_f \;=\; e^{-V} \bb M^\dagger \nabla f \;, \qquad
\Phi^*_f \;=\; e^{-V} \bb M \nabla f\;.
\end{equation}
Denote by $\mathcal C^{a,b}_{\mc A,\mc B}$, $a$, $b\in \bb R$ the set of
bounded functions $f$ in $C^2(\overline{\Omega})\cap W^{1,2}(\Omega)$
such that $f(\bs{x})=a$, $\bs x\in \partial A$, $f(\bs{y})=b$, $\bs y\in \partial B$.

\begin{proposition}[Dirichlet's Principle]
\label{prop1}
Let $\mc A$, $\mc B$ be two disjoint open subsets of $\bb R^d$ satisfying Assumption S. Then,
\begin{equation}
\label{26}
\Cap (\mc A,\,\mc B) \;=\; \inf_{f\in \mathcal C^{1,0}_{\mc A,\mc B}}
\, \inf_{\varphi \in \mathcal  F^{(0)}} \,
\< \Phi_f - \varphi \,,\, \Phi_f - \varphi \>\;.
\end{equation}
The minimum is attained at $f=(1/2) (h_{\mc A,\mc B} + h^*_{\mc A,\mc B})$, and
$\varphi = \Phi_f - \Psi_{h_{\mc A,\mc B}}$.
\end{proposition}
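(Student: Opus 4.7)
The plan is to exhibit the explicit pair $(f_0,\varphi_0)$ with $f_0 = \tfrac12(h_{\mc A,\mc B}+h^*_{\mc A,\mc B})$ and $\varphi_0 = \Phi_{f_0} - \Psi_{h_{\mc A,\mc B}}$, show it is admissible and attains the value $\Cap(\mc A,\mc B)$, and then prove that any other admissible pair can only increase the functional. Writing $h = h_{\mc A,\mc B}$ and $h^* = h^*_{\mc A,\mc B}$ for brevity, admissibility is handled first. Using $\Psi_h = \tfrac12(\Phi_h + \Phi^*_h)$ a direct algebraic computation collapses the definition of $\varphi_0$ to the identity $\varphi_0 = \tfrac12(\Phi_{h^*} - \Phi^*_h)$. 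Since $\mathcal L h = 0$ and $\mathcal L^* h^* = 0$, the vector fields $\Phi^*_h = e^{-V}\bb M\nabla h$ and $\Phi_{h^*} = e^{-V}\bb M^\dagger\nabla h^*$ are divergence-free on $\Omega$, hence so is $\varphi_0$. Their fluxes through $\partial\mc A$ equal $\Cap(\mc A,\mc B)$ for the operators $\mathcal L$ and $\mathcal L^*$; a short Green's identity applied to $\nabla\!\cdot(h\,\Phi_{h^*})$ and $\nabla\!\cdot(h^*\,\Phi^*_h)$ shows both equal $\int_\Omega e^{-V}\nabla h\cdot \bb M^\dagger\nabla h^* d\bs x$, so the flux of $\varphi_0$ across $\partial\mc A$, and therefore also across $\partial\mc B$, vanishes. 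Thus $\varphi_0 \in \mathcal F^{(0)}$.

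By construction $\Phi_{f_0} - \varphi_0 = \Psi_h$, and a short computation unwinding the inner product gives
$$\<\Psi_h,\Psi_h\> \;=\; \int_\Omega \nabla h \cdot \bb S\,\nabla h\;e^{-V}\,d\bs x \;=\; \Cap(\mc A,\mc B),$$
where the last identity follows from integrating $\nabla\!\cdot(h\,\Phi^*_h)$ over $\Omega$ using $\mathcal L h = 0$ and the boundary values of $h$, plus the fact that $\nabla h\cdot\bb M\nabla h = \nabla h\cdot\bb S\nabla h$ since the antisymmetric part contracts to zero. This establishes that the infimum is at most $\Cap(\mc A,\mc B)$.

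For the lower bound, since $\mathcal C^{1,0}_{\mc A,\mc B}$ is an affine space modelled on $\mathcal C^{0,0}_{\mc A,\mc B}$ and $\mathcal F^{(0)}$ is a linear space, any admissible pair writes as $f = f_0 + g$ and $\varphi = \varphi_0 + \psi$ with $g\in\mathcal C^{0,0}_{\mc A,\mc B}$ and $\psi\in\mathcal F^{(0)}$, so that $\Phi_f - \varphi = \Psi_h + (\Phi_g - \psi)$. Expanding the quadratic form,
$$\<\Phi_f-\varphi,\Phi_f-\varphi\> \;=\; \Cap(\mc A,\mc B) \,+\, 2\<\Psi_h,\Phi_g-\psi\> \,+\, \<\Phi_g-\psi,\Phi_g-\psi\>,$$
and it remains to show the cross term vanishes. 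For the first piece, $\<\Psi_h,\Phi_g\> = \int_\Omega e^{-V}\nabla g\cdot\bb M\nabla h\,d\bs x$; integrating by parts produces a boundary term $\int_{\partial\Omega} g\,\Phi^*_h\cdot\bs n_\Omega\,\sigma$ that vanishes because $g\equiv 0$ on $\partial\Omega$, plus a bulk term $\int_\Omega g\,(\mathcal L h)\,e^{-V}d\bs x$ that vanishes because $\mathcal L h = 0$. For the second piece, $\<\Psi_h,\psi\> = \int_\Omega \nabla h\cdot\psi\,d\bs x$; by the divergence theorem this equals $\int_{\partial\Omega} h\,\psi\cdot\bs n_\Omega\,\sigma - \int_\Omega h\,(\nabla\!\cdot\psi)\,d\bs x$, where the bulk term vanishes as $\psi$ is divergence-free and the boundary term collapses to $\int_{\partial\mc A}\psi\cdot\bs n_\Omega\,\sigma$, which is zero because $\psi\in\mathcal F^{(0)}$. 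The remaining term $\<\Phi_g-\psi,\Phi_g-\psi\>\ge 0$ closes the lower bound, with equality at $(f_0,\varphi_0)$.

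The main technical hurdle is justifying the integration-by-parts identities on the unbounded domain $\Omega = (\overline{\mc A}\cup\overline{\mc B})^c$. The finiteness of $\<\varphi,\varphi\>$ combined with the growth hypothesis \eqref{18} on $V$ provides $L^2$-type decay of $\psi$ at infinity, but one must couple this with corresponding decay of $\nabla h$ and $\nabla h^*$ (which is expected from the positive recurrence of the associated diffusion and the representation of $h$, $h^*$ as hitting probabilities) to rule out boundary contributions at infinity. Once this is taken care of, presumably via a smooth cutoff at radius $R\to\infty$ and the growth estimate \eqref{20}, the algebraic skeleton above yields the identification of the infimum with $\Cap(\mc A,\mc B)$.
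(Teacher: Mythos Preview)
Your proof is correct and rests on the same core identities as the paper's (admissibility of $\varphi_0$ via $\Cap=\Cap^*$, the formula $\<\Psi_h,\Psi_h\>=\Cap(\mc A,\mc B)$, and the orthogonality $\<\Psi_h,\Phi_g-\psi\>=0$), but you organize the lower bound differently. The paper establishes the single identity $\<\Phi_f-\varphi,\Psi_h\>=\Cap(\mc A,\mc B)$ for \emph{every} admissible pair and then applies Cauchy--Schwarz against $\Psi_h$; you instead write a general pair as a perturbation $(f_0+g,\varphi_0+\psi)$ of the optimizer and expand the quadratic form directly, showing the cross term vanishes. These are two faces of the same computation: your cross-term vanishing is precisely the statement that $\<\Phi_f-\varphi,\Psi_h\>$ is constant on the admissible class. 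The Cauchy--Schwarz route is a line shorter and makes the equality case automatic; your expansion is the natural variational argument and displays the first-order optimality condition explicitly. Your caution about integration by parts on the unbounded $\Omega$ is well placed---the paper handles this implicitly through the standing hypotheses $h,h^*\in W^{1,2}(\Omega)\cap C^{2,\alpha}(\overline{\Omega})$, the boundedness of $f$, and the integrability condition defining $\mc F$.
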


\begin{proposition}[Thomson principle]
\label{prop2}
Let $\mc A$, $\mc B$ be disjoint two open subsets of $\bb R^d$ satisfying Assumptions S. Then,
\begin{equation}
\label{28}
\Cap (\mc A,\,\mc B) \;=\; \sup_{f\in \mathcal C^{0,0}_{\mc A,\mc B}} \,
\sup_{\varphi \in \mathcal  F^{(1)}} \,
\frac 1{\< \Phi_f - \varphi \,,\, \Phi_f - \varphi \>} \;\cdot
\end{equation}
The maximum is attained at $f=(h_{\mc A,\mc B} - h^*_{\mc A,\mc B})/2 \, \Cap (\mc A,\,\mc B)$, and
$\varphi = \Phi_f - \Psi_{g_{\mc A,\mc B}}$, where $g_{\mc A,\mc B} = h_{\mc A,\mc B}/\Cap
(\mc A,\mc B)$.
\end{proposition}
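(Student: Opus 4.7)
The plan is to reduce the supremum to a single application of the Cauchy--Schwarz inequality in the Hilbert space $(\mc F, \<\,\cdot\,,\,\cdot\,\>)$, taking the candidate optimal flow $\Psi_{g}$ as the distinguished test vector, where $g = h_{\mc A,\mc B}/\Cap(\mc A,\mc B)$. Throughout, abbreviate $h = h_{\mc A,\mc B}$, $h^* = h^*_{\mc A,\mc B}$, and $\Cap = \Cap(\mc A,\mc B)$. The core computation is the universal identity
\begin{equation*}
\<\Phi_f - \varphi \,,\, \Psi_g\> \;=\; \frac{1}{\Cap}\;,\qquad (f,\varphi) \in \mc C^{0,0}_{\mc A,\mc B} \times \mc F^{(1)}\,,
\end{equation*}
combined with $\<\Psi_g \,,\, \Psi_g\> = 1/\Cap$.

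For the first identity, $\<\Phi_f, \Psi_g\> = \int_\Omega \bb M^\dagger \nabla f \cdot \nabla g \, e^{-V}\, d\bs{x} = \int_\Omega \nabla f \cdot \bb M \nabla g \, e^{-V}\, d\bs{x}$ vanishes after one integration by parts, since $\mathcal L h = 0$ makes the bulk integrand a divergence and $f$ vanishes on $\partial \Omega$; meanwhile $\<\varphi, \Psi_g\> = \int_\Omega \varphi \cdot \nabla g\, d\bs{x}$, and the divergence-freeness of $\varphi$ together with $g = 1/\Cap$ on $\partial \mc A$, $g = 0$ on $\partial \mc B$, and the flux normalization \eqref{gg2} evaluates to $-1/\Cap$. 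An analogous computation gives $\<\Psi_g, \Psi_g\> = \int_\Omega \nabla g \cdot \bb S \nabla g \, e^{-V}\, d\bs{x} = \int_\Omega \nabla g \cdot \bb M \nabla g\, e^{-V}\, d\bs{x} = 1/\Cap$, the antisymmetric part of $\bb M$ dropping out, and the last equality coming from Green's identity and $\mathcal L h = 0$. The Cauchy--Schwarz inequality then gives
\begin{equation*}
\<\Phi_f - \varphi \,,\, \Phi_f - \varphi\> \;\ge\; \frac{\<\Phi_f - \varphi \,,\, \Psi_g\>^2}{\<\Psi_g \,,\, \Psi_g\>} \;=\; \frac{1}{\Cap}\,,
\end{equation*}
so $\sup 1/\<\Phi_f - \varphi \,,\, \Phi_f - \varphi\> \le \Cap$, with equality iff $\Phi_f - \varphi = \Psi_g$.

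Finally, I would verify that the candidate $f^* = (h - h^*)/(2\Cap)$, $\varphi^* = \Phi_{f^*} - \Psi_g$ is admissible and saturates the bound. Since $h = h^* = 1$ on $\partial \mc A$ and $h = h^* = 0$ on $\partial \mc B$, one has $f^* \in \mc C^{0,0}_{\mc A,\mc B}$; and $\Phi_{f^*} - \varphi^* = \Psi_g$ by construction, saturating Cauchy--Schwarz. The only nontrivial step is checking $\varphi^* \in \mc F^{(1)}$: divergence-freeness follows from the matching computations $\nabla \cdot \Phi_{f^*} = (2\Cap)^{-1} e^{-V} \mathcal L^*(h - h^*) = (2\Cap)^{-1} e^{-V} \mathcal L^* h$ (using $\mathcal L^* h^* = 0$) and $\nabla \cdot \Psi_g = (2\Cap)^{-1} e^{-V}(\mathcal L h + \mathcal L^* h) = (2\Cap)^{-1} e^{-V} \mathcal L^* h$ (using $\mathcal L h = 0$). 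For the flux, the vanishing of $h - h^*$ on $\partial \Omega$ forces $\nabla(h - h^*)$ to be normal there, so $\bb M^\dagger \nabla(h - h^*) \cdot \bs n_\Omega = \bb M \nabla(h - h^*) \cdot \bs n_\Omega$; coupled with the self-duality $\Cap = \Cap^*$ (a one-line Green's identity applied to $\int_\Omega \nabla h^* \cdot \bb M \nabla h \, e^{-V} d\bs{x}$), this gives $\int_{\partial \mc A} \Phi_{f^*} \cdot \bs n_\Omega\, d\sigma = (\Cap - \Cap^*)/(2\Cap) = 0$, while $\int_{\partial \mc A} \Psi_g \cdot \bs n_\Omega\, d\sigma = 1$ by the same normality argument, yielding $\int_{\partial \mc A} \varphi^* \cdot \bs n_\Omega\, d\sigma = -1$. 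The main obstacle is this last admissibility verification; everything else reduces to short applications of Green's identity.
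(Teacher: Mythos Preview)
Your proposal is correct and follows essentially the same route as the paper's proof: both establish the universal identity $\langle \Phi_f-\varphi,\Psi_h\rangle = \text{const}$ for admissible $(f,\varphi)$, invoke Cauchy--Schwarz against $\Psi_h$ (you use the rescaled $\Psi_g=\Psi_h/\Cap$, which is immaterial), and then verify that the candidate $(f^*,\varphi^*)$ lies in $\mc C^{0,0}_{\mc A,\mc B}\times\mc F^{(1)}$ using $\mc L h=\mc L^*h^*=0$ together with $\Cap=\Cap^*$. The only cosmetic difference is that the paper simplifies $\varphi^*$ algebraically to $-(2\Cap)^{-1}e^{-V}(\bb M\nabla h+\bb M^\dagger\nabla h^*)$ before computing the boundary flux, whereas you treat $\Phi_{f^*}$ and $\Psi_g$ separately and invoke the normality of $\nabla h,\nabla h^*$ on $\partial\Omega$; both lead to the same conclusion.
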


These principles can be extended to other contexts, provided that the underlying process admits a unique, stationary probability measure and that the sets $\mc A$ and $\mc B$ are recurrent and sufficiently regular. In particular, in Section~\ref{sec7} we establish these principles for non-reversible elliptic operators in compact manifolds without boundaries.

The article is organized as follows. In Sections~\ref{sec1} and \ref{sec7} we prove Propositions~\ref{prop1} and \ref{prop2} and an
extension of these results to the context of elliptic operators in compact manifolds without boundaries. As an application of these results, in Section~\ref{sec3}, we introduce a class of non-reversible metastable diffusion process and we provide a sharp estimate for the transition time between two wells, the so-called Eyring-Kramers formula. The proof of this estimate is given in Sections~\ref{sec4}--\ref{sec8}.

\section{Dirichlet's and Thomson's principles on Euclidean spaces}
\label{sec1}

Denote by $C([0,\infty), \bb R^d)$ the space of continuous functions
$\omega$ from $\bb R_+$ to $\bb R^d$ endowed with the topology of
uniform convergence on bounded intervals. Let $X_t$, $t\ge 0$ be the
one-dimensional projections: $X_t(\omega) = \omega(t)$, $\omega \in
C([0,\infty), \bb R^d)$. We sometimes represent $X_t$ as $X(t)$.

It follows from the assumptions on $\bb M$ and $V$, and from \cite
[Theorems~1.10.4 and 1.10.6]{p95} that there exists a unique solution, denoted hereafter by $\{\bb P_x : x\in \bb R^d\}$, to the martingale problem associated to the generator $\mc L$ introduced in \eqref{gen1}. Moreover, the family $\{\bb P_x : x\in \bb R^d\}$ is strong Markov and possesses the Feller property. Expectation with
respect to $\bb P_x$ is expressed as $\bb E_x$.

The process $X_t$ can be represented in terms of a stochastic
differential equation.  Denote by $\bb K\colon\bb R^d \to \bb R^d \otimes
\bb R^d$ the square root of $\bb S (\bs{x})$, in the sense that $\bb K
(\bs{x})$ is a positive-definite, symmetric matrix such that $\bb K
(\bs{x}) \bb K(\bs{x}) = \bb S (\bs{x})$. By \cite[Lemma~6.1.1]{f},
the entries of $\bb K$ inherit the regularity properties of $\bb S$:
$\bb K_{m,n}$ belongs to $C^3(\bb R^d)$ for $1\le m,\,n\le d$. Recall
the drift $\bs{b}(\cdot)$ from \eqref{drift1}. Then, the process $X_t$
is the unique solution of the stochastic differential equation
\begin{equation*}
d X_t \;=\; \bs b (X_t) \, dt \;+\; \sqrt{2} \, \bb K (X_t) \, dB_t\;,
\end{equation*}
where $B_t$ stands for a $d$-dimensional Brownian motion.

In view of the regularity of $\bb M$ and $V$ and conditions \eqref{18}, \eqref{20}, by \cite[Theorem~6.1.3]{p95}, the process $X_t$ is positive recurrent. Furthermore, by \cite[Theorem~4.9.6]{p95}, $\bb E_{\bs x} [H_\mc C]<\infty$ for all open sets $\mc
C\subset \bb R^d$ and all $\bs x\not\in \mc C$. Finally, an elementary computation shows that the probability measure $\mu$ is stationary.

The solutions of the elliptic equation \eqref{22} can be represented
in terms of the process $X_t$.  Fix $0<\alpha<1$, a bounded function
$\mf g$ in $L^2(\Omega) \cap C^{\alpha}(\overline{\Omega})$ and a
bounded function $\mf b$ in $W^{1,2}(\Omega)\cap
C^{2,\alpha}(\overline{\Omega})$.  Assume that $\Omega$ has a
$C^{2,\alpha}$-boundary.  It follows from the proof of \cite[Theorem
6.5.1]{f} and from the positive recurrence that the unique solution
$u$ of \eqref{22} can be represented as
\begin{equation}
\label{23}
u(\bs{x}) \;=\; \bb E_{\bs x} \big[ \mf b (X (H_{\Omega^c}))\, \big]
\;+\; \bb E_{\bs x} \Big[ \int_0^{H_{\Omega^c}} \mf g (X_t) \,
dt \, \Big]\;.
\end{equation}

In particular if $\mc A$, $\mc B$ represent two open sets satisfying
Assumption S, the equilibrium potential between $\mc A$ and $\mc B$,
introduced just above \eqref{06}, is given by
\begin{equation}
\label{30}
h_{\mc A,\mc B} (\bs{x}) \;=\; \bb P_{\bs x} [H_\mc A<H_\mc B]\;.
\end{equation}

\subsection{Properties of the capacity}

We present in this subsection some elementary properties of the
capacity.  We begin with an alternative formula for the
capacity. Unless otherwise stated, until the end of this section, the
open subsets $\mc A$, $\mc B$ satisfy Assumption S.

\begin{lemma}
\label{lem1}
Recall that the capacity $\Cap(\mc A,\mc B)$ was defined in \eqref{06}. It holds
\begin{equation*}
\Cap (\mc A,\,\mc B) \;=\; \int_{\bb R^d}
\nabla h_{\mc A,\mc B}(\bs{x}) \cdot \bb S(\bs{x}) \, \nabla
h_{\mc A,\mc B}(\bs{x}) \, \mu(d\bs{x})\;.
\end{equation*}
\end{lemma}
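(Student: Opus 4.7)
\smallskip\noindent\textbf{Proof proposal.} The plan is to rewrite $\nabla h_{\mc A,\mc B}\cdot\bb S\,\nabla h_{\mc A,\mc B}$ as $\nabla h_{\mc A,\mc B}\cdot\bb M\,\nabla h_{\mc A,\mc B}$ (the antisymmetric part of $\bb M$ contributes zero to the quadratic form) and then integrate by parts, pushing the $\nabla$ onto the divergence-free field $e^{-V}\bb M\nabla h_{\mc A,\mc B}$.

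First I would write $h = h_{\mc A,\mc B}$ and extend $h$ to all of $\bb R^d$ by setting $h\equiv1$ on $\mc A$ and $h\equiv 0$ on $\mc B$. Since $h$ is thus constant on the interiors of $\mc A$ and $\mc B$, the integrand $\nabla h\cdot \bb S\,\nabla h\,e^{-V}$ vanishes a.e.\ on $\mc A\cup \mc B$, so
\begin{equation*}
\int_{\bb R^d}\nabla h\cdot\bb S\,\nabla h\,d\mu
\;=\;\int_\Omega \nabla h\cdot\bb S\,\nabla h\,e^{-V}\,d\bs x
\;=\;\int_\Omega \nabla h\cdot\bb M\,\nabla h\,e^{-V}\,d\bs x,
\end{equation*}
using $\bs\xi\cdot\bb M\bs\xi=\bs\xi\cdot\bb S\bs\xi$ pointwise.

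Next I would set $F(\bs x)=e^{-V(\bs x)}\bb M(\bs x)\nabla h(\bs x)$ and observe, by definition of $h$ as the solution of $\mc L h=0$ on $\Omega$ together with \eqref{gen1}, that $\nabla\cdot F=0$ on $\Omega$. Applying the divergence theorem to the vector field $h\,F$ on $\Omega$, and using the regularity of $h$ up to the boundary (guaranteed by $h\in C^{2,\alpha}(\overline{\Omega})$ from the discussion after \eqref{22}) together with the fact that $\mc A$, $\mc B$ have $C^{2,\alpha}$ boundaries and finite perimeter under Assumption~S, I obtain
\begin{equation*}
\int_\Omega \nabla h\cdot F\,d\bs x
\;=\;\int_{\partial\Omega} h\,F\cdot\bs n_\Omega\,\sigma(d\bs x)
\;-\;\int_\Omega h\,(\nabla\cdot F)\,d\bs x
\;=\;\int_{\partial \mc A} F\cdot\bs n_\Omega\,\sigma(d\bs x),
\end{equation*}
where the boundary term on $\partial\mc B$ drops because $h=0$ there and the divergence term drops because $\nabla\cdot F=0$ on $\Omega$. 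Comparing with the definition \eqref{06} of $\Cap(\mc A,\mc B)$ closes the identity.

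I do not expect a serious obstacle here: the computation is essentially the Green/divergence-theorem identity for the Dirichlet form, once one notices that only the symmetric part of $\bb M$ appears in the quadratic form. The only point deserving care is justifying the integration by parts on the unbounded domain $\Omega$, but this is immediate because $\nabla h$ is supported on the bounded set $\overline{\Omega\cap\{\nabla h\neq 0\}}\subset \overline{\Omega}$ (after the extension) and $h\in C^{2,\alpha}(\overline{\Omega})$; alternatively, one applies the divergence theorem on the bounded domain $\Omega\cap \mc B(R)$ and lets $R\to\infty$, noting that $\nabla h=0$ outside $\overline{\mc A}\cup\overline{\mc B}$'s complement inside a sufficiently large ball.
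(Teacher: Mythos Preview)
Your proof is correct and follows essentially the same route as the paper: both apply the divergence theorem to $h\,e^{-V}\bb M\nabla h$ on $\Omega$, using that $\mc Lh=0$ kills the bulk term, the boundary values $h=1$ on $\partial\mc A$ and $h=0$ on $\partial\mc B$, and the pointwise identity $\bs\xi\cdot\bb M\bs\xi=\bs\xi\cdot\bb S\bs\xi$. One small correction to your last paragraph: your claim that $\nabla h$ is supported on a bounded set (or vanishes outside a large ball) is not justified---$\Omega$ is unbounded and $h$ need not be eventually constant there; the integration by parts on the unbounded domain is instead legitimized by $h\in W^{1,2}(\Omega)$ together with the boundedness of $h$ (maximum principle) and of $\bb M$, which is what the paper also relies on implicitly.
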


\begin{proof}
Since the function $h_{\mc A,\mc B}$ is harmonic on $\Omega =
(\overline{\mc A}\cup \overline{\mc B})^c$, and since it
is equal to $1$ on the set $\partial \mc A$ and $0$ on
the set $\partial \mc B$, the capacity $\Cap (\mc A,\,\mc B)$ can be written as
\begin{equation}
\label{c11}
\begin{aligned}
& \int_{\partial \mc A} h_{\mc A,\mc B}(\bs{x})  \big[ \bb M(\bs{x}) \, \nabla
h_{\mc A,\mc B}(\bs{x}) \big] \cdot \bs n_{\Omega}(\bs{x})  \, e^{-V(\bs{x})} \sigma(d\bs{x}) \\
&\quad \;+\;
\int_{\partial \mc B} h_{\mc A,\mc B}(\bs{x}) \big[ \bb M(\bs{x}) \, \nabla
h_{\mc A,\mc B}(\bs{x}) \big] \cdot \bs n_{\Omega}(\bs{x})  \, e^{-V(\bs{x})} \sigma(d\bs{x}) \\
&\qquad \;-\;
\int_{\Omega} h_{\mc A,\mc B}(\bs{x}) \, \nabla \cdot \big[ e^{-V(\bs{x})} \, \bb M(\bs{x})
\, \nabla h_{\mc A,\mc B}(\bs{x}) \big] \, d\bs{x} \;.
\end{aligned}
\end{equation}
Note that the function $h_{\mc A,\mc B}$ belongs to
$C^{2+\alpha}(\overline{\Omega})\cap W^{1,2}(\Omega)$ and the matrix
$\bb S$ represents the symmetric part of the matrix $\bb M$. Hence, if
we apply the divergence theorem at the thrid term in the previous
expression, then the resulting boundary terms exactly concide with the
first two terms of the same expression, and we obtain that \eqref{c11}
equals
\begin{equation*}
\int_{\Omega} \nabla h_{\mc A,\mc B}(\bs{x}) \cdot \bb{S}(\bs x) \,\nabla h_{\mc A,\mc B} (\bs{x}) \mu (d\bs x)\;.
\end{equation*}
As the equilibrium potential is constant in $\mc A\cup \mc B$, we may replace
in the last formula the integration domain $\Omega$ to $\bb R^d$, which
completes the proof.
\end{proof}

Since $h_{\mc B,\mc A} = 1-h_{\mc A,\mc B}$, it follows from the previous lemma that
the capacity is symmetric: for every disjoint subsets $\mc A$, $\mc B$ of $\bb
R^d$,
\begin{equation}
\label{08}
\Cap (\mc A,\,\mc B) \;=\; \Cap (\mc B,\,\mc A) \;.
\end{equation}

\smallskip\noindent{\bf Adjoint generator.}  Denote by $\mathcal L^*$
the $L^2(\mu)$ adjoint of the generator $\mathcal L$, which acts on
functions in $C^2(\bb R^d)$ as
\begin{equation*}
(\mathcal L^* f) (\bs{x}) \;=\; e^{V(\bs{x})} \, \nabla \cdot \big\{ e^{-V(\bs{x})}
\, \bb M^\dagger(\bs{x}) (\nabla f)(\bs{x}) \big\} \;.
\end{equation*}
Let $\mathcal S$ be the symmetric part of the generator $\mathcal L$,
defined as $\mathcal S = (1/2) (\mathcal L + \mathcal L^*)$ and acting
on $C^2(\mathbb R^d)$ as $\mathcal S f= e^V \nabla \cdot (e^{-V}\mathbb
S \nabla f)$.

Denote by $\Cap^*(\mc A,\,\mc B)$ the capacity between the open sets
$\mc A$, $\mc B$ with respect to the adjoint generator $\mathcal
L^*$. In view of \eqref{06}, this capacity $\Cap^*(\mc A,\,\mc B)$ is
defined as
\begin{equation}
\label{10}
\Cap^* (\mc A,\,\mc B) \;=\; \int_{\partial \mc A} \big[
\bb M^\dagger(\bs{x}) \, \nabla
h^*_{\mc A,\mc B}(\bs{x}) \big] \cdot \bs n_{\Omega}(\bs{x})
\, e^{-V(\bs{x})} \sigma(d\bs{x})\;,
\end{equation}
where $h^*_{\mc A,\mc B}\colon \bb R^d \to [0,1]$, called the equilibrium
potential between $\mc A$ and $\mc B$ for the adjoint generator, is
the unique solution in $C^2(\overline{\Omega})\cap W^{1,2}(\Omega)$ of
the elliptic equation
\begin{equation*}
\begin{cases}
(\mathcal L^* u) (\bs{x}) \;=\; 0 &  x\in \Omega \;, \\
u (\bs{x}) \;=\; \chi_\mc A (\bs{x})  &   x\in \partial \Omega \;,
\end{cases}
\end{equation*}
where $\chi_\mc A(\cdot)$ represents the indicator function of the set
$\mc A$.

The next lemma states that the capacity between two disjoint subsets
$\mc A$, $\mc B$ of $\bb R^d$ coincides with the capacity with respect
to the adjoint process. Recall that we are assuming that $\mc A$ and
$\mc B$ fulfill Assumption S.

\begin{lemma}
\label{lem2}
For every open subsets $\mc A$, $\mc B$ of $\bb R^d$,
\begin{equation*}
\Cap (\mc A,\,\mc B) \;=\; \Cap^* (\mc A,\,\mc B) \;.
\end{equation*}
\end{lemma}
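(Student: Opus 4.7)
The plan is to prove the identity by showing that both $\Cap(\mc A,\mc B)$ and $\Cap^*(\mc A,\mc B)$ admit a common bilinear representation in terms of $h_{\mc A,\mc B}$ and $h^*_{\mc A,\mc B}$.

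First, in the spirit of the proof of Lemma~\ref{lem1}, I would start from the identity
\begin{equation*}
0 \;=\; \int_{\Omega} h^*_{\mc A,\mc B}(\bs x)\, (\mathcal L h_{\mc A,\mc B})(\bs x)\, \mu(d\bs x)\;,
\end{equation*}
valid because $\mc L h_{\mc A,\mc B}=0$ on $\Omega$. Expanding $\mc L h_{\mc A,\mc B} = e^{V}\nabla\cdot(e^{-V}\bb M\nabla h_{\mc A,\mc B})$ and applying the divergence theorem on $\Omega$, the volume contribution becomes $-\int_\Omega \nabla h^*_{\mc A,\mc B}\cdot \bb M\nabla h_{\mc A,\mc B}\,d\mu$, while the boundary contribution splits over $\partial\mc A$ and $\partial\mc B$. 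Using the boundary conditions $h^*_{\mc A,\mc B}=1$ on $\partial\mc A$ and $h^*_{\mc A,\mc B}=0$ on $\partial\mc B$, the boundary term reduces to exactly the defining expression \eqref{06} for $\Cap(\mc A,\mc B)$, so that
\begin{equation*}
\Cap(\mc A,\mc B) \;=\; \int_{\Omega} \nabla h^*_{\mc A,\mc B}(\bs x)\cdot \bb M(\bs x)\,\nabla h_{\mc A,\mc B}(\bs x)\, \mu(d\bs x)\;.
\end{equation*}

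Next I would run the symmetric argument with the roles interchanged, starting from $\int_\Omega h_{\mc A,\mc B}\,(\mc L^* h^*_{\mc A,\mc B})\,d\mu = 0$ and using $\mc L^* h^*_{\mc A,\mc B}=e^V\nabla\cdot(e^{-V}\bb M^\dagger \nabla h^*_{\mc A,\mc B})$, together with the definition \eqref{10} of $\Cap^*$ and the boundary conditions of $h_{\mc A,\mc B}$. This yields
\begin{equation*}
\Cap^*(\mc A,\mc B) \;=\; \int_{\Omega} \nabla h_{\mc A,\mc B}(\bs x)\cdot \bb M^\dagger(\bs x)\,\nabla h^*_{\mc A,\mc B}(\bs x)\, \mu(d\bs x)\;.
\end{equation*}
Since $\nabla h_{\mc A,\mc B}\cdot \bb M^\dagger \nabla h^*_{\mc A,\mc B} = \bb M\nabla h_{\mc A,\mc B}\cdot \nabla h^*_{\mc A,\mc B}$ pointwise, the two displayed integrals coincide, which proves the lemma.

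The only technical point I would be careful about is the application of the divergence theorem when $\Omega=(\overline{\mc A}\cup\overline{\mc B})^c$ is unbounded: I would justify the absence of a boundary contribution at infinity by the same truncation/exhaustion argument that implicitly underlies Lemma~\ref{lem1} (one first integrates over $\Omega\cap \mc B(n)$ and lets $n\to\infty$, using the growth condition \eqref{18} on $V$ together with the integrability of $\nabla h_{\mc A,\mc B}$ and $\nabla h^*_{\mc A,\mc B}$ against $\mu$, guaranteed by the $W^{1,2}(\Omega,\mu)$ membership of the equilibrium potentials). This boundary-at-infinity control is the only step that is not immediate; everything else is bookkeeping with the boundary conditions of $h_{\mc A,\mc B}$ and $h^*_{\mc A,\mc B}$.
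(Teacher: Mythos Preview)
Your proof is correct and follows essentially the same approach as the paper: both arguments reduce $\Cap(\mc A,\mc B)$ and $\Cap^*(\mc A,\mc B)$ to the common bilinear expression $\int_\Omega \nabla h^*_{\mc A,\mc B}\cdot \bb M\,\nabla h_{\mc A,\mc B}\,d\mu$ via the divergence theorem and the boundary conditions of the two equilibrium potentials. The paper organizes the computation slightly differently (writing the capacity as boundary plus a vanishing volume term, then integrating by parts back and forth), but the content is identical; your remark on the boundary-at-infinity control is appropriate and matches what the paper implicitly relies on from Lemma~\ref{lem1}.
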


\begin{proof}
As in the proof of Lemma \ref{lem1}, we may write $\Cap (\mc A,\,\mc B)$ as
\begin{equation}
\label{12}
\begin{aligned}
& \int_{\partial \mc A} h^*_{\mc A,\mc B}(\bs{x})  \big[ \bb M(\bs{x}) \, \nabla
h_{\mc A,\mc B}(\bs{x}) \big] \cdot \bs n_{\Omega}(\bs{x})  \, e^{-V(\bs{x})} \sigma(d\bs{x}) \\
&\quad \;-\;
\int_{\partial \mc B} h^*_{\mc A,\mc B}(\bs{x}) \big[ \bb M(\bs{x}) \, \nabla
h_{\mc A,\mc B}(\bs{x}) \big] \cdot \bs n_{\Omega}(\bs{x})  \, e^{-V(\bs{x})} \sigma(d\bs{x}) \\
&\qquad \;-\;
\int_{\Omega} h^*_{\mc A,\mc B}(\bs{x}) \, \nabla \cdot \big[ e^{-V(\bs{x})} \, \bb M(\bs{x})
\, \nabla h_{\mc A,\mc B}(\bs{x}) \big] \, d\bs{x} \;,
\end{aligned}
\end{equation}
By the arguments presented in the proof of the previous lemma, and the
divergence theorem, this expression is equal to
\begin{equation*}
\int_{\Omega} \nabla h^*_{\mc A,\mc B}(\bs{x}) \,\bb M(\bs{x}) \, \nabla
h_{\mc A,\mc B}(\bs{x}) \, \mu(d\bs{x}) \;=\;
\int_{\Omega} \nabla h_{\mc A,\mc B}(\bs{x}) \,\bb M^\dagger(\bs{x}) \, \nabla
h^*_{\mc A,\mc B}(\bs{x}) \, \mu(d\bs{x}) \;.
\end{equation*}
By the divergence theorem once more, we obtain that this integral is
equal to the sum \eqref{12}, in which $\nabla h_{\mc A,\mc B}$ and $\nabla
h^*_{\mc A,\mc B}$ are interchanged and $\bb M(\bs{x})$ is replaced by $\bb
M^\dagger(\bs{x})$. We may remove the function $h^*_{\mc A,\mc B}$ in the first line
because it is equal to $1$ on $\partial \mc A$.  The second line vanishes
because $h^*_{\mc A,\mc B}$ is equal to $0$ at $\partial \mc B$, and the third line
vanishes because $h^*_{\mc A,\mc B}$ is harmonic on $\Omega$. This completes
the proof of the lemma.
\end{proof}

Recall from \eqref{2-3} the definition of the vector fields $\Psi_f$,
$\Phi_f$, $\Phi^*_f$. Note that for every function $f$, $g$ in
$C^2(\overline{\Omega})\cap W^{1,2}(\Omega)$,
\begin{equation*}
\<\Psi_{f} \,,\, \Psi_{g}\> \;=\; \int_{\Omega} (\nabla f)(\bs{x}) \cdot
\, \bb S (\bs{x}) \, (\nabla g)(\bs{x}) \, \mu(d\bs{x}) \;.
\end{equation*}
In particular, by Lemma \ref{lem1},
\begin{equation}
\label{03}
\Cap (\mc A,\,\mc B) \;=\; \<\Psi_{h_{\mc A,\mc B}} \,,\, \Psi_{h_{\mc A,\mc B}}\> \;.
\end{equation}
On the other hand, for every function $f$, $g$ in
$C^2(\overline{\Omega})\cap W^{1,2}(\Omega)$,
\begin{equation}
\label{04}
\begin{aligned}
& \<\Phi_f \,,\, \Psi_{g}\> \;=\; \int_{\Omega} (\nabla g)(\bs{x})
\cdot \bb M^\dagger (\bs{x}) \,  (\nabla f)(\bs{x}) \, \mu(d\bs{x}) \;, \\
&\qquad \<\Phi^*_f \,,\, \Psi_{g}\> \;=\;
\int_{\Omega} (\nabla g)(\bs{x})  \cdot \bb M (\bs{x}) \,
(\nabla f)(\bs{x}) \, \mu(d\bs{x}) \;.
\end{aligned}
\end{equation}

\begin{proof}[Proof of Proposition \ref{prop1}]
We first claim that for all $f\in \mathcal C^{1,0}_{\mc A,\mc B}$, $\varphi
\in \mathcal F^{(0)}$,
\begin{equation}
\label{02}
\<\Phi_f - \varphi \,,\, \Psi_{h_{\mc A,\mc B}}\>  \;=\; \Cap (\mc A,\,\mc B) \;.
\end{equation}
Indeed, on the one hand, for any $f\in \mathcal C^{1,0}_{\mc A,\mc B}$, $\varphi
\in \mathcal F^{(0)}$, by definition of $\Psi_f$, by the divergence
theorem, and since $f$ is bounded and belongs to $ W^{1,2}(\Omega)$,
and since $\int_{\Omega} \Vert\varphi(\bs x)\Vert^2 \exp\{V(\bs{x})\} d\bs{x}$ is
finite, $\< \Psi_f , \varphi\>$ is equal to
\begin{equation*}
\int_{\Omega} \varphi(\bs{x}) \cdot (\nabla
f)(\bs{x}) \, d\bs{x} \;=\; \int_{\partial \Omega} f(\bs{x}) \, \varphi(\bs{x}) \cdot \bs
n_\Omega(\bs{x}) \, \sigma(d\bs{x})
\;-\; \int_{\Omega} (\nabla \cdot \varphi)(\bs{x}) \, f(\bs{x}) \, d\bs{x} \;.
\end{equation*}
The second integral on the right-hand side vanishes because $\varphi$
is divergence free in $\Omega$, while the first integral vanishes
because $f$ is constant on each set $\partial \mc A$, $\partial \mc B$ and the
vector field $\varphi$ belongs to $\mathcal F^{(0)}$.

Therefore, by \eqref{04},
\begin{equation*}
\<\Phi_f - \varphi \,,\, \Psi_{h_{\mc A,\mc B}}\>
\;=\; \<\Phi_f \,,\, \Psi_{h_{\mc A,\mc B}}\> \;=\;
\int_{\Omega} (\nabla f) (\bs{x}) \cdot \bb M(\bs{x}) \,
(\nabla h_{\mc A,\mc B})(\bs{x}) \, e^{-V(\bs{x})}\, d\bs{x} \;.
\end{equation*}
By the divergence theorem, the previous expression is equal to
\begin{equation*}
-\, \int_{\Omega} f (\bs{x}) (\mathcal L h_{\mc A,\mc B})(\bs{x}) \, \mu(d\bs{x})
\;+\; \int_{\partial \Omega} f (\bs{x}) \, e^{-V(\bs{x})}\, \bb M(\bs{x}) \,
(\nabla h_{\mc A,\mc B})(\bs{x}) \cdot \bs n_{\Omega}(\bs{x}) \, \sigma(d\bs{x})\;.
\end{equation*}
The first integral vanishes because $h_{\mc A,\mc B}$ is harmonic on
$\Omega$. Since $f$ belongs to $\mathcal C^{1,0}_{\mc A,\mc B}$, we may first
restrict the second integral to $\partial \mc A$, and then remove the
function $f$ to conclude that
\begin{equation*}
\<\Phi_f - \varphi \,,\, \Psi_{h_{\mc A,\mc B}}\>  \;=\;
\int_{\partial \mc A} e^{-V(\bs{x})}\, \bb M(\bs{x}) \,
(\nabla h_{\mc A,\mc B})(\bs{x}) \cdot \bs n_{\Omega}(\bs{x}) \, \sigma(d\bs{x})\;,
\end{equation*}
which proves claim \eqref{02} in view of \eqref{06}.

By \eqref{02} and by the Cauchy-Schwarz inequality
$\< \varphi , \psi \>^2 \le \< \varphi , \varphi \> \,\< \psi \,,\, \psi \> $, for every $f\in \mathcal C^{1,0}_{\mc A,\mc B}$, $\varphi \in \mathcal F^{(0)}$,
\begin{equation*}
\Cap (\mc A,\,\mc B)^2 \;=\; \<\Phi_f - \varphi \,,\, \Psi_{h_{\mc A,\mc B}}\>^2 \;\le\;
\<\Phi_f - \varphi \,,\, \Phi_f - \varphi \> \,
\< \Psi_{h_{\mc A,\mc B}} \,,\, \Psi_{h_{\mc A,\mc B}}\> \;.
\end{equation*}
By \eqref{03}, the last term is equal to $\Cap (\mc A,\,\mc B)$, which proves
that
\begin{equation*}
\<\Phi_f - \varphi \,,\, \Phi_f - \varphi \> \;\ge\; \Cap (\mc A,\,\mc B)
\end{equation*}
for all $f\in \mathcal C^{1,0}_{\mc A,\mc B}$ and $\varphi \in \mathcal F^{(0)}$.

To complete the proof of the proposition, it remains to show that
$\varphi = \Phi_f - \Psi_{h_{\mc A,\mc B}}$ belongs to $\mathcal F^{(0)}$ for
$f = (1/2) (h_{\mc A,\mc B} + h^*_{\mc A,\mc B})$. This is indeed the case. Recall
that $h_{\mc A,\mc B}$, $h^*_{\mc A,\mc B}$ are bounded and belong to
$C^2(\overline{\Omega})\cap W^{1,2}(\Omega)$. On the one hand, by
definition of $f$, for every $\bs x\in \Omega$,
\begin{equation*}
\nabla \cdot [\Phi_f - \Psi_{h_{\mc A,\mc B}}] \;=\;
e^{-V(\bs{x})} \, (\mathcal L^* f)(\bs{x}) \;-\;
e^{-V(\bs{x})} \, (\mathcal S h_{\mc A,\mc B})(\bs{x}) \;,
\end{equation*}
where $\mathcal S = (1/2) (\mathcal L + \mathcal L^*)$.  Since $f= (1/2) (h_{\mc A,\mc B} +
h^*_{\mc A,\mc B})$ by definition of $\mathcal S$, this expression is equal to
\begin{equation*}
\frac 12 \, e^{-V(\bs{x})} \, (\mathcal L^* h^*_{\mc A,\mc B})(\bs{x}) \;-\;
\frac 12 \, e^{-V(\bs{x})} \, (\mathcal L h_{\mc A,\mc B})(\bs{x}) \;=\; 0 \;.
\end{equation*}
On the other hand, by definition of $f$
\begin{align*}
& \int_{\partial \mc A} [\Phi_f (\bs{x}) - \Psi_{h_{\mc A,\mc B}} (\bs{x})] \cdot \bs n_{\Omega}(\bs{x}) \,
\sigma(d\bs{x}) \\
&\quad  =\; \frac 12 \int_{\partial A} e^{-V(\bs{x})} \, \bb M^\dagger(\bs{x}) \,
(\nabla h_{\mc A,\mc B}^*)(\bs{x}) \cdot \bs n_{\Omega}(\bs{x}) \, \sigma(d\bs{x}) \\
&\qquad -\; \frac 12 \int_{\partial A} e^{-V(\bs{x})} \, \bb M(\bs{x}) \,
(\nabla h_{\mc A,\mc B})(\bs{x}) \cdot \bs n_{\Omega}(\bs{x}) \, \sigma(d\bs{x})\;.
\end{align*}
By definition of the capacities and by Lemma \ref{lem2}, this
expression is equal to $(1/2)\{\Cap (\mc A,\,\mc B) - \Cap^* (\mc A,\,\mc B)\}=0$.  As
$\Phi_f - \Psi_{h_{\mc A,\mc B}}$ is divergence free on $\Omega$, the same
identity holds at $\partial \mc B$, which concludes the proof of the
proposition.
\end{proof}

\begin{proof}[Proof of Proposition \ref{prop2}]
We claim that for every $f\in \mathcal C^{0,0}_{\mc A,\mc B}$, $\varphi \in \mathcal
F^{(1)}$,
\begin{equation}
\label{13}
\< \Phi_f - \varphi \,,\, \Psi_{h_{\mc A,\mc B}} \> \; =\; -1\;.
\end{equation}
Indeed,
\begin{equation*}
\< \Phi_f - \varphi \,,\, \Psi_{h_{\mc A,\mc B}} \> \; =\;
\int_{\Omega} (\nabla f)(\bs{x}) \cdot \bb M(\bs{x}) \, (\nabla h_{\mc A,\mc B}) (\bs{x})
\, \mu(d\bs{x}) \;-\; \int_{\Omega} \varphi (\bs{x}) \cdot (\nabla h_{\mc A,\mc B}) (\bs{x})
\, d\bs{x} \;.
\end{equation*}
By the divergence theorem, this expression is equal to
\begin{align*}
& -\, \int_{\Omega} f(\bs{x}) \, (\mathcal L h_{\mc A,\mc B}) (\bs{x}) \, \mu(d\bs{x})
\;+\; \int_{\partial \Omega} f(\bs{x}) \, \bb M(\bs{x}) \, (\nabla h_{\mc A,\mc B}) (\bs{x})
\cdot \bs n_{\Omega}(\bs{x}) \, e^{-V(\bs{x})} \, \sigma(d\bs{x})\\
& \quad \;+\; \int_{\Omega} (\nabla \cdot \varphi) (\bs{x}) \, h_{\mc A,\mc B} (\bs{x}) \, d\bs{x}
\;-\; \int_{\partial \Omega} h_{\mc A,\mc B} (\bs{x}) \, \varphi (\bs{x})
\cdot \bs n_{\Omega}(\bs{x}) \, \sigma(d\bs{x}) \;.
\end{align*}
The integrals over $\Omega$ vanish because $h_{\mc A,\mc B}$ is harmonic and
the vector field $\varphi$ is divergence free. The second integral in
the first line also vanishes because $f$ belongs to $\mathcal
C^{0,0}_{\mc A,\mc B}$. The last integral is equal to $-1$ because $\varphi$
belongs to $\mathcal F^{(1)}$. This proves claim \eqref{13}.

Therefore, by the Cauchy-Schwarz inequality,
\begin{equation*}
1\;=\; \< \Phi_f - \varphi \,,\, \Psi_{h_{\mc A,\mc B}} \>^2 \;\le\;
\< \Phi_f - \varphi \,,\, \Phi_f - \varphi \>\,
\< \Psi_{h_{\mc A,\mc B}} \,,\, \Psi_{h_{\mc A,\mc B}} \> \;.
\end{equation*}
Since $\< \Psi_{h_{\mc A,\mc B}} \,,\, \Psi_{h_{\mc A,\mc B}} \> = \Cap(\mc A,\,\mc B)$, it
follows from the previous relation that
\begin{equation*}
\Cap(\mc A,\,\mc B) \;\ge\; \sup_{f\in \mathcal C^{0,0}_{\mc A,\mc B}} \sup_{\varphi \in \mathcal
  F^{(1)}} \frac 1{\< \Phi_f - \varphi \,,\, \Phi_f - \varphi
  \>}\;\cdot
\end{equation*}

To complete the proof of the proposition, it remains to check that
$\varphi = \Phi_f - \Psi_{g_{\mc A,\mc B}}$ belongs to $\mathcal F^{(1)}$, where
$g_{\mc A,\mc B} = h_{\mc A,\mc B}/\Cap (\mc A,\,\mc B)$, $f=(1/2) (h_{\mc A,\mc B} - h^*_{\mc A,\mc B})/\Cap
(\mc A,\,\mc B)$. Since for $x\in\Omega$,
\begin{align*}
(\nabla \cdot \varphi)(\bs{x}) \; &=\; \frac 1{2 \, \Cap (\mc A,\,\mc B)}\, e^{-V(\bs{x})}
\Big\{ [\mathcal L^* (h_{\mc A,\mc B} - h^*_{\mc A,\mc B})](\bs{x}) - 2 (\mathcal S h_{\mc A,\mc B})(\bs{x})
\Big\} \\
 &=\; \frac {-1}{2 \, \Cap (\mc A,\,\mc B)}\, e^{-V(\bs{x})}
\Big\{ (\mathcal L^* h^*_{\mc A,\mc B}) (\bs{x}) +  (\mathcal L h_{\mc A,\mc B})(\bs{x}) \Big\}  \;,
\end{align*}
the vector field $\varphi$ is divergence free on $\Omega$. On the
other hand, the integral of $\varphi \cdot \bs n_{\Omega}$ over the set
$\partial \mc A$ is equal to
\begin{equation*}
\frac {-1}{2 \, \Cap (\mc A,\,\mc B)}\, \int_{\partial \mc A}
\big\{ \bb M(\bs{x}) \, (\nabla h_{\mc A,\mc B}) (\bs{x}) + \bb M^\dagger(\bs{x}) \,
(\nabla h^*_{\mc A,\mc B}) (\bs{x}) \big\}
\cdot \bs n_{\Omega}(\bs{x}) \, e^{-V(\bs{x})} \, \sigma(d\bs{x}) \;.
\end{equation*}
By definition \eqref{06} and by Lemma \ref{lem2}, this expression is
equal to $-1$, which proves that $\varphi$ belongs to $\mathcal
F^{(1)}$. This completes the proof of the proposition.
\end{proof}

\subsection{The reversible case.}

When the matrix $\bb M$ is symmetric and the generator $\mathcal L$ is
symmetric in $L^2(\mu)$, the previous variational formulae are
simplified and we recover the Dirichlet's and the Thomson's principles for
reversible diffusions.

Fix two open subsets $\mc A$, $\mc B$ of $\bb R^d$.  On the one hand, since
$h_{\mc A,\mc B} = h^*_{\mc A,\mc B}$, and since all vector fields $\Phi_f$,
$\Phi^*_f$, $\Psi_f$ coincide, by Proposition \ref{prop1}, the minimum
over $\varphi$ in \eqref{26} is attained at $\varphi=0$, so that
\begin{equation}
\label{27}
\Cap (\mc A,\,\mc B) \;=\; \inf_{f\in \mathcal C^{1,0}_{\mc A,\mc B}}
\< \Psi_f  \,,\, \Psi_f \> \;=\; \inf_{f\in \mathcal C^{1,0}_{\mc A,\mc B}}
\int_{\Omega} (\nabla f)(\bs{x}) \cdot
\, \bb S (\bs{x}) \, (\nabla f)(\bs{x}) \, \mu(d\bs{x})\;,
\end{equation}
which is the well-known Dirichlet's principle.

Similarly, the supremum over $f$ in \eqref{28} is attained at $f=0$,
so that
\begin{equation}
\label{29}
\Cap (\mc A,\,\mc B) \;=\; \sup_{\varphi \in \mathcal  F^{(1)}}
\frac 1{\< \varphi \,,\, \varphi \>} \;,
\end{equation}
which is the classical Thomson principle.

\subsection{The equilirbium measure.}

Recall that the sets $\mc A$, $\mc B$ are assumed to fulfill
Assumption S.  Let $\nu_{\mc A,\mc B}$ be the equilirbium measure on
$\partial \mc A$:
\begin{equation}
\label{2-2}
\nu_{A,B} (d\bs{x}) \;=\; \frac {1}{\Cap(\mc A,\,\mc B)} \, \bb M^\dagger(\bs{x}) \, (\nabla
h^*_{\mc A,\mc B}) (\bs{x}) \cdot \bs n_{\Omega}(\bs{x})\, e^{-V(\bs{x})}\, \sigma(d\bs{x})\;.
\end{equation}
By maximum principle, the equilibrium potential $h^*_{\mc A,\mc B}$ is bounded by $1$, and since $\partial \mc A$ is in the $1$-level set of $h^\ast$, it holds $\nabla h^*_{\mc A,\mc B} = c(\bs{x}) \,
  \bs n_{\Omega}(\bs{x})$ for some nonnegative scalar function $c\colon \partial \mc A \to \bb R_+$, so that
\begin{equation*}
\bb M^\dagger(\bs{x}) \, (\nabla h^*_{\mc A,\mc B}) (\bs{x})
\cdot \bs n_{\Omega}(\bs{x}) \;=\;
\bb S(\bs{x}) \, (\nabla h^*_{\mc A,\mc B}) (\bs{x})
\cdot \bs n_{\Omega}(\bs{x}) \; \ge\; 0\;.
\end{equation*}
This shows that $\nu_{\mc A,\mc B}$ is a probability measure.

\begin{proposition}
\label{prop3}
For any two bounded, open subsets $\mc A$, $\mc B$ satisfying Assumption S, and for every bounded function $f$ in $C^\alpha (\bb R^d)$, $0<\alpha<1$,
\begin{equation}
\label{11}
\bb E_{\nu_{\mc A,\mc B}} \Big[ \int_0^{H_\mc B} f(X_s) \, ds \Big] \;=\;
\frac {1} {\Cap (\mc A,\,\mc B)} \int_{\bb R^d} h^*_{\mc A,\mc B} (\bs{x}) \, f(\bs{x}) \,
e^{-V(\bs{x})}\, d\bs{x} \;.
\end{equation}
\end{proposition}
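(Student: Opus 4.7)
The plan is to introduce the function $v(\bs{x}) := \bb E_{\bs x}\big[\int_0^{H_\mc B} f(X_s)\, ds\big]$ defined for $\bs x\in \overline{\mc B}^c$, and to combine a Green's identity for the pair $(v, h^*_{\mc A,\mc B})$ on the unbounded domain $\Omega=(\overline{\mc A}\cup\overline{\mc B})^c$ with the divergence theorem on the bounded domain $\mc A$. A preliminary step is to verify, by the representation formula \eqref{23} applied on truncations of $\mc B^c$ together with positive recurrence, that $v$ is the unique classical solution in $C^{2,\alpha}(\overline{\mc B}^c)$ of $\mathcal L v=-f$ with $v|_{\partial \mc B}=0$. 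Since $\partial \mc A$ lies in the interior of $\overline{\mc B}^c$, interior Schauder estimates yield that $v$ is in fact $C^{2,\alpha}$ across $\partial \mc A$ as well.

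The main computation is the Green's identity
\begin{equation*}
\int_\Omega \big( h^*_{\mc A,\mc B}\, \mathcal L v - v\, \mathcal L^* h^*_{\mc A,\mc B}\big)\, d\mu \;=\; \int_{\partial \Omega}\big[h^*_{\mc A,\mc B}\,\bb M\nabla v - v\, \bb M^\dagger\nabla h^*_{\mc A,\mc B}\big]\cdot \bs n_\Omega \, e^{-V}\, d\sigma,
\end{equation*}
proven exactly as in Lemmas~\ref{lem1}--\ref{lem2}. The left-hand side collapses to $-\int_\Omega h^*_{\mc A,\mc B}\, f\, d\mu$ since $\mathcal L^* h^*_{\mc A,\mc B}=0$ in $\Omega$ and $\mathcal L v=-f$. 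On the right-hand side, the contribution from $\partial \mc B$ vanishes ($v$ and $h^*_{\mc A,\mc B}$ are both zero there), and $h^*_{\mc A,\mc B}=1$ on $\partial \mc A$ leaves $\int_{\partial \mc A}\big[\bb M\nabla v - v\, \bb M^\dagger\nabla h^*_{\mc A,\mc B}\big]\cdot \bs n_\Omega\, e^{-V}\, d\sigma$. Next, the divergence theorem applied to $e^{-V}\bb M\nabla v$ on $\mc A$ (where $\mathcal L v=-f$ also holds), together with $\bs n_\Omega=-\bs n_\mc A$ on $\partial \mc A$, gives $\int_{\partial \mc A}\bb M\nabla v\cdot \bs n_\Omega\, e^{-V}\, d\sigma = \int_\mc A f\, d\mu$. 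Substituting and rearranging yields
\begin{equation*}
\int_{\partial \mc A} v\, \bb M^\dagger\nabla h^*_{\mc A,\mc B}\cdot \bs n_\Omega\, e^{-V}\, d\sigma \;=\; \int_\mc A f\, d\mu + \int_\Omega h^*_{\mc A,\mc B}\, f\, d\mu \;=\; \int_{\bb R^d} h^*_{\mc A,\mc B}\, f\, d\mu,
\end{equation*}
where in the last equality one uses the natural extension $h^*_{\mc A,\mc B}\equiv 1$ on $\mc A$, $\equiv 0$ on $\mc B$. Dividing by $\Cap(\mc A,\mc B)$ and using definition \eqref{2-2}, the left-hand side becomes $\int_{\partial \mc A} v\, d\nu_{\mc A,\mc B} = \bb E_{\nu_{\mc A,\mc B}}[\int_0^{H_\mc B} f(X_s)\, ds]$, giving \eqref{11}.

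The main technical obstacle is that Green's identity is applied above on the unbounded domain $\Omega$. Rigorously, it should first be applied on truncations $\Omega\cap \mc B(R)$, and then one lets $R\to\infty$, showing that the spurious boundary term on $\partial \mc B(R)$ vanishes in the limit. This should follow from assumption \eqref{18}, which forces $e^{-V}$ to decay faster than any polynomial at infinity, combined with $\Vert h^*_{\mc A,\mc B}\Vert_\infty \le 1$, at-most-polynomial growth of $v(\bs x)$ in $\Vert \bs x\Vert$ (via positive recurrence and \eqref{20}), and interior Schauder bounds on $\nabla v$ and $\nabla h^*_{\mc A,\mc B}$.
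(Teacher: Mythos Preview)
Your argument is correct and is essentially the same as the paper's proof: the paper also introduces $u=v$ as the solution of $\mathcal L u=-f$ on $\overline{\mc B}^c$ with zero boundary data, and then performs the same two integrations by parts on $\Omega$ together with the divergence theorem on $\mc A$, just organized as two successive applications of the divergence theorem rather than packaged as a single Green's identity. Your explicit acknowledgment of the boundary term at infinity (handled implicitly in the paper via the $W^{1,2}(\Omega)$ regularity of $u$ and $h^*_{\mc A,\mc B}$ with respect to $\mu$) is a welcome point of care.
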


\begin{proof}
Fix a bounded function $f$ in $C^\alpha (\bb R^d)$, and let $\Omega_\mc B
= \bb R^d \setminus \overline{\mc B}$. Denote by $u$ the unique solution
in $W^{1,2}(\Omega_\mc B) \cap C^{2,\alpha}(\Omega_\mc B)$ of the elliptic
equation \eqref{22} with $\Omega = \Omega_\mc B$, $\mf g = f$, $\mf b=0$.
In view of the representation \eqref{23} of $u$ and by definition of
the equilibrium measure $\nu_{A,B}$, the left-hand side of \eqref{11} equals
\begin{equation*}
\frac {1}{\Cap (\mc A,\,\mc B)} \int_{\partial \mc A} u(\bs{x})\, [\bb M^\dagger(\bs{x})\,
\nabla h^*_{\mc A,\mc B} (\bs{x}) ]
\cdot \bs n_{\Omega} (\bs{x}) \, e^{-V(\bs{x})} \, \sigma (d\bs{x}) \;.
\end{equation*}
The integral of the same expression at $\partial \mc B$ vanishes due to
the presence of the function $u$. Hence, by the divergence theorem,
this expression is equal to
\begin{equation*}
\frac {1} {\Cap (\mc A,\,\mc B)} \int_{\Omega} \nabla \cdot
\Big\{ [\bb M^\dagger(\bs{x})\, \nabla h^*_{\mc A,\mc B} (\bs{x}) ] \, e^{-V(\bs{x})} \, u(\bs{x})
\Big\} \, d\bs{x}  \;.
\end{equation*}
Since the equilibrium potential $h^*_{\mc A,\mc B}$ is harmonic on $\Omega$,
the previous equation is equal to
\begin{align*}
\frac {1} {\Cap (\mc A,\,\mc B)} \int_{\Omega}   \, e^{-V(\bs{x})} \,
\nabla h^*_{\mc A,\mc B} (\bs{x}) \cdot \, \bb M(\bs{x}) \, (\nabla u)(\bs{x}) \, d\bs{x} \;.
\end{align*}
By the divergence theorem and since the equilibrium potential
$h^*_{\mc A,\mc B}$ is equal to $1$ on $\partial \mc A$ and $0$ on $\partial \mc B$,
this expression becomes
\begin{equation}
\label{c010}
\begin{aligned}
& \frac {1} {\Cap (\mc A,\,\mc B)} \int_{\partial \mc A}
e^{-V(\bs{x})} \, \bb M(\bs{x}) \, (\nabla u)(\bs{x})  \cdot \bs
n_{\Omega}(\bs{x}) \, \sigma(d\bs{x}) \\
&\quad - \; \frac {1} {\Cap (\mc A,\,\mc B)} \int_{\Omega}
h^*_{\mc A,\mc B} (\bs{x})  \nabla \cdot \Big\{ e^{-V(\bs{x})} \, \bb M(\bs{x}) \,
(\nabla u)(\bs{x}) \Big\} \, d\bs{x} \;.
\end{aligned}
\end{equation}
As $\bs n_{\Omega} = - \bs n_{\mc A}$ on $\partial \mc A$ and $\mc L u = -f$
on $\mc A$, the first term of \eqref{c010} is equal to
\begin{equation*}
\frac {-1} {\Cap (\mc A,\,\mc B)} \int_{\mc A}
\nabla \cdot \big\{ e^{-V(\bs{x})} \, \bb M(\bs{x}) \, (\nabla u)(\bs{x}) \big\} \,
d\bs{x} \;=\; \frac {1} {\Cap (\mc A,\,\mc B)} \int_{\mc A} f(\bs{x}) \, e^{-V(\bs{x})}\, d\bs{x} \;.
\end{equation*}
Since the equilibrium potential $h^*_{\mc A,\mc B}$ is equal to $1$ on $\mc A$, we
may insert it in the integral.
On the other hand, by definition of $u$,
$$\nabla \cdot \{ e^{-V(\bs{x})} \,
\bb M(\bs{x}) \, (\nabla u)(\bs{x}) \} = \exp\{-V(\bs{x})\} (\mathcal L u)(\bs{x}) = -
\exp\{-V(\bs{x})\} f(\bs{x})\;.$$
Thus, the second term of \eqref{c010} is equal to
\begin{equation*}
\frac {1} {\Cap (\mc A,\,\mc B)} \int_{\Omega} h^*_{\mc A,\mc B} (\bs{x}) \, f(\bs{x}) \,
e^{-V(\bs{x})}\, d\bs{x} \;.
\end{equation*}
Since the equilibrium potential $h^*_{\mc A,\mc B}$ vanishes at $\mc B$, we may
extend this integral to the set $\mc B$, which completes the proof of
the proposition.
\end{proof}

Proposition \ref{prop3} can be restated as follows. Let $u$ be the
solution of \eqref{22} with $\mf g = - f$, $\mf b=0$, $\Omega =
\overline{\mc B}^c$.  Then,
\begin{equation}
\label{2-1}
\int_{\partial \mc A} u(\bs{x})\, \nu_{\mc A,\mc B}(d\bs{x}) \;=\;
\frac {1} {\Cap (\mc A,\,\mc B)}
\int_{\bb R^d} h^*_{\mc A,\mc B} (\bs{x}) \, f(\bs{x}) \, e^{-V(\bs{x})}\, d\bs{x} \;.
\end{equation}

Equation \eqref{11} has been derived previously in the
  context of non-reversible Markov chains in \cite[Proposition~A.2]{BL2}, and for non-reversible diffusions in the case $f=1$ in \cite[Proposition~1.8]{ln}.

\section{Dirichlet's and Thomson's principles on a compact manifold}
\label{sec7}
\subsection{Notation}
Let $\mf M$ be a compact manifold without boundary, equipped with a smooth Riemannian tensor $g=a^{-1}$. Since $\mf M$ is compact, $a$ satisfies the ellipticity condition \eqref{19}. Denote by $\nabla$ the gradient,
by $\nabla \cdot$ the divergence and by $\Delta=\nabla\cdot \nabla$ the Laplace-Beltrami operators on $\mf M$. Since in this section it is convenient to keep an intrinsic notation , the tangent and cotangent norms induced by the tensor $g$ on a tangent or cotangent vector $\eta$ are denoted by $|\eta|$, and the tangent-cotangent duality is simply denoted by $\cdot$. Thus, in this section,  $|\nabla V|^2$ stands for what has been denoted by $a \nabla V \cdot \nabla V$ in the previous section.
Recall from \cite[Definition 3.35, page 143]{afp} that a set $A\subset
\mf M$ has finite perimeter if $\chi_A$, the indicator function of $A$, has bounded variation.
 In such a case the notation $\bs
n_A:=\nabla \, \chi_A$ is used, so that $\bs n_A$ represents the
inward pointing unit normal field of the boundary $\partial A$. The
volume measure on $\mf M$ is denoted by $dx$. If $\mu(dx) =\varrho
(x)\, dx$ for some continuous function $\varrho\colon\mf M\to \bb R$ and if
$A$ has finite perimeter, with some abuse of notation, we denote by
$\mu_A$ the measure $\varrho(x) \, \sigma(dx)$ on $\partial A$. Hence,
for every smooth tangent vector field $\varphi$,
\begin{equation*}
\oint \varphi (x) \cdot \bs n (x) \, \mu_A(dx)
\;=\; \oint \varphi  (x) \cdot \bs n (x) \, \varrho(x)
\, \sigma (d x)\;.
\end{equation*}
\subsection{Generator}
Denote by $\tilde {\mc L}$ the generator given by
\begin{equation*}
\tilde{\mc L}f\;=\; \Delta f \;+\; b \cdot \nabla f \qquad
f\in C^2(\mf M)\;,
\end{equation*}
where $b\colon \mf M \to \bb R^d$ is a smooth vector field. Since $\mf M$
is compact and the coefficients are smooth, there exists a unique Borel probability measure such that $\mu \tilde{\mc L}=0$. Moreover, $\mu(dx) =e^{-V(x)}dx$,
where $V$ is the unique solution to the Hamilton-Jacobi equation
\begin{equation*}
|\nabla V|^2+b\cdot \nabla V=\Delta V+\nabla \cdot b\; .
\end{equation*}
such that $\mu(\mf M)=1$. Since $a$ satisfies condition \eqref{19} and $\varrho (x) = e^{-V(x)}$
is the solution of a linear second-order elliptic equation, by
\cite[Theorem 8.3]{gt}, $V$ is smooth.
The generator $\tilde{\mc L}$ extends to a closed, unbounded operator $\mc L$ on $L_2(\mu)$. It is easy to check that $\mc L$ writes uniquely in the form
\begin{equation}
\label{e:gen2}
\mc Lf= e^V \nabla \cdot \left( e^{-V} \nabla f\right)+ c\cdot \nabla f
\end{equation}
for a suitable vector field $c$, which is also smooth and satisfies $\nabla \cdot (e^{-V}c)=0$. This in turn implies that for any
$A\subset \mf M$, and smooth functions $f$, $g\colon \mf M\to \bb R$,
\begin{equation}
\label{e:cintb}
\begin{aligned}
& \oint c \cdot \bs n \, d\mu_A = \int_{\mf M\setminus A} e^V \nabla
\cdot (e^{-V} c)\, d\mu =0\;, \\
& \quad \int f \, c\cdot \nabla g \, d\mu
\;=\; -\, \int g\,  c\cdot \nabla f \, d\mu\;.
\end{aligned}
\end{equation}
Namely, the operator $c\cdot \nabla$ is skew-adjoint in $L_2(\mu)$.
Denote by $H^1=H^1(\mf M)$ the Hilbert space of weakly differentiable
functions endowed with the scalar product $\<f,g\>_1$ given by
\begin{equation*}
\< f , g\>_1 \;=\; \int  f \, g \, d \mu \;+\;
\int \nabla f \cdot \nabla g \, d \mu \;.
\end{equation*}
Functions in $H^1$ admit a weak trace at the boundary of sets of finite perimeter, see \cite{cf}. By \cite[Theorem~2.1-2.2]{cf}, the usual integration by parts
formulae hold w.r.t.\ to this trace.
Let $A$ and $B$ be disjoint closed subsets of $\mf M$ with a finite
perimeter and let $f,\,g\in H^1$. If $f$ and $g$ are such that
$f_{\restriction_{\partial A}}$, $g_{\restriction_{\partial A}}$,
$f_{\restriction_{\partial B}}$ and $g_{\restriction_{\partial B}}$
are (possibly different) constant, then
\begin{equation}
\label{e:cintg}
\int_{\mf M\setminus {A\cup B}} f \, c\cdot \nabla g \, d\mu \,=\,
-\int_{\mf M\setminus {A\cup B}} g \,c\cdot \nabla f \, d\mu
\end{equation}
since all the boundary terms in the integration by parts vanish in
view of \eqref{e:cintb}. In particular $c \cdot \nabla$ is
skew-adjoint on $L_2(\mu_{\restriction_{A\cup B}})$ when restricted to
$H^1$ functions that take a constant value at the boundary.
On the other hand $e^V \nabla \cdot (e^{-V} \nabla)$ is self-adjoint
in $L_2(\mu)$, so that \eqref{e:gen2} provides a decomposition $\mc
L=\mc L_s+\mc L_a$ in the symmetric and skew part of $\mc L$ in
$L_2(\mu)$. The adjoint $\mc L^*$ of $\mc L$ is then defined as
\begin{equation}
\label{e:adjoint}
\mc L^* f= \mc L_sf-\mc L_a f
=e^V \nabla \cdot \left( e^{-V} \nabla f\right)- c\cdot \nabla f\;.
\end{equation}
\subsection{Stochastic processes}
$\mc L$ and $\mc L^*$ are the generators of a Feller process on
$\mf M$ with invariant measure $\mu$. We denote by $(\bb P_x)$ and
$(\bb P_x^*)$ the induced probability measures on
$C([0,+\infty);\mf M)$.
If $A$ is closed, let $H_A$ be the hitting time of $A$ and for a given
$f\in L_2(\mu)$ consider the function
\begin{equation}
\label{e:ell}
u(x):=\bb E_x\Big[\int_0^{H_A} f(X_t)\,dt\Big]\;.
\end{equation}
If $A$ is the closure of an open set with smooth boundary then $u$ is
the unique $\mc H^1(\mf M\setminus A)$ solution to
\begin{equation}
\label{e:elleq}
\begin{cases}
\mc Lu=f & \text{on $\mf M\setminus A$,} \\
u=0 & \text{on $\partial A$.}
\end{cases}
\end{equation}
Similarly, if $A$ and $B$ are closed, disjoint sets that are the
closure of open sets with smooth boundary, define
\begin{equation*}
h_{A,B}(x)=\bb P_x(H_A<H_B) \;, \quad
h_{A,B}^*(x)=\bb P^*_x(H_A<H_B)\;.
\end{equation*}
Then $h$ and $h^*$ are the unique $\mc H^1$ solutions to
\begin{equation}
\label{e:heq}
\begin{cases}
 \mc Lh=0 & \text{on $\mf M\setminus A\cup B$,}
\\
h=1 & \text{on $A$,}
\\
h=0 & \text{on $B$,}
\end{cases}
\qquad
\begin{cases}
\mc L^* h^*=0 & \text{on $\mf M\setminus A\cup B$,}
\\
h^*=1 & \text{on $A$,}
\\
h^*=0 & \text{on $B$.}
\end{cases}
\end{equation}
\subsection{Capacity}
For $\mc L$, $A$ and $B$ as above, the \emph{capacities} $\Cap(A,B)$,
$\Cap^*(A,B)$ are defined as
\begin{equation}
\label{e:cap}
\Cap(A,B) := \int |\nabla h_{A,B}|^2 d\mu \;, \quad
\Cap^*(A,B) := \int |\nabla h^*_{A,B}|^2 d\mu \;.
\end{equation}
Hereafter we \emph{fix} the sets $A$ and $B$ and denote $h\equiv
h_{A,B}$ and $h^*\equiv h^*_{A,B}$.
\begin{lemma}
\label{e:capeq}
We have that
\begin{equation*}
\Cap(A,B) \;= \; \Cap(B,A) \;=\;
\oint_{\partial A} (\nabla h)\cdot \bs n \,d\mu_A
=\oint_{\partial A} \left(\nabla h + h\, c\right)\cdot \bs n
\,d\mu_A \;.
\end{equation*}
Moreover,
\begin{equation}
\label{ma1}
\Cap(A,B) \;= \;
\int \big\{ \nabla h \cdot \nabla h^*- h^* \,c\cdot \nabla
h\big\} \, d\mu
\, =\, \int \big\{ \nabla h \cdot \nabla h^*+ h \,c\cdot \nabla h^*
\big\} \, d\mu\;,
\end{equation}
and $\Cap(A,B)=\Cap^*(A,B)$.
\end{lemma}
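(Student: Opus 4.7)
The proof splits into several integration-by-parts computations, all exploiting (i) that $h$ and $h^*$ are constant on $A$ and $B$, (ii) the harmonic equations \eqref{e:heq}, and (iii) the skew-adjointness relations \eqref{e:cintb}, \eqref{e:cintg}.

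First I would dispose of the symmetry $\Cap(A,B)=\Cap(B,A)$ by the trivial observation that $1-h_{A,B}$ solves the boundary value problem defining $h_{B,A}$ (the roles of the Dirichlet data $1$ and $0$ are swapped), so $h_{B,A}=1-h_{A,B}$ and therefore $|\nabla h_{B,A}|^2=|\nabla h_{A,B}|^2$ pointwise.

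Next, for the boundary integral representation, I would start from $\Cap(A,B)=\int_{\mf M\setminus(A\cup B)}|\nabla h|^{2}\,d\mu$ (the gradient vanishes on $A\cup B$, where $h$ is constant), integrate by parts against $h$, and use that the outward normal to $\mf M\setminus(A\cup B)$ on $\partial A$ is $\bs n_A$ and on $\partial B$ is $\bs n_B$. Since $h=1$ on $\partial A$, $h=0$ on $\partial B$, only the $\partial A$ boundary term survives, and one obtains
\begin{equation*}
\Cap(A,B)\;=\;-\int_{\mf M\setminus(A\cup B)}h\,e^{V}\nabla\!\cdot\!(e^{-V}\nabla h)\,d\mu\;+\;\oint_{\partial A}(\nabla h)\cdot\bs n\,d\mu_A\,.
\end{equation*}
From $\mc L h=0$ and \eqref{e:gen2} we get $e^{V}\nabla\!\cdot\!(e^{-V}\nabla h)=-c\cdot\nabla h$ on $\mf M\setminus(A\cup B)$, hence the bulk term equals $\int h\,c\cdot\nabla h\,d\mu$, which vanishes by \eqref{e:cintg} applied with $f=g=h$. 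This yields $\Cap(A,B)=\oint_{\partial A}(\nabla h)\cdot\bs n\,d\mu_A$. To add the $hc$ term at no cost, observe that $h\equiv 1$ on $\partial A$, so $\oint_{\partial A}h\,c\cdot\bs n\,d\mu_A=\oint_{\partial A}c\cdot\bs n\,d\mu_A=0$ by \eqref{e:cintb}.

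For the two expressions in \eqref{ma1}, I would compute $\int\nabla h\cdot\nabla h^{*}\,d\mu$ by integrating by parts in the two possible ways on $\mf M\setminus(A\cup B)$. Moving the derivatives onto $h$ produces the bulk term $-\int h^{*}e^{V}\nabla\!\cdot\!(e^{-V}\nabla h)\,d\mu=\int h^{*}\,c\cdot\nabla h\,d\mu$ (again by $\mc L h=0$) and, since $h^{*}=1$ on $\partial A$ and $h^{*}=0$ on $\partial B$, the boundary term $\oint_{\partial A}(\nabla h)\cdot\bs n\,d\mu_A=\Cap(A,B)$; rearranging gives the first identity in \eqref{ma1}. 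Moving the derivatives onto $h^{*}$ instead, and using $\mc L^{*}h^{*}=0$ together with \eqref{e:adjoint} (so that now the sign of $c$ is flipped), yields the second identity. The equality of the two middle expressions is also transparent from \eqref{e:cintg}, which gives $\int h^{*}\,c\cdot\nabla h\,d\mu=-\int h\,c\cdot\nabla h^{*}\,d\mu$.

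Finally, for $\Cap(A,B)=\Cap^{*}(A,B)$, I would repeat the first step of the argument with $\mc L^{*}$ in place of $\mc L$ (legitimate because $\mc L^{*}$ has the same symmetric part and the opposite drift, so \eqref{e:cintb}--\eqref{e:cintg} still apply) to obtain $\Cap^{*}(A,B)=\oint_{\partial A}(\nabla h^{*})\cdot\bs n\,d\mu_A$. Then starting from the second expression in \eqref{ma1}, integration by parts on $\int\nabla h\cdot\nabla h^{*}\,d\mu$ moving the derivatives onto $h$, together with $h=1$ on $\partial A$, $h=0$ on $\partial B$, and $\mc L h=0$, shows this quantity equals exactly $\oint_{\partial A}(\nabla h^{*})\cdot\bs n\,d\mu_A=\Cap^{*}(A,B)$. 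The only delicate point in the whole argument is bookkeeping signs and normal conventions in the integration by parts on $\mf M\setminus(A\cup B)$, since there are two distinct boundary components with opposite orientations relative to the two sets; the weak trace theorem of \cite{cf} justifies all boundary evaluations for our $H^{1}$ harmonic functions.
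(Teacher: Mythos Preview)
Your proof is correct and follows essentially the same approach as the paper's: symmetry via $h_{B,A}=1-h_{A,B}$, the boundary representation via integration by parts of $\int|\nabla h|^2\,d\mu$ using $\mc Lh=0$ and the skew-adjointness relations, and the identities in \eqref{ma1} via integrating $\int\nabla h\cdot\nabla h^*\,d\mu$ by parts in both directions. One small slip in your final paragraph: to land on $\oint_{\partial A}(\nabla h^*)\cdot\bs n\,d\mu_A$ you must move the derivatives onto $h^*$ (keeping $h$ undifferentiated, so that $h=1$ on $\partial A$, $h=0$ on $\partial B$ kills the $\partial B$ term) and invoke $\mc L^*h^*=0$, not $\mc Lh=0$; with that correction your computation is exactly the paper's ``same computations inverting the roles of $h$ and $h^*$''.
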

\begin{proof}
$\Cap(B,A)=\Cap(A,B)$ since $h_{B,A}=1-h_{A,B}$ as $A\cap
B=\varnothing$. On the other hand, since $h=\mathbf{1}\{\partial A\}$ on $\partial A
\cup \partial B$, by the
explicit form of the generator $\mc  L$ and by an integration by parts,
\begin{equation}
\label{e:capeq1}
\begin{aligned}
\int |\nabla h|^2 d\mu  \;=\;
& -\, \int h\, e^V\nabla \cdot \left(e^{-V} \nabla h\right)\, d\mu
\,+\, \oint h\, \nabla h\cdot\bs n\, d\mu_{A\cup B}
\\
=\; &  \int h\, (c \cdot \nabla h) \, d\mu
\,+\, \oint \nabla h\cdot\bs n  \,  d\mu_{A} \;.
\end{aligned}
\end{equation}
The first term vanishes in view of the second identity of
\eqref{e:cintb}. This proves the second assertion of the lemma.
The third assertion follows from the first equation in \eqref{e:cintb}
and from the fact that $h$ is constant in $\partial A$.
A similar reasoning to the one  in \eqref{e:capeq1} yields
\begin{equation*}
\int \nabla h \cdot \nabla h^* \, d\mu \;=\;
\int h^*\, c\cdot \nabla h \, d\mu\, \;+\;
\oint  \nabla h\cdot\bs n \, d\mu_{A}
\;=\; \int  h^*\,c\cdot \nabla h \, d\mu \, +\, \Cap(A,B)\;,
\end{equation*}
where the last identity follows from the first part of the proof.  The
previous equation is the first identity in \eqref{ma1}. The second
identity in \eqref{ma1} is obtained from \eqref{e:cintb}. The same
computations inverting the roles of $h$ and $h^*$ gives that
\begin{equation*}
\int \nabla h \cdot \nabla h^* \, d\mu \;=\;
 -\, \int  h\,c\cdot \nabla h^* \, d\mu \, +\, \Cap^*(A,B)\;.
\end{equation*}
In particular, $\Cap(A,B)=\Cap^*(A,B)$, which completes the proof of
the lemma.
\end{proof}
Considering $\mc L^*$ in place of $\mc L$ we obtain from the previous
lemma that
\begin{equation}
\label{ma02}
\Cap^*(A,B) \;=\; \Cap^*(B,A) \;=\;
\oint_{\partial A} (\nabla h^* )\cdot \bs n \,d\mu_A
=\oint_{\partial A} \left(\nabla h^* - h^*\, c\right)\cdot \bs n \,d\mu_A
\end{equation}

\subsection{Equilibrium measure}
Fix $A$ and $B$ as above. Define the probability measure $\nu\equiv
\nu_{A,B}$ as the equilirbium measure on $\partial A \cup \partial B$
conditioned to $\partial A$ as
\begin{equation*}
d\nu:=\frac{- 1}{\Cap(A,B)} \nabla h^* \cdot \bs n\, d\mu_A\;.
\end{equation*}
\begin{proposition}
\label{p:harmcap}
For each $f\in L_2(\mu) $ it holds
\begin{equation*}
\bb E_{\nu}\Big[ \int_0^{H_B}f(X_t)\, dt\Big] \;=\;
\frac{1}{\Cap(A,B)}\int h^*\,f\,d\mu\;.
\end{equation*}
\end{proposition}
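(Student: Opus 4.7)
The plan is to adapt the proof of Proposition \ref{prop3} to the manifold setting, using the decomposition $\mc L = \mc L_s + c\cdot\nabla$ from \eqref{e:gen2}--\eqref{e:adjoint} and the skew-adjointness of $c\cdot\nabla$ encoded in \eqref{e:cintb}.

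First, I would introduce the auxiliary function $u(x) := \bb E_x\bigl[\int_0^{H_B} f(X_t)\,dt\bigr]$, which by \eqref{e:ell}--\eqref{e:elleq} is the $H^1(\mf M\setminus B)$ solution of $\mc L u = -f$ on $\mf M\setminus B$ with $u=0$ on $\partial B$. The left-hand side of the proposition equals $\oint_{\partial A} u\, d\nu$, which by definition of $\nu$ is a constant multiple of $\oint_{\partial A} u\,\nabla h^*\cdot\bs n\, d\mu_A$. Because $u$ vanishes on $\partial B$, this integral extends at no cost to the full boundary of $\Omega := \mf M\setminus(A\cup B)$, so the divergence theorem becomes available.

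The core computation is then two successive integrations by parts on $\Omega$. The first, applied to $u\, e^{-V}\nabla h^*$ and using $\mc L^* h^* = 0$ (so that $e^V\nabla\cdot(e^{-V}\nabla h^*) = c\cdot\nabla h^*$), produces $\int_\Omega \nabla u\cdot\nabla h^*\, d\mu + \int_\Omega u\,(c\cdot\nabla h^*)\, d\mu$. The second, applied to $h^*\, e^{-V}\nabla u$ and using $\mc L u = -f$ together with the boundary values $h^*=1$ on $\partial A$, $h^*=0$ on $\partial B$, rewrites $\int_\Omega \nabla u\cdot\nabla h^*\, d\mu$ as $\int_\Omega h^* f\, d\mu + \int_\Omega h^*\,(c\cdot\nabla u)\, d\mu + \oint_{\partial A}\nabla u\cdot\bs n\, d\mu_A$. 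The two resulting drift integrals are then merged by computing $\int_\Omega \nabla\cdot(h^* u c\, e^{-V})\,dx$: the identity $\nabla\cdot(e^{-V}c) = 0$ from \eqref{e:cintb} collapses this to the pure boundary integral $\oint_{\partial A} u\,(c\cdot\bs n)\, d\mu_A$, and both volume drift terms cancel simultaneously.

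Putting the pieces together yields
$$\oint_{\partial A} u\,\nabla h^*\cdot\bs n\, d\mu_A \,=\, \int_\Omega h^* f\, d\mu + \oint_{\partial A}(\nabla u + u c)\cdot\bs n\, d\mu_A.$$
One more application of the divergence theorem, this time on the interior region $A$ (using again that $\mc L u = -f$ holds on $A$ and that $\nabla\cdot(e^{-V}c)=0$), identifies the residual surface integral with $\int_A f\, d\mu = \int_A h^* f\, d\mu$, since $h^*\equiv 1$ on $A$. Because $h^*$ vanishes on $B$, the two volume integrals combine into $\int_{\mf M} h^* f\, d\mu$, and dividing by $\Cap(A,B)$ delivers the claim. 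The main obstacle is orientation bookkeeping (the normal $\bs n$ is inward to $A$, hence outward from $\Omega$) and tracking the drift terms through the integrations by parts so that they cancel at each step; the crucial algebraic input that reduces the non-reversible case to the reversible template is precisely the divergence-freeness of $e^{-V}c$.
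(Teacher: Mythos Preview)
Your proposal is correct and follows the same two-integration-by-parts strategy as the paper's proof. The only difference is organizational: after the first integration by parts the paper enlarges the domain from $\mf M\setminus(A\cup B)$ to $\mf M\setminus B$ (using $\nabla h^*=0$ on $A$) so that the second integration by parts produces no boundary term on $\partial A$ and the two drift integrals cancel directly via \eqref{e:cintg}, whereas you stay on $\Omega$, collect the residual surface term $\oint_{\partial A}(\nabla u+uc)\cdot\bs n\,d\mu_A$, and dispose of it with an extra divergence theorem on the interior of $A$---closer in spirit to the Euclidean argument of Proposition~\ref{prop3}.
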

\begin{proof}
Take $u$ as in \eqref{e:ell} with $A$ changed to $B$. Since $u$
vanishes on $\partial B$,
\begin{equation}
\label{e:aa1}
\begin{aligned}
\oint u\, \nabla h^* \cdot \bs n\, d\mu_A(x)\, \;=
& \;
\int_{\mf M\setminus A\cup B} e^{V} \nabla \cdot
\left(u\, e^{-V} \nabla h^* \right)\, d\mu
\\
= &\;
\int_{\mf M\setminus A\cup B} \big\{ \nabla h^* \cdot \nabla u
+ u \,e^{V} \nabla \cdot \left(e^{-V} \nabla h^* \right)\big\}
\, d\mu
\\
= & \int_{\mf M\setminus A\cup B} \left( \nabla h^* \cdot
  \nabla u
+ u \, c\cdot \nabla h^* \right)\, d\mu \,
\end{aligned}
\end{equation}
where we used the fact that $\mc L^* h^*=0$ in the last equality.
Since $\nabla h^*$ vanishes on $A$, the quantity in \eqref{e:aa1} also
equals
\begin{equation*}
\int_{\mf M\setminus B}
\left( \nabla h^* \cdot \nabla u + u \, c\cdot \nabla h^* \right)
d\mu \, = \, \int_{\mf M\setminus B}  \big\{ -\, h^*\, e^V
\nabla \cdot \left(e^{-V}  \nabla u\right) + u \, c\cdot \nabla h^* \big\}
d\mu \,
\end{equation*}
where in the last equality we used the fact that $h^*=0$ on $\partial
B$, so that boundary terms vanish in the integration by parts.  Since
$u$ satisfies \eqref{e:elleq} we gather
\begin{equation*}
\oint u\, \nabla h^* \cdot \bs n\, d\mu_A
\;=\; \int_{\mf M\setminus B}
\big\{ -\, h^* \, f+h^*\, c \cdot\nabla u + u \, c\cdot \nabla h^* \big\}
\, d\mu \;.
\end{equation*}
However the last two terms sum up to zero since $h$ and $u$ are
constant on $\partial B$ and \eqref{e:cintg} holds (with
$A=\varnothing$). Thus, since $h^* $ vanishes on $B$,
\begin{equation*}
\oint u\, \nabla h^* \cdot \bs n\, d\mu_A
\;=\; - \int_{\mf M} h^* \, f\, d\mu \;.
\end{equation*}
Therefore, by linearity of the expectation and by the previous equation,
\begin{equation*}
\begin{aligned}
\bb E_{\nu} \Big[ \int_0^{H_B}f(X_t)dt\Big]
\;=\; \oint u \, d\nu\;= &\;
\frac{-1}{\Cap(A,B)} \oint u\, \nabla h^* \cdot \bs n\, d\mu_A
\\
=&\; \frac{1}{\Cap(A,B)}\int h^*\,f\,d\mu\;.
\end{aligned}
\end{equation*}
\end{proof}
\subsection{Variational formulae for the capacity}
In view of Proposition~\ref{p:harmcap}, it may be useful to have
variational formulae for the capacity in order to estimate the
expected value of hitting times.
Let $\mc F \equiv \mc F_{A,B}$ be the Hilbert space of
$L_2(\mu_{\restriction_{\mf M\setminus A\cup B}})$ tangent vector
fields on $\mf M\setminus A\cup B$, and let $\langle \cdot,\cdot
\rangle$ be the associated scalar product:
\begin{equation*}
\langle \varphi, \psi \rangle \;:=\;
\int_{\mf M\setminus {A\cup B}} a^{-1} \varphi \cdot \psi\, d\mu
\;.
\end{equation*}
For $\gamma\in \bb R$ let also $\mc F^{(\gamma)}$ be the closure in $\mc
F$ of the space of smooth tangent vector fields $\varphi\in \mc F$
such that
\begin{equation}
\label{e:fc}
\nabla \cdot (e^{-V}\varphi)=0\;,\qquad
\oint \varphi \cdot \bs n\, d\mu_A\,  \;=\; -\, \gamma\;.
\end{equation}
It is a well-known fact that $\mc F^{(\gamma)}$ is the space of tangent
vector fields such that $\nabla \cdot (e^{-V}\varphi) =0$ weakly, and
that such vector fields admit a weak normal trace $\varphi \cdot \bs
n$ such that \eqref{e:fc} holds (cf. \cite[Theorem~2.2]{cf}).
Let also $\mc H_{\alpha,\beta} \equiv \mc H_{\alpha,\beta,A,B}$ be the
space of $H^1$ functions $f$ on $\mf M\setminus A\cup B$ such that
their normal trace at $A$ and $B$ is constant and equal to $\alpha$
and $\beta$ respectively (these traces exist since we assumed $A$ and
$B$ to have finite perimeter). For $f \in \mc H_{\alpha,\beta}$ define
$\Phi_f:=\nabla f-c\,f$.
\begin{lemma}
\label{l:circ}
If $\varphi\in \mc F^{(\gamma)}$ and $f\in \mc H_{\alpha,0}$ then
\begin{equation*}
\langle \Phi_f-\varphi \,,\, \nabla h\rangle
\;=\; \gamma\;+\; \alpha\,\Cap(A,B)\;.
\end{equation*}
\end{lemma}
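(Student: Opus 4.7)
The plan is to split the bilinear form into three natural pieces and evaluate each by a single integration by parts, exploiting the boundary data of $f$ and $h$ together with the defining properties of the classes $\mc F^{(\gamma)}$ and $\mc H_{\alpha,0}$. Writing $\Phi_f = \nabla f - cf$, I would first decompose
\[
\< \Phi_f - \varphi \,,\, \nabla h \> \;=\; \< \nabla f \,,\, \nabla h \> \,-\, \< fc \,,\, \nabla h \> \,-\, \< \varphi \,,\, \nabla h \>\;.
\]

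For the first difference, I would apply Green's identity on $\mf M\setminus(A\cup B)$ to $\< \nabla f, \nabla h\>$. Since $h$ solves $\mc L h = 0$ in the bulk, the identity \eqref{e:gen2} rearranges to $e^V\nabla\cdot(e^{-V}\nabla h) = -\, c\cdot \nabla h$, so the bulk term produced is exactly $\int f\, c\cdot \nabla h\, d\mu$. Using the tangent–cotangent identification $a^{-1}(fc)\cdot \nabla h = f\,(c\cdot \nabla h)$, this coincides with $\< fc, \nabla h\>$, and the two bulk pieces cancel. The boundary term lives on $\partial A\cup \partial B$: because $f\equiv \alpha$ on $\partial A$ and $f\equiv 0$ on $\partial B$, only the $\partial A$ integral survives, and Lemma~\ref{e:capeq} identifies it as $\alpha\, \Cap(A,B)$. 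Hence $\< \Phi_f, \nabla h\> = \alpha\, \Cap(A,B)$.

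For the remaining piece $\< \varphi, \nabla h\>$, I would again use the metric identification to rewrite $a^{-1}\varphi\cdot \nabla h$ as the duality pairing $\varphi\cdot dh$, so that $\< \varphi, \nabla h\> = \int \varphi\cdot \nabla h\; e^{-V}dx$. An integration by parts against the vector field $e^{-V}\varphi$, whose divergence vanishes in the bulk by the first condition of \eqref{e:fc}, kills the interior term. What remains is the boundary integral, which, using $h=1$ on $\partial A$, $h=0$ on $\partial B$, and the flux condition $\oint_{\partial A}\varphi\cdot \bs n\, d\mu_A = -\gamma$ from \eqref{e:fc}, evaluates to $-\gamma$. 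Combining,
\[
\< \Phi_f - \varphi, \nabla h\> \;=\; \alpha\, \Cap(A,B) \,-\, (-\gamma) \;=\; \gamma \,+\, \alpha\, \Cap(A,B)\;.
\]

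The only substantive obstacle is justifying the two integrations by parts under the weak regularity at hand: $f$ is merely $H^1$ and $\varphi$ lies in the $\mc F$-closure of smooth divergence-free fields, so all boundary pairings must be read as weak normal traces. This is handled by the theory of \cite{cf} that the authors already invoke in the very definitions of $\mc F^{(\gamma)}$ and $\mc H_{\alpha,\beta}$; once that trace framework is in place the computation proceeds as sketched, with the skew part of $\mc L$ absorbed into the harmonicity identity $\mc L h = 0$ and the flux information about $\varphi$ absorbed into \eqref{e:fc}.
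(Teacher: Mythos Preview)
Your proof is correct and follows essentially the same route as the paper: integrate by parts in $\langle\nabla f,\nabla h\rangle$ to produce the boundary term $\alpha\oint_{\partial A}\nabla h\cdot\bs n\,d\mu_A=\alpha\,\Cap(A,B)$ and a bulk term that combines with $\langle fc,\nabla h\rangle$ into $-\int f\,\mc Lh\,d\mu=0$, then integrate by parts in $\langle\varphi,\nabla h\rangle$ to use the divergence-free condition and the flux constraint from \eqref{e:fc}. The only cosmetic difference is that you split into three pieces at the outset, whereas the paper carries all terms together before isolating $-f\,\mc Lh$ and the $\varphi$-integral; the content and the appeals to Lemma~\ref{e:capeq}, \eqref{e:fc}, and the trace theory of \cite{cf} are the same.
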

\begin{proof}
By definition of $\Phi_f$,
\begin{equation*}
\langle \Phi_f-\varphi,\nabla h\rangle \;=\;
\int_{\mf M\setminus A\cup B} \left(\nabla f
-f\,c-\varphi\right)\cdot \nabla h\, d\mu \;.
\end{equation*}
Integrating by parts, since $f=\alpha$ on $\partial A$ and $f=0$ on
$\partial B$, the previous term becomes
\begin{equation*}
- \int_{\mf M\setminus A\cup B}  \Big\{ f\,e^{V} \nabla \cdot
\left(e^{-V}\nabla h\right) \,+\, [ f\,c + \varphi] \cdot \nabla h
  \Big\} \, d\mu
\;+\; \alpha \oint \nabla h \cdot \bs n\, d\mu_A \;.
\end{equation*}
By Lemma \ref{e:capeq}, the last integral is the capacity between $A$
and $B$, while the expression involving $f$ is equal to $- f\, \mc L
h$. Since $h$ is $\mc L$-harmonic in $\mf M\setminus A\cup B$, by an
integration by part, the previous equation is equal
\begin{equation*}
\int_{\mf M\setminus A\cup B} h \, e^V \nabla \cdot (e^{-V}\varphi) \, d\mu
\;-\; \oint \, \varphi \cdot \bs n \, d\mu_A
\;+\; \alpha\,\Cap(A,B) \;.
\end{equation*}
By \eqref{e:fc}, this expression is equal to $\gamma +
\alpha\,\Cap(A,B)$, as claimed.
\end{proof}
\begin{proposition}[Dirichlet's principle]
\label{p:dirich}
It holds
\begin{equation}
\label{e:dirich}
\Cap(A,B)\;=\; \inf_{f\in \mc H_{1,0}}\inf_{\varphi \in \mc F^{(0)}}
\langle \Phi_f-\varphi,\Phi_f-\varphi\rangle\;,
\end{equation}
and the infimum is attained for $\bar f=(1/2) (h+h^*)$ and $\bar
\varphi=\Phi_{\bar f}-\nabla h$.
\end{proposition}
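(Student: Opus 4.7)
The proof mirrors the structure of Proposition~\ref{prop1} in the Euclidean setting, relying on Lemma~\ref{l:circ} as its central algebraic identity and on Lemma~\ref{e:capeq} for boundary computations. The plan is split into a lower bound via Cauchy--Schwarz and a verification that the candidate minimizer is admissible and realizes the bound.

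\textbf{Step 1: lower bound.} I first apply Lemma~\ref{l:circ} with $\alpha=1$, $\gamma=0$: for any $f\in \mc H_{1,0}$ and $\varphi\in\mc F^{(0)}$,
\begin{equation*}
\langle \Phi_f-\varphi,\nabla h\rangle \;=\; \Cap(A,B)\;.
\end{equation*}
Using the definition $\Cap(A,B)=\int|\nabla h|^2\,d\mu = \langle \nabla h,\nabla h\rangle$ (note $\nabla h\equiv 0$ on $A\cup B$, so the integral over $\mf M$ and over $\mf M\setminus A\cup B$ agree, and $a^{-1}\nabla h\cdot \nabla h=|\nabla h|^2$ by the intrinsic notation), Cauchy--Schwarz in $\mc F$ gives
\begin{equation*}
\Cap(A,B)^2 \;=\; \langle \Phi_f-\varphi,\nabla h\rangle^2 \;\le\; \langle \Phi_f-\varphi,\Phi_f-\varphi\rangle\,\Cap(A,B)\;,
\end{equation*}
and dividing by $\Cap(A,B)>0$ yields $\langle \Phi_f-\varphi,\Phi_f-\varphi\rangle\ge\Cap(A,B)$.

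\textbf{Step 2: admissibility of the candidate.} Set $\bar f=(1/2)(h+h^*)$ and $\bar \varphi=\Phi_{\bar f}-\nabla h$. That $\bar f\in \mc H_{1,0}$ is immediate from the boundary conditions in \eqref{e:heq}. Then $\Phi_{\bar f}-\bar \varphi=\nabla h$, so the minimum value is $\langle \nabla h,\nabla h\rangle=\Cap(A,B)$, matching the lower bound. The crux is to check $\bar \varphi\in \mc F^{(0)}$, which I expect to be the main obstacle. For the divergence condition, decompose $\mc L=\mc L_s+\mc L_a$ with $\mc L_s=e^V\nabla\cdot(e^{-V}\nabla\cdot)$ and $\mc L_a=c\cdot\nabla$ as in \eqref{e:adjoint}. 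Using $\nabla\cdot(e^{-V}c)=0$ from \eqref{e:cintb},
\begin{equation*}
e^V\nabla\cdot(e^{-V}\bar\varphi) \;=\; \mc L_s\bar f - c\cdot\nabla \bar f - \mc L_s h\;.
\end{equation*}
From $\mc Lh=0$ and $\mc L^*h^*=0$ on $\mf M\setminus A\cup B$, we obtain $\mc L_s h=-c\cdot\nabla h$ and $\mc L_s h^*=c\cdot\nabla h^*$, and a short computation with $\bar f=(h+h^*)/2$ shows that the right-hand side vanishes. For the flux condition, by \eqref{e:cintb} $\oint c\cdot\bs n\, d\mu_A=0$, so
\begin{equation*}
\oint \bar\varphi\cdot \bs n\,d\mu_A \;=\; \oint \nabla \bar f\cdot\bs n\,d\mu_A - \oint \nabla h\cdot\bs n\,d\mu_A\;.
\end{equation*}
By Lemma~\ref{e:capeq} and its analogue \eqref{ma02}, $\oint\nabla h\cdot\bs n\,d\mu_A=\Cap(A,B)$ and $\oint\nabla h^*\cdot\bs n\,d\mu_A=\Cap^*(A,B)$, and the identity $\Cap=\Cap^*$ from Lemma~\ref{e:capeq} yields that the two terms cancel, giving flux $0$ as required.

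\textbf{Anticipated difficulty.} Steps 1 and the flux identity in Step 2 are essentially bookkeeping from Lemmas~\ref{l:circ} and \ref{e:capeq}. The delicate point is the divergence-free check: it requires carefully using the splitting $\mc L=\mc L_s+\mc L_a$, the skew-adjointness of $c\cdot\nabla$ in the weighted space, and the \emph{cross} harmonic relations $\mc L_s h=-c\cdot\nabla h$, $\mc L_s h^*=c\cdot\nabla h^*$ coming from the definitions of $h$ and $h^*$ via the adjoint generator. This is where the non-reversible character of the problem truly enters, and everything else is formally analogous to the classical self-adjoint argument.
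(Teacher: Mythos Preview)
Your proof is correct and follows essentially the same approach as the paper: Lemma~\ref{l:circ} with $\alpha=1$, $\gamma=0$ combined with Cauchy--Schwarz for the lower bound, then verification that $\bar f\in\mc H_{1,0}$ and $\bar\varphi\in\mc F^{(0)}$ using $\mc Lh=0$, $\mc L^*h^*=0$ for the divergence-free condition and $\Cap=\Cap^*$ for the flux. The only cosmetic difference is that the paper writes the divergence check directly as $\nabla\cdot(e^{-V}\bar\varphi)=(1/2)e^{-V}(\mc L^*h^*-\mc Lh)=0$, whereas you route it through the intermediate relations $\mc L_s h=-c\cdot\nabla h$ and $\mc L_s h^*=c\cdot\nabla h^*$; both are equivalent short computations.
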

\begin{proof}
From Lemma~\ref{l:circ} (applied with $\gamma=0$ and $\alpha=1$), for
$f$ and $\varphi$ as in \eqref{e:dirich}, by the Cauchy-Schwarz inequality,
\begin{equation*}
\begin{aligned}
\Cap(A,B)^2 \; & =\; \langle \Phi_f-\varphi,\nabla h\rangle^2
\;\le\; \langle \Phi_f-\varphi,\Phi_f-\varphi\rangle \,
\langle \nabla h,\nabla h\rangle  \\
& = \;
\langle \Phi_f-\varphi,\Phi_f-\varphi\rangle \, \Cap(A,B)
\end{aligned}
\end{equation*}
so that $\Cap(A,B)\le \langle \Phi_f-\varphi,\Phi_f-\varphi\rangle$
for every $f$ and $\varphi$ as in \eqref{e:dirich}.
Since $\Cap(A,B)=\langle \Phi_{\bar f}-\bar \varphi,\Phi_{\bar
  f}-\bar\varphi\rangle$, to complete the proof of the proposition,
one only needs to check that $\bar f\in \mc H_{1,0}$, and $\bar
\varphi\in \mc F^{(0)}$. It is easy to check the first condition, while
the second one follows from the identities
\begin{gather*}
\nabla \cdot (e^{-V} \bar \varphi) \;=\;
(1/2) \, e^{-V}\, \left( \mc L^* h^* -\mc L h\right)\;=\;0\;, \\
\oint \bar \varphi \cdot \bs n\, d\mu_A \;=\;
\frac 12 \,  \oint (\nabla h^* - h^* c) \cdot \bs n\, d\mu_A
\;-\; \frac 12 \,
\oint (\nabla h + h c) \cdot \bs n\, d\mu_A \;.
\end{gather*}
By Lemma \ref{e:capeq} and \eqref{ma02}, the previous expression is
equal to $(1/2) \{\Cap^*(A,B)-\Cap(A,B)\}= 0$, which completes the
proof of the proposition.
\end{proof}
\begin{proposition}[Thomson principle]
\label{p:thomp}
It holds
\begin{equation}
\label{e:thomp}
\Cap(A,B)=\sup_{f\in \mc H_{0,0}}\sup_{\varphi \in \mc F^{(1)}} \,
\frac{1}{\langle \Phi_f-\varphi,\Phi_f-\varphi\rangle}\;\cdot
\end{equation}
Moreover, the supremum is attained at $\bar f=(h-h^*)/2\, \Cap(A,B)$ and
$\bar \varphi=\Phi_{\bar f}- \nabla h/\Cap(A,B)$.
\end{proposition}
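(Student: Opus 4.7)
The plan is to follow the pattern of Proposition~\ref{prop2} in the Euclidean setting and of the Dirichlet principle on the manifold, Proposition~\ref{p:dirich}. The key ingredient is Lemma~\ref{l:circ}: applying it with $\gamma=1$ and $\alpha=0$, for every $f\in \mc H_{0,0}$ and $\varphi\in \mc F^{(1)}$ one obtains
\begin{equation*}
\langle \Phi_f-\varphi\,,\,\nabla h\rangle \;=\; 1\;.
\end{equation*}
Combining this with the Cauchy--Schwarz inequality and with $\langle \nabla h,\nabla h\rangle=\Cap(A,B)$ yields $1\le \langle \Phi_f-\varphi,\Phi_f-\varphi\rangle\,\Cap(A,B)$, which is the direction $\le$ in \eqref{e:thomp}.

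To finish the proof, I would verify that the candidate extremizers saturate this bound. Membership $\bar f\in \mc H_{0,0}$ is immediate, since both $h$ and $h^*$ equal $1$ on $\partial A$ and $0$ on $\partial B$, so $\bar f$ vanishes on $\partial A\cup \partial B$. For $\bar \varphi=\Phi_{\bar f}-\nabla h/\Cap(A,B)$, a direct computation based on the decomposition $\mc L=\mc L_s+\mc L_a$ of \eqref{e:gen2}, on $\nabla\cdot(e^{-V}c)=0$, and on $\mc L h=\mc L^* h^*=0$ in $\mf M\setminus A\cup B$, gives
\begin{equation*}
e^V\nabla\cdot(e^{-V}\bar\varphi)\;=\;\frac{-1}{2\Cap(A,B)}\bigl(\mc L h+\mc L^* h^*\bigr)\;=\;0\;,
\end{equation*}
so the divergence-free condition in \eqref{e:fc} holds.

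The main subtlety, as in Proposition~\ref{p:dirich}, is the normalisation of the boundary flux. Since $\bar f=0$ on $\partial A$, the contribution of $-c\bar f$ in $\Phi_{\bar f}$ drops out, and
\begin{equation*}
\oint \bar\varphi\cdot\bs n\,d\mu_A \;=\; \frac{1}{2\Cap(A,B)}\oint_{\partial A}(\nabla h-\nabla h^*)\cdot\bs n\,d\mu_A \;-\; \frac{1}{\Cap(A,B)}\oint_{\partial A} \nabla h\cdot\bs n\,d\mu_A\;.
\end{equation*}
By Lemma~\ref{e:capeq} and \eqref{ma02}, together with $\Cap(A,B)=\Cap^*(A,B)$, this equals $-1$, so $\bar\varphi\in \mc F^{(1)}$. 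Finally, by construction $\Phi_{\bar f}-\bar\varphi=\nabla h/\Cap(A,B)$, whence
\begin{equation*}
\langle \Phi_{\bar f}-\bar\varphi\,,\,\Phi_{\bar f}-\bar\varphi\rangle \;=\; \Cap(A,B)^{-2}\langle\nabla h,\nabla h\rangle \;=\; \frac{1}{\Cap(A,B)}\;,
\end{equation*}
which attains the upper bound and completes the proof.
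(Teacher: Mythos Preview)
Your proof is correct and follows essentially the same approach as the paper's: Lemma~\ref{l:circ} with $\alpha=0,\gamma=1$ plus Cauchy--Schwarz for the inequality, then verification that the stated $\bar f,\bar\varphi$ are admissible and saturate the bound. The only cosmetic difference is in the boundary-flux computation: you drop the $c\bar f$ term at once (since $\bar f=0$ on $\partial A$) and then invoke $\oint\nabla h\cdot\bs n\,d\mu_A=\Cap(A,B)$ and $\oint\nabla h^*\cdot\bs n\,d\mu_A=\Cap^*(A,B)$ separately, whereas the paper rearranges $\bar\varphi$ into the combinations $(\nabla h+hc)$ and $(\nabla h^*-h^*c)$ before applying Lemma~\ref{e:capeq} and \eqref{ma02}.
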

\begin{proof}
By Lemma~\ref{l:circ} (applied with $\alpha=0$ and $\gamma=1$) and by
the Cauchy-Schwarz inequality, for $f$ and $\varphi$ as in \eqref{e:thomp} we
have that
\begin{equation*}
1\,=\, \langle \Phi_f-\varphi,\nabla h\rangle^2
\le \langle \Phi_f-\varphi,\Phi_f-\varphi\rangle \,
\langle \nabla h, \nabla h\rangle
\,=\, \langle \Phi_f-\varphi,\Phi_f-\varphi\rangle \,
\Cap(A,B)\, .
\end{equation*}
Since $\langle \Phi_{\bar f}-\bar \varphi,\Phi_{\bar f}-\bar
\varphi\rangle = 1/\Cap(A,B)$, one only need to check that $\bar f\in
\mc H_{0,0}$, and $\bar \varphi \in \mc F^{(1)}$. It is easy to verify the
first condition, while the second follows from
\begin{gather*}
\nabla \cdot (e^{-V} \bar \varphi)
=\frac{-1}{2\, \Cap(A,B)} \, e^{-V} \left( \mc L^* h^* +\mc L h\right)
\;=\;0  \\
\oint \bar \varphi \cdot \bs n\, d\mu_A \;=\;
\frac{-1}{2\, \Cap(A,B)} \, \Big\{
\oint (\nabla h^* - h^* c) \cdot \bs n\, d\mu_A
\;+\; \oint (\nabla h + h c) \cdot \bs n\, d\mu_A \Big\}\;.
\end{gather*}
By Lemma \ref{e:capeq} and \eqref{ma02}, the expression inside braces
is equal to $\Cap^*(A,B)+\Cap(A,B)= 2 \, \Cap(A,B)$, so that $\bar
\varphi \in \mc F^{(1)}$. This completes the proof of the proposition.
\end{proof}

\section{Diffusions in a double-well potential field}
\label{sec3}

In this section, we state the Eyring-Kramers formula for a
non-reversible diffusion in a double-well potential field.

\noindent{\bf Potential field.} Consider a potential $U: \bb R^d
\to \bb R$.  Denote by $H_{\bs x, \bs y}$ the height of the saddle
points between $\bs x$ and $\bs y\in\bb R^d$:
\begin{equation}
\label{2-5}
H_{\bs x, \bs y} \;=\; \inf_\gamma H(\gamma)
\;:=\; \inf_\gamma \, \sup_{\bs z \in \gamma} \, U(\bs z)\;,
\end{equation}
where the infimum is carried over the set $\Gamma_{\bs x, \bs y}$ of
all continuous paths $\gamma :[0,1]\to \bb R^d$ such that $\gamma(0) =
\bs x$, $\gamma(1) = \bs y$. Let $G_{\bs x, \bs y}$ be the smallest
subset of $\{\bs z\in\bb R^d : U(\bs z) = H_{\bs x, \bs y}\}$ with the
property that any path $\gamma \in \Gamma_{\bs x, \bs y}$ such that
$H(\gamma) = H_{\bs x, \bs y}$ contains a point in $G_{\bs x, \bs y}$.
The set $G_{\bs x, \bs y}$ is called the set of gates between $\bs x$
and $\bs y$. In addition, we assume that the potential function $U$ is such that
\begin{itemize}
\item[(P1)] $U\in C^{3}(\bb R^d)$ and
  $\lim_{n\rightarrow\infty} \inf_{\bs x : \Vert \bs x\Vert \ge n} U(\bs{x})=\infty$.

\item[(P2)] The function $U$ has finitely many critical points.  Only
  two of them, denoted by $\bs{m}_{1}$ and $\bs{m}_{2}$, are local
  minima.  The Hessian of $U$ at each of these minima has $d$ strictly
  positive eigenvalues.

\item[(P3)] The set of gates between $\bs{m}_{1}$ and $\bs{m}_{2}$ is
  formed by $\ell \ge1$ saddle points, denoted by $\bs{\sigma}_{1},
  \dots, \bs{\sigma}_{\ell}$. The Hessian of $U$ at each saddle point
  $\bs \sigma_i$ has exactly one strictly negative eigenvalue and
  $(d-1)$ strictly positive eigenvalues.

\item[(P4)] The function $U$ satisfies
\begin{equation}
\label{tight1}
\lim_{\Vert\bs{x}\Vert\rightarrow\infty}
\frac{\bs{x}}{\Vert\bs{x}\Vert}\cdot\nabla U(\bs{x})
\;=\;\lim_{\Vert\bs{x}\Vert\rightarrow\infty} \Big\{\Vert\nabla U(\bs{x})\Vert-
2\Delta U(\bs{x})\Big\} \;=\; \infty\;,
\end{equation}
and
\begin{equation*}
Z_{\epsilon}\;:=\;\int_{\bb R^d}\exp\{-U(\bs{x})/\epsilon\}d\bs{x}\;<\;\infty
\end{equation*}
for all $\epsilon>0$.
\end{itemize}

It is not difficult to show that the conditions \eqref{tight1} imply
that, for all $a\in \bb R$,
\begin{equation}
\label{tight2}
\int_{\bs x:U(\bs x)\;\ge\; a}e^{-U(\bs x)/\epsilon} \, d\bs x \;\le\; C(a)\, e^{-a/\epsilon}
\end{equation}
where the constant $C(a)$ is uniform in $\epsilon\in(0,\,1]$.

\smallskip\noindent{\bf Diffusion model.} Let $\bb{M}$ be a $d\times
d$ (generally non-symmetric) positive-definite matrix:
$\bs{v}\cdot\bb{M}\bs{v}>0$ for all $\bs{v}\neq\bs{0}$. Denote by
$\{X_{t}^{\epsilon}:t\in[0,\infty)\}$, $\epsilon>0$, the diffusion
process associated to the generator $\mc{L}_{\epsilon}$ given by
\begin{equation*}
(\mc{L}_{\epsilon}f) (\bs{x})\;=\;-\nabla U(\bs{x}) \cdot \bb{M}
(\nabla f) (\bs{x}) \,+\,\epsilon\sum_{1\le i,j\le d}
\bb{M}_{ij} (\partial^2_{x_{i},x_{j}} f)(\bs{x})\;.
\end{equation*}
Note that we can rewrite the generator $\mc{L}_{\epsilon}$ as
\begin{equation*}
(\mc{L}_{\epsilon}f) (\bs{x})\;=\;\epsilon\,
e^{U(\bs{x})/\epsilon} \, \nabla\,\cdot\,
\left[e^{-U(\bs{x})/\epsilon} \, \bb{M} (\nabla f)(\bs{x})\right]\;.
\end{equation*}
Thus, this generator is a special form of \eqref{gen1} that we
investigated in the first part of the paper. The additional factor
$\epsilon$ can be regarded merely as the time rescaling of the
process. The probability measure
\begin{equation*}
\mu_{\epsilon}(d\bs{x})\;:=\;Z_{\epsilon}^{-1}\exp\{-U(\bs{x})/\epsilon\}\,
d\bs{x}
\end{equation*}
is the stationary state of the process $X_{t}^{\epsilon}$.

The process $X_{t}^{\epsilon}$ can also be written as the solution of
a stochastic differential equation. As in Section \ref{sec1}, denote
by $\bb{K}$ the symmetric, positive-definite square root of the
symmetric matrix $\bb{S}=(\bb{M}+\bb{M}^{\dagger})/2$: $\bb{S}=\bb K
\bb K$. It is easy to check that $X_{t}^{\epsilon}$ is the solution of
the stochastic differential equation \eqref{1-3}.

Let $\mc A$, $\mc B\subset\bb R^d$ be two open subsets of $\bb R^d$
satisfying the assumptions S, and let $\Omega = (\overline{\mc A}\cup
\overline{\mc B})^c$.  In the present context, the capacity, defined in
\eqref{06}, is given by
\begin{equation}
\label{7-16}
\Cap (\mc A, \mc B) \;=\; \frac{\epsilon}{Z_\epsilon}
\int_{\partial \mc A} \big[ \bb M(\bs x) \, \nabla
h_{\mc A, \mc B}(\bs x) \big] \cdot \bs n_{\Omega}(\bs x)
\, e^{-U(\bs x)/\epsilon} \, \sigma(d\bs x)\;.
\end{equation}

\smallskip\noindent{\bf Structure of valleys.} Let $h_{i} =
U(\bs{m}_{i})$, $i=1,\,2$, and assume without loss of generality that
$h_{1}\ge h_{2}$, so that $\bs{m}_{2}$ is the global minimum of the
potential $U$. Denote by $H$ the height of the saddle points
$\mf{S}:=\{\bs{\sigma}_{1},\,\dots,\,\bs{\sigma}_{\ell}\}$:
\begin{equation*}
H\;:=\;U(\bs{\sigma}_{1})\;=\;\cdots\;=\;U(\bs{\sigma}_{\ell})\; .
\end{equation*}
Let $\Omega$ be the level set defined by saddle points which separate
$\bs m_1$ from $\bs m_2$:
\begin{equation*}
\Omega\;:=\;\{\bs{x}\in\bb R^d:U(\bs{x})<H\}\;.
\end{equation*}
Denote by $\mc{W}_{1}$ and $\mc{W}_{2}$ the two connected components
of $\Omega$ such that $\bs{m}_{i}\in\mc{W}_{i}$, $i=1,\,2$,
respectively. Note that $\overline{\mc{W}}_{1} \cap
\overline{\mc{W}}_{2} =\mf{S}$.

Denote by $\mc{V}_{1}$ and $\mc{V}_{2}$ two metastable sets containing
$\bs{m}_1$ and $\bs{m}_2$, respectively. More precisely, $\mc{V}_i$,
$i=1,\,2$, is a open subset of $\mc{W}_i$ which satisfies assumptions S and
such that
\begin{equation*}
B_\epsilon(\bs{m}_i) \;\subset\; \mc V_i \;\subset\;
\{\bs{x} \in \bb R^d :  U(\bs{x}) < U(\bs{\sigma})- \kappa \}
\end{equation*}
for some $\kappa>0$, where $B_\epsilon(\bs{m}_i)$ represents the ball
of radius $\epsilon$ centered at $\bs{m}_i$: $B_\epsilon(\bs{m}_i) =
\{\bs{x}: |\bs{x}-\bs{m}_i| <\epsilon\}$.

\begin{figure}
  \protect
\includegraphics[scale=0.25]{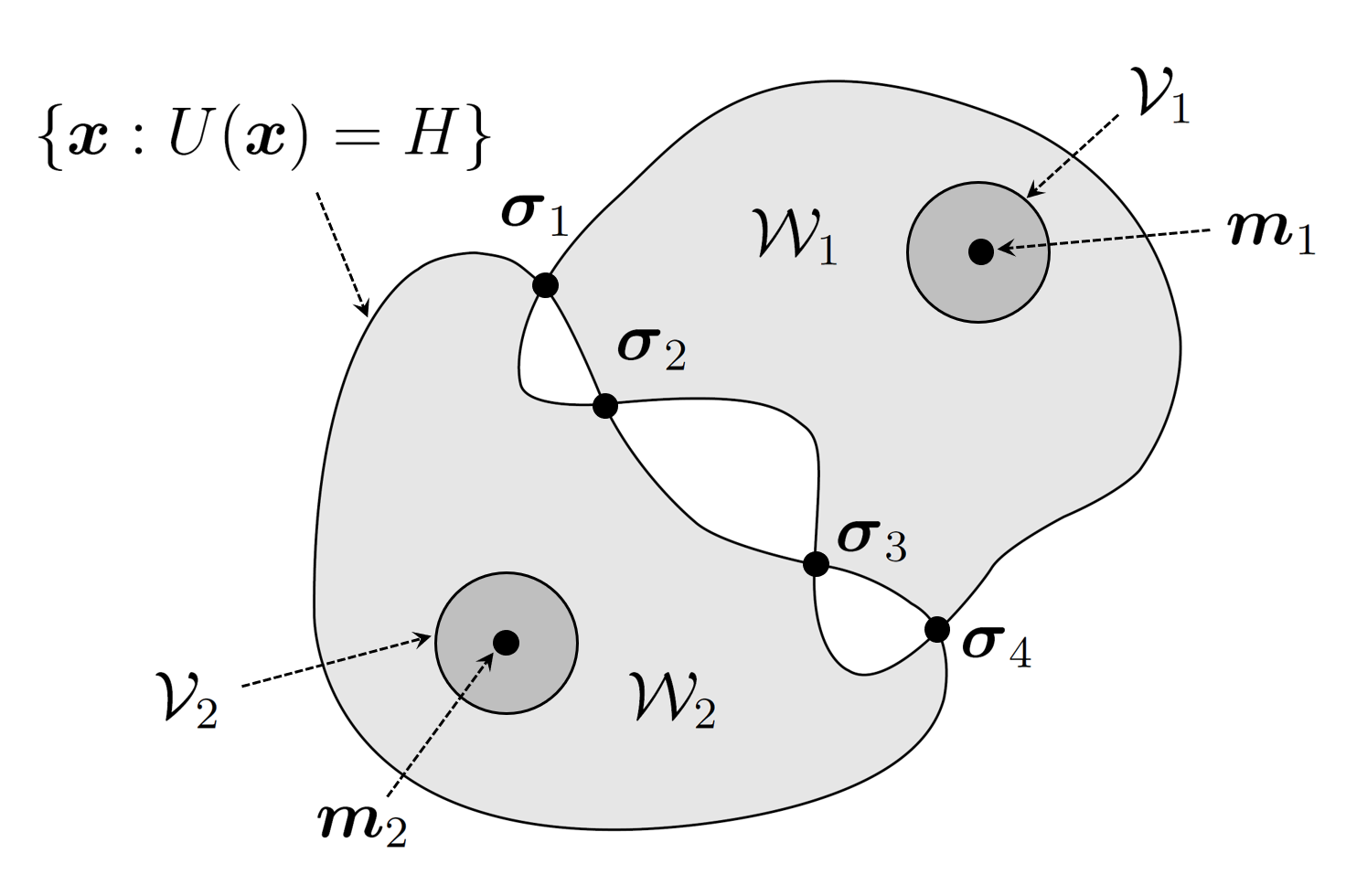}\protect
  \caption{\label{fig1}The structure of metastable wells and valleys.}
\end{figure}

\smallskip\noindent{\bf Metastability results.}  Fix a saddle point
$\bs{\sigma}$ of the potential $U$.  Denote by
$-\lambda_{1}^\bs{\sigma}<0<\lambda_{2}^\bs{\sigma} < \cdots <
\lambda_{d}^\bs{\sigma}$ the eigenvalues of $(\textup{Hess }U)
(\bs{\sigma}):=\bb L^\bs{\sigma}$. By \cite[Lemma A.1]{LS1}, both
$\bb{L}^\bs{\sigma} \bb M$ and $\bb{L}^\bs{\sigma} \bb M^{\dagger}$
have a unique negative eigenvalue. The negative eigenvalues of
$\bb{L}^\bs{\sigma} \bb M$ and $\bb{L}^\bs{\sigma} \bb M^{\dagger}$
coincide because $\bb{L}^\bs{\sigma}
\bb{M}^{\dagger}=\bb{L}^\bs{\sigma}
(\bb{L}^\bs{\sigma}\bb{M})^{\dagger}(\bb{L}^\bs{\sigma})^{-1}$.
Denote by $-\mu^\bs{\sigma}$ this common negative eigenvalue, and let
\begin{equation}
\label{omega}
\omega(\bs{\sigma})\;:=\;\frac{\mu^\bs{\sigma}}
{\sqrt{-\det\left[(\text{Hess }U) \, (\bs{\sigma})\right]}}
\;\;;\;\bs{\sigma}\in\mathfrak{S}.
\end{equation}
We prove in Section \ref{sec4} the following sharp estimate for
capacity between the valleys $\mc{V}_{1}$ and $\mc{V}_{2}$.

\begin{theorem}
\label{thmp1}
We have the following estimate on the capacity.
\begin{equation}
\label{cap}
\textup{cap}(\mc{V}_{1},\,\mc{V}_{2})\;=\;\left[1+o_{\epsilon}(1)\right]\,
\frac 1{Z_{\epsilon}}\, e^{-H/\epsilon}\, \frac{(2\pi \epsilon)^{d/2}}{2\pi}\,
\sum_{i=1}^{\ell}\omega(\bs{\sigma}_{i})\;.
\end{equation}
\end{theorem}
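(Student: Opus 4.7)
The plan is to apply the variational Propositions~\ref{prop1} and \ref{prop2} (Dirichlet and Thomson) to produce matching upper and lower bounds on $\Cap(\mc V_1,\mc V_2)$, then evaluate both by a Laplace asymptotic analysis concentrated at the saddle points $\bs\sigma_1,\dots,\bs\sigma_\ell$. The non-reversible version of Eyring--Kramers proved by Bouchet--Reygner suggests the correct local ansatz at each saddle: the ``reaction coordinate'' is no longer the gradient-descent direction, but rather the eigenvector of $\bb L^{\bs\sigma_i}\bb M$ associated with its unique negative eigenvalue $-\mu^{\bs\sigma_i}$.

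For the upper bound, I use Proposition~\ref{prop1} with a carefully constructed pair $(f,\varphi)$. Fix a small $\delta>0$ and, around each $\bs\sigma_i$, introduce a cylinder $\mc C_i$ of diameter $O(\sqrt{\epsilon|\log\epsilon|})$ whose axis is the left eigenvector $\bs u_i$ of $\bb L^{\bs\sigma_i}\bb M$ with eigenvalue $-\mu^{\bs\sigma_i}$. In each $\mc C_i$ set
\begin{equation*}
f(\bs x) \;=\; \frac{1}{\sqrt{2\pi\epsilon/\mu^{\bs\sigma_i}}}
\int_{-\infty}^{-\bs u_i\cdot(\bs x-\bs\sigma_i)}
e^{-\mu^{\bs\sigma_i} t^2/(2\epsilon)}\, dt,
\end{equation*}
which is the explicit solution of the one-dimensional linearized equation derived from the quadratic expansion $U(\bs x)\approx H+\tfrac12(\bs x-\bs\sigma_i)\cdot \bb L^{\bs\sigma_i}(\bs x-\bs\sigma_i)$. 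Extend $f$ by $1$ on the $\bs m_1$-component and by $0$ on the $\bs m_2$-component of the complement of the cylinders, smoothing in a neighborhood where the integrand is exponentially small. Take $\varphi=0$. Substituting into $\<\Phi_f-\varphi,\Phi_f-\varphi\>$ produces an integral localized to the $\mc C_i$, which is evaluated by a standard Laplace computation with the Gaussian weight $e^{-U/\epsilon}$; the $(d-1)$ ``stable'' directions contribute $(2\pi\epsilon)^{(d-1)/2}/\sqrt{\prod_{k\ge2}\lambda_k^{\bs\sigma_i}}$, the ``unstable'' direction contributes a factor $\sqrt{2\pi\epsilon/\mu^{\bs\sigma_i}}\cdot \mu^{\bs\sigma_i}/\epsilon$, and assembling everything with the prefactor $\epsilon/Z_\epsilon$ (see \eqref{7-16}) yields the right-hand side of \eqref{cap} with $\omega(\bs\sigma_i)=\mu^{\bs\sigma_i}/\sqrt{-\det \bb L^{\bs\sigma_i}}$, summed over $i$.

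For the lower bound I use Proposition~\ref{prop2}. I construct a divergence-free vector field $\varphi_\epsilon\in\mc F^{(1)}$ supported in a tube routed through the cylinders $\mc C_i$, with total flux $1$ split among saddles in proportion $\omega(\bs\sigma_i)/\sum_j\omega(\bs\sigma_j)$. Inside each $\mc C_i$, $\varphi_\epsilon$ is taken to be the Gaussian model $(\omega(\bs\sigma_i)/\sum_j\omega(\bs\sigma_j))\cdot \bb M^\dagger \nabla f$ for the $f$ above, renormalized, which matches the optimizer $\bb M^\dagger\nabla h_{\mc V_1,\mc V_2}/\Cap$ predicted by Proposition~\ref{prop2}. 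Take $f=0$. The quantity $\<\Phi_f-\varphi_\epsilon,\Phi_f-\varphi_\epsilon\>=\int e^{V}\varphi_\epsilon\cdot\bb S^{-1}\varphi_\epsilon\,dx$ is again localized at the $\mc C_i$ and produces, by the same Gaussian computation, $[1+o_\epsilon(1)]/\bigl(\text{the right-hand side of \eqref{cap}}\bigr)$; taking the reciprocal and using \eqref{28} yields the matching lower bound.

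The main obstacle is the local analysis at each saddle in the non-reversible regime. One must: (i) justify replacing $\bb M(\bs x)$ and $U(\bs x)$ by their Taylor expansions at $\bs\sigma_i$ in the cylinders $\mc C_i$, controlling the resulting errors uniformly in $\epsilon$; (ii) verify that the one-dimensional ansatz $f$ is harmonic for $\mc L_\epsilon$ up to errors of relative size $o_\epsilon(1)$ after integration against $e^{-U/\epsilon}$; (iii) perform the Gaussian integral with the \emph{non-symmetric} matrix $\bb M$ and check that the resulting prefactor is exactly $\mu^{\bs\sigma_i}/\sqrt{-\det\bb L^{\bs\sigma_i}}$ rather than the symmetric Helffer--Klein--Nier factor $\sqrt{\lambda_1^{\bs\sigma_i}/|\det\bb L^{\bs\sigma_i}|}$. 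The tightness estimates outside a large ball, which send the contribution of $\{U\ge H+\kappa\}$ to zero, follow from (P4) together with \eqref{tight2}. The fact that Dirichlet and Thomson bounds close without a residual gap is precisely what is bought by our infimum/supremum formulae of Section~\ref{sec1}, replacing Pinsky's minmax principles.
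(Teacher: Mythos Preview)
Your plan reverts to the \emph{reversible} simplifications $\varphi=0$ (Dirichlet) and $f=0$ (Thomson), and this fails in the non-selfadjoint setting. With $\varphi=0$, the Dirichlet functional is $\langle\Phi_f,\Phi_f\rangle=\int e^{-U/\epsilon}\,\nabla f\cdot\bb M\bb S^{-1}\bb M^\dagger\nabla f\,dx/Z_\epsilon$, not the symmetric form $\int\nabla f\cdot\bb S\nabla f\,d\mu$. For your Gaussian ansatz $\nabla f\propto\bs v$ this produces the prefactor $\bs v\cdot\bb M\bb S^{-1}\bb M^\dagger\bs v=\bs v\cdot\bb S\bs v+(\bb A\bs v)\cdot\bb S^{-1}(\bb A\bs v)$, which is strictly larger than $\bs v\cdot\bb S\bs v$ unless $\bb A\bs v=0$; the upper bound you obtain exceeds the target \eqref{cap} by an $O(1)$ multiplicative constant and does not match the lower bound. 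Symmetrically, your Thomson flow $\varphi_\epsilon\propto e^{-U/\epsilon}\bb M^\dagger\nabla f$ is not admissible: $\nabla\cdot(e^{-U/\epsilon}\bb M^\dagger\nabla f)=e^{-U/\epsilon}\mc L^*_\epsilon f$, and your $f$ was built to be approximately $\mc L_\epsilon$-harmonic, not $\mc L^*_\epsilon$-harmonic, so $\varphi_\epsilon\notin\mc F^{(1)}$; and even with the adjoint ansatz the $\bb M\bb S^{-1}\bb M^\dagger$ issue reappears in $\langle\varphi_\epsilon,\varphi_\epsilon\rangle$.

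The paper's proof uses \emph{both} a function and a flow simultaneously, and the choices are coupled: one takes $f_\epsilon=\tfrac12(p_\epsilon+p^*_\epsilon)$ built from the $\mc L_\epsilon$- and $\mc L^*_\epsilon$-harmonic linearized profiles (eigenvectors $\bs v$ and $\bs v^*$ of $\bb L\bb M$ and $\bb L\bb M^\dagger$), and $\varphi_\epsilon=\tfrac12(\Theta_{\bs q^*_\epsilon}-\Theta^*_{\bs q_\epsilon})$, so that $\Phi_{f_\epsilon}-\varphi_\epsilon$ collapses to $\Psi_{p_\epsilon}=e^{-U/\epsilon}\bb S\nabla p_\epsilon$ and one lands on the \emph{symmetric} form $\int\nabla p_\epsilon\cdot\bb S\nabla p_\epsilon\,d\mu$, whose Laplace evaluation gives exactly $T_\epsilon\,\omega(\bs 0)$. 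A second key difference is that the paper does not produce a globally smooth divergence-free flow at all (it notes this is ``very difficult''): instead it inserts the discontinuous $\varphi_\epsilon$ directly into the Cauchy--Schwarz inequality behind Propositions~\ref{prop1}--\ref{prop2} and controls the extra term $\langle\varphi_\epsilon,\Psi_{h_{\mc V_1,\mc V_2}}\rangle$ by a separate boundary-integral estimate (Lemma~\ref{tp2}), which in turn requires the a priori bound on the equilibrium potential from Proposition~\ref{7-l11}. Your sketch is missing both ingredients.
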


The metastable behavior of $X_{t}^{\epsilon}$ follows from this
result. In Section \ref{sec5}, we derive a sharp estimate for the
transitions time between the two different wells stated below.

Denote by $\bb P_\bs x^\epsilon$, $x\in \bb R^d$, the probability
measure on $C(\bb R_+, \bb R^d)$ induced by the Markov process $X
^{\epsilon}_t$ starting from $\bs x$. Expectation with respect to
$X_t^\epsilon$ is represented by $\bb E_\bs x^\epsilon$. Denote by
$H_\mc C^\epsilon$, $\mc C$ an open subset of $\bb R^d$, the hitting
time of the set $\mc C$:
\begin{equation}
\label{hitt}
H_\mc C^\epsilon \;=\; \inf\{t\ge 0 : X_t^\epsilon \in \mc C\}\;.
\end{equation}
We henceforth omit the superscript $\epsilon$ in these definitions
since there is no risk of confusion.

\begin{theorem}
\label{thmp2}
Under the notations above,
\begin{equation}
\label{2-4}
\bb{E}_{\bs{m}_{1}}^\epsilon\left[H_{\mc{V}_{2}}\right]\;=\;
\left[1+o_{\epsilon}(1)\right]\, \frac{2\pi \,
  e^{(H-h_{1})/\epsilon}}
{\sqrt{\det\left[(\text{\rm Hess }U)\, (\bs{m}_{1})\right]}}\,
\Big(\sum_{i=1}^{\ell}\omega(\bs{\sigma}_{i})\Big)^{-1}\;.
\end{equation}
\end{theorem}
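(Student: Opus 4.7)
The plan is to combine the equilibrium-measure formula of Proposition~\ref{prop3} with the sharp capacity estimate of Theorem~\ref{thmp1} and then transfer the resulting identity from the equilibrium measure on $\partial \mc V_1$ to the point mass at $\bs m_1$. Concretely, I would apply Proposition~\ref{prop3} to the rescaled setting of Section~\ref{sec3}, taking $\mc A=\mc V_1$, $\mc B=\mc V_2$, $V=U/\epsilon$, and $f\equiv 1$ (the $\epsilon$-prefactor in \eqref{7-16} encodes the standard time rescaling and will cancel consistently). This yields
\begin{equation*}
\bb E^{\epsilon}_{\nu_{\mc V_1,\mc V_2}}\!\big[H_{\mc V_2}\big]
\;=\; \frac{1}{\Cap(\mc V_1,\mc V_2)} \int_{\bb R^d} h^{*}_{\mc V_1,\mc V_2}(\bs x)\,\mu_\epsilon(d\bs x)\;.
\end{equation*}

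Second, I would estimate the numerator by Laplace's method. Since the adjoint diffusion satisfies the same hypotheses (P1)--(P4) with the same potential $U$ (only the drift direction changes), large deviations imply $h^{*}_{\mc V_1,\mc V_2}\to 1$ uniformly on compact subsets of the basin $\mc W_1$ and $h^{*}_{\mc V_1,\mc V_2}\to 0$ on compact subsets of $\mc W_2$ as $\epsilon\to 0$. Using the tightness bound \eqref{tight2} to neglect contributions from $\{U\ge H-\kappa'\}$ and the fact that $\mu_\epsilon$ concentrates at $\bs m_2$ with sub-exponentially negligible mass outside any $\delta$-ball around the minima, a Gaussian expansion of $U$ near $\bs m_1$ gives
\begin{equation*}
\int h^{*}_{\mc V_1,\mc V_2}\,d\mu_\epsilon
\;=\;[1+o_\epsilon(1)]\,\mu_\epsilon(\mc W_1)
\;=\;[1+o_\epsilon(1)]\,\frac{(2\pi\epsilon)^{d/2}\,e^{-h_1/\epsilon}}{Z_\epsilon\,\sqrt{\det(\text{Hess }U)(\bs m_1)}}\;.
\end{equation*}
Dividing by the capacity \eqref{cap} from Theorem~\ref{thmp1}, the prefactors $Z_\epsilon^{-1}$, $(2\pi\epsilon)^{d/2}$ cancel, the exponentials combine into $e^{(H-h_1)/\epsilon}$, and one obtains exactly the right-hand side of \eqref{2-4} but with $\bs m_1$ replaced by $\nu_{\mc V_1,\mc V_2}$.

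Third, and this is the genuine difficulty, I would show that the starting-point average can be replaced by the single point $\bs m_1$, i.e.\
\begin{equation*}
\bb E^\epsilon_{\bs m_1}[H_{\mc V_2}] \;=\;[1+o_\epsilon(1)]\,\bb E^\epsilon_{\nu_{\mc V_1,\mc V_2}}[H_{\mc V_2}]\;.
\end{equation*}
The strategy is to analyse the function $w_\epsilon(\bs x):=\bb E^\epsilon_{\bs x}[H_{\mc V_2}]$, which solves $\mc L_\epsilon w_\epsilon=-1$ on $\mc V_2^{c}$ with $w_\epsilon=0$ on $\mc V_2$. One argues that $w_\epsilon$ is nearly constant on a large neighborhood $\mc U$ of $\bs m_1$ inside $\mc W_1$: the oscillation of $w_\epsilon$ over $\mc U$ is bounded by the time needed for the diffusion to traverse $\mc U$, which is at most polynomial in $\epsilon^{-1}$ by the Freidlin--Wentzell lower bound on the barrier height $< H-h_1$, and hence negligible compared to the exponentially large $e^{(H-h_1)/\epsilon}$. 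Combining this with an elliptic Harnack-type control on $\partial\mc V_1$ and the fact that $\nu_{\mc V_1,\mc V_2}$ is supported on $\partial\mc V_1\subset\mc U$ gives the desired replacement.

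The main obstacle is this last averaging step: rigorously proving that $w_\epsilon$ varies only by an additive error $o(e^{(H-h_1)/\epsilon})$ on $\mc V_1\cup\partial\mc V_1$. The other steps are either direct applications of the variational machinery (Proposition~\ref{prop3}, Theorem~\ref{thmp1}) or routine Laplace asymptotics, but the equidistribution of the expected hitting time inside the starting well requires a careful coupling or PDE/Harnack argument adapted to the non-reversible generator $\mc L_\epsilon$, for which reversibility-based reflection arguments (available in \cite{BEGK1}) are not directly applicable.
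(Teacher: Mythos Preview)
Your overall framework---Proposition~\ref{prop3} plus Theorem~\ref{thmp1} plus Laplace asymptotics for $\int h^* d\mu_\epsilon$---is exactly the paper's. The crucial difference lies in the third step, where you correctly identify the averaging problem as the obstacle. The paper circumvents this difficulty not by proving that $w_\epsilon$ is nearly constant over the macroscopic valley $\mc V_1$, but by choosing a much smaller reference set: it takes $\mc A=B_\epsilon(\bs m_1)$, the ball of radius $\epsilon$ around $\bs m_1$, instead of $\mc V_1$. Theorem~\ref{thmp1} is stated so that any $\mc V_1$ with $B_\epsilon(\bs m_1)\subset\mc V_1\subset\{U<H-\kappa\}$ gives the same capacity asymptotics, so this choice is admissible.

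With this choice, the averaging step becomes an honest elliptic regularity estimate at scale $\epsilon$: on $B_{\sqrt\epsilon}(\bs m_1)$ the drift is $O(\sqrt\epsilon)$ (since $\bs m_1$ is a nondegenerate critical point), so the quantity $\nu_\Omega R^2$ in the Harnack and H\"older lemmata (Lemmata~\ref{7-l4}--\ref{7-l2}) is bounded uniformly in $\epsilon$. A Harnack inequality first shows $\sup_{B_{\sqrt\epsilon}(\bs m_1)} w_\epsilon \le C_0\, w_\epsilon(\bs m_1)$ (Lemma~\ref{7-l3}), and then a H\"older estimate for $\mc L_\epsilon w_\epsilon=-1$ controls the oscillation of $w_\epsilon$ over $B_\epsilon(\bs m_1)$ by $\epsilon^{\alpha/2}\,w_\epsilon(\bs m_1)$ (Lemma~\ref{7-l1}). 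Since $\nu_{B_\epsilon(\bs m_1),\mc V_2}$ sits on $\partial B_\epsilon(\bs m_1)$, this immediately gives $\bb E_{\bs m_1}[H_{\mc V_2}]=(1+o_\epsilon(1))\,\bb E_{\nu}[H_{\mc V_2}]$. No Freidlin--Wentzell or coupling argument is needed, and non-reversibility is irrelevant because the Harnack and H\"older estimates hold for general uniformly elliptic operators.

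Your route---bounding the oscillation of $w_\epsilon$ over the macroscopic $\mc V_1$ by an $O(\epsilon^{-p})$ mixing time inside the well---is plausible and would also give the result, but it is both more work and less precise; the paper's trick of shrinking $\mc A$ to a microscopic ball is what you are missing.
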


The remaining part of the paper is devoted to provide a detailed proof
of Theorems \ref{thmp1} and \ref{thmp2}. In Section \ref{sec4}, we
prove Theorem \ref{thmp1} by constructing vector fields which
approximate the optimal ones for the Dirichlet's and the Thomson's
principles. The properties of these vector fields are derived in
Section \ref{sec6}, based on general estimates presented in Section
\ref{sec5}. Section \ref{sec8} is dedicated to Theorem \ref{thmp2}.

We close this section with some remarks on the last theorems.

\begin{remark}
\label{rem53}
A careful reading of the proofs reveals that the error term
$o_\epsilon (1)$ appearing in Theorems \ref{thmp1} and \ref{thmp2} are
of order $O(\epsilon^{1/2}(\log \epsilon)^{3/2})$, as in the
reversible case \cite{BEGK1}. It is not clear, however, that this error is sharp.
\end{remark}

\begin{remark}
  Let $\Xi \subset \bb R^d$ be a bounded domain with a boundary in
  $C^{2,\alpha}$ for some $0<\alpha<1$. Assume that the potential has
  no critical points at $\partial \Xi$ and that $\bs n_\Xi \cdot
  \nabla U >0$ at $\partial \Xi$.  A similar result can be proven for
  a diffusion evolving on $\Xi$ with Neumann boundary conditions.
\end{remark}

\begin{remark}
\label{rm11}
In view of \cite{BL1, BL2}, Theorems \ref{thmp1} and \ref{thmp2}
represent the first main step in a complete description of the
metastable behavior of the diffusion $X^\epsilon_t$, which can be
easily foretell. Let $\theta_\epsilon = e^{-(H-h_1)/\epsilon}$, and
denote by $Y^\epsilon_t$ the diffusion $X^\epsilon_{t}$ speeded-up by
$\theta_\epsilon$: $Y^\epsilon_t:= X^\epsilon_{\theta_\epsilon t}$.
Assume that $X^\epsilon_0 = \bs m_1$. As $\epsilon \downarrow 0$, we
expect $Y^\epsilon$ to converge to a two-state Markov chain on $\{\bs
m_1,\bs m_2\}$ which starts from $m_1$. If $U(\bs m_2) < U(\bs m_1)$
the process remains for ever at $m_2$ once it hits this point. In
contrast, if $U(\bs m_2) = U(\bs m_1)$, it jumps back and forth from
$\bs m_2$ to $\bs m_1$.
\end{remark}

\begin{remark}
  The arguments presented in the next sections to prove Theorems
  \ref{thmp1} and \ref{thmp2} apply to the case in which the entries
  of the matrix $\bb M (\bs x)$ belong to $C^2(\bb R^d)$ and satisfy
  conditions \eqref{3-1}, \eqref{19}.
\end{remark}

\section{Proof of Theorem \ref{thmp1}}
\label{sec4}

Throughout this section, to avoid unnecessary technical
considerations, we assume that there is a unique saddle point of
height $H$ between the two valleys around $\bs{m}_{1}$ and
$\bs{m}_{2}$: $\mf{S}=\{\bs{\sigma}\}$. The general case can be
handled without much effort. We refer to \cite{LS1} for the details.

By a translation and change of coordinates we may assume that
$\bs{\sigma}=\bs{0}$ and $(\textup{Hess }U) (\bs{0})=\bb{L}^{\bs 0} =
\textup{diag }(-\lambda_{1}^{\bs 0},\,\lambda_{2}^{\bs
  0},\,\dots,\,\lambda_{d}^{\bs 0})$. We shall drop the superscript
$\bs{0}$ in these notations from now on.  According to these
assumptions, the eigenvectors of $\bb{L}$ are the vectors of the
canonical basis of $\bb R^d$, represented by
$\bs{e}_{1},\,\bs{e}_{2},\,\dots,\,\bs{e}_{d}$. Assume, furthermore,
that $\bs{e}_{1}$ is directed toward $\mc{W}_{1}$, i.e., that there
exists $t_{0}>0$ such that $t\bs{e}_{1}\in\mc{W}_{1}$ for all
$t\in(0,t_{0})$. (cf. Figure \ref{fig2})

\begin{figure}
  \protect
\includegraphics[scale=0.25]{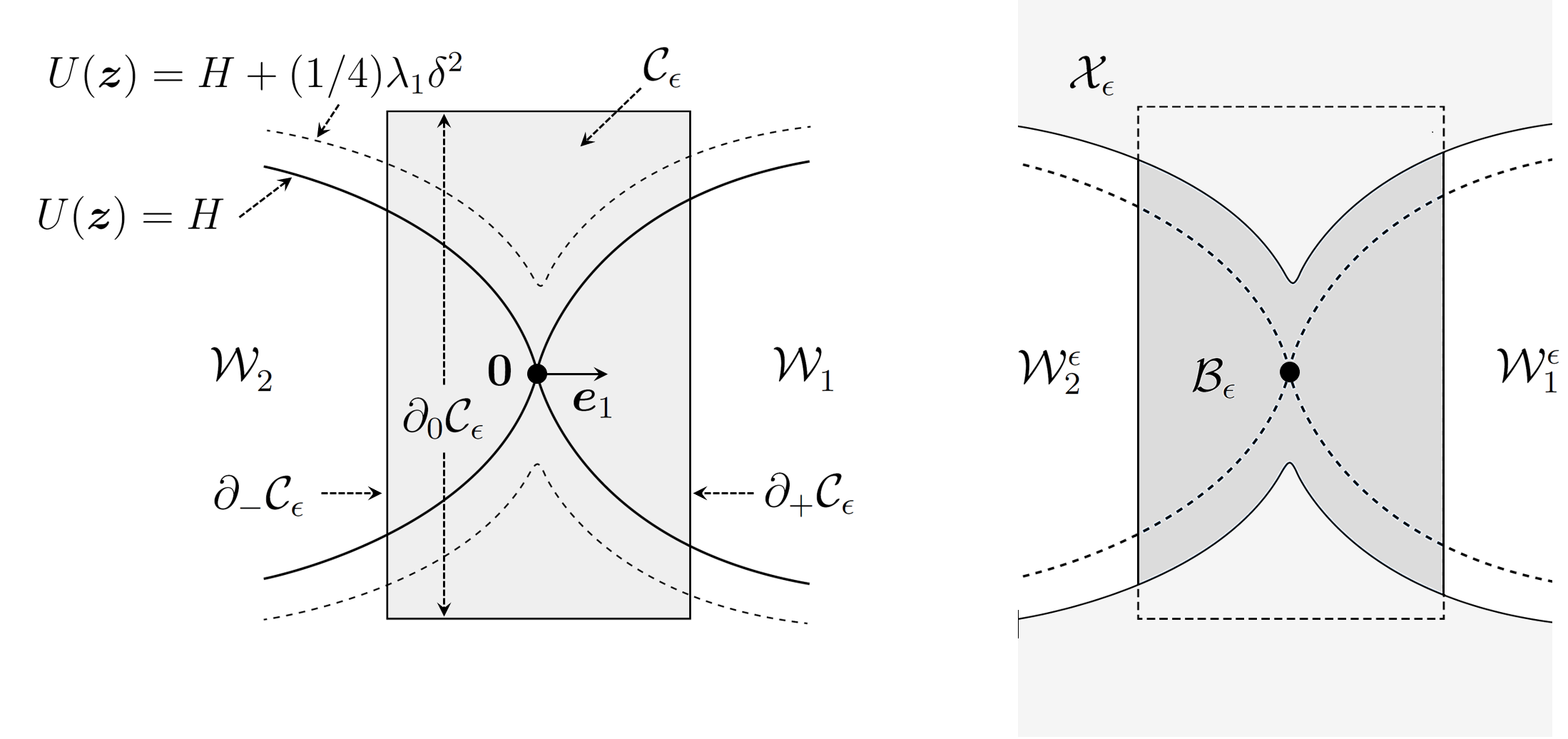}\protect
  \caption{\label{fig2} The neighborhood of the saddle point $\bs{0}$.}
\end{figure}

\smallskip\noindent{\bf A neighborhood of the saddle point.}  We first
introduce a neighborhood of the saddle point, cf. \cite{LS1}.
For a large enough constant $K$ define
\begin{equation}
\label{ins01}
\delta\;:=\; K \, \sqrt{\epsilon\log(1/\epsilon)}\;.
\end{equation}
Let $\mc{C}_{\epsilon}$ be the closed hyperrectangle around the saddle
point $\bs{0}$ defined by
\begin{equation*}
\mc{C}_{\epsilon}\;=\;\left[-\delta,\delta\right]\times
\prod_{i=2}^{d}\Big[-\sqrt{\frac{2\lambda_{1}}{\lambda_{i}}}\,
\delta \,,\,\sqrt{\frac{2\lambda_{1}}{\lambda_{i}}}\, \delta\,
\Big]\;,
\end{equation*}
and denote by $\partial\mc{C}_{\epsilon}$ its boundary. Write
$\bs{z}\in\bb{R}^{d}$ as $\bs{z}=\sum_{i=1}^{d}z_{i}\bs{e}_{i}$, and
define the boundaries $\partial_{-}\mc{C}_{\epsilon}$,
$\partial_{+}\mc{C}_{\epsilon}$, $\partial_{0}\mc{C}_{\epsilon}$ by
\begin{align*}
& \partial_{+}\mc{C}_{\epsilon} \;=\; \{
\bs{z}\in\partial\mc{C}_{\epsilon}:
z_{1}=\delta\} \;,\quad \partial_{-}\mc{C}_{\epsilon}\;=\;
\{ \bs{z}\in\partial\mc{C}_{\epsilon}:z_{1}=-\delta\} \;,\\
& \quad\partial_{0}\mc{C}_{\epsilon}\;=\;\partial\mc{C}_{\epsilon}
\setminus (\partial_{-}\mc{C}_{\epsilon}\cup\partial_{+}\mc{C}_{\epsilon})\;.
\end{align*}

Recall that $U(\bs{0})=H$.

\begin{lemma}
\label{lems0}
For all $\bs{z}\in\partial_{0}\mc{C}_{\epsilon}$, we have that
$U(\bs{z})\ge H+ \left[1+o_{\epsilon}(1)\right]
(1/2)\lambda_{1}\delta^{2}$.
\end{lemma}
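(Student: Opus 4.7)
The plan is a straightforward Taylor expansion of $U$ at the saddle point, combined with the definition of $\mathcal{C}_\epsilon$ on the lateral face $\partial_0 \mathcal{C}_\epsilon$. Since $\bs 0$ is a critical point with $U(\bs 0)=H$ and Hessian $\bb L=\mathrm{diag}(-\lambda_1,\lambda_2,\dots,\lambda_d)$, and since $U\in C^3(\bb R^d)$, I would write
\begin{equation*}
U(\bs z) \;=\; H \;+\; \tfrac12\Bigl(-\lambda_1 z_1^2 + \sum_{i=2}^d \lambda_i z_i^2\Bigr) \;+\; R(\bs z)\;,
\end{equation*}
where $|R(\bs z)|\le C_U\,\|\bs z\|^3$ for $\bs z$ in a fixed neighborhood of $\bs 0$, with $C_U$ depending only on the $C^3$-norm of $U$ near the saddle.

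Next I would use the geometry of $\mathcal{C}_\epsilon$. By definition of $\partial_0\mathcal{C}_\epsilon$, there exists some index $j\in\{2,\dots,d\}$ with $z_j^2 = 2\lambda_1\delta^2/\lambda_j$, while $|z_1|\le\delta$ and $\lambda_i z_i^2\ge 0$ for all other $i\ge 2$. Hence
\begin{equation*}
-\lambda_1 z_1^2 + \sum_{i=2}^d \lambda_i z_i^2 \;\ge\; -\lambda_1\delta^2 + \lambda_j z_j^2 \;=\; -\lambda_1\delta^2 + 2\lambda_1\delta^2 \;=\; \lambda_1\delta^2\;,
\end{equation*}
giving a quadratic lower bound of $H + (1/2)\lambda_1\delta^2$.

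Finally I would control the remainder. On $\mathcal{C}_\epsilon$ every coordinate is bounded by a constant multiple of $\delta$, so $\|\bs z\|\le C'\delta$ and $|R(\bs z)|\le C_U (C')^3\delta^3$. With $\delta=K\sqrt{\epsilon\log(1/\epsilon)}\to 0$, we have $\delta^3=\delta^2\cdot\delta = o_\epsilon(1)\cdot\delta^2$, so $R(\bs z)=o_\epsilon(1)\,\lambda_1\delta^2$ uniformly on $\partial_0\mathcal{C}_\epsilon$. Combining the two bounds yields
\begin{equation*}
U(\bs z)\;\ge\; H \;+\; [1+o_\epsilon(1)]\,\tfrac12\,\lambda_1\,\delta^2\;,
\end{equation*}
as claimed.

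There is no genuine obstacle here; the only point deserving care is that the Taylor remainder must be shown to be $o(\delta^2)$ rather than merely $O(\delta^2)$, which is why the $C^3$ hypothesis on $U$ in (P1) (giving a cubic remainder) together with the logarithmic scaling $\delta^2 \sim \epsilon\log(1/\epsilon)$ is exactly what is needed.
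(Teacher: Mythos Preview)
Your proposal is correct and follows essentially the same argument as the paper: Taylor expand $U$ at the saddle to second order with an $O(\delta^3)$ remainder, use that on $\partial_0\mathcal{C}_\epsilon$ some transverse coordinate $z_j$ attains $\pm\sqrt{2\lambda_1/\lambda_j}\,\delta$ while $|z_1|\le\delta$, and conclude the quadratic part is at least $\lambda_1\delta^2$. Your write-up is in fact slightly more explicit than the paper's about why the cubic remainder is $o_\epsilon(1)\,\delta^2$, but the route is identical.
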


\begin{proof}
For $\bs{z}\in\mc{C}_{\epsilon}$, by the Taylor expansion,
\begin{equation*}
U(\bs{z})\;=\;H \;-\, \frac{1}{2} \, \lambda_{1}\, z_{1}^{2}
\;+\; \frac{1}{2} \,  \sum_{j=2}^d \lambda_{j}\, z_{j}^{2}
\;+\; O(\delta^{3})\;.
\end{equation*}
For $\bs{z}\in\partial_{0}\mc{C}_{\epsilon}$, there exists $2\le i\le
d$, such that $z_{i}=\pm\sqrt{2\lambda_{1}/\lambda_{i}}\delta$.
Therefore,
\begin{equation*}
-\, \lambda_{1}\, z_{1}^{2}
\;+\; \sum_{j=2}^d \lambda_{j}\, z_{j}^{2}
\;\ge\; -\, \lambda_{1}\, \delta^{2}
+\lambda_{i} \Big(\sqrt{\frac{2\lambda_{1}}{\lambda_{i}}}\delta\Big)^{2}
\;=\;\lambda_{1}\delta^{2}\;.
\end{equation*}
To complete the proof, it remains to report this estimate to the first
identity.
\end{proof}

Let
\begin{equation*}
\Omega_{\epsilon} \;=\; \big\{\bs{z}\in\bb R^d : U(\bs{z}) <
H+(1/4)\lambda_{1}\delta^{2} \big\}\;, \quad \mc{B}_{\epsilon} =
\mc{C}_{\epsilon} \cap \Omega_{\epsilon}\;.
\end{equation*}
Since the saddle point $\bs \sigma = \bs 0$ is the unique critical
point separating the two local minima $\bs m_1$, $\bs m_2$ of $U$, by
Lemma \ref{lems0}, for $\epsilon$ small enough, there are two
connected components $\mc{W}_{1}^{\epsilon}$ and
$\mc{W}_{2}^{\epsilon}$ of $\Omega_{\epsilon} \setminus
\mc{B}_{\epsilon}$ such that $\bs{m}_{i}\in\mc{W}_{i}^{\epsilon}$,
$i=1,\,2$. Note that $\mc{V}_{i}\subset\mc{W}_{i}^{\epsilon}$,
$i=1,\,2$, for sufficiently small $\epsilon$. Let $\mc{X}_{\epsilon}=
\bb R^d \setminus (\mc{W}_{1}^{\epsilon} \cup \mc{W}_{2}^{\epsilon} \cup
\mc{B}_{\epsilon})$.  These sets are represented in Figure \ref{fig2}.

\smallskip\noindent{\bf Approximations of the equilibrium potentials.}
We introduce in this subsection an approximation of the equilibrium
potentials $h_{\mc{V}_{1},\mc{V}_{2}}$,
$h_{\mc{V}_{1},\mc{V}_{2}}^{*}$. As pointed out in \cite{BEGK1} for
reversible diffusions and in \cite{LS1} for non-reversible Markov
chains, the crucial point consists in defining these approximations in
a mesoscopic neighborhood of the saddle point, denoted above by
$\mc{B}_{\epsilon}$.

Let $-\mu$ be the unique negative eigenvalue of the matrices
$\bb{L} \bb M$, $\bb{L} \bb M^{\dagger}$, and let $\bs{v}$,
$\bs{v}^{*}$ be the associated normal eigenvectors. By Lemma \ref{bl1}
below, the first component of $\bs{v}$, denoted by $v_{1}$, does not
vanish. Assume, without loss of generality, that $v_{1}>0$.
Similarly, assume that $v_{1}^{*}$, the first component of $\bs{v}^*$,
is positive.

Let
\begin{equation}
\label{ins02}
\alpha\;=\;\frac{\mu}{\bs{v}\cdot\bb{M}\bs{v}}\;,
\quad \alpha^{*}\;=\;\frac{\mu}{\bs{v}^{*}\cdot\bb{M}\bs{v}^{*}}\;,
\end{equation}
and let
\begin{equation*}
C_{\epsilon}\;=\; \int_{-\infty}^{\infty}
\exp\left\{ -\frac{\alpha}{2\epsilon}t^{2}\right\} \, dt \;, \quad
C_{\epsilon}^{*} \;=\;\int_{-\infty}^{\infty}
\exp\left\{ -\frac{\alpha^{*}}{2\epsilon}t^{2}\right\} \,dt \;.
\end{equation*}
Of course, $C_{\epsilon} = \sqrt{2\pi\epsilon/\alpha}$,
$C^*_{\epsilon} = \sqrt{2\pi\epsilon/\alpha^*}$.  The constants
$\alpha$ and $\alpha^{*}$ were introduced in \cite{LS1} and they play
a significant role in the estimation of the capacity.
\smallskip

Since $\nabla U(\bs{z})= \bb{L}\bs{z}+O( \Vert \bs z\Vert^2)$, denote
by $\widetilde{\mc{L}}_{\epsilon}$ the approximation of the generator
$\mc{L}_{\epsilon}$ around the origin, namely,
\begin{equation*}
(\widetilde{\mc{L}}_{\epsilon}f) (\bs{z})\;=\;
-(\bb{L}\bs{z})\cdot\bb{M} \, (\nabla f)(\bs{z})
\,+\,\epsilon\sum_{1\le i,j\le d}\bb{M}_{ij}
(\partial^2_{z_{i},z_{j}}f)(\bs{z})\;.
\end{equation*}

Since the equilibrium potential satisfy the boundary conditions
$f\simeq1$ on $\partial \mc{B}_{\epsilon} \cap \partial_{-}
\mc{C}_{\epsilon}$ and $f\simeq 0$ on $\partial \mc{B}_{\epsilon}
\cap \partial_{+} \mc{C}_{\epsilon}$, a natural approximation of the
equilibrium potentials $h_{\mc{V}_{1},\mc{V}_{2}}$,
$h_{\mc{V}_{1},\mc{V}_{2}}^{*}$ in the neighborhood
$\mc{B}_{\epsilon}$ are
\begin{equation}
\label{ins03}
\begin{cases}
p_{\epsilon}(\bs{z})\;=\; (1/C_{\epsilon})\,
\int_{-\infty}^{\bs{z}\cdot\bs{v}}
\exp \{ - (\alpha/2\epsilon)\, t^{2} \} \, dt
& \text{ for } \bs{z}\in\mc{B}_{\epsilon}\;, \\
p_{\epsilon}(\bs{z})\;=\; \bs 1 \{\bs{z}\in\mc{W}_{1}^{\epsilon}\}
& \mbox{ for }\bs{z} \in \mc{B}_{\epsilon}^c\,.
\end{cases}
\end{equation}
\begin{equation*}
\begin{cases}
p^*_{\epsilon}(\bs{z})\;=\; (1/C^*_{\epsilon})\,
\int_{-\infty}^{\bs{z}\cdot\bs{v}^*}
\exp \{ - (\alpha^*/2\epsilon)\, t^{2} \} \, dt
& \text{ for } \bs{z}\in\mc{B}_{\epsilon}\;, \\
p^*_{\epsilon}(\bs{z})\;=\; \bs 1 \{\bs{z}\in\mc{W}_{1}^{\epsilon}\}
& \mbox{ for }\bs{z} \in \mc{B}_{\epsilon}^c\,.
\end{cases}
\end{equation*}

Note that $p_{\epsilon}(\bs{z}) = 1$ on $\mc{W}_{1}^{\epsilon}$ and
that $p_{\epsilon}(\bs{z}) = 0$ on $\mc{W}_{2}^{\epsilon} \cup
\mc{X}_{\epsilon}$, and that $p_{\epsilon}^{*}$ satisfies the same
identities. Moreover, $p_{\epsilon}$ and $p_{\epsilon}^{*}$ are smooth
in the interior of $\mc{B}_{\epsilon}$, $\mc{W}_{1}^{\epsilon}$,
$\mc{W}_{2}^{\epsilon}$ and $\mc{X}_{\epsilon}$, but have jumps along
the boundaries of these domains. These jumps should be removed in
order to use these functions as test functions for the Dirichlet's and Thomson's principles.

To introduce the vector fields $\Theta_{\bs{q}_{\epsilon}}$,
$\Theta_{\bs{q}_{\epsilon}}^{*}$, $\Theta_{\bs{q}_{\epsilon}^{*}}$ and
$\Theta_{\bs{q}_{\epsilon}^{*}}^{*}$ which approximate the vectors
fields $\Phi_{h_{\mc{V}_{1},\mc{V}_{2}}}$,
$\Phi_{h_{\mc{V}_{1},\mc{V}_{2}}}^{*}$,
$\Phi_{h_{\mc{V}_{1},\mc{V}_{2}}^{*}}$ and
$\Phi_{h_{\mc{V}_{1},\mc{V}_{2}}^{*}}^{*}$, respectively, let
\begin{equation*}
\bs{q}_{\epsilon}(\bs{z})\;=\;
\begin{cases}
(\nabla p_{\epsilon}) (\bs{z}) & \mbox{if }\bs{z}\in\mc{B}_{\epsilon}\\
0 & \mbox{otherwise}\;,
\end{cases}
\qquad
\bs{q}_{\epsilon}^{*}(\bs{z})\;=\;
\begin{cases}
(\nabla p_{\epsilon}^{*})(\bs{z}) & \mbox{if }\bs{z}\in\mc{B}_{\epsilon}\\
0 & \mbox{otherwise}\;;
\end{cases}
\end{equation*}
and set
\begin{align*}
&  \Theta_{\bs{q}_{\epsilon}}(\bs{z})\;=\;
\frac{\epsilon}{Z_{\epsilon}}\, e^{-U(\bs{z})/\epsilon} \, \bb{M}^{\dagger}
\bs{q}_{\epsilon}(\bs{z})
\;,\quad \Theta_{\bs{q}_{\epsilon}}^{*}(\bs{z})
\;=\; \frac{\epsilon}{Z_{\epsilon}}\, e^{-U(\bs{z})/\epsilon}\,
\bb{M}\, \bs{q}_{\epsilon}(\bs{z})\;,\\
&  \quad\Theta_{\bs{q}_{\epsilon}^{*}}(\bs{z})\;=\;
\frac{\epsilon}{Z_{\epsilon}} \, e^{-U(\bs{z})/\epsilon} \,
\bb{M}^{\dagger} \, \bs{q}_{\epsilon}^{*}(\bs{z})
\;,\;\;\Theta_{\bs{q}_{\epsilon}^{*}}(\bs{z})
\;=\; \frac{\epsilon}{Z_{\epsilon}}\, e^{-U(\bs{z})/\epsilon}
\, \bb{M} \, \bs{q}_{\epsilon}^{*}(\bs{z})\;.
\end{align*}

One could be tempted to set $\Theta_{\bs{q}_{\epsilon}} =
\Phi_{p_{\epsilon}}$. One has to be cautious, however, because
$p_{\epsilon}$ is discontinuous along $\partial\mc{B}_{\epsilon}$, and
these jumps become significant when applying the divergence theorem.

Let $T_{\epsilon}$ be the time scale given by
\begin{equation}
\label{f03}
T_{\epsilon}\;:=\; \frac 1{Z_{\epsilon}}\,e^{- H/\epsilon}
\, \frac{(2\pi \epsilon)^{d/2}}{2\pi} \;\cdot
\end{equation}
In the presence of a unique saddle point separating two wells,
Theorem \ref{thmp1} becomes

\begin{theorem}
\label{tp}
We have that
\begin{equation}
\label{cap1}
\textup{cap}(\mc{V}_{1},\mc{V}_{2})\;=\;
\left[1+o_{\epsilon}(1)\right]\, T_{\epsilon}\, \omega(\bs{0})\;.
\end{equation}
\end{theorem}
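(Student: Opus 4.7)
The plan is to sandwich $\Cap(\mc V_1,\mc V_2)$ between matching upper and lower bounds, both of the form $[1+o_\epsilon(1)]\, T_\epsilon\omega(\bs 0)$, by applying Dirichlet's principle (Proposition~\ref{prop1}) for the upper bound and Thomson's principle (Proposition~\ref{prop2}) for the lower bound. In both cases the test objects are built from the mesoscopic approximations $p_\epsilon, p_\epsilon^*$ of the equilibrium potentials $h_{\mc V_1,\mc V_2}, h_{\mc V_1,\mc V_2}^*$, and from the flows $\Theta_{\bs q_\epsilon}, \Theta_{\bs q_\epsilon^*}^*$ constructed just above the statement. The heuristic is that the true equilibrium potential, along the unstable direction $\bs v$ of $\bb L\bb M$, looks near the saddle like the Gaussian error function appearing in \eqref{ins03}, so the corresponding flow $\Phi_h$ is close to $\Theta_{\bs q_\epsilon}$ inside $\mc B_\epsilon$ and negligibly small outside.

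The entire leading asymptotics originates from a single Gaussian computation near the saddle. Inside $\mc B_\epsilon$, $\nabla p_\epsilon(\bs z) = C_\epsilon^{-1}\exp\{-\alpha(\bs z\cdot\bs v)^2/(2\epsilon)\}\,\bs v$, and the Taylor expansion at $\bs 0$ gives $U(\bs z) = H - \tfrac12\lambda_1 z_1^2 + \tfrac12\sum_{j\ge 2}\lambda_j z_j^2 + O(\delta^3)$. Plugging these into a quadratic pairing such as $\<\Theta_{\bs q_\epsilon^*},\Theta_{\bs q_\epsilon}\>$, using the identity $\bs v\cdot \bb S\bs v = \bs v\cdot \bb M\bs v = \mu/\alpha$ valid for the eigenvector of $\bb L\bb M$, and evaluating the resulting $d$-dimensional Gaussian integral, yields precisely $T_\epsilon \omega(\bs 0)$ at leading order: the factor $\sqrt{-\det\bb L}$ in the denominator of $\omega(\bs 0)$ arises from the $d-1$ transverse Gaussians together with the Gaussian in $\bs z\cdot\bs v$, while $\mu$ comes from combining $\alpha$ with $\bs v\cdot\bb M\bs v$. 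This is the only place where the non-reversible prefactor makes its appearance.

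For the upper bound I would apply Proposition~\ref{prop1} with $f=(p_\epsilon+p_\epsilon^*)/2$ on $\mc B_\epsilon$, $f\equiv 1$ on $\mc V_1$, $f\equiv 0$ on $\mc V_2$, and smoothly interpolated in the annuli $\mc W_i^\epsilon\setminus \mc V_i$. The test flow $\varphi$ would be chosen close to $\Phi_f - (1/2)(\Theta_{\bs q_\epsilon}+\Theta_{\bs q_\epsilon^*}^*)$ modified by a compactly supported corrector to force $\varphi\in \mathcal F^{(0)}$. For the lower bound I would apply Proposition~\ref{prop2} with $\varphi$ equal to $\Theta_{\bs q_\epsilon}/(T_\epsilon\omega(\bs 0))$ on $\mc B_\epsilon$, glued to divergence-free extensions into $\mc W_1^\epsilon, \mc W_2^\epsilon$ carrying unit flux through $\partial \mc V_1$, and with $f$ proportional to $p_\epsilon-p_\epsilon^*$. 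In each case the task is to verify $\<\Phi_f-\varphi,\Phi_f-\varphi\> = [1+o_\epsilon(1)]\,T_\epsilon\omega(\bs 0)$ (upper) and its reciprocal (lower), which both reduce to the core Gaussian.

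The hardest step will be the error control over the three relevant regions. Inside $\mc B_\epsilon$, the cubic Taylor remainder in $U$ and the truncation of the Gaussian tail of $p_\epsilon$ contribute only multiplicative $o_\epsilon(1)$ corrections thanks to Lemma~\ref{lems0} and the choice $\delta = K\sqrt{\epsilon\log(1/\epsilon)}$ with $K$ taken large enough. On the mesoscopic boundary $\partial \mc B_\epsilon$, the jumps of $p_\epsilon, p_\epsilon^*$ and their gradients produce spurious divergence that the corrector in $\varphi$ must absorb; the corresponding $\<\cdot,\cdot\>$-norm is estimated by combining Lemma~\ref{lems0} with the weight $e^{-U/\epsilon}$, which is $o(T_\epsilon)$ on $\partial_0 \mc C_\epsilon$. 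In the bulk $\mc W_i^\epsilon\setminus \mc B_\epsilon$, the weight $e^{-U/\epsilon}$ is exponentially smaller than near the saddle, so \eqref{tight2} makes bulk contributions negligible. The single most delicate point is the construction of the globally divergence-free vector field in $\mathcal F^{(1)}$ for the Thomson bound: in each well one must solve an auxiliary Neumann problem guaranteeing unit flux through $\partial \mc V_1$ while keeping the $\<\cdot,\cdot\>$-norm outside $\mc B_\epsilon$ exponentially small, in the spirit of the arguments of \cite{LS1, LS2} adapted to the continuous diffusion setting.
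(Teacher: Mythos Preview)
Your overall strategy and the core Gaussian computation are correct, and your choice of the test function $f$ in both variational principles matches the paper's. However, your treatment of the test flow $\varphi$ has a genuine gap, and it is precisely the gap the paper singles out as the main obstacle.

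You propose to force $\varphi$ into $\mathcal F^{(0)}$ (resp.\ $\mathcal F^{(1)}$) by attaching divergence-free extensions through the wells, solving auxiliary Neumann problems to route the correct flux from $\partial\mc B_\epsilon$ to $\partial\mc V_1$. The paper discusses exactly this idea and states that ``constructing a smooth divergence-free field which matches the boundary condition \eqref{gg2} is very difficult,'' that the discrete analogue in \cite{LS1} ``is solved by a delicate computation,'' and that ``we do not know yet how to carry out a similar procedure in the continuous context.'' Your outline offers no mechanism for this step beyond an appeal to \cite{LS1,LS2}, so the proposal inherits the same unresolved difficulty.

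The paper circumvents this by abandoning admissibility of $\varphi$. Rather than invoking Propositions~\ref{prop1}--\ref{prop2} as black boxes, it revisits the Cauchy--Schwarz inequality inside their proofs: with the \emph{discontinuous} flow $\varphi_\epsilon = \tfrac12(\Theta_{\bs q_\epsilon^*}-\Theta^*_{\bs q_\epsilon})$ one still has
\[
\big\langle \Phi_{f_\epsilon^{(\eta)}}-\varphi_\epsilon,\Psi_{h_{\mc V_1,\mc V_2}}\big\rangle^2
\;\le\;\big\|\Phi_{f_\epsilon^{(\eta)}}-\varphi_\epsilon\big\|^2\,\Cap(\mc V_1,\mc V_2)\,.
\]
The only price is that $\langle \varphi_\epsilon,\Psi_{h_{\mc V_1,\mc V_2}}\rangle$ is no longer automatically zero. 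It is instead computed directly (Lemma~\ref{tp2}) by integrating by parts on $\mc B_\epsilon$; the boundary terms on $\partial_\pm\mc B_\epsilon$ involve the \emph{true} equilibrium potential $h_{\mc V_1,\mc V_2}$, and controlling them requires the a~priori pointwise bound $h_{\mc V_1,\mc V_2}(\bs y)\le C_0\epsilon^{-d}e^{-(H-U(\bs y))/\epsilon}$ on $\mc W_2$ (Proposition~\ref{7-l11}), itself obtained via Harnack inequalities and capacity comparisons. This potential-theoretic ingredient is absent from your outline and is what replaces the divergence-free extension you propose. A secondary difference: the paper smooths $f$ by mollification (controlling the resulting errors in Lemma~\ref{tp1}) rather than by an explicit interpolation in the annuli.
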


In view of the explicit expression for the minimizers of the
variational problem \eqref{26} in Proposition \ref{prop1}, the
function $f_{\epsilon}= (1/2) (p_{\epsilon}+ p_{\epsilon}^{*})$ and
the vector field $\varphi_{\epsilon}=(1/2)
(\Theta_{\bs{q}_{\epsilon}^{*}} - \Theta_{\bs{q}_{\epsilon}}^{*})$ are
the natural candidates to estimate the capacity \eqref{cap1} through
\eqref{26}. However, $f_{\epsilon}$ does not belong to the set
$\mc{C}_{\mc{V}_{1},\mc{V}_{2}}^{1,0}$, being discontinuous along the
$(d-1)$-dimensional surface $\partial\mc{X}_{\epsilon}
\cup \partial\mc{B}_{\epsilon}$.  To overcome this difficulty, we
convolve $f_{\epsilon}$ with a smooth mollifier $\phi_{\eta}(\cdot) :=
(1/\eta^{d}) \phi(\cdot/\eta)$, where $\phi$ is supported on the
$d$-dimensional unit ball.

Denote by $g^{(\eta)}$ the convolution of a function
$g:\bb R^d\rightarrow\bb{R}$ with the mollifier $\phi_{\eta}$:
$g^{(\eta)}:=g*\phi_{\eta}$.  It follows from the explicit expression
of $p_\epsilon$ and $p_\epsilon^*$ that $p_{\epsilon}^{(\eta)}$ and
$(p_{\epsilon}^*)^{(\eta)}$ belongs to the set
$\mc{C}_{\mc{V}_{1} , \mc{V}_{2}}^{1,0}$ for sufficiently small
$\eta$.

We turn to the test vector field
$\varphi_{\epsilon} = (1/2) (\Theta_{\bs{q}_{\epsilon}^{*}} -
\Theta_{\bs{q}_{\epsilon}}^{*})$.
Note that this flow is discontinuous along
$\partial\mc{B}_{\epsilon}$. As we need a smooth flow to apply the
Dirichlet principle, one might be tempted to continuously extend this
test field all the way to the valleys $\mathcal{V}_1$ and
$\mathcal{V}_2$ along a suitable tube connecting these valleys and
passing through the saddle point. This is indeed the scheme carried
out for the test function in the reversible case considered in
\cite{BEGK1}. The corresponding extension procedure for the test flow
is not as simple, mainly because constructing a smooth divergence-free
field which matches the boundary condition \eqref{gg2} is very
difficult.

We stress that in the discrete case \cite{LS1}, this difficulty is
confronted directly and is solved by a delicate computation. We do not
know yet how to carry out a similar procedure in the continuous
context. Instead of applying Proposition \ref{prop1}, we insert a
discontinuous vector field in the proof of this proposition
and we estimate the error terms coming from the lack of regularity of
the vector field. In particular, the proof below shows that the
regularity conditions imposed on the vector fields in the variational
formulae for the capacity can be overpassed.

\begin{proof}[Proof of Theorem \ref{tp}]
We start with the upper bound. Let $\eta=\epsilon^2$ and let
\begin{equation*}
f_{\epsilon}^{(\eta)}\;=\;\frac{1}{2} \, \Big\{
p_{\epsilon}^{(\eta)} + (p_{\epsilon}^{*})^{(\eta)}\Big\}
\;,\quad \varphi_{\epsilon}\;=\;
\frac{1}{2}\,
(\Theta_{\bs{q}_{\epsilon}^{*}} - \Theta_{\bs{q}_{\epsilon}}^{*})\; .
\end{equation*}
Although $\varphi_{\epsilon}$ does not satisfy the hypotheses of
Proposition \ref{prop1}, inserting $\varphi_{\epsilon}$ in the proof
of Proposition \ref{prop1} provides an upper bound for the capacity.

By the Cauchy-Schwarz inequality,
\begin{equation}
\label{se03}
\big\langle \Phi_{f_{\epsilon}^{(\eta)}} -
\varphi_{\epsilon} \,,\, \Psi_{h_{\mc{V}_{1},\mc{V}_{2}}}
\big\rangle ^{2} \;\le\;
\big\Vert \Phi_{f_{\epsilon}^{(\eta)}} -
\varphi_{\epsilon}\big\Vert ^{2} \,
\Vert \Psi_{h_{\mc{V}_{1},\mc{V}_{2}}} \Vert ^{2}\;.
\end{equation}
Since, for sufficiently small $\epsilon$, $f_{\epsilon}^{(\eta)}$
belongs to $\mc{C}_{\mc{V}_{1},\mc{V}_{2}}^{1,0}$, by the proof of
Proposition \ref{prop1},
\begin{equation*}
\big \langle \Phi_{f_{\epsilon}^{(\eta)}} \,,\,
\Psi_{h_{\mc{V}_{1},\mc{V}_{2}}} \big\rangle
\;=\;\textup{cap}(\mc{V}_{1},\mc{V}_{2})\;.
\end{equation*}
Therefore, by Lemma \ref{tp2},
\begin{equation}
\label{se04}
\big\langle \Phi_{f_{\epsilon}^{(\eta)}} -
\varphi_{\epsilon} \,,\, \Psi_{h_{\mc{V}_{1},\mc{V}_{2}}}
\big\rangle \;=\;\textup{cap}(\mc{V}_{1},\mc{V}_{2})
\;+\; o_{\epsilon}(1)\, T_{\epsilon}\;.
\end{equation}

On the other hand, by \eqref{se01} and by the triangle inequality,
\begin{equation*}
\big\Vert \Phi_{f_{\epsilon}^{(\eta)}}
-\varphi_{\epsilon}\big\Vert \;\le\;
\Big\Vert \frac{\Theta_{\bs{q}_{\epsilon}} +
\Theta_{\bs{q}_{\epsilon}^{*}}}{2} - \varphi_{\epsilon}
\Big\Vert \;+\; o_{\epsilon}(1)\,\sqrt{T_{\epsilon}}\;.
\end{equation*}
Since
\begin{equation*}
\frac{\Theta_{\bs{q}_{\epsilon}}+\Theta_{\bs{q}_{\epsilon}^{*}}}{2}
\;-\; \varphi_{\epsilon} \;=\;
\frac{\Theta_{\bs{q}_{\epsilon}}+\Theta_{\bs{q}_{\epsilon}}^{*}}{2}\;,
\end{equation*}
by the last two displayed equations and by Lemma \ref{tp0},
\begin{equation}
\label{se05}
\big\Vert \Phi_{f_{\epsilon}^{(\eta)}} \,-\,
\varphi_{\epsilon}\big\Vert ^{2}
\;\le\; [1+o_{\epsilon}(1)] \, T_{\epsilon}
\,\omega(\bs{0})\;.
\end{equation}

By \eqref{se03}, \eqref{se04}, \eqref{se05}, and since $\Vert
\Psi_{h_{\mc{V}_{1},\mc{V}_{2}}} \Vert ^{2} =
\textup{cap}(\mc{V}_{1},\mc{V}_{2})$,
\begin{equation*}
\big\{ \textup{cap}(\mc{V}_{1},\mc{V}_{2}) \,+\,
o_{\epsilon}(1) \, T_{\epsilon}\big\}^{2} \;\le\;
[1+o_{\epsilon}(1)] \,T_{\epsilon} \,\omega(\bs{0})
\,\textup{cap}(\mc{V}_{1},\mc{V}_{2})\;.
\end{equation*}
so that,
\begin{equation*}
\textup{cap}(\mc{V}_{1},\mc{V}_{2})
\;\le\; [1+o_{\epsilon}(1)] \,T_{\epsilon}\, \omega(\bs{0})\;.
\end{equation*}
This is the upper bound for the capacity.

In order to obtain the lower bound, we repeat the proof of Proposition
\ref{prop2}. Let
\begin{equation*}
g_{\epsilon}^{(\eta)} \;=\;
\frac{p_{\epsilon}^{(\eta)}-(p_{\epsilon}^{*})^{(\eta)}}
{2\,T_{\epsilon}\,\omega(\bs{0})}\;,
\quad \psi_{\epsilon}\;=\; -\,
\frac{\Theta_{\bs{q}_{\epsilon}}^{*}+\Theta_{\bs{q}_{\epsilon}^{*}}}
{2\,T_{\epsilon}\,\omega(\bs{0})}\;\cdot
\end{equation*}
By the Cauchy-Schwarz inequality,
\begin{equation}
\label{se06}
\big\langle \Phi_{g_{\epsilon}^{(\eta)}} -\psi_{\epsilon} \,,\,
\Psi_{h_{\mc{V}_{1},\mc{V}_{2}}}\big\rangle ^{2}
\;\le\; \big\Vert \Phi_{g_{\epsilon}^{(\eta)}}
-\psi_{\epsilon} \big\Vert ^{2} \,
\Vert \Psi_{h_{\mc{V}_{1},\mc{V}_{2}}} \Vert ^{2}\;.
\end{equation}
Since $g_{\epsilon}^{(\eta)}\in\mc{C}_{\mc{V}_{1},\mc{V}_{2}}^{0,0}$ for
sufficiently small $\epsilon$, as in the proof of Proposition
\ref{prop2}, we obtain that
\begin{equation*}
\big\langle \Phi_{g_{\epsilon}^{(\eta)}} \,,\,
\Psi_{h_{\mc{V}_{1},\mc{V}_{2}}} \big\rangle \;=\;0\;.
\end{equation*}
On the other hand, by Lemma \ref{tp2},
\begin{equation*}
\langle \psi_{\epsilon} \,,\,\Psi_{h_{\mc{V}_{1},\mc{V}_{2}}}\rangle
\;=\; 1\,+\,o_{\epsilon}(1)\;.
\end{equation*}
In particular, the left-hand side of \eqref{se06} is equal to
$1+o_{\epsilon}(1)$.

Consider the first term on the right-hand side of \eqref{se06}.
By \eqref{se01} and by the triangle inequality,
\begin{equation}
\label{se07}
\big\Vert \Phi_{g_{\epsilon}^{(\eta)}} -
\psi_{\epsilon} \big\Vert \;\le\;
\Big\Vert \frac{\Theta_{\bs{q}_{\epsilon}} -
  \Theta_{\bs{q}_{\epsilon}^{*}}} {2\, T_{\epsilon}\, \omega(\bs{0})}
- \psi_{\epsilon} \Big\Vert \;+\;
\frac{o_{\epsilon}(1)}{\sqrt{T_{\epsilon}}}\;\cdot
\end{equation}
Since
\begin{equation*}
\frac{\Theta_{\bs{q}_{\epsilon}} - \Theta_{\bs{q}_{\epsilon}^{*}}}
{2\, T_{\epsilon}\, \omega(\bs{0})} \;-\; \psi_{\epsilon}\;=\;
\frac{\Theta_{\bs{q}_{\epsilon}}+\Theta_{\bs{q}_{\epsilon}}^{*}}
{2\, T_{\epsilon}\, \omega(\bs{0})}\;,
\end{equation*}
by Lemma \ref{tp0}, the right-hand side of \eqref{se07} is less than
or equal to
$$[1+o_{\epsilon}(1)] \{T_{\epsilon}\,
\omega(\bs{0})\}^{-1/2}\;.$$
Putting together the previous estimates,
since $\Vert \Psi_{h_{\mc{V}_{1},\mc{V}_{2}}} \Vert ^{2} =
\textup{cap}(\mc{V}_{1},\mc{V}_{2})$, we obtain from \eqref{se06} that
\begin{equation*}
[1+o_{\epsilon}(1)]^{2} \;\le\;
[1+o_{\epsilon}(1)] \, \frac 1{T_{\epsilon}\, \omega(\bs{0})}
\, \textup{cap }(\mc{V}_{1},\mc{V}_{2})\;.
\end{equation*}
This completes the proof of lower bound.
\end{proof}

We conclude this section with three lemmata, whose proofs are
postponed to Section \ref{sec6}.

\begin{lemma}
\label{tp0}
We have that
\begin{equation*}
\Big \Vert \frac{\Theta_{\bs{q}_{\epsilon}}+\Theta_{\bs{q}_{\epsilon}}^{*}}{2}
\Big\Vert ^{2}\;=\;\left[1+o_{\epsilon}(1)\right]\,T_{\epsilon}\,\omega(\bs{0})\;.
\end{equation*}
\end{lemma}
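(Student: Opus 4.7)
Since $\bb M + \bb M^\dagger = 2\bb S$,
\[
\frac{\Theta_{\bs q_\epsilon}+\Theta_{\bs q_\epsilon}^*}{2} \;=\; \frac{\epsilon}{Z_\epsilon}\, e^{-U/\epsilon}\, \bb S\, \bs q_\epsilon.
\]
Plugging this into the quadratic form $\langle\cdot,\cdot\rangle$ and using $(\bb S\bs q_\epsilon)\cdot\bb S^{-1}(\bb S\bs q_\epsilon) = \bs q_\epsilon\cdot\bb S\bs q_\epsilon$ reduces the lemma to an asymptotic evaluation of
\[
I_\epsilon \;=\; \frac{\epsilon^2}{Z_\epsilon}\int_{\mc B_\epsilon} e^{-U(\bs x)/\epsilon}\, \bs q_\epsilon(\bs x)\cdot\bb S(\bs x)\bs q_\epsilon(\bs x)\, d\bs x,
\]
where the integration is restricted to $\mc B_\epsilon$ because $\bs q_\epsilon$ is supported there. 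I plan to evaluate $I_\epsilon$ by a Laplace/Gaussian analysis around the saddle $\bs 0$, using that $\mc B_\epsilon$ has linear dimensions of order $\delta = K\sqrt{\epsilon\log(1/\epsilon)}$.

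\textbf{Gaussian reduction.} Substituting the explicit formula $\bs q_\epsilon(\bs x) = C_\epsilon^{-1}\,e^{-\alpha(\bs x\cdot\bs v)^2/(2\epsilon)}\,\bs v$ gives $\bs q_\epsilon\cdot\bb S\bs q_\epsilon = C_\epsilon^{-2}\, e^{-\alpha(\bs x\cdot\bs v)^2/\epsilon}\,\bs v\cdot\bb S(\bs x)\bs v$. On $\mc B_\epsilon$ I Taylor-expand $U(\bs x) = H + \tfrac12\bs x^T\bb L\bs x + O(\|\bs x\|^3)$ and $\bs v\cdot \bb S(\bs x)\bs v = \bs v\cdot\bb S(\bs 0)\bs v + O(\|\bs x\|)$. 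Since $\|\bs x\|\le C\delta$, the cubic error in the exponential contributes a factor $1+O(\delta^3/\epsilon)=1+o_\epsilon(1)$. The zeroth-order value is $\bs v\cdot\bb S(\bs 0)\bs v=\mu/\alpha$, because the antisymmetric part of $\bb M$ contributes zero so $\bs v\cdot\bb M\bs v=\bs v\cdot\bb S\bs v$, and $\alpha=\mu/(\bs v\cdot\bb M\bs v)$. This reduces $I_\epsilon$ to
\[
[1+o_\epsilon(1)]\,\frac{\epsilon^2\mu}{\alpha Z_\epsilon C_\epsilon^2}\, e^{-H/\epsilon}\int_{\mc B_\epsilon}\exp\Big\{-\frac{1}{2\epsilon}\,\bs x^T Q\bs x\Big\}\, d\bs x, \qquad Q:=\bb L+2\alpha\,\bs v\bs v^T.
\]

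\textbf{Determinant identity and conclusion.} The crucial algebraic step is to show that $Q$ is positive-definite with $\det Q = -\det\bb L=\lambda_1\lambda_2\cdots\lambda_d$. The matrix-determinant lemma gives $\det Q = \det\bb L\,(1+2\alpha\,\bs v^T\bb L^{-1}\bs v)$. The eigenvalue relation $\bb L\bb M\bs v=-\mu\bs v$ yields $\bb M\bs v=-\mu\bb L^{-1}\bs v$, hence $\bs v\cdot\bb M\bs v=-\mu\, \bs v^T\bb L^{-1}\bs v$; combined with $\alpha=\mu/(\bs v\cdot\bb M\bs v)$ this forces $\bs v^T\bb L^{-1}\bs v=-1/\alpha$, so $\det Q = \det\bb L\,(1-2) = -\det\bb L > 0$. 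Positive-definiteness of $Q$ lets me extend the integration from $\mc B_\epsilon$ to $\bb R^d$ at the cost of an exponentially small tail (of order $e^{-cK^2\log(1/\epsilon)}$, since $\delta^2/\epsilon\to\infty$), yielding the standard value $(2\pi\epsilon)^{d/2}/\sqrt{-\det\bb L}$. Substituting $C_\epsilon^2 = 2\pi\epsilon/\alpha$ and collecting all prefactors gives $I_\epsilon = [1+o_\epsilon(1)]\, T_\epsilon\,\omega(\bs 0)$ with $\omega(\bs 0)=\mu/\sqrt{-\det\bb L}$ and $T_\epsilon$ as in \eqref{f03}, which is the claim.

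\textbf{Main obstacle.} The one non-routine input is the algebraic identity $\bs v^T\bb L^{-1}\bs v=-1/\alpha$, which simultaneously guarantees that the symmetric rank-one perturbation $2\alpha\bs v\bs v^T$ lifts the negative eigenvalue of $\bb L$ into positivity and produces exactly the ratio $\det Q=-\det\bb L$ responsible for the $\omega(\bs 0)$ factor; without this identity the calculation does not close. Once it is in place, the remaining ingredients are standard Laplace-type error estimates, controlled by $\delta^3/\epsilon=K^3\sqrt{\epsilon}\,(\log(1/\epsilon))^{3/2} = o_\epsilon(1)$.
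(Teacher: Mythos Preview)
Your approach is essentially identical to the paper's: reduce to the integral $\int_{\mc B_\epsilon} e^{-U/\epsilon}\, \bs q_\epsilon\cdot \bb S\,\bs q_\epsilon\,d\bs x$, Taylor-expand $U$ to get the Gaussian with matrix $Q=\bb L+2\alpha\bs v\bs v^\dagger$, invoke the identity $\bs v^\dagger\bb L^{-1}\bs v=-1/\alpha$ (Lemma~\ref{bl1}) and the determinant formula $\det Q=-\det\bb L$ (Lemma~\ref{bl2}), and evaluate.

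There is one bookkeeping slip. In Section~\ref{sec3} the effective matrix in the general framework of Section~\ref{sec0} is $\epsilon\bb M$ (this is why the capacity formula \eqref{7-16} and the vector fields $\Theta_{\bs q_\epsilon}$ carry an explicit $\epsilon$), so the quadratic form uses $(\epsilon\bb S)^{-1}$, not $\bb S^{-1}$. Consequently the correct prefactor is $\epsilon/Z_\epsilon$, not $\epsilon^2/Z_\epsilon$; with your extra $\epsilon$ the final answer comes out as $\epsilon\, T_\epsilon\,\omega(\bs 0)$ rather than $T_\epsilon\,\omega(\bs 0)$. Compare the paper's computation in the proof of Lemma~\ref{tp12}, where $\|\Theta_a-\Theta_b\|^2$ is written with $\epsilon/Z_\epsilon$ in front.

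A second minor point: you assert positive-definiteness of $Q$ but only verify $\det Q>0$. Since $\bb L$ has exactly one negative eigenvalue and $2\alpha\bs v\bs v^\dagger$ is a positive rank-one perturbation, eigenvalue interlacing forces the $d-1$ largest eigenvalues of $Q$ to be positive, and then $\det Q>0$ pins down the sign of the smallest one; alternatively this is the content of Lemma~\ref{bl2}. Also note that in this section $\bb M$ is a constant matrix, so your expansion $\bs v\cdot\bb S(\bs x)\bs v=\bs v\cdot\bb S(\bs 0)\bs v+O(\|\bs x\|)$ is unnecessary.
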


Recall from \eqref{ins01} that $\delta= K \sqrt{\epsilon
  \log(1/\epsilon)}$, where $K$ is an arbitrary positive number.

\begin{lemma}
\label{tp1}
Assume that $\eta\ll \delta$, in the sense that $\lim_{\epsilon \to 0}
\eta(\epsilon)/\delta(\epsilon) =0$.  There exist positive constants
$C_{1},\,C_{2},\,C_{3}$ and $C_{4}$, which do not depends on
$\epsilon$ and $\eta$, such that
\begin{align*}
& \big\Vert \Phi_{p_{\epsilon}^{(\eta)}} -
\Theta_{\bs{q}_{\epsilon}}\big\Vert ^{2} \\
&\qquad \;\le\; \frac {C_{1} }{Z_{\epsilon}}\, e^{- H/\epsilon}\,
\Big\{ \frac{\epsilon^{C_{2}K^{2}}}{\eta^{d}}\,
e^{C_{3} \eta/ \epsilon} \,+\, o_{\epsilon}(1)\,
\epsilon^{d/2} \Big[ \Big(\frac{\eta}{\epsilon}\Big)^{2}
\,+\, \frac{\eta}{\epsilon} \Big]
\Big(1+e^{C_{4}\eta\delta/\epsilon}\Big)\Big\}\;.
\end{align*}
A similar estimate holds for $\Phi_{p_{\epsilon}^{(\eta)}}^{*}$,
$\Phi_{(p_{\epsilon}^{*})^{(\eta)}}$ and
$\Phi_{(p_{\epsilon}^{*})^{(\eta)}}^{*}$.
\end{lemma}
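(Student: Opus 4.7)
The plan is to unfold the squared norm into the weighted integral
\[(\epsilon/Z_\epsilon)\int e^{-U/\epsilon}(\nabla p_\epsilon^{(\eta)}-\bs q_\epsilon)\cdot \bb M\bb S^{-1}\bb M^\dagger(\nabla p_\epsilon^{(\eta)}-\bs q_\epsilon)\,d\bs x,\]
then use the uniform ellipticity of $\bb M$ and $\bb S$ to reduce to bounding $(\epsilon/Z_\epsilon)\int e^{-U/\epsilon}\Vert\nabla p_\epsilon^{(\eta)}-\bs q_\epsilon\Vert^2 d\bs x$, and estimate this integrand region by region. The function $p_\epsilon$ is piecewise smooth but has jump discontinuities along the set $\mathcal J:=\partial\mc B_\epsilon\cup(\partial\mc W_1^\epsilon\setminus\partial\mc B_\epsilon)$; away from $\mathcal J$ one has $\bs q_\epsilon=\nabla p_\epsilon$ classically, whereas the distributional gradient of $p_\epsilon$ equals $\bs q_\epsilon$ plus the surface measures on the smooth pieces of $\mathcal J$ weighted by the corresponding jumps.

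\textbf{Far-field zone.} Let $N_\eta$ be the $\eta$-neighborhood of $\mathcal J$. For $\bs x\notin N_\eta$, the support of $\phi_\eta(\bs x-\cdot)$ avoids $\mathcal J$, so either $\bs x$ lies outside $\mc B_\epsilon$ and both $\nabla p_\epsilon^{(\eta)}(\bs x)$ and $\bs q_\epsilon(\bs x)$ vanish, or $\bs x\in\mc B_\epsilon\setminus N_\eta$ and $\nabla p_\epsilon^{(\eta)}(\bs x)=(\bs q_\epsilon*\phi_\eta)(\bs x)$. In the latter case I expand $\bs q_\epsilon(\bs x-\bs y)=\bs q_\epsilon(\bs x)-\bs y\cdot\nabla\bs q_\epsilon(\bs x)+O(\Vert\bs y\Vert^2\Vert D^2\bs q_\epsilon\Vert)$ inside the convolution; the explicit Gaussian form $\bs q_\epsilon(\bs z)=C_\epsilon^{-1}e^{-\alpha(\bs z\cdot\bs v)^2/(2\epsilon)}\bs v$ gives $\Vert\nabla\bs q_\epsilon(\bs z)\Vert\le C\delta\epsilon^{-3/2}e^{-\alpha(\bs z\cdot\bs v)^2/(2\epsilon)}$ and a similar bound for $D^2\bs q_\epsilon$. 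Multiplying by $e^{-U/\epsilon}$, expanding $U$ quadratically at the saddle point, and carrying out the resulting Gaussian integral in the $d$ coordinates contributes the prefactor $\epsilon^{d/2}$; this produces the terms $\epsilon^{d/2}(\eta/\epsilon)^2$ (from the Hessian remainder) and $\epsilon^{d/2}(\eta/\epsilon)$ (from the first-order Taylor term, which does not cancel a priori since $\phi$ need not be even). The factor $e^{C_4\eta\delta/\epsilon}$ absorbs $e^{-U(\bs y)/\epsilon}/e^{-U(\bs x)/\epsilon}\leq e^{C\eta\delta/\epsilon}$ for $\bs y\in B_\eta(\bs x)\subset\mc B_\epsilon$, which follows from $\Vert\nabla U\Vert=O(\delta)$ on $\mc B_\epsilon$.

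\textbf{Near-boundary zone.} On $N_\eta$, integration by parts in the convolution gives
\[\nabla p_\epsilon^{(\eta)}(\bs x)=(\bs q_\epsilon*\phi_\eta)(\bs x)+\sum_{\Gamma}\int_{\Gamma\cap B_\eta(\bs x)}[p_\epsilon]_\Gamma(\bs y)\,\bs n_\Gamma(\bs y)\,\phi_\eta(\bs x-\bs y)\,\sigma(d\bs y),\]
where $\Gamma$ ranges over the smooth pieces of $\mathcal J$ and $[p_\epsilon]_\Gamma$ is the jump across $\Gamma$. I split $\mathcal J$ by the size of $U$ on it: on the lateral faces $\partial_\pm\mc C_\epsilon\cap\Omega_\epsilon$ the jump is only a Gaussian tail of size $O(\epsilon^{\alpha K^2/2})$, while on the high-energy pieces $\partial\mc B_\epsilon\cap\partial\Omega_\epsilon$ and $\partial\mc W_1^\epsilon\cap\partial\mc X_\epsilon$ the jump may be of order $1$ but Lemma~\ref{lems0} and the definition of $\Omega_\epsilon$ force $U\geq H+(\lambda_1/4)\delta^2$, so $e^{-U/\epsilon}\leq e^{-H/\epsilon}\epsilon^{(\lambda_1/4)K^2}$. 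Bounding $\phi_\eta\leq C/\eta^d$ and integrating over the surface $\Gamma\cap B_\eta(\bs x)$ and then over $\bs x\in N_\eta$, both cases produce the term $\epsilon^{C_2K^2}/\eta^d$, with $C_2$ being any positive number smaller than $\min(\alpha,\lambda_1)/2$ for $\epsilon$ small; $e^{C_3\eta/\epsilon}$ comes again from comparing the weight at $\bs x$ with that at a nearby point of $\mathcal J$. Summing the contributions of the two zones produces the stated inequality, and the bounds for $\Phi_{p_\epsilon^{(\eta)}}^*$, $\Phi_{(p_\epsilon^*)^{(\eta)}}$, $\Phi_{(p_\epsilon^*)^{(\eta)}}^*$ follow by the same argument with $\bb M$ replaced by $\bb M^\dagger$ or $(\alpha,\bs v)$ by $(\alpha^*,\bs v^*)$.

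\textbf{Main obstacle.} The delicate step is the near-boundary one and, within it, the high-energy pieces of $\mathcal J$, where the jump is of order one and $\phi_\eta\sim\eta^{-d}$, so only the weight $e^{-U/\epsilon}$ can rescue us. One has to check that $N_\eta$ does not reach too far into $\mc B_\epsilon$ before the gain $\epsilon^{(\lambda_1/4)K^2}$ is realised; the assumption $\eta\ll\delta$ is exactly what ensures that $N_\eta$ is a layer much thinner than the scale $\delta$ over which $U$ descends toward the saddle, so the weight at any point of $N_\eta$ differs from its value at the nearest point of the high-energy boundary only by the harmless factor $e^{C_3\eta/\epsilon}$.
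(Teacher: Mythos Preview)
Your overall architecture is the same as the paper's: reduce to the $L^2$-weighted norm of $\nabla p_\epsilon^{(\eta)}-\bs q_\epsilon$, use the jump formula for the mollified gradient, and Taylor-expand $\bs q_\epsilon$ away from the discontinuity set. The paper organizes this slightly differently, inserting the intermediate field $\bs q_\epsilon^{(\eta)}=\bs q_\epsilon*\phi_\eta$ and splitting by the triangle inequality into $\Vert\Phi_{p_\epsilon^{(\eta)}}-\Theta_{\bs q_\epsilon^{(\eta)}}\Vert^2$ (pure jump contribution) and $\Vert\Theta_{\bs q_\epsilon^{(\eta)}}-\Theta_{\bs q_\epsilon}\Vert^2$ (pure convolution error), rather than by zones; but the ingredients are the same.

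There is, however, a genuine gap in your near-boundary step. You assert that on the lateral faces $\partial_\pm\mc C_\epsilon\cap\Omega_\epsilon$ the jump of $p_\epsilon$ is a Gaussian tail of size $O(\epsilon^{\alpha K^2/2})$. This is not true in general: the jump at $\bs z\in\partial_+\mc C_\epsilon$ equals $(2\pi)^{-1/2}\int_{\sqrt{\alpha/\epsilon}(\bs z\cdot\bs v)}^\infty e^{-t^2/2}dt$, and $\bs z\cdot\bs v$ need not be bounded below by a positive multiple of $\delta$ on $\partial_+\mc C_\epsilon\cap\Omega_\epsilon$; it can even be negative there. What the paper proves (Assertion~\ref{bl5}) is a dichotomy: for each $\bs z\in\partial_\pm\mc C_\epsilon$ either $\bs z\cdot\bs v\ge c\delta$ \emph{or} $\bs z\cdot\bb L\bs z\ge c\delta^2$. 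Neither alternative alone controls the jump; one must combine the jump with the weight and use that $\bb L+2\alpha\bs v\bs v^\dagger$ is positive definite (Lemma~\ref{bl2}) to obtain the uniform bound $[\mf d(\bs z)]^2e^{-U(\bs z)/\epsilon}\le Ce^{-H/\epsilon}\epsilon^{cK^2}$. Your splitting ``small jump on lateral faces / high energy elsewhere'' therefore does not cover all cases.

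A second, smaller omission: on $N_\eta$ you write $\nabla p_\epsilon^{(\eta)}=(\bs q_\epsilon*\phi_\eta)+(\text{surface terms})$ and bound only the surface terms, but you still owe the reader a bound on $(\bs q_\epsilon*\phi_\eta)-\bs q_\epsilon$ over $N_\eta$, where the Taylor-expansion argument no longer applies because $B_\eta(\bs x)$ may exit $\mc B_\epsilon$. In the paper this is Assertion~\ref{as02}, and it, not the first-order Taylor term, is the actual source of the linear factor $\eta/\epsilon$ in the statement.
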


Since $p_{\epsilon}$ is discontinuous along
$\partial\mc{X}_{\epsilon}\cup\partial\mc{B}_{\epsilon}$, the function
$p_{\epsilon}^{(\eta)}$ has a bump around this boundary.  The first
term in the bracket takes this into account. Taking $\eta=\epsilon^2$
and $K$ a large enough real number, it follows from Lemma \ref{tp1}
that
\begin{equation}
\label{se01}
\big\Vert  \Phi_{p_{\epsilon}^{(\epsilon^2)}} -
\Theta_{\bs{q}_{\epsilon}}\big\Vert^{2}
\;=\;o_{\epsilon}(1)\,T_{\epsilon}\;.
\end{equation}
We could have chosen $\eta=\epsilon$ to complete the proof of
  Theorem \ref{thmp1}. We selected $\eta=\epsilon^2$ to obtain
  an error of order $O(\epsilon^{1/2}(\log \epsilon)^{3/2})$, as
  stated in Remark \ref{rem53}.

\begin{lemma}
\label{tp2}
We have that
\begin{equation}
\label{se02}
\big\langle \Theta_{\bs{q}_{\epsilon}}^{*},
\Psi_{h_{\mc{V}_{1},\mc{V}_{2}}} \big\rangle
\;=\;-\, [1+o_{\epsilon}(1)] \, T_{\epsilon}\,
\omega(\bs{0})\;.
\end{equation}
The same estimate holds for $\Theta_{\bs{q}_{\epsilon}^{*}}$.
\end{lemma}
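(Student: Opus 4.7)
The plan is to rewrite the pairing as an explicit integral over the mesoscopic saddle neighborhood $\mc B_\epsilon$, reduce the volume integral to a dominant boundary contribution by integration by parts, and then evaluate that contribution by Laplace's method. The architecture mirrors the way the capacity itself is evaluated.

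\textbf{Reduction to an explicit integral.} Since $\bs q_\epsilon = \nabla p_\epsilon$ is supported on $\mc B_\epsilon$, unwinding the definitions of $\Theta_{\bs q_\epsilon}^*$ and $\Psi_{h_{\mc V_1,\mc V_2}}$ via identity \eqref{04} (adapted with the $\epsilon/Z_\epsilon$ normalization proper to this section) yields
\begin{equation*}
\<\Theta_{\bs q_\epsilon}^* \,,\, \Psi_{h_{\mc V_1,\mc V_2}}\>
\;=\; \frac{\epsilon}{Z_\epsilon}\int_{\mc B_\epsilon}
\nabla h_{\mc V_1,\mc V_2}\cdot \bb M \nabla p_\epsilon \; e^{-U/\epsilon}\, d\bs z\;.
\end{equation*}
Green's identity together with $\nabla\cdot [e^{-U/\epsilon}\bb M\nabla p_\epsilon]=\epsilon^{-1}e^{-U/\epsilon}\mc L_\epsilon p_\epsilon$ converts this into a boundary integral on $\partial\mc B_\epsilon$ plus a volume term with integrand $h_{\mc V_1,\mc V_2}\,\mc L_\epsilon p_\epsilon\,e^{-U/\epsilon}$. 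A short algebraic computation using $\bb L\bb M\bs v = -\mu\bs v$ and $\alpha=\mu/(\bs v\cdot\bb M\bs v)$ shows $\widetilde{\mc L}_\epsilon p_\epsilon = 0$, so $\mc L_\epsilon p_\epsilon$ reduces to the cubic Taylor remainder of $U$ and the local variation of the entries of $\bb M$; these residuals are controlled by the Gaussian factor $\Vert\nabla p_\epsilon\Vert\, e^{-U/\epsilon}$ on $\mc B_\epsilon$ and, after pairing with the bounded function $h_{\mc V_1,\mc V_2}$, make the volume term $o_\epsilon(1)\,T_\epsilon$.

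\textbf{Boundary analysis and Laplace evaluation.} The boundary $\partial\mc B_\epsilon$ splits into the flat caps $\partial_\pm\mc C_\epsilon\cap\overline{\mc B_\epsilon}$, the lateral faces on $\partial_0\mc C_\epsilon$, and the level-set portion on $\partial\Omega_\epsilon$. Lemma \ref{lems0} and the defining inequality of $\Omega_\epsilon$ force $U\ge H+c\delta^2$ on the last two pieces, so for $K$ large enough they contribute $o_\epsilon(1)\,T_\epsilon$. On $\partial_\pm\mc C_\epsilon$ I substitute the explicit profile $\bb M\nabla p_\epsilon = C_\epsilon^{-1} e^{-\alpha(\bs z\cdot\bs v)^2/(2\epsilon)}\bb M\bs v$, the Taylor expansion $U(\bs z)= H + \frac{1}{2}\bs z\cdot\bb L\bs z + O(\delta^3)$, and the near-saddle approximation $h_{\mc V_1,\mc V_2}(\bs z)\approx p_\epsilon^*(\bs z)$. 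The resulting transverse $(d-1)$-dimensional Gaussian integral is evaluated by Laplace's method; collecting $C_\epsilon = \sqrt{2\pi\epsilon/\alpha}$, $C_\epsilon^* = \sqrt{2\pi\epsilon/\alpha^*}$, using the algebraic identity $\mu^2 = \alpha\alpha^*(\bs v\cdot\bb M\bs v)(\bs v^*\cdot\bb M\bs v^*)$, and applying the determinant identity for the quadratic form $\bs z\cdot\bb L\bs z + \alpha(\bs z\cdot\bs v)^2 + \alpha^*(\bs z\cdot\bs v^*)^2$ restricted to $\{z_1 = \pm \delta\}$, the prefactor collapses to $\omega(\bs 0) = \mu/\sqrt{-\det\bb L}$. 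The announced negative sign emerges from combining the outward orientations $\bs n = \pm\bs e_1$ on $\partial_\pm\mc C_\epsilon$ with the respective limiting values of $h_{\mc V_1,\mc V_2}$ on these caps.

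\textbf{Main obstacle and symmetric case.} The sharpest analytic input is the near-saddle asymptotic $h_{\mc V_1,\mc V_2}\approx p_\epsilon^*$, needed in an $L^1(e^{-U/\epsilon}\,d\sigma)$ sense on $\partial_\pm\mc B_\epsilon$; this is where the genuinely analytic content lies, and will be established by barrier arguments, the maximum principle, and the probabilistic representation $h_{\mc V_1,\mc V_2}(\bs x) = \bb P_{\bs x}[H_{\mc V_1} < H_{\mc V_2}]$, drawing on the general diffusion estimates of Section \ref{sec5}. Everything else is algebra and Gaussian calculus. The companion statement for $\Theta_{\bs q_\epsilon^*}$ is obtained by the same argument, with $\bb M \leftrightarrow \bb M^\dagger$, $\bs v \leftrightarrow \bs v^*$, and $\alpha \leftrightarrow \alpha^*$ throughout; the symmetry $\Cap = \Cap^*$ of Lemma \ref{lem2} ensures the two pairings agree in the leading order.
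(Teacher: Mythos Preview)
Your overall architecture matches the paper: write the pairing as $\frac{\epsilon}{Z_\epsilon}\int_{\mc B_\epsilon}\nabla h_{\mc V_1,\mc V_2}\cdot\bb M\nabla p_\epsilon\,e^{-U/\epsilon}\,d\bs z$, integrate by parts, kill the volume term via $|\mc L_\epsilon p_\epsilon|=O(\delta^2/\sqrt{\epsilon})e^{-(\alpha/2\epsilon)(\bs z\cdot\bs v)^2}$ (this is Lemma~\ref{pt21}), and reduce to the boundary $\partial\mc B_\epsilon$. The lateral and level-set pieces are indeed negligible for $K$ large.

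The gap is in your treatment of the flat caps $\partial_\pm\mc B_\epsilon$. You propose to substitute the approximation $h_{\mc V_1,\mc V_2}\approx p_\epsilon^*$ there and then evaluate a Gaussian involving the combined quadratic form $\bs z\cdot\bb L\bs z+\alpha(\bs z\cdot\bs v)^2+\alpha^*(\bs z\cdot\bs v^*)^2$. This is both unnecessary and, as written, not tractable: $p_\epsilon^*$ is an error-function profile, not a Gaussian factor, so plugging it in does not produce a $(d-1)$-dimensional Gaussian integral amenable to Laplace's method. Moreover, the heuristic approximation for $h_{\mc V_1,\mc V_2}$ (which solves $\mc L_\epsilon h=0$) is $p_\epsilon$, not $p_\epsilon^*$; and in any case Section~\ref{sec5} does \emph{not} establish $h\approx p_\epsilon$ or $h\approx p_\epsilon^*$ near the saddle---it only establishes, via Proposition~\ref{7-l11}, the much weaker fact that $h_{\mc V_1,\mc V_2}$ is exponentially close to $1$ inside $\mc W_1$ and to $0$ inside $\mc W_2$.

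The paper exploits exactly this weaker input. On $\partial_+\mc B_\epsilon$ one writes $h=1-(1-h)$ and shows $\int_{\partial_+\mc B_\epsilon}(1-h)\,\Theta_{\bs q_\epsilon}^*\cdot\bs n\,\sigma=o_\epsilon(1)T_\epsilon$ using Assertion~\ref{bl10} (i.e.\ Proposition~\ref{7-l11}); on $\partial_-\mc B_\epsilon$ the integral is directly $o_\epsilon(1)T_\epsilon$ since $h$ is small there. What remains is the clean integral $\int_{\partial_+\mc B_\epsilon}\Theta_{\bs q_\epsilon}^*\cdot\bs n\,\sigma$, a genuine $(d-1)$-dimensional Gaussian with quadratic form $\bb L+\alpha\bs v\bs v^\dagger$ restricted to $\{z_1=\delta\}$; an affine change of variables and the determinant identity of Lemma~\ref{bl1} reduce it to $[1+o_\epsilon(1)]T_\epsilon\,\omega(\bs 0)$. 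So the route is: replace $h$ by the constant $1$ (resp.\ $0$) on the caps, not by $p_\epsilon^*$; then only the $\bs v$-direction Gaussian survives, and the determinant algebra is the one already prepared in Lemmata~\ref{bl1}--\ref{bl3}. Your ``sharpest analytic input'' is replaced by a strictly weaker one that Section~\ref{sec5} actually delivers.
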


\section{The equilibrium potential}
\label{sec5}

The main result of this section establishes a pointwise bound on the equilibrium potential between two open sets. The proofs roughly go along the lines of \cite{BEGK1} in the reversible case. We provide detailed proofs for sake of completeness.

We start recalling some classical estimates on the solutions of
elliptic equations.  Fix $0<\alpha<1$. Unless otherwise stated,
throughout this section $\Omega\subset \bb R^d$ is a domain with
boundary in $C^{2,\alpha}$, $\mf g$ a function in $L^2(\Omega) \cap
C^{\alpha}(\overline{\Omega})$ and $\mf b$ a function in
$W^{1,2}(\Omega)\cap C^{2,\alpha}(\overline{\Omega})$, where the
reference measure is $Z^{-1}_\epsilon \exp\{-U(\bs{x})/\epsilon\}
d\bs{x}$. We examine the Dirichlet problem \eqref{22} with $\mc L$
replaced by $\mc L_\epsilon$.

\smallskip\noindent{\bf Harnack and H\"older estimates.}  In this
subsection, $\Omega\subset\bb R^d$ represents a {\sl bounded} domain and
$W^{2,p} (\Omega)$, $p\ge 1$, the space of twice weakly differentiable
functions whose derivatives of order $n\le 2$ are in $L^p(\Omega)$.

Since $\bb M$ is a positive-definite matrix, there exist $0<\lambda <
\Lambda$ such that
\begin{equation}
\label{7-5}
\lambda\, \Vert \bs x \Vert^2 \;\le\; \bs x \cdot \bb M \, \bs x
\;\le\; \Lambda\, \Vert \bs x \Vert^2 \quad
\text{for all $\bs x\in \bb R^d$.}
\end{equation}
Clearly, $\gamma=\Lambda/\lambda<\infty$.  For a domain $\Omega
\subset \bb R^d$, let
\begin{equation}
\label{7-1}
\nu_{\Omega} \;=\; \frac 1{\epsilon^2} \,
\frac{\Vert \bb M \Vert^2_2} {\lambda^2} \,
\sup_{\bs{x}\in \Omega} \Vert (\nabla U)(\bs x)\Vert^2 \;,
\end{equation}
where $\Vert (\nabla U)(\bs x)\Vert^2 = \sum_j (\partial_{x_j} U)(\bs
x)^2$, $\Vert \bb M \Vert^2_2 = \sum_{j,k} \bb M_{j,k}^2$.

The Harnack inequality presented in the next result is \cite[Corollary
9.25]{gt}. Denote by $B_{r}(\bs x)$ the open ball of radius $r>0$
centered at $\bs x\in\bb R^d$.

\begin{lemma}
\label{7-l4}
Let $u\in W^{2,d} (\Omega)$ be a non-negative function which satisfies
the equation $\mc L_\epsilon u = 0$ in $\Omega$. Suppose that
$B_{2R}(\bs x) \subset \Omega$ for some $R>0$, $\bs x\in\Omega$.
Then, there exists a constant $C_0 = C_0(d, \gamma, \nu_{\Omega}
R^2)<\infty$ such that
\begin{equation*}
\sup_{\bs x\in B_R(\bs x)} u(\bs x) \;\le\; C_0
\, \inf_{\bs x\in B_R(\bs x)} u(\bs x) \;.
\end{equation*}
\end{lemma}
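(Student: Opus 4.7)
The statement is essentially a specialization of the classical Krylov--Safonov Harnack inequality for uniformly elliptic equations in non-divergence form, in the version given as Corollary 9.25 in \cite{gt}. My plan is to rewrite the equation $\mc L_\epsilon u = 0$ in non-divergence form, verify the hypotheses of that corollary with constants matching those in the statement, and invoke it directly.

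First I would recast the PDE. Expanding the generator,
\begin{equation*}
\mc L_\epsilon u \;=\; \epsilon\sum_{i,j=1}^d \bb M_{ij} \partial^2_{x_i x_j} u \;-\; \nabla U \cdot \bb M \, \nabla u \;=\; 0\;,
\end{equation*}
so dividing by $\epsilon$ the equation takes the form $\sum_{ij} a^{ij}(\bs x) \partial^2_{ij} u + \sum_i b^i(\bs x) \partial_i u = 0$ with $a^{ij}(\bs x) = \bb M_{ij}(\bs x)$ and $b(\bs x) = -(1/\epsilon)\bb M^{\dagger}(\bs x) \nabla U(\bs x)$. Note that we may symmetrize the $a^{ij}$ since the Hessian is symmetric, so only the symmetric part of $\bb M$ enters the second-order coefficients; the uniform ellipticity bounds \eqref{7-5} then read $\lambda |\xi|^2 \le a^{ij}\xi_i \xi_j \le \Lambda|\xi|^2$ with ratio $\Lambda/\lambda = \gamma$.

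Next I would bound the drift. By Cauchy--Schwarz, for every $\bs x \in \Omega$,
\begin{equation*}
|b(\bs x)| \;\le\; \frac{1}{\epsilon}\, \Vert \bb M(\bs x)\Vert_2 \, \Vert \nabla U(\bs x)\Vert \;\le\; \lambda \sqrt{\nu_\Omega}\;,
\end{equation*}
by the very definition \eqref{7-1} of $\nu_\Omega$. Hence the normalized drift is controlled by $|b|/\lambda \le \sqrt{\nu_\Omega}$ throughout $\Omega$ and there is no zeroth-order term. These are exactly the hypotheses under which Corollary 9.25 of \cite{gt} applies: since $u \in W^{2,d}(\Omega)$ is a non-negative strong solution and $B_{2R}(\bs x) \subset \Omega$, there exists a constant depending only on the dimension $d$, the ellipticity ratio $\gamma$, and the product $\sqrt{\nu_\Omega}\, R$, such that
\begin{equation*}
\sup_{B_R(\bs x)} u \;\le\; C_0 \, \inf_{B_R(\bs x)} u\;.
\end{equation*}
Writing the dependence as a function of $\nu_\Omega R^2 = (\sqrt{\nu_\Omega}\,R)^2$ yields exactly the claimed form $C_0 = C_0(d,\gamma,\nu_\Omega R^2)$.

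There is no real obstacle beyond bookkeeping: the only subtle point is to make sure that the normalization $\nu_\Omega R^2$ in the statement is consistent with the $\sqrt{\nu_\Omega}\, R$ appearing naturally in \cite[Cor.~9.25]{gt}, which is automatic once one observes that the constant in that corollary is a monotone function of the normalized drift-times-radius quantity. The $W^{2,d}$ regularity assumption is built into the statement precisely so that the Aleksandrov--Bakelman--Pucci machinery underlying Corollary 9.25 is available, and the assumption that $B_{2R}(\bs x) \subset \Omega$ guarantees the geometric room required by the iteration in the proof of that corollary.
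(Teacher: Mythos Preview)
Your proposal is correct and matches the paper's approach exactly: the paper does not give a proof but simply states that the result is \cite[Corollary~9.25]{gt}, and your argument amounts to verifying that the hypotheses of that corollary are satisfied with the indicated constants. Your bookkeeping of the drift bound $|b|/\lambda \le \sqrt{\nu_\Omega}$ from \eqref{7-1} is precisely the reason the constant depends on $\nu_\Omega R^2$.
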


Denote by $\text{\rm osc}(u,\mc A)$ the oscillation of a function
$u:\mc A \to \bb R$ in the set $\mc A$: $\text{\rm osc}(u,\mc A) =
\sup_{\bs x \in \mc A} u(\bs x) - \inf_{\bs x \in \mc A} u(\bs x)$.
The H\"older estimate stated below is \cite[Corollary 9.24]{gt}.

\begin{lemma}
\label{7-l2}
Let $u\in W^{2,d} (\Omega)$ satisfy the equation $\mc L_\epsilon u =
f$ in $\Omega$ for some $f\in L^d(\Omega)$.  Suppose that $B_{R_0}(\bs
x) \subset \Omega$ for some $R_0>0$, $\bs x\in\Omega$.  Then, there
exist constants $C_0 = C_0(d, \gamma, \nu_{\Omega} R^2_0)<\infty$,
$\alpha = \alpha(d, \gamma, \nu_{\Omega} R^2_0)>0$ such that for all
$R\le R_0$,
\begin{equation*}
\text{\rm osc} (u,B_R(\bs x)) \;\le\; C_0 \Big(\frac{R}{R_0}\Big)^\alpha
\, \Big( \text{\rm osc} (u,B_{R_0}(\bs x)) + R_0 \, \Vert f\Vert_{d,
  B_{R_0}(\bs x)} \Big)\;,
\end{equation*}
where $\Vert f\Vert_{d, B_{R_0}(\bs x)}$ stands for the
$L^d(B_{R_0}(\bs x))$ norm of $f$.
\end{lemma}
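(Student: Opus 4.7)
The statement is a direct application of \cite[Corollary 9.24]{gt}, so my plan is to verify that $\mc L_\epsilon$ fits that framework and to identify the parameter $\nu_\Omega$ in \eqref{7-1} as the correct scale-invariant drift-to-ellipticity ratio appearing there.

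First I would rewrite $\mc L_\epsilon$ in non-divergence form. Expanding the divergence in the explicit representation of the generator, with $V$ replaced by $U/\epsilon$ and an overall factor $\epsilon$, one gets
\begin{equation*}
(\mc L_\epsilon u)(\bs x) \;=\; \epsilon\sum_{j,k} \bb S_{j,k}(\bs x)\,\partial^2_{x_j x_k} u \;+\; \sum_j \Big\{-(\bb M^\dagger \nabla U)_j(\bs x) + \epsilon \sum_k \partial_{x_k}\bb M_{k,j}(\bs x)\Big\}\, \partial_{x_j} u\;.
\end{equation*}
By \eqref{7-5}, the second-order part is uniformly elliptic with constants $\epsilon\lambda$ and $\epsilon\Lambda$, so the ellipticity ratio is $\gamma=\Lambda/\lambda$, independent of $\epsilon$. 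The sup-norm of the drift divided by the smallest eigenvalue $\epsilon\lambda$ of the leading coefficient is bounded, up to an $\epsilon$-independent factor coming from $\bb M$ and its derivatives on the bounded set $\Omega$, by $\Vert\bb M\Vert_2\,\sup_{\Omega}\Vert\nabla U\Vert/(\epsilon\lambda)$, which is precisely $\nu_\Omega^{1/2}$ in view of \eqref{7-1}. Thus the scale-invariant parameter controlling the constants in the Krylov-Safonov theory on a ball of radius $R_0$ is exactly $\nu_\Omega R_0^2$, in perfect agreement with the dependence asserted in the lemma.

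With these identifications, the statement is the conclusion of \cite[Cor.\ 9.24]{gt} word for word, so no independent computation is required. The mechanism underlying that corollary, which I would briefly recall in a remark, is the Krylov-Safonov weak Harnack inequality \cite[Thm.\ 9.22]{gt} applied alternately to $\sup_{B_r} u - u$ and $u-\inf_{B_r} u$ on dyadically shrinking balls $B_{R_0/2^k}(\bs x)$; this produces a geometric decay of $\textrm{osc}(u,B_{R_0/2^k}(\bs x))$, which gives the H\"older modulus with an exponent $\alpha$ depending only on the listed constants, while the forcing term $R_0\Vert f\Vert_{d,B_{R_0}(\bs x)}$ enters at each step through the Aleksandrov-Bakelman-Pucci maximum principle. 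The only step worth double-checking is the bookkeeping that brings $\mc L_\epsilon$ into the normalized form used in \cite{gt}, and I expect this to be routine rather than the main obstacle, especially because the same identification has already been used for $\mc L_\epsilon$ in the Harnack statement of Lemma \ref{7-l4}.
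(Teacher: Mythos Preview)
Your proposal is correct and matches the paper's approach exactly: the paper simply cites \cite[Corollary 9.24]{gt} without further argument, and your verification that $\mc L_\epsilon$ fits that framework with ellipticity ratio $\gamma=\Lambda/\lambda$ and scale-invariant drift parameter $\nu_\Omega R_0^2$ is precisely the bookkeeping implicit in that citation. Note that in the specific setting of Section~\ref{sec3} the matrix $\bb M$ is constant, so the derivative term $\epsilon\sum_k\partial_{x_k}\bb M_{k,j}$ in your drift actually vanishes, but your more general expression is harmless and in fact consistent with the remark at the end of Section~\ref{sec3} allowing variable $\bb M(\bs x)$.
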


\smallskip\noindent{\bf The Green function.}  We present in this
subsection several properties of the Green function associated to the
boundary-value problem \eqref{22}. We do not assume
$\Omega\subset\bb R^d$ to be bounded.

By the assumptions (P4) in Section \ref{sec3} and by
\cite[Theorems 6.1.3, 4.2.1 (ii), 4.2.5]{p95}, the generator $\mathcal{L}_{\epsilon}$ possesses a
non-negative Green function, denoted by $G_{\Omega}: \Omega \times
\Omega \to \bb R_+$, such that for each $\bs y\in\Omega$,
\begin{equation}
\label{7-13}
\text{$G_{\Omega}(\cdot,\,\bs{y}) \in
C^{2,\alpha}(\Omega\setminus \{\bs y\})$,\quad $\mathcal{L}_{\epsilon}
G_{\Omega}(\cdot,\,\bs{y}) = 0$ on $\Omega\setminus \{\bs y\}$.}
\end{equation}

The solutions of the boundary-value problem \eqref{22} can be
represented in terms of the Green function.  Next result follows from
hypothesis (P4), which guarantees that the process is positive
recurrent, and Theorems 3.6.4 and 4.3.7 in \cite{p95} with
$\lambda=0$.

\begin{lemma}
\label{7-l14}
Assume that $\Omega$ has a $C^{2,\alpha}$-boundary for some
$0<\alpha<1$. Then, for any function $g$ in
$C^\alpha(\overline{\Omega}) \cap L^2(\Omega)$ which vanishes at
$\partial \Omega$, the function
\begin{equation*}
f(\bs{x})\;=\;\int_{\Omega} G_{\Omega}(\bs{x},\,\bs{y}) \,
g(\bs{y}) \, d\bs{y}
\end{equation*}
belongs to $C^{2,\alpha}(\overline{\Omega})$ and is the unique
solution of the problem \eqref{22} with $\mf g= g$, $\mf b=0$.
\end{lemma}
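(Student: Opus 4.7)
The plan is to identify the function $f$ with the probabilistic potential
\begin{equation*}
u(\bs{x}) \;=\; \bb E_{\bs x} \Big[ \int_0^{H_{\Omega^c}} g(X_t) \, dt \Big]
\end{equation*}
and then derive both existence/regularity and uniqueness from standard elliptic/probabilistic results, rather than building the proof from scratch. The first step is to check the hypotheses that make the probabilistic machinery available: assumption (P4) yields the growth of $U$ needed to verify \eqref{18}--\eqref{20} for the rescaled generator $\mc L_\epsilon$, so by \cite[Theorem~6.1.3]{p95} the diffusion $X^\epsilon_t$ is positive recurrent. Consequently, $\bb E_{\bs x}[H_{\Omega^c}] < \infty$ for every $\bs x\in\Omega$, and since $g$ is bounded on $\overline{\Omega}$ (being continuous there), the integrand above is well defined and finite.

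Next, I would invoke \cite[Theorem~3.6.4]{p95}, which gives the spectral/kernel representation of the $\lambda=0$ potential operator for positive recurrent diffusions in terms of the Green function $G_\Omega$, yielding precisely
\begin{equation*}
u(\bs{x}) \;=\; \int_{\Omega} G_\Omega(\bs{x},\bs{y})\, g(\bs{y}) \, d\bs{y} \;=\; f(\bs{x})\;.
\end{equation*}
Having made this identification, I would then invoke \cite[Theorem~4.3.7]{p95} with $\lambda=0$: since $\Omega$ has a $C^{2,\alpha}$-boundary, $g\in C^\alpha(\overline{\Omega})\cap L^2(\Omega)$, and the coefficients of $\mc L_\epsilon$ are smooth with uniform ellipticity, that theorem asserts that $f$ belongs to $C^{2,\alpha}(\overline{\Omega})$ and solves $\mc L_\epsilon f = -g$ in $\Omega$ with $f = 0$ on $\partial\Omega$, which is exactly \eqref{22} with $\mf g = g$, $\mf b = 0$. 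The hypothesis that $g$ vanishes on $\partial\Omega$ ensures compatibility of the interior source with the vanishing boundary data.

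Finally, uniqueness is immediate: if $f_1, f_2$ were two solutions, then $w = f_1 - f_2 \in C^{2,\alpha}(\overline{\Omega})$ would satisfy $\mc L_\epsilon w = 0$ in $\Omega$ with $w = 0$ on $\partial\Omega$, so by the maximum principle \cite[Theorem~8.1]{gt} applied to both $w$ and $-w$ (as in \eqref{25}), $w\equiv 0$.

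The step I expect to be the genuine obstacle, if one wanted a fully self-contained argument rather than a citation, is the construction of the Green function $G_\Omega$ with the regularity stated in \eqref{7-13} for the non-self-adjoint operator $\mc L_\epsilon$ on a possibly unbounded domain, together with the identification of the probabilistic potential with integration against $G_\Omega$. In the self-adjoint case this is classical, but in the non-reversible, non-compact setting one truly relies on the positive recurrence provided by (P4) to control the Green function globally; this is precisely what \cite[Theorems~3.6.4 and 4.3.7]{p95} package for us, so my plan is simply to verify their hypotheses and apply them.
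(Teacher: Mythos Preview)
Your proposal is correct and is essentially identical to the paper's own treatment: the paper does not give a self-contained proof of this lemma but simply states that it ``follows from hypothesis (P4), which guarantees that the process is positive recurrent, and Theorems 3.6.4 and 4.3.7 in \cite{p95} with $\lambda=0$.'' Your write-up spells out the same citations and adds the maximum-principle argument for uniqueness, which the paper has already recorded in \eqref{25}.
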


The previous result asserts that the Green function, as an operator, is
the inverse of $-\mc L_\epsilon$. In particular, it inherits the dual
properties of the generator. More precisely, if we denote by
$G_{\Omega}^{*}$ the Green function of the adjoint generator
$\mathcal{L}_{\epsilon}^{*}$, it follows from the previous lemma that
$G_{\Omega}^{*}$ is the adjoint of $G_{\Omega}$ in $L^2(\mu_\epsilon)$
so that
\begin{equation}
\label{e62}
e^{-U(\bs{x})/\epsilon} \, G_{\Omega}(\bs{x},\,\bs{y})
\;=\;
e^{-U(\bs{y})/\epsilon} \,G_{\Omega}^{*}(\bs{y},\,\bs{x})\;,
\quad \bs x\; \not = \; \bs y\in\overline{\Omega}\;.
\end{equation}

By Lemma \ref{7-l14},
\begin{equation}
\label{7-11}
G_{\Omega}(\bs{x},\,\bs{y})\;=\; 0 \quad \text{for all $\bs
  x\in\partial\Omega$, $\bs y\in\Omega$.}
\end{equation}
On the other hand, by \eqref{e62} and \eqref{7-11} for $G_{\Omega}^*$ in place
of $G_{\Omega}$,
\begin{equation}
\label{7-12}
G_{\Omega}(\bs{x},\,\bs{y})\;=\; 0 \quad \text{for all $\bs
  x\in\Omega$, $\bs y\in\partial\Omega$.}
\end{equation}
Of course, all previous properties are in force for the adjoint Green
function $G_{\Omega}^*$.

Next result is Theorem 4.2.8 in \cite{p95}.
\begin{lemma}
\label{7-l5}
For each compact set $\mc K \subset \Omega$, there exist constants
$0<c_1<c_2<\infty$ and $r_0 \in (0,1)$ such that for each $\bs x\in
\mc K$,
\begin{equation*}
c_1\, \Vert \bs y-\bs x\Vert^{2-d} \;\le\; G_{\Omega}(\bs x, \bs y)
\;\le\; c_2\, \Vert \bs y-\bs x\Vert^{2-d}
\end{equation*}
for all $\Vert \bs y-\bs x\Vert < r_0$ if $d\ge 3$, and
\begin{equation*}
- c_1\, \log \Vert \bs y-\bs x\Vert \;\le\; G_{\Omega}(\bs x, \bs y)
\;\le\; - c_2\, \log \Vert \bs y-\bs x\Vert
\end{equation*}
for all $\Vert \bs y-\bs x\Vert < r_0$ if $d=2$.
\end{lemma}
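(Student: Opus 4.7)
The plan is to reduce the estimate to the well-known behaviour of the fundamental solution of a constant-coefficient elliptic operator. By compactness of $\mc K$ and continuity of $\bb M$ and $U$ on $\overline{\Omega}$, it suffices to produce, for each $\bs x_0 \in \mc K$, a ball $B=B_{r_0}(\bs x_0)\subset\subset\Omega$ and constants $c_1,c_2,r_0>0$ depending only on the $C^{2,\alpha}$-norms of $\bb M$ and $U$ on a fixed compact neighborhood of $\mc K$, such that the claimed two-sided bounds hold for $\bs y \in B_{r_0}(\bs x_0)$.

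Fix $\bs x_0 \in \mc K$. I would freeze the second-order coefficients of $\mc L_\epsilon$ at $\bs x_0$ to obtain the constant-coefficient operator $\mc L_0 = \epsilon \sum_{j,k} \bb M_{j,k}(\bs x_0) \partial^2_{j,k}$, and diagonalize its symmetric part by a linear change of variables, reducing $\mc L_0$ to a constant multiple of the Laplacian. The associated fundamental solution $\Gamma_0(\bs x_0,\bs y)$ is then explicit and comparable, up to multiplicative constants, to $\Vert \bs y - \bs x_0\Vert^{2-d}$ for $d\ge 3$, respectively $-\log\Vert \bs y-\bs x_0\Vert$ for $d=2$. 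Because $\bb M\in C^2$ and $U\in C^3$, Taylor expansion at $\bs x_0$ gives
\[
  (\mc L_\epsilon \Gamma_0(\bs x_0,\cdot))(\bs y) \;=\; O\bigl(\Vert \bs y-\bs x_0\Vert^{1-d}\bigr) \qquad \text{on } B\setminus\{\bs x_0\},
\]
one order less singular than a delta source at $\bs x_0$. A classical Levi-type parametrix construction (see e.g.\ \cite{f}) then yields a corrector $\Gamma_1$, strictly less singular than $\Gamma_0$, so that $\Gamma := \Gamma_0+\Gamma_1$ satisfies $\mc L_\epsilon \Gamma \in L^p(B)$ for some $p>d$ while retaining the leading singularity of $\Gamma_0$.

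Next I would write $G_B(\bs x_0,\bs y) = \Gamma(\bs x_0,\bs y) + w(\bs y)$, where $G_B$ denotes the Dirichlet Green function on $B$. Then $w$ solves an elliptic equation on $B$ with $L^p$ right-hand side and bounded Dirichlet data on $\partial B$, so the standard $W^{2,p}$-estimate \cite[Theorem~9.15]{gt} combined with the Sobolev embedding $W^{2,p}\hookrightarrow C^0$ for $p>d$ gives that $w$ is bounded on $B$. Consequently $G_B(\bs x_0,\bs y)$ has exactly the claimed singularity on a small ball around $\bs x_0$. To transfer the estimate from $G_B$ to $G_\Omega$, I would use \eqref{e62} and the $\mc L^*_\epsilon$-harmonicity of $G^*_\Omega(\cdot,\bs x_0)$ together with Harnack's inequality (Lemma \ref{7-l4}) applied along a chain of balls inside $\Omega\setminus\{\bs x_0\}$: this bounds the boundary trace of the difference $G_\Omega(\bs x_0,\cdot)-G_B(\bs x_0,\cdot)$ on $\partial B$ by constants depending only on $\mc K$ and $\Omega$, and the maximum principle then extends these bounds inside $B$, yielding the desired two-sided estimates on $G_\Omega(\bs x_0,\bs y)$.

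The main technical obstacle is the parametrix construction: one must verify that the error $\mc L_\epsilon\Gamma$ lies in $L^p(B)$ for some $p>d$ and that all constants depend only on the local $C^{2,\alpha}$ bounds of $\bb M$ and $U$, so that the resulting $c_1,c_2$ are uniform over $\bs x_0\in\mc K$. This is classical under the assumed regularity, and indeed the result appears as \cite[Theorem~4.2.8]{p95}, from which we invoke it.
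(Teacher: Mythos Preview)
The paper does not prove this lemma; it simply states ``Next result is Theorem 4.2.8 in \cite{p95}'' and moves on. Your proposal ultimately invokes the very same reference, so in that sense the approaches coincide. The parametrix sketch you add beforehand is a correct outline of the classical Levi construction underlying that theorem, and is a reasonable supplement, though strictly speaking unnecessary once you cite \cite[Theorem~4.2.8]{p95}.
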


By \eqref{7-13}, the function $G_\Omega(\cdot, \bs x)$ is harmonic on
$\Omega\setminus\{\bs x\}$, and, by Lemma \ref{7-l5}, it diverges at $\bs
x$.  The next lemma turns rigorous the formal identity $[\mc
L_\epsilon G_\Omega(\cdot, \bs x)](\bs y) = -\, \delta_{\bs x}(\bs
y)$, where $\delta_{\bs x}$ is the Dirac delta function at $\bs x$.

\begin{lemma}
\label{7-l13}
Assume that $\Omega$ has a $C^{2,\alpha}$-boundary for some
$0<\alpha<1$, and let $f$ be a function in
$C^{2,\alpha}(\overline{\Omega})$. Then, for all $\bs x\in \Omega$,
\begin{equation*}
f(\bs x) \;=\; \lim_{\delta\to 0} \epsilon\, \int_{\partial B_\delta(\bs x)}
e^{[U(\bs x) - U(\bs y)]/\epsilon} f(\bs y)\, \bb M^\dagger (\nabla G_\Omega^*)(\bs
y,\bs{x}), \cdot \bs n_{B_\delta(\bs x)^c} (\bs{y})\, \sigma(d\bs y)\;.
\end{equation*}
\end{lemma}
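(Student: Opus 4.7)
The strategy is to split
\[
J_\delta \;:=\; \epsilon\int_{\partial B_\delta(\bs x)} e^{[U(\bs x)-U(\bs y)]/\epsilon}\, f(\bs y)\,\bb M^\dagger\nabla G_\Omega^*(\bs y,\bs x)\cdot \bs n_{B_\delta(\bs x)^c}(\bs y)\,\sigma(d\bs y)
\]
via the decomposition $f(\bs y)=f(\bs x)+[f(\bs y)-f(\bs x)]$ into
\[
J_\delta^{(1)} \;:=\; f(\bs x)\,e^{U(\bs x)/\epsilon}\cdot\epsilon\int_{\partial B_\delta(\bs x)} e^{-U/\epsilon}\,\bb M^\dagger\nabla G^*(\bs y,\bs x)\cdot\bs n_{B_\delta(\bs x)^c}(\bs y)\,\sigma(d\bs y)
\]
and a remainder $J_\delta^{(2)}$. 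The plan is then to prove $J_\delta^{(2)}\to 0$ and to show that $J_\delta^{(1)}$ is in fact \emph{independent} of sufficiently small $\delta$ and equal to $f(\bs x)$.

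For the remainder, $|f(\bs y)-f(\bs x)|\le C\delta$ on $\partial B_\delta(\bs x)$, combined with the pointwise gradient estimate $|\bb M^\dagger\nabla G^*(\bs y,\bs x)|\le C\delta^{1-d}$ and the $C\delta^{d-1}$ surface area of $\partial B_\delta$, yields $|J_\delta^{(2)}|=O(\delta)$ (with an extra $|\log\delta|$ factor when $d=2$), and in particular vanishes. The gradient estimate is obtained by applying standard interior gradient bounds to the $\mc L^*_\epsilon$-harmonic function $G^*(\cdot,\bs x)$ on the shell $B_{2\delta}(\bs x)\setminus B_{\delta/2}(\bs x)$, with the supremum controlled by the two-sided estimate of Lemma~\ref{7-l5}. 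The bounded factor $e^{[U(\bs x)-U(\bs y)]/\epsilon}$ is immaterial since $\epsilon$ is fixed and $|U(\bs y)-U(\bs x)|=O(\delta)$ on the shell.

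For $J_\delta^{(1)}$, the vector field $e^{-U/\epsilon}\bb M^\dagger\nabla G^*(\cdot,\bs x)$ is divergence-free on $\Omega\setminus\{\bs x\}$, because $\mc L^*_\epsilon G^*(\cdot,\bs x)=0$ off the diagonal by \eqref{7-13}. By Stokes' theorem on the shell $B_R(\bs x)\setminus B_\delta(\bs x)$, with $B_R(\bs x)\subset\Omega$ fixed, the flux through $\partial B_\delta(\bs x)$ in the $\bs n_{B_\delta^c}$ direction is independent of $\delta\in(0,R)$. To pin down its value, I invoke the distributional identity $\mc L^*_\epsilon G^*(\cdot,\bs x)=-\delta_{\bs x}$, equivalently $\epsilon\,\nabla\cdot(e^{-U/\epsilon}\bb M^\dagger\nabla G^*)=-e^{-U(\bs x)/\epsilon}\delta_{\bs x}$, which follows from Lemma~\ref{7-l14} applied to $\mc L^*_\epsilon$: testing against mollifiers $g_\rho\in C^\alpha_c(\Omega)$ converging to $\delta_{\bs x}$, the functions $u_\rho(\bs y_0)=\int G^*(\bs y_0,\bs y)g_\rho(\bs y)\,d\bs y$ satisfy $\mc L^*_\epsilon u_\rho=-g_\rho$ and $u_\rho(\bs y_0)\to G^*(\bs y_0,\bs x)$ pointwise off the diagonal. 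Pairing the resulting distributional identity with (a smooth approximation of) $\mathbf 1_{B_R(\bs x)}$ and using the divergence theorem produces $\epsilon\int_{\partial B_R}e^{-U/\epsilon}\bb M^\dagger\nabla G^*\cdot\bs n_{B_R}\,d\sigma=-e^{-U(\bs x)/\epsilon}$; combined with the constancy of the flux, this forces $\epsilon\int_{\partial B_\delta}e^{-U/\epsilon}\bb M^\dagger\nabla G^*\cdot\bs n_{B_\delta^c}\,d\sigma=e^{-U(\bs x)/\epsilon}$, so $J_\delta^{(1)}=f(\bs x)$ for all small $\delta$. The main obstacle is making the distributional identity and the associated integration by parts against $\mathbf 1_{B_R}$ rigorous: $G^*$ is singular at $\bs x$, so both the Dirac mass and the characteristic function must be approximated by smooth cutoffs, with the limits controlled by the local $L^1$-integrability of $G^*(\cdot,\bs x)$ granted by Lemma~\ref{7-l5}. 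All other ingredients — the Stokes step, the gradient bound, and the splitting of $J_\delta$ — are then routine.
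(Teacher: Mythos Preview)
Your proof is correct but takes a genuinely different route from the paper's. You split $f(\bs y)=f(\bs x)+[f(\bs y)-f(\bs x)]$, kill the remainder via a pointwise gradient bound $|\nabla G_\Omega^*(\bs y,\bs x)|\le C\delta^{1-d}$ obtained from interior Schauder estimates combined with Lemma~\ref{7-l5}, and then identify the constant flux $\epsilon\int_{\partial B_\delta}e^{-U/\epsilon}\bb M^\dagger\nabla G^*\cdot\bs n_{B_\delta^c}\,d\sigma$ by a mollifier argument that makes the distributional identity $\mc L_\epsilon^* G_\Omega^*(\cdot,\bs x)=-\delta_{\bs x}$ rigorous. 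This all works, but it imports two ingredients not explicitly provided in the paper (the gradient bound and the approximation of $\delta_{\bs x}$), and the flux computation is somewhat indirect.

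The paper instead starts from the integral identity $f(\bs x)=-\int_\Omega G_\Omega(\bs x,\bs y)\,(\mc L_\epsilon f)(\bs y)\,d\bs y$, which follows immediately from Lemma~\ref{7-l14} after modifying $f$ to vanish on $\partial\Omega$ (without touching it near $\bs x$). Rewriting $G_\Omega(\bs x,\bs y)$ as $e^{[U(\bs x)-U(\bs y)]/\epsilon}G_\Omega^*(\bs y,\bs x)$ via \eqref{e62}, excising $B_\delta(\bs x)$, and integrating by parts twice on $\Omega\setminus B_\delta(\bs x)$ lands directly on the boundary integral in the statement. The first integration by parts produces a boundary term of type $G^*\cdot\bb M\nabla f$ on $\partial B_\delta$, which vanishes as $\delta\to 0$ by Lemma~\ref{7-l5} alone (no gradient bound on $G^*$ needed); the second uses only that $\mc L_\epsilon^* G_\Omega^*(\cdot,\bs x)=0$ off the diagonal and that $f=0$ on $\partial\Omega$. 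The paper's approach is shorter and self-contained within the lemmas already stated; your approach is more ``microlocal'' at the singularity and would transfer more readily to settings where the global inverse identity of Lemma~\ref{7-l14} is not available.
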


\begin{proof}
Fix $\bs x\in \Omega$ and modify $f$ outside a neighborhood of $\bs x$
for $f$ to vanish at $\partial \Omega$. We first claim that
\begin{equation*}
f(\bs x) \;=\; -\, \int_{\Omega} e^{[U(\bs x) - U(\bs y)]/\epsilon}
\, G_\Omega^*(\bs y, \bs x) \, (\mc L_\epsilon f) (\bs y)\, d\bs y\;.
\end{equation*}
To prove this identity, denote by $h$ the function defined by integral
on the right-hand side.  Applying \eqref{e62}, we may replace $\exp\{
[U(\bs x) - U(\bs y)]/\epsilon\} \, G_\Omega^*(\bs y, \bs x)$ by $G_\Omega(\bs x,
\bs y)$. By assumption, $\mc L_\epsilon f$ belongs to
$C^\alpha(\overline{\Omega})$ and vanishes at $\partial
\Omega$. Therefore, by Lemma \ref{7-l14}, $h$ is the unique solution
of \eqref{22} with $\mf g = \mc L_\epsilon f$ and $\mf b=0$. Since
$-f$ solves the same equation, by uniqueness, $h=-f$, proving the
identity.

By Lemma \ref{7-l5}, the integral on the right-hand side of the
previous displayed equation is equal to
\begin{equation*}
\lim_{\delta\to 0} \int_{\Omega\setminus B_\delta(\bs x)} e^{[U(\bs x) - U(\bs y)]/\epsilon}
\, G_\Omega^*(\bs y, \bs x) \, (\mc L_\epsilon f) (\bs y)\, d\bs y\;.
\end{equation*}
By the divergence theorem and since $G_\Omega^*(\cdot, \bs x)$ vanishes at
$\partial \Omega$, the previous integral is equal to
\begin{equation}
\label{7-14}
\begin{aligned}
& \epsilon \, \int_{\partial B_\delta(\bs x)} e^{[U(\bs x) - U(\bs y)]/\epsilon}
\, G_\Omega^*(\bs y, \bs x) \, \bb M\, \nabla f (\bs y) \cdot
\bs n_{B_\delta(\bs x)^c}(\bs y) \, \sigma(d\bs y) \\
&\quad - \,
\epsilon\, \int_{\Omega\setminus B_\delta(\bs x)} e^{[U(\bs x) - U(\bs y)]/\epsilon}
\bb M^\dagger \, \nabla \, G_\Omega^*(\bs y, \bs x) \, \nabla f (\bs y)\, d\bs y\;.
\end{aligned}
\end{equation}
By Lemma \ref{7-l5}, the first integral vanishes as $\delta\to 0$. By
the divergence theorem, since $f$ vanishes on $\partial \Omega$ and
since $[\mc L^*_\epsilon G_\Omega^*(\cdot, \bs x)] (\bs y) = 0$ on
$\Omega\setminus\{\bs x\}$, the second one is equal to
\begin{equation*}
-\, \epsilon \, \int_{\partial B_\delta(\bs x)} e^{[U(\bs x) - U(\bs y)]/\epsilon}
\, f (\bs y) \, \bb M^\dagger \, \nabla  G_\Omega^*(\bs y, \bs x)
\cdot \bs n_{B_\delta(\bs x)^c}(\bs y) \, \sigma(d\bs y) \;.
\end{equation*}
This completes the proof of the lemma.
\end{proof}

\begin{lemma}
\label{7-l7}
Assume that $\Omega$ has a $C^{2,\alpha}$-boundary for some
$0<\alpha<1$, and let $b$ be a function in
$C^{2,\alpha}(\overline{\Omega}) \cap W^{1,2}(\Omega)$. The unique
solution in $C^{2,\alpha}(\overline{\Omega})$ of the Dirichlet problem
\eqref{22} with $\mf g=0$, $\mf b = b$, denoted by $f$, can be
represented as
\begin{equation*}
f(\bs x) \;=\; -\, \epsilon\, \int_{\partial \Omega} b(\bs y)\, e^{[U(\bs x)
  - U(\bs y)]/\epsilon} \, \bb M^\dagger \nabla G_{\Omega}^* (\bs y, \bs x)
\cdot \bs n_{\Omega }(\bs{y}) \, \sigma(d\bs y)\;, \quad \bs x\in\Omega\;.
\end{equation*}
\end{lemma}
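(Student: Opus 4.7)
The plan is to apply the adjoint Green's identity to the pair $(f, G_\Omega^*(\cdot,\bs x))$ on the punctured domain $\Omega_\delta := \Omega \setminus \overline{B_\delta(\bs x)}$, use the equations satisfied by $f$ and $G_\Omega^*(\cdot,\bs x)$ to kill the bulk terms, and then let $\delta \downarrow 0$, recovering a delta-function contribution at $\bs x$ via Lemma \ref{7-l13}.

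First, I would establish the Green identity: for any $u\in C^{2,\alpha}(\overline{\Omega_\delta})$ and $v\in C^{2,\alpha}(\overline{\Omega_\delta})$, two integrations by parts using $\mathcal{L}_\epsilon u = \epsilon e^{U/\epsilon}\nabla\cdot [e^{-U/\epsilon} \bb M \nabla u]$ yield
\begin{equation*}
\int_{\Omega_\delta} \big[v\,\mathcal{L}_\epsilon u - u\,\mathcal{L}_\epsilon^* v\big]\, e^{-U/\epsilon}\, d\bs y \;=\; \epsilon\int_{\partial \Omega_\delta} \big[v\,\bb M \nabla u - u\,\bb M^\dagger \nabla v\big]\cdot \bs n_{\Omega_\delta}\, e^{-U/\epsilon}\, \sigma(d\bs y).
\end{equation*}
I would then take $u = f$ and $v(\bs y) = G_\Omega^*(\bs y, \bs x)$. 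By hypothesis $\mathcal{L}_\epsilon f = 0$ on $\Omega$, and by \eqref{7-13} applied to the adjoint, $\mathcal{L}_\epsilon^* v = 0$ on $\Omega\setminus\{\bs x\}$, so the left-hand side vanishes.

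Next I would split the boundary $\partial \Omega_\delta = \partial\Omega \cup \partial B_\delta(\bs x)$. On $\partial\Omega$, by \eqref{7-12} applied to $G_\Omega^*$, $v \equiv 0$, so only the $-u\,\bb M^\dagger \nabla v$ term survives, and since $f = b$ there it contributes exactly $-\epsilon \int_{\partial\Omega} b\, \bb M^\dagger \nabla G_\Omega^*(\cdot,\bs x)\cdot \bs n_\Omega\, e^{-U/\epsilon}\, \sigma(d\bs y)$ (note $\bs n_{\Omega_\delta} = \bs n_\Omega$ on $\partial\Omega$). On $\partial B_\delta(\bs x)$ the outward normal from $\Omega_\delta$ coincides with $\bs n_{B_\delta(\bs x)^c}$. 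The term containing $v\,\bb M \nabla f$ is bounded, using Lemma \ref{7-l5}, by $C\,\delta^{2-d}\cdot \|\nabla f\|_\infty \cdot \delta^{d-1} = O(\delta)$ (and $O(\delta|\log\delta|)$ if $d=2$), hence vanishes as $\delta \to 0$. The remaining term is precisely what Lemma \ref{7-l13} evaluates:
\begin{equation*}
\lim_{\delta\to 0}\; \epsilon \int_{\partial B_\delta(\bs x)} f(\bs y)\, \bb M^\dagger \nabla G_\Omega^*(\bs y,\bs x)\cdot \bs n_{B_\delta(\bs x)^c}\, e^{-U(\bs y)/\epsilon}\, \sigma(d\bs y) \;=\; f(\bs x)\, e^{-U(\bs x)/\epsilon},
\end{equation*}
after factoring $e^{-U(\bs x)/\epsilon}$ out of the exponential. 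Collecting terms, dividing by $e^{-U(\bs x)/\epsilon}$, and solving for $f(\bs x)$ yields the stated representation formula.

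The main obstacle, and the reason Lemma \ref{7-l13} was proved first, is the delicate passage to the limit at the singularity: the factor $\bb M^\dagger \nabla G_\Omega^*(\cdot,\bs x)$ blows up like $\delta^{1-d}$ on $\partial B_\delta(\bs x)$, so naive bounds do not suffice and one really needs the precise identification provided by Lemma \ref{7-l13}. A minor subsidiary point is justifying the divergence theorem on $\Omega_\delta$: this requires $G_\Omega^*(\cdot,\bs x) \in C^{2,\alpha}(\overline{\Omega}\setminus\{\bs x\})$, which follows from interior regularity \eqref{7-13} plus standard boundary regularity for the homogeneous equation with $C^{2,\alpha}$ boundary data (zero on $\partial\Omega$) away from $\bs x$, using the same Schauder theory (\cite[Thms.~6.1.3]{p95} or \cite[Thm.~6.14]{gt}) invoked throughout this section.
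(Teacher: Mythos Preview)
Your proof is correct and follows essentially the same approach as the paper: both arguments apply two integrations by parts on $\Omega\setminus\overline{B_\delta(\bs x)}$, use $\mc L_\epsilon f=0$, $\mc L^*_\epsilon G^*_\Omega(\cdot,\bs x)=0$, and $G^*_\Omega(\cdot,\bs x)=0$ on $\partial\Omega$ to eliminate all but the boundary terms, bound the $G^*_\Omega\,\bb M\nabla f$ contribution on $\partial B_\delta(\bs x)$ via Lemma~\ref{7-l5}, and identify the singular term via Lemma~\ref{7-l13}. The only difference is cosmetic: you package the two integrations by parts into a single Green identity, whereas the paper performs them one after the other.
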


\begin{proof}
Fix $\bs x$ in $\Omega$, and let $h(\bs y) = G^*_{\Omega}(\bs y, \bs
x)$. By \eqref{7-13} and \eqref{7-11}, $h\in
C^{2,\alpha}(\Omega\setminus \{\bs x\})$, $(\mc L^*_\epsilon h)(\bs
y)= 0$, $\bs y\in\Omega\setminus\{\bs x\}$, and $h(\bs y)= 0$, $\bs
y\in\partial \Omega$.

Denote by $f$ the solution of the Dirichlet problem \eqref{22} with
$\mf g=0$, $\mf b = b$. Fix $\delta>0$. Since $\mc L_\epsilon f=0$ and
since $h$ vanishes on $\partial \Omega$, by the divergence theorem,
\begin{align*}
0\; &=\; \int_{\Omega\setminus B_\delta(\bs x)} h(\bs y) \,
\nabla\cdot \big( e^{-U/\epsilon}
\bb M \nabla f  \big) (\bs y) \, d\bs y \\
\; & =\; -\, \int_{\Omega\setminus B_\delta(\bs x)}
e^{-U(\bs y)/\epsilon} \, \bb M^\dagger \,
\nabla h(\bs y) \, \cdot \nabla f (\bs y) \, d\bs y
\;+\; o_\delta(1)\;.
\end{align*}
The expression $o_\delta(1)$ comes from the integral on $\partial
B_\delta(\bs x)$, which vanishes as $\delta\to 0$ as we have seen in
\eqref{7-14}.

Applying the divergence theorem once more, since $\mc L^*_\epsilon
h=0$ on $\Omega\setminus B_\delta(\bs x)$, the right-hand side of the
previous identity is equal to
\begin{align*}
& -\; \int_{\partial B_\delta(\bs x)}  f (\bs y) \, e^{-U(\bs y)/\epsilon} \bb M^\dagger \,
\nabla h(\bs y) \cdot \bs n_{B_\delta(\bs x)^c}(\bs y) \, \sigma(d \bs y) \\
&\quad -\; \int_{\partial \Omega}  f (\bs y) \, e^{-U(\bs y)/\epsilon} \bb M^\dagger \,
\nabla h(\bs y) \cdot \bs n_{\Omega}(\bs y) \, \sigma(d \bs y)
\;+\; o_\delta(1)\;.
\end{align*}
As $f$ belongs to $C^{2,\alpha}(\overline{\Omega})$ and is equal to
$b$ on $\partial \Omega$, by Lemma
\ref{7-l13}, letting $\delta \to 0$, this sum converges to
\begin{equation*}
-\, \epsilon^{-1}\, f (\bs x)\, e^{-U(\bs x)/\epsilon}
\;-\; \int_{\partial \Omega}  b (\bs y) \, e^{-U(\bs y)/\epsilon} \bb M^\dagger \,
\nabla h(\bs y) \cdot \bs n_{\Omega}(\bs y) \, \sigma(d \bs y) \;.
\end{equation*}
This completes the proof of the lemma.
\end{proof}

\smallskip\noindent{\bf The equilibrium potential.} In this
subsection, we establish a bound on the harmonic function in terms of
capacities and simple bounds for the capacity between two
sets. Together these estimate provide a useful bound on the harmonic
function.

Let $\mc A$, $\mc B\subset\bb R^d$ be two open subsets of $\bb R^d$
satisfying Assumption S, and let $\Omega = (\overline{\mc A}\cup
\overline{\mc B})^c$.  The next result presents a formula for the
equilibrium potential. The same proof provides an identity for
$h^*_{\mc A,\mc B}$ in place of $h_{\mc A,\mc B}$.

\begin{lemma}
\label{7-l8}
Let $\mc A$ and $\mc B$ be open sets satisfying Assumption S. Then,
for all $\bs x\not\in\mc B$,
\begin{equation*}
h_{\mc A,\mc B} (\bs x) \;=\; \epsilon\, \int_{\partial \mc A}
G_{\mc B^c} (\bs x, \bs y)\, \bb M \, \nabla h_{\mc A,\mc B} (\bs y)
\cdot \bs n_{\mc A^c}(\bs y) \, \sigma(d \bs y)\;.
\end{equation*}
\end{lemma}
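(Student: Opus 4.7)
The natural strategy is a Green's identity argument, analogous to the one used in Lemma~7.l7, pairing $h_{\mc A,\mc B}$ with the adjoint Green function and extracting the pointwise value via the limit encoded in Lemma~7.l13. Throughout, set $u(\bs y):=h_{\mc A,\mc B}(\bs y)$ and $w(\bs y):=G^{*}_{\mc B^c}(\bs y,\bs x)$. By the representation properties recalled in the subsection on the Green function, $u$ is $\mc L_\epsilon$-harmonic on $\Omega=(\overline{\mc A}\cup\overline{\mc B})^{c}$ with $u=1$ on $\partial \mc A$, $u=0$ on $\partial \mc B$, while $w$ is $\mc L_\epsilon^{*}$-harmonic on $\mc B^{c}\setminus\{\bs x\}$ and vanishes on $\partial \mc B$ by \eqref{7-11} applied to $G^{*}_{\mc B^c}$.

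First I treat the main case $\bs x\in\Omega$. On the domain $D_\delta:=\Omega\setminus B_\delta(\bs x)$ both $u$ and $w$ are smooth, so two integrations by parts (as in the derivation of Lemma~\ref{7-l7}) give the Green--type identity
\begin{equation*}
0 \;=\; \epsilon\int_{\partial D_\delta}\bigl[w\,\bb M\nabla u-u\,\bb M^{\dagger}\nabla w\bigr]\cdot \bs n_{D_\delta}\,e^{-U/\epsilon}\,\sigma(d\bs y).
\end{equation*}
The boundary $\partial D_\delta$ decomposes into $\partial \mc A\cup\partial \mc B\cup\partial B_\delta(\bs x)$. On $\partial \mc B$ the integrand vanishes since $u=w=0$; on $\partial \mc A$, $u=1$. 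For the $\partial B_\delta(\bs x)$ contribution I let $\delta\downarrow 0$: by Lemma~\ref{7-l5}, $w(\bs y)\lesssim \|\bs y-\bs x\|^{2-d}$ (or $|\log|$ if $d=2$), so combined with $|\partial B_\delta|\sim\delta^{d-1}$ the term $\epsilon\int_{\partial B_\delta}w\,\bb M\nabla u\cdot\bs n\,e^{-U/\epsilon}\sigma$ tends to zero; the remaining term is handled by Lemma~\ref{7-l13} applied to $f=u$, producing the limit $-e^{-U(\bs x)/\epsilon}h_{\mc A,\mc B}(\bs x)$. After passing to the limit the identity becomes
\begin{equation*}
e^{-U(\bs x)/\epsilon}\,h_{\mc A,\mc B}(\bs x) \;=\; \epsilon\int_{\partial \mc A}\bigl[w\,\bb M\nabla u-\bb M^{\dagger}\nabla w\bigr]\cdot \bs n_\Omega\,e^{-U/\epsilon}\,\sigma(d\bs y).
\end{equation*}

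The next step is to show the unwanted term $\epsilon\int_{\partial \mc A}\bb M^{\dagger}\nabla w\cdot \bs n_\Omega\,e^{-U/\epsilon}\sigma$ vanishes. Since $\bs x\in\Omega$ lies outside $\overline{\mc A}$, $w$ is smooth and $\mc L_\epsilon^{*}$-harmonic on a neighborhood of $\overline{\mc A}$; applying the divergence theorem on $\mc A$ with $\bs n_\Omega=-\bs n_{\mc A}$ converts this boundary integral to $-\int_{\mc A}\epsilon^{-1}e^{-U/\epsilon}(\mc L^{*}_\epsilon w)\,d\bs y=0$. Multiplying the surviving identity by $e^{U(\bs x)/\epsilon}$ and using the duality \eqref{e62} in the form $e^{U(\bs x)/\epsilon}e^{-U(\bs y)/\epsilon}G^{*}_{\mc B^c}(\bs y,\bs x)=G_{\mc B^c}(\bs x,\bs y)$, together with $\bs n_\Omega=\bs n_{\mc A^c}$ on $\partial \mc A$, yields the claim.

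Finally, the case $\bs x\in \mc A$ is handled analogously by applying Green's identity on all of $\Omega$ (no puncture needed since $w$ is smooth there), which gives $\int_{\partial \mc A}[w\bb M\nabla u-\bb M^{\dagger}\nabla w]\cdot\bs n_\Omega e^{-U/\epsilon}\sigma=0$; combined with the divergence theorem on $\mc A\setminus B_\delta(\bs x)$ and Lemma~\ref{7-l13} (which now gives $\int_{\partial \mc A}\bb M^{\dagger}\nabla w\cdot\bs n_\Omega e^{-U/\epsilon}\sigma=e^{-U(\bs x)/\epsilon}/\epsilon$), the same identity follows, reducing to $1=h_{\mc A,\mc B}(\bs x)$. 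I expect the main technical obstacle to be the careful bookkeeping of orientation of normals across the various domains, and the verification that the $\partial B_\delta$ term involving $w\,\bb M\nabla u$ genuinely vanishes -- this requires the full strength of Lemma~\ref{7-l5} together with interior elliptic regularity of $h_{\mc A,\mc B}$ at $\bs x\in\Omega$ to bound $\nabla u$; everything else is bookkeeping on top of Lemmas \ref{7-l5} and \ref{7-l13}.
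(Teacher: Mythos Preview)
Your proposal is correct and follows essentially the same strategy as the paper: pair $h_{\mc A,\mc B}$ with the adjoint Green function $G^*_{\mc B^c}(\cdot,\bs x)$, integrate by parts twice on the punctured domain, and extract the point value via Lemma~\ref{7-l13}. The only organisational difference is that the paper first rewrites $\nabla h_{\mc A,\mc B}=-\nabla h_{\mc B,\mc A}$, so that the surviving boundary contribution after the two integrations lies on $\partial\mc B$ and is then identified as the constant $1$ through Lemma~\ref{7-l7}; you instead keep $h_{\mc A,\mc B}$, so the surviving contribution sits on $\partial\mc A$, and you eliminate the extra $\bb M^\dagger\nabla w$ term by one further application of the divergence theorem on $\mc A$ (where $G^*_{\mc B^c}(\cdot,\bs x)$ is $\mc L^*_\epsilon$-harmonic and smooth since $\bs x\notin\overline{\mc A}$). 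Both routes use the same ingredients and are equally short.
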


\begin{proof}
Consider the integral on the right-hand side.
Since $G_{\mc B^c} (\bs x, \cdot )$ vanishes at $\partial \mc B$, we
may extend the integral to $\partial \mc A \cup \partial \mc B$.
By \eqref{e62}, we may replace $G_{\mc B^c} (\bs x, \bs y)$ by
$\exp\{[U(\bs x) - U(\bs y)]/\epsilon\} \, G^*_{\mc B^c} (\bs y, \bs x)$. On
the other hand, as $h_{\mc A, \mc B} = 1 - h_{\mc B, \mc A}$, we may
also replace $\nabla h_{\mc A, \mc B}$ by $- \nabla h_{\mc B, \mc A}$.
After these modifications, the integral appearing in the statement of
the lemma becomes
\begin{equation*}
-\, \epsilon\, \int_{\partial \mc A\cup \partial \mc B} e^{[U(\bs x) - U(\bs y)]/\epsilon}
\, G^*_{\mc B^c} (\bs y , \bs x)\, \bb M \, \nabla h_{\mc B, \mc A} (\bs y)
\cdot \bs n_{(\mc A\cup \mc B)^c} (\bs y) \, \sigma(d \bs y)\;.
\end{equation*}
In the argument below, as we did in the two previous lemmata, we need
to remove from the integration region a ball $B_\delta(\bs x)$ and let
$\delta\to 0$. As the argument should be clear at this point, we
ignore the singularity of the Green function at $\bs x$.  By the
divergence theorem, and since $\mc L_\epsilon h_{\mc B, \mc A}=0$ on
$(\mc A\cup \mc B)^c$, this expression is equal to
\begin{equation*}
-\, \epsilon\, \int_{(\mc A\cup \mc B)^c} e^{[U(\bs x) - U(\bs y)]/\epsilon}
\, \bb M^\dagger \, \nabla G^*_{\mc B^c} (\bs y , \bs x)\,
\nabla h_{\mc B, \mc A} (\bs y)\, d \bs y\;.
\end{equation*}
Applying the divergence theorem a second time, as $\mc L^*_\epsilon
G^*_{\mc B^c} (\cdot , \bs x) = - \delta_{\bs x}(\cdot)$ and $h_{\mc
  B, \mc A} = \bs 1\{\mc B\}$ on $\partial \mc A\cup \partial \mc B$,
this
expression becomes
\begin{equation*}
-\,  h_{\mc B, \mc A} (\bs x)
\; -\, \epsilon\, \int_{\partial \mc B} e^{[U(\bs x) - U(\bs y)]/\epsilon}
\, \bb M^\dagger \, \nabla G^*_{\mc B^c} (\bs y , \bs x)
\cdot \bs n_{\mc B^c} (\bs y) \, \sigma(d \bs y)\;.
\end{equation*}
By Lemma \ref{7-l7} the integral is equal to $f(\bs x)$ where $f$ is
the solution of \eqref{22} with $\Omega = \mc B^c$, $\mf g=0$ and
$\mf b=1$. Since the solution of this equation is equal to $1$, the
previous expression is equal to $1- h_{\mc B, \mc A} (\bs x) = h_{\mc
  A, \mc B} (\bs x)$, as claimed.
\end{proof}

In the present context, the equilirbium measure $\nu_{\mc A, \mc B}$ ,
introduced in \eqref{2-2}, is the probability measure on $\partial \mc A$ given by
\begin{equation}
\label{7-15}
\nu_{\mc A, \mc B} (d\bs y) \;=\; \frac{\epsilon}{Z_\epsilon\,
  \Cap(\mc A,\mc B)} \, e^{- U(\bs y)/\epsilon}\, \bb M^\dagger \,
\nabla h^*_{\mc A,\mc B} (\bs y) \cdot \bs n_{\Omega}(\bs y) \,
\sigma (d \bs y)\;.
\end{equation}
In particular, in view of \eqref{e62}, in terms of the equilirbium
measure, the formula for $h^*_{\mc A,\mc B}$ becomes
\begin{equation*}
h^*_{\mc A,\mc B} (\bs x) \;=\; Z_\epsilon\, \Cap(\mc A,\mc B) \,
\, e^{U(\bs x)/\epsilon} \int_{\partial \mc A} G_{\mc B^c} (\bs y, \bs x)
\, \nu_{\mc A, \mc B} (d \bs y)\;, \quad \bs x\not\in\mc B\;.
\end{equation*}
Therefore, since $h^*_{\mc A,\mc B} (\bs x)=1$ for $\bs x\in \mc A$ and since the equilirbium measure is a probability measure, we obtain
that
\begin{equation}
\label{7-4}
\inf_{\bs y\in \partial \mc A} G_{\mc B^c} (\bs y, \bs x) \;\le\;
\frac{e^{-U(\bs x)/\epsilon}} {Z_\epsilon\, \Cap(\mc A,\mc B)} \;,
\quad \bs x\in \mc A  \;\cdot
\end{equation}

\begin{lemma}
\label{7-l9}
Let $\mc D$ be an open set with a $C^{2,\alpha}$-boundary for some
$0<\alpha<1$. Fix $\bs x\not\in \overline{\mc D}$. For every $r>0$
there exists a finite constant $C_0$, depending only on $r$ and $U$,
such that for all $0<\epsilon <\epsilon_0 = d(\bs x, \mc D)/2r$,
\begin{equation*}
\sup_{\bs y\in \partial B_{r\epsilon}(\bs x) } G_{\mc D^c} (\bs y, \bs x) \;\le\;
C_0\, \inf_{\bs y\in \partial B_{r\epsilon}(\bs x)} G_{\mc D^c} (\bs y, \bs x) \;.
\end{equation*}
\end{lemma}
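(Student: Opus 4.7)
The plan is to derive this Harnack-type estimate on the Green function by running a finite Harnack chain along a geodesic arc of the sphere $\partial B_{r\epsilon}(\bs x)$, with each chain ball chosen to have radius proportional to $\epsilon$ so that the ellipticity parameter $\nu_\Omega R^2$ appearing in Lemma~\ref{7-l4} stays bounded uniformly in $\epsilon$.

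First I would observe that under the assumption $\epsilon<\epsilon_0 = d(\bs x, \mc D)/(2r)$, the closed ball $\overline{B_{2r\epsilon}(\bs x)}$ sits inside $\mc D^c$, so by \eqref{7-13} the function $\bs y\mapsto G_{\mc D^c}(\bs y, \bs x)$ is non-negative, of class $C^{2,\alpha}$, and $\mc L_\epsilon$-harmonic on $B_{2r\epsilon}(\bs x)\setminus\{\bs x\}$.

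Next I would set $R=r\epsilon/4$ and fix an arbitrary $\bs z\in\partial B_{r\epsilon}(\bs x)$. The enlarged ball $B_{2R}(\bs z)=B_{r\epsilon/2}(\bs z)$ avoids the singularity $\bs x$, since $\|\bs z-\bs x\|=r\epsilon>r\epsilon/2$, and is contained in $B_{3r\epsilon/2}(\bs x)\subset \mc D^c$. Moreover, by \eqref{7-1},
\begin{equation*}
\nu_{B_{2R}(\bs z)}\, R^2 \;\le\; \frac{R^2}{\epsilon^2}\, \frac{\Vert \bb M\Vert_2^2}{\lambda^2} \sup_{\bs w\in B_{3r\epsilon_0/2}(\bs x)} \Vert (\nabla U)(\bs w)\Vert^2 \;=:\; C_1(r,U)\;,
\end{equation*}
a constant independent of $\epsilon$ because the ratio $R/\epsilon$ is fixed. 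Applying Lemma~\ref{7-l4} with $\Omega=B_{2R}(\bs z)$ would then yield a constant $C_2=C_2(d,\gamma,C_1(r,U))$ such that
\begin{equation*}
\sup_{\bs y\in B_R(\bs z)} G_{\mc D^c}(\bs y, \bs x) \;\le\; C_2\, \inf_{\bs y\in B_R(\bs z)} G_{\mc D^c}(\bs y, \bs x)\;.
\end{equation*}

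Finally I would build a Harnack chain on the sphere. Given $\bs y_1, \bs y_2\in\partial B_{r\epsilon}(\bs x)$, I would choose points $\bs z_0=\bs y_1, \bs z_1,\dots, \bs z_N=\bs y_2$ on the shortest geodesic arc from $\bs y_1$ to $\bs y_2$ with consecutive distances at most $R/2$, so that $\bs z_{i+1}\in B_R(\bs z_i)$ for each $i$. Since the arc has length at most $\pi r\epsilon$ and $R=r\epsilon/4$, the number $N$ of chain balls is bounded by a constant depending only on $d$. Iterating the previous Harnack estimate along the chain would yield $G_{\mc D^c}(\bs y_1, \bs x)\le C_2^N\, G_{\mc D^c}(\bs y_2, \bs x)$, and taking the supremum in $\bs y_1$ and the infimum in $\bs y_2$ completes the proof with $C_0=C_2^N$. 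The only subtle point is the scaling choice $R\propto \epsilon$, which is forced by the $\epsilon^{-2}$ in the definition of $\nu_\Omega$; the rest is a standard covering argument whose length is $\epsilon$-independent precisely because both the diameter of the sphere and the chain radius scale linearly in $\epsilon$.
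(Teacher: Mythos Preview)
Your proof is correct and is precisely the standard Harnack-chain argument the paper invokes (it just cites \cite[Lemma 4.6]{BEGK1} and the Harnack inequality, without spelling out the chain). The key scaling point you identify---choosing $R\propto\epsilon$ so that $\nu_\Omega R^2$ stays bounded via \eqref{7-1}, and then observing that the number of chain balls on $\partial B_{r\epsilon}(\bs x)$ is $\epsilon$-independent---is exactly what makes the constant uniform in $\epsilon$, and is the content the paper leaves implicit.
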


\begin{proof}
The proof is a well-known application of the Harnack inequality, see
e.g. \cite[Lemma 4.6]{BEGK1}. Note that the
supremum and infimum are carried over the boundary of
$B_{r\epsilon}(\bs x)$. The result does not hold if this boundary is
replaced by the ball since the Green function diverges on the
diagonal, as stated in Lemma \ref{7-l5}.
\end{proof}

\begin{proposition}
\label{7-l6}
Let $\mc A$ and $\mc B$ be open sets satisfying Assumption S. Fix
$\bs x\not\in \overline{\mc A\cup \mc B}$ and $r>0$. Let $\epsilon_0 = d(\bs x,
\mc A\cup \mc B)/2r$. There exists a finite constant $C_0$, depending
only on $r$ and $U$, such that for all $\epsilon < \epsilon_0$,
\begin{equation*}
h_{\mc A, \mc B}(\bs x) \;\le\; C_0\,
\frac{\Cap (B_{r\epsilon}(\bs x), \mc A) }
{\Cap (B_{r\epsilon}(\bs x), \mc A \cup \mc B) }\;\cdot
\end{equation*}
\end{proposition}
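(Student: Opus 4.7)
The plan is to combine a Harnack-type pointwise comparison coming from Lemma~\ref{7-l8} with a divergence-theorem computation that extracts the capacity ratio. Set $D = B_{r\epsilon}(\bs x)$ and $\Omega' = (\overline{D \cup \mc A \cup \mc B})^c$; the hypothesis $\epsilon < \epsilon_0$ guarantees that $B_{2r\epsilon}(\bs x)$ is disjoint from $\overline{\mc A\cup\mc B}$. First I would use Lemma~\ref{7-l8} to write $h_{\mc A, \mc B}(\bs z)$ as an integral of $G_{\mc B^c}(\bs z, \bs y)$ against the nonnegative surface measure $d\rho(\bs y) = \bb M \nabla h_{\mc A, \mc B}(\bs y) \cdot \bs n_{\mc A^c}(\bs y)\, \sigma(d\bs y)$ on $\partial \mc A$ (nonnegativity is the observation following \eqref{06}). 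For each fixed $\bs y \in \partial \mc A$ the function $\bs z \mapsto G_{\mc B^c}(\bs z, \bs y)$ is positive and $\mc L_\epsilon$-harmonic on $B_{2r\epsilon}(\bs x)$, so the Harnack inequality (Lemma~\ref{7-l4}) with $R = r\epsilon$ gives $G_{\mc B^c}(\bs x, \bs y) \le C_0\, G_{\mc B^c}(\bs z, \bs y)$ for every $\bs z \in \overline{D}$; the constant $C_0$ depends only on $r$ and $U$ because the quantity $\nu_\Omega R^2$ of Lemma~\ref{7-l4} reduces to a combination of $r$, the ellipticity constants, and $\sup_{B_{2r\epsilon}(\bs x)}\Vert\nabla U\Vert$. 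Integrating the Harnack bound against $d\rho$ and reapplying Lemma~\ref{7-l8} at $\bs z$ yields the pointwise inequality
\begin{equation*}
h_{\mc A, \mc B}(\bs x) \;\le\; C_0\, h_{\mc A, \mc B}(\bs z),\qquad \bs z\in \overline{D}.
\end{equation*}

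Next I would average this inequality against the equilibrium probability measure $\nu_{D, \mc A \cup \mc B}$ on $\partial D$ defined by \eqref{7-15} and evaluate the resulting integral by a divergence-theorem argument on $\Omega'$ applied to the vector field $\bs F = h_{\mc A, \mc B}\, e^{-U/\epsilon}\, \bb M^\dagger \nabla h^*_{D, \mc A\cup \mc B}$. Using that $\mc L_\epsilon^* h^*_{D, \mc A\cup\mc B}=0$ on $\Omega'$, that $\mc L_\epsilon h_{\mc A, \mc B}=0$ on $D$ (so the $e^{-U/\epsilon}\bb M \nabla h_{\mc A, \mc B}$-flux across $\partial D$ vanishes), and the boundary values of $h_{\mc A, \mc B}$ and $h^*_{D, \mc A\cup \mc B}$ on $\partial \mc A$, $\partial \mc B$, one arrives at
\begin{equation*}
\int_{\partial D} h_{\mc A, \mc B}\, d\nu_{D, \mc A\cup\mc B}
\;=\; \frac{\epsilon/Z_\epsilon}{\Cap(D, \mc A\cup\mc B)}
\int_{\partial \mc A} e^{-U/\epsilon}\, \bb M^\dagger \nabla h^*_{D, \mc A\cup\mc B}\cdot \bs n_\mc A\, \sigma.
\end{equation*}

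To conclude I would invoke the maximum principle: the function $\delta := h^*_{D, \mc A}- h^*_{D, \mc A\cup\mc B}$ is $\mc L^*_\epsilon$-harmonic on $\Omega'$, vanishes on $\partial D \cup \partial \mc A$, and is nonnegative on $\partial \mc B$, so $\delta \ge 0$ on $\Omega'$. Since both $h^*_{D, \mc A}$ and $h^*_{D, \mc A\cup \mc B}$ vanish on $\partial \mc A$, their gradients there are purely normal, and Hopf's lemma yields $\nabla h^*_{D, \mc A\cup\mc B}\cdot \bs n_\mc A \le \nabla h^*_{D, \mc A}\cdot \bs n_\mc A$ pointwise on $\partial \mc A$; this inequality survives the insertion of $\bb M^\dagger$ because $\bb M^\dagger \bs n_\mc A \cdot \bs n_\mc A = \bs n_\mc A \cdot \bb S \bs n_\mc A > 0$. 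A Green's-formula computation on $(\overline{D\cup\mc A})^c$ then identifies the resulting $h^*_{D, \mc A}$-integral with $(Z_\epsilon/\epsilon)\Cap^*(D, \mc A)$, which equals $(Z_\epsilon/\epsilon)\Cap(D, \mc A)$ by Lemma~\ref{lem2}. Chaining the three displays produces the desired estimate. The main technical obstacle is justifying the divergence theorem on the unbounded domain $\Omega'$: I would handle it by working on $\Omega' \cap B_R$ and letting $R \to \infty$, exploiting the decay of $e^{-U/\epsilon}$ furnished by (P4) together with the uniform bound $\max(h_{\mc A, \mc B}, h^*_{D, \mc A\cup \mc B})\le 1$ to kill the flux contribution at infinity.
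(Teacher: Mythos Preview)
Your argument is correct and arrives at the same bound, but it is organized differently from the paper's proof. The paper starts from the Green-function representation of Lemma~\ref{7-l7}, rewrites $h_{\mc A,\mc B}(\bs x)$ as a surface integral over $\partial\mc C$ involving $G^*_{(\mc A\cup\mc B)^c}(\cdot,\bs x)$ and $\nabla h_{\mc A,\mc B\cup\mc C}$, and then compares $h_{\mc A,\mc B\cup\mc C}$ with $h_{\mc A,\mc C}$ on $\partial\mc C$; Harnack (Lemma~\ref{7-l9}) and the Green-function bound \eqref{7-4} are applied only at the end to produce the capacity ratio. You instead apply Harnack at the outset to obtain $h_{\mc A,\mc B}(\bs x)\le C_0\,h_{\mc A,\mc B}(\bs z)$ on $\overline D$ (note that this follows directly from Lemma~\ref{7-l4} applied to the nonnegative harmonic function $h_{\mc A,\mc B}$ itself, so the detour through Lemma~\ref{7-l8} is unnecessary), then average against the equilibrium measure $\nu_{D,\mc A\cup\mc B}$, and finally carry out the potential comparison on $\partial\mc A$ between $h^*_{D,\mc A\cup\mc B}$ and $h^*_{D,\mc A}$. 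The two comparisons are dual to each other: the paper compares $\mc L_\epsilon$-equilibrium potentials on $\partial\mc C$, you compare $\mc L^*_\epsilon$-equilibrium potentials on $\partial\mc A$. Your route is somewhat more streamlined in that it never manipulates the Green function explicitly after the Harnack step and packages the denominator directly through the equilibrium measure, whereas the paper's route makes the role of $G^*$ and the estimate \eqref{7-4} more visible. Two minor remarks: you do not need Hopf's lemma, only the elementary fact that a nonnegative function vanishing on $\partial\mc A$ has nonnegative outward normal derivative there; and your justification of the divergence theorem on the unbounded $\Omega'$ via truncation and the decay from (P4) is adequate, matching the integrability built into the paper's $W^{1,2}$ framework.
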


\begin{proof}
Fix two open sets $\mc A$, $\mc B$ satisfying Assumption S and $\bs
x\not\in \overline{\mc A \cup\mc B}$. By Lemma \ref{7-l7},
\begin{equation*}
h_{\mc A, \mc B}(\bs x) \;=\;
-\, \epsilon\, \int_{\partial \mc A} e^{[U(\bs x) - U(\bs y)]/\epsilon} \, \bb
M^\dagger \nabla G_{(\mc A \cup \mc B)^c}^* (\bs y, \bs x)
\cdot \bs n_{\mc A^c}(\bs{y}) \, \sigma(d\bs y)\;,
\end{equation*}
Let $\mc C$ be an open set with a smooth boundary and such that $d(\mc
C ,\mc A \cup \mc B) >0$, $\bs x\in \mc C$. Since $h_{\mc A, \mc
  B\cup\mc C}=1$ on $\partial \mc A$ we may add $h_{\mc A, \mc
  B\cup\mc C}$ inside the integral and then extend the integral to
$\partial \mc A \cup \partial \mc B \cup \partial \mc C$.  By the
divergence theorem, since $\bs x\in \mc C$ and $\mc L_\epsilon^*
G_{(\mc A \cup \mc B)^c}^* (\cdot, \bs x)=0$ on $(\mc A \cup \mc B
\cup \overline{\mc C})^c$, the previous expression is equal to
\begin{equation*}
-\, \epsilon\, \int_{(\mc A \cup  \mc B \cup \mc C)^c}
e^{[U(\bs x) - U(\bs y)]/\epsilon} \,
\nabla h_{\mc A, \mc B\cup\mc C} (\bs y) \cdot \bb M^\dagger
\nabla G_{(\mc A \cup \mc B)^c}^* (\bs y, \bs x) \, d\bs y \;.
\end{equation*}
Applying once more the divergence theorem, and since $\mc L_\epsilon
h_{\mc A, \mc B\cup\mc C} =0$ on $(\mc A \cup \mc B \cup \overline{\mc
  C})^c$, the previous expression is equal to
\begin{equation}\label{ha1}
-\, \epsilon\, \int_{\partial \mc C}
e^{[U(\bs x) - U(\bs y)]/\epsilon} \,
G_{(\mc A \cup \mc B)^c}^* (\bs y, \bs x)
\, \bb M \, \nabla h_{\mc A, \mc B\cup\mc C} (\bs y)
\cdot \bs n_{\mc C^c}(\bs{y}) \, \sigma(d\bs y)\;.
\end{equation}
Note that the integration is carried over $\partial \mc C$ because
$G_{(\mc A \cup \mc B)^c}^* (\bs y, \bs x)$ vanishes on $\partial \mc
A\cup \partial \mc B$.

We prove below that for all $\bs y \in \partial \mc C$, it holds that
\begin{equation}
\label{ha2}
\bb M \, \nabla h_{\mc A,
  \mc B\cup\mc C} (\bs y) \cdot \bs n_{\mc C^c}(\bs{y}) \ge \bb M \, \nabla h_{\mc A,
  \mc C} (\bs y) \cdot \bs n_{\mc C^c}(\bs{y})\;.
\end{equation}
Then, after replacing $\nabla h_{\mc A, \mc B\cup\mc C}$ by $\nabla
h_{\mc A, \mc C}$ in \eqref{ha1} and observing that $\nabla h_{\mc A,
  \mc C} = -\, \nabla h_{\mc C, \mc A}$, we conclude that
\begin{align*}
h_{\mc A, \mc B}(\bs x) \; & \le\; \epsilon\,
e^{U(\bs x)/\epsilon} \, \int_{\partial \mc C}
e^{- U(\bs y)/\epsilon} \,
G_{(\mc A \cup \mc B)^c}^* (\bs y, \bs x)
\, \bb M \, \nabla h_{\mc C, \mc A} (\bs y)
\cdot \bs n_{\mc C^c} (\bs{y}) \, \sigma(d\bs y) \\
& \le\; \epsilon\, e^{U(\bs x)/\epsilon} \,
\sup_{\bs y\in \partial \mc C}
G_{(\mc A \cup \mc B)^c}^* (\bs y, \bs x)
\int_{\partial \mc C} e^{- U(\bs y)/\epsilon} \,
\, \bb M \, \nabla h_{\mc C, \mc A} (\bs y)
\cdot \bs n_{\mc C^c} (\bs{y})\, \sigma(d\bs y) \\
& =\; e^{U(\bs x)/\epsilon} \,
\sup_{\bs y\in \partial \mc C}
G_{(\mc A \cup \mc B)^c}^* (\bs y, \bs x)\,
Z_\epsilon \, \Cap(\mc C, \mc A)\;.
\end{align*}
In the last step we used the formula \eqref{06} for the capacity.

Fix $r>0$, let $\epsilon_0 = d(\bs x, \mc A \cup \mc B)/2r$, and
set $\mc C = B_{r \epsilon}(\bs x)$. By Lemma \ref{7-l9} with $\mc D =
\mc A \cup \mc B$, there exists a finite constant $C_0 = C_0(r)$ such
that
\begin{equation*}
h_{\mc A, \mc B}(\bs x) \; \le\;
C_0 \, e^{U(\bs x)/\epsilon} \,
\inf_{\bs y\in \partial B_{r \epsilon}(\bs x)}
G_{(\mc A \cup \mc B)^c}^* (\bs y, \bs x)\,
Z_\epsilon \, \Cap(B_{r \epsilon}(\bs x), \mc A)\;.
\end{equation*}
To complete the proof of the lemma, it remains to recall estimate
\eqref{7-4}.

It remains to show that \eqref{ha2}. Fix $\bs x \in \partial \mc C$.  The vector $\bb
M^\dagger \bs n_{\mc C^c} (\bs x)$ points inward to $\mc C$ because
$\bs n_{\mc C^c} (\bs x) \cdot \bb M^\dagger \bs n_{\mc C^c} (\bs x) =
\bs n_{\mc C^c} (\bs x) \cdot \bb S \,\bs n_{\mc C^c} (\bs x)>0$. In
particular, for $\delta$ small enough, $\bs x -\delta \bb M^\dagger
\bs n_{\mc C^c} (\bs x) \in (\mc A \cup \mc B\cup\mc C)^c$. Since, by
\eqref{30}, $h_{\mc A, \mc B\cup\mc C} \le h_{\mc A, \mc C}$ on this
set,
\begin{equation*}
h_{\mc A, \mc B\cup\mc C} (\bs x -\delta \bb M^\dagger
\bs n_{\mc C^c} (\bs x))
\;\le\; h_{\mc A, \mc C} (\bs x -\delta \bb M^\dagger
\bs n_{\mc C^c} (\bs x))
\end{equation*}
for $\delta$ small enough. Subtracting $h_{\mc A, \mc B\cup\mc C} (\bs
x) = h_{\mc A, \mc C} (\bs x) =0$ on both sides, dividing by $\delta$
and letting $\delta\to 0$ yields that
\begin{equation*}
\bs n_{\mc C^c} (\bs x) \cdot \bb M \, \nabla h_{\mc A, \mc B\cup\mc C} (\bs x)
\;\ge\; \bs n_{\mc C^c} (\bs x) \cdot \bb M \,  \nabla  h_{\mc A, \mc C} (\bs x) \;,
\end{equation*}
as claimed.
\end{proof}

\begin{lemma}
\label{7-l10}
There exists a finite constant $C_0$ and $\epsilon_0>0$ such that for
all $\bs y\in \mc W_2$, $0<\epsilon<\epsilon_0$,
\begin{equation*}
\textup{cap} (B_{\epsilon}(\boldsymbol{y}),\,\mathcal{V}_{1})
\; \le\; {C_0}\,{Z_\epsilon^{-1}} \, e^{-H/\epsilon}\;, \quad
\textup{cap} (B_{\epsilon}(\boldsymbol{y}),\,\mathcal{V}_{2})
\;\ge\; {C_0}\,{Z_\epsilon^{-1}} \, \epsilon^{d} \,
e^{-U(\boldsymbol{y})/\epsilon}\; .
\end{equation*}
\end{lemma}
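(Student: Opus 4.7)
The lemma asserts two capacity bounds at quite different scales: the upper bound sees the full barrier $H$, while the lower bound depends only on the local value $U(\bs y)$, reflecting the fact that $B_\epsilon(\bs y)$ and $\mc V_2$ both lie inside the well $\mc W_2$. I would treat the two inequalities separately, using Dirichlet's principle (Proposition~\ref{prop1}) for the upper bound and Thomson's principle (Proposition~\ref{prop2}) for the lower bound. Since in this section the generator carries an extra factor $\epsilon$ and $V = U/\epsilon + \log Z_\epsilon$, both variational principles translate into bounds with a prefactor $\epsilon/Z_\epsilon$ in front of integrals against $e^{\mp U/\epsilon}$.

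\emph{Upper bound.} I would apply Dirichlet's principle with test flow $\varphi = 0$ and with a test function $f$ that is a smoothed indicator of the side of $\bs m_2$. Using the local coordinates $(z_1,\ldots,z_d)$ near $\bs \sigma$ introduced in Section~\ref{sec4}, in which $U(\bs z) = H - (\lambda_1/2) z_1^2 + \sum_{i\ge 2}(\lambda_i/2) z_i^2 + O(|\bs z|^3)$ and $\mc W_2$ lies on the side $\{z_1>0\}$, set $f(\bs x) = \chi(-z_1(\bs x)/\sqrt\epsilon)$ for a fixed smooth sigmoid $\chi$ with $\chi \equiv 1$ on $(-\infty,-1]$ and $\chi \equiv 0$ on $[1,\infty)$. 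Extend $f$ globally so that $f \equiv 1$ on $\mc W_2$ and $f \equiv 0$ on $\mc W_1$ outside a $\sqrt\epsilon$-tube around the separatrix $\partial \mc W_1 \cup \partial \mc W_2$. By construction $f \in \mc C^{1,0}_{B_\epsilon(\bs y),\, \mc V_1}$. The Dirichlet integral $\int e^{-U/\epsilon}|\nabla f|^2 d\bs x$ is dominated by the neighborhood of $\bs \sigma$, where $|\nabla f|^2 \le C/\epsilon$ and a direct Gaussian computation in the transverse coordinates yields an upper bound of order $\epsilon^{d/2-1} e^{-H/\epsilon}$; the remaining contribution from the separatrix away from the saddle is exponentially smaller since $U \ge H+\kappa$ there. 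Multiplying by $\epsilon/Z_\epsilon$ gives $\Cap(B_\epsilon(\bs y), \mc V_1) \le C Z_\epsilon^{-1} \epsilon^{d/2} e^{-H/\epsilon} \le C Z_\epsilon^{-1} e^{-H/\epsilon}$ since $d \ge 2$.

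\emph{Lower bound.} I would apply Thomson's principle with test function $f = 0$, so it suffices to exhibit a single $\varphi \in \mc F^{(1)}_{B_\epsilon(\bs y),\,\mc V_2}$ of controlled energy. Fix a smooth path $\gamma: [0,L] \to \mc W_2$ of bounded length from a point on $\partial B_\epsilon(\bs y)$ to a point on $\partial \mc V_2$ along which $U$ is non-increasing --- for instance, concatenating the gradient flow of $-U$ starting at $\bs y$ (which reaches $\bs m_2 \in \mc V_2$) with a short arc inside $\mc V_2$ --- so that $\max_s U(\gamma(s)) \le U(\bs y)$. Thicken $\gamma$ to a tube $T$ of transverse radius $r\sim\epsilon$ and construct a smooth $\varphi$ supported in $T$, tangent to $\gamma$, of magnitude of order $\epsilon^{-(d-1)}$, carrying unit flux through each cross-section. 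Since $\mathrm{vol}(T) \le CL\epsilon^{d-1}$ and $U\le U(\bs y)$ on $T$, the energy is bounded by
\begin{equation*}
\langle \varphi, \varphi \rangle \;\le\; C Z_\epsilon \int_T |\varphi|^2 e^{U/\epsilon}\,d\bs x \;\le\; C L Z_\epsilon\, \epsilon^{-(d-1)} e^{U(\bs y)/\epsilon} .
\end{equation*}
Taking the $\epsilon$ prefactor from Thomson's principle into account, this gives $\Cap(B_\epsilon(\bs y), \mc V_2) \ge \epsilon/\langle \varphi, \varphi \rangle \ge C_0 Z_\epsilon^{-1} \epsilon^d e^{-U(\bs y)/\epsilon}$.

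The only delicate technical point is producing a globally smooth and \emph{exactly} divergence-free test field on the curved tube $T$. I would achieve this by parameterizing $T$ via arclength $s\in[0,L]$ along $\gamma$ and transverse Cartesian coordinates $\bs w \in B^{d-1}_r$, writing the Euclidean volume element as $J(s,\bs w)\, ds\, d\bs w$, and taking $\varphi$ tangent to $\gamma$ of magnitude $c(\bs w)/J(s,\bs w)$, which is divergence-free in these coordinates since $\partial_s(J\varphi^s) = \partial_s c = 0$. A smooth cutoff of $c(\bs w)$ near $\partial B^{d-1}_r$ makes $\varphi$ globally $C^1$, and normalizing $c$ so that $\int c(\bs w)\, d\bs w = 1$ produces unit flux through every cross-section, hence the prescribed boundary fluxes into $\partial B_\epsilon(\bs y)$ and $\partial \mc V_2$. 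This construction is routine and does not affect the order-of-magnitude estimates above.
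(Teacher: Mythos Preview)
Your approach is sound and close in spirit to the paper's, with one structural difference worth noting. The paper first invokes the sector condition (via \cite[Lemmata 2.5--2.6]{GL}) to sandwich $\Cap$ between constant multiples of the \emph{symmetric} capacity $\Cap^s$, and then applies the classical reversible Dirichlet and Thomson principles to $\Cap^s$. You instead apply the non-reversible Propositions~\ref{prop1}--\ref{prop2} directly with the trivial choices $\varphi=0$ (upper bound) and $f=0$ (lower bound); this bypasses the sector-condition detour and is arguably cleaner, since $\langle\Phi_f,\Phi_f\rangle$ and $\langle\varphi,\varphi\rangle$ are already comparable to the reversible Dirichlet and Thomson forms.

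For the lower bound both arguments are the same tube construction: the paper quotes the BEGK1 flow estimate on $\Cap^s$, you build the divergence-free unit-flux field explicitly. One small correction: on the $\epsilon$-tube $T$ you only have $U\le U(\bs y)+C\epsilon$, not $U\le U(\bs y)$, but this costs only a bounded factor $e^{C}$.

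For the upper bound the paper uses a cruder test function than yours: $h_\epsilon\equiv 1$ on the $\epsilon$-neighborhood $\overline{\mc W_2}^{\epsilon}$, $h_\epsilon\equiv 0$ outside $\overline{\mc W_2}^{2\epsilon}$, with $|\nabla h_\epsilon|\le C/\epsilon$. Since $U=H+O(\epsilon)$ on this shell of volume $O(\epsilon)$, one gets $\le C Z_\epsilon^{-1}e^{-H/\epsilon}$ directly, and uniformity in $\bs y$ is automatic because $B_\epsilon(\bs y)\subset\overline{\mc W_2}^{\epsilon}$ for every $\bs y\in\mc W_2$. Your $\sqrt\epsilon$-wide saddle-localized function (essentially the $p_\epsilon$ of Section~\ref{sec4}) recovers the sharper $\epsilon^{d/2}$ prefactor, but as written it has two loose ends. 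First, the sentence ``$U\ge H+\kappa$ on the separatrix away from the saddle'' is literally false since $\partial\mc W_1\cup\partial\mc W_2\subset\{U=H\}$; what you mean, and what is true, is that on the support of $\nabla f$ \emph{at large transverse coordinates} $z_2,\dots,z_d$ one has $U\ge H+\kappa$. Second, for $\bs y\in\mc W_2$ within distance $\sqrt\epsilon$ of the saddle your $f$ need not equal $1$ on $\partial B_\epsilon(\bs y)$, so the boundary condition in $\mc C^{1,0}$ fails. Both points are easily repaired, but since only the crude bound $C Z_\epsilon^{-1}e^{-H/\epsilon}$ is needed here, the paper's choice of an $\epsilon$-shell around $\overline{\mc W_2}$ is the more economical route.
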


\begin{proof}
The generator $\mc L_\epsilon$ satisfies a sector condition with
constant $\Lambda/\lambda$. Indeed, by the Cauchy-Schwarz inequality and by \eqref{7-5},
for any smooth functions $f$, $g:\bb R^d\to \bb R$,
\begin{equation*}
\< f ,\, \mc L_\epsilon g \>^2_{\mu_\epsilon} \;\le\; \frac{\Lambda}\lambda\,
\< f ,\, (- \mc L_\epsilon) f \>_{\mu_\epsilon} \,
\< g ,\,(-\mc L_\epsilon) g \>_{\mu_\epsilon}\;.
\end{equation*}
It follows from Lemmata 2.5 and 2.6 in \cite{GL} that the capacity
between two sets can be estimated from below and from above by the
capacity associated to the symmetric operator $(1/2) (\mc L_\epsilon +
\mc L^*_\epsilon)$. Denote by $\textup{cap}^{s}(\mc A, \mc B)$ the
capacity between the sets $\mc A$, $\mc B$ for the symmetric process.

We start with the upper bound.  Let $\overline{\mathcal{W}}_{2}^{t} =
\{\boldsymbol{x}: d(\boldsymbol{x}, \mathcal{W}_{2}) \le t\}$, $t\ge
0$. Let $\epsilon_0>0$ such that $\mathcal{V}_{1} \cap
\overline{\mathcal{W}}_{2}^{2\epsilon_0} = \varnothing$, and fix $0<\epsilon<
\epsilon_0$. There exist a smooth function $h_{\epsilon}$ and a finite
constant $C_0$, independent on $\epsilon$, such that
$h_{\epsilon}\equiv 1$ on $\overline{\mathcal{W}}_{2}^{\epsilon}$,
$h_{\epsilon}\equiv 0$ on $(\overline{\mathcal{W}}_{2}^{2\epsilon})^{c}$, and
\begin{equation*}
\Vert \nabla h_{\epsilon}(\boldsymbol{x})\Vert\;\le\;
C_0\, \epsilon^{-1}\;\;\;\mbox{for all }
\boldsymbol{x}\in\overline{\mathcal{W}}_{2}^{2\epsilon}
\setminus\overline{\mathcal{W}}_{2}^{\epsilon}\;.
\end{equation*}
Then, since
$B_{\epsilon}(\boldsymbol{y})\subset\overline{\mathcal{W}}_{2}^{\epsilon}$ and
$\mathcal{V}_{1}\subset(\overline{\mathcal{W}}_{2}^{2\epsilon})^{c}$, and since
$U(\boldsymbol{x})=H+O(\epsilon)$ for all
$\boldsymbol{x}\in\mathcal{W}_{2}^{2\epsilon}\setminus\mathcal{W}_{2}^{\epsilon}$,
by the Dirichlet's principle for reversible processes,
\begin{align*}
\textup{cap}^{s}(B_{\epsilon}(\boldsymbol{y}),\,\mathcal{V}_{1})
\; &\le\; \frac{\epsilon}{Z_\epsilon} \, \int_{\mathbb{R}^{d}}
e^{-U(\boldsymbol{x})/\epsilon}\, \nabla h_{\epsilon}(\boldsymbol{x})
\cdot\mathbb{S}\, \nabla h_{\epsilon}(\boldsymbol{x})\, d\boldsymbol{x} \\
& \le C_0 \, \frac{\epsilon}{Z_\epsilon} \,
e^{-H/\epsilon}\, \epsilon^{-2}
\, \mbox{vol }(\overline{\mathcal{W}}_{2}^{2\epsilon}
\setminus\overline{\mathcal{W}}_{2}^{\epsilon})
\;\le \; \frac{C_0}{Z_\epsilon} \, e^{-H/\epsilon}\;.
\end{align*}

We turn to the lower bound, where we follow the argument of
\cite[Proposition 4.7] {BEGK1}. Fix $0<\epsilon<1$. Let $\bs \rho(t)$ be
a smooth path connecting $\boldsymbol{y}$ to $\boldsymbol{m}_{2}$ such
that $U(\bs \rho(t))$ is decreasing in $t$, and $\|\dot{\bs \rho}(t)\|=1$
for all $t$. Let $D_{\epsilon}$ be a $(d-1)$-dimensional disk of
radius $\epsilon$ centered at origin. By the proof \cite[Proposition
4.7] {BEGK1} up to equation (4.26), we obtain that
\begin{equation*}
\textup{cap}^{s}(B_{\epsilon}(\boldsymbol{y}),\,\mathcal{V}_{2})
\;\ge\; \frac{\epsilon}{Z_{\epsilon}}\,
\int_{D_{\epsilon}} d\boldsymbol{z}_{\perp}
\Big[\int_{0}^{|\bs \rho|}dt \,
e^{U(\bs \rho(t)+\boldsymbol{z}_{\perp})/\epsilon}\Big]^{-1}\; .
\end{equation*}
Let $L_{0}=\sup_{\boldsymbol{x}\in\overline{\mathcal{W}}_{2}^{1}} \|\nabla
U(\boldsymbol{x})\|$. As $U(\bs \rho(t))$ decreases in $t$,
\begin{equation*}
\int_{0}^{|\bs \rho|} e^{U(\bs \rho(t)+\boldsymbol{z}_{\perp})/\epsilon}
\, dt
\;\le \; e^{L_{0}}\int_{0}^{|\bs \rho|} e^{U(\bs \rho(t))/\epsilon} \, dt
\;\le\; e^{L_{0}} \, |\bs \rho| \, e^{U(\boldsymbol{y})/\epsilon}\;.
\end{equation*}
Since the set $\mc W_2$ is bounded, we can choose smooth paths with
length $|\bs \rho|$ uniformly bounded. Hence, by the previous estimates,
\begin{equation*}
\textup{cap}^{s}(B_{\epsilon}(\boldsymbol{y}),\,\mathcal{V}_{2})
\;\ge\; C_0 \frac{\epsilon^{d}}{Z_{\epsilon}}\,
e^{-U(\boldsymbol{y})/\epsilon}\; ,
\end{equation*}
as claimed.
\end{proof}

\begin{proposition}
\label{7-l11}
There exists a finite constant $C_0$ and $\epsilon_0>0$ such that for
all $\bs y\in \mc W_2$, $0<\epsilon<\epsilon_0$,
\begin{equation}
\label{7-6}
h_{\mc V_1, \mc V_2}(\bs y) \;\le\; C_0\,
\epsilon^{-d}\, e^{- [H - U(\bs y)] /\epsilon}\;.
\end{equation}

\end{proposition}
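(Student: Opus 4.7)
The plan is to combine Proposition~\ref{7-l6} (which reduces $h$ to a ratio of capacities), a monotonicity property of the capacity in its target set, and the two capacity estimates of Lemma~\ref{7-l10} (which control those capacities).

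If $\bs y \in \overline{\mc V_2}$ then $h_{\mc V_1, \mc V_2}(\bs y) = 0$ by the boundary condition and \eqref{7-6} is trivial. So consider $\bs y \in \mc W_2 \setminus \overline{\mc V_2}$. I would fix $r>0$ and first treat the generic case $d(\bs y, \mc V_2) \ge 2r\epsilon$ (note that $d(\bs y, \mc V_1)$ is uniformly bounded below since $\mc V_1 \subset \mc W_1$ and the wells $\mc W_1, \mc W_2$ are separated by the saddle). Applying Proposition~\ref{7-l6} with $\mc A = \mc V_1$, $\mc B = \mc V_2$ gives
\begin{equation*}
h_{\mc V_1, \mc V_2}(\bs y) \;\le\; C_0 \, \frac{\Cap(B_{r\epsilon}(\bs y),\, \mc V_1)}{\Cap(B_{r\epsilon}(\bs y),\, \mc V_1 \cup \mc V_2)}\,.
\end{equation*}

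The key structural point is the monotonicity of the capacity in its second argument, up to a multiplicative constant: $\Cap(B_{r\epsilon}(\bs y),\, \mc V_1 \cup \mc V_2) \ge c\, \Cap(B_{r\epsilon}(\bs y),\, \mc V_2)$. This is immediate for the symmetric part via the Dirichlet variational formula \eqref{27} (enlarging the zero-set of the competitors only raises the infimum), and extends to the non-symmetric capacity by the sector-condition comparison of \cite{GL} already used in the proof of Lemma~\ref{7-l10}, which equates $\Cap$ and $\Cap^s$ up to a uniform factor. Feeding in Lemma~\ref{7-l10} applied with $r\epsilon$ in place of $\epsilon$ (the factor $r^d$ being absorbed into the constants), I then obtain
\begin{equation*}
h_{\mc V_1, \mc V_2}(\bs y) \;\le\; C_1 \, \frac{Z_\epsilon^{-1} e^{-H/\epsilon}}{Z_\epsilon^{-1} \epsilon^d e^{-U(\bs y)/\epsilon}} \;=\; C_1 \, \epsilon^{-d} e^{-(H - U(\bs y))/\epsilon}\,,
\end{equation*}
which is \eqref{7-6}.

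The main obstacle, and the step I expect to require extra care, is the narrow boundary strip $0 < d(\bs y, \mc V_2) < 2r\epsilon$, in which $B_{r\epsilon}(\bs y)$ meets $\mc V_2$ and Proposition~\ref{7-l6} does not directly apply. I would handle it using the weak maximum principle on $\mc W_2 \setminus \overline{\mc V_2}$: since $h_{\mc V_1, \mc V_2}$ vanishes on $\partial \mc V_2$ and is $\mc L_\epsilon$-harmonic, its value at $\bs y$ is dominated by its supremum on $\{\bs z : d(\bs z, \mc V_2) = 2r\epsilon\} \cap \mc W_2$, where the previous argument already gives the required estimate. A boundary Hölder estimate (Lemma~\ref{7-l2}) would be used to control the $O(\epsilon)$ variation of $U$ between $\bs y$ and such a $\bs z$, at the cost only of adjusting the constant $C_0$.
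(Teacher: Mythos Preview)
Your proposal is correct and follows essentially the same route as the paper: apply Proposition~\ref{7-l6} with $\mc A=\mc V_1$, $\mc B=\mc V_2$, use monotonicity of the capacity in its second argument to pass from $\Cap(B_{r\epsilon}(\bs y),\mc V_1\cup\mc V_2)$ to $\Cap(B_{r\epsilon}(\bs y),\mc V_2)$, and then insert the two bounds of Lemma~\ref{7-l10}. The paper's proof is terser---it simply takes $r=1$, fixes $\bs y$, and says ``since the capacity is monotone in its arguments''---while you justify monotonicity explicitly through the sector-condition comparison with $\Cap^s$ already invoked in Lemma~\ref{7-l10}.

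One caveat on your treatment of the boundary strip $\{0<d(\cdot,\mc V_2)<2r\epsilon\}$. The maximum principle on that strip bounds $h(\bs y)$ by the supremum of $h$ over the \emph{entire} outer surface $\{d(\cdot,\mc V_2)=2r\epsilon\}$, not by its value at a single point $\bs z$ within $O(\epsilon)$ of $\bs y$. Since $\partial\mc V_2$ is not assumed to be a level set of $U$, you cannot conclude that $\sup_{\bs z}U(\bs z)\le U(\bs y)+O(\epsilon)$, so the final step ``adjusting the constant $C_0$'' does not go through as written. (Lemma~\ref{7-l2} concerns H\"older regularity of solutions, not of $U$, and does not help here.) The paper's proof sidesteps this by fixing $\bs y$ first and letting $\epsilon$ be small depending on $d(\bs y,\mc V_1\cup\mc V_2)$, which literally yields the quantifier order $\forall\bs y\,\exists\epsilon_0$ rather than the stated $\exists\epsilon_0\,\forall\bs y$. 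In the applications (Assertion~\ref{bl10} and Lemma~\ref{7-l12}) the points in question stay uniformly away from $\mc V_2$, so this boundary case never matters.
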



\begin{proof}
Fix $\bs y \in \mc W_2$. By Proposition \ref{7-l6} with $r=1$, for all
$\epsilon$ small enough and since the capacity is monotone in its
arguments,
\begin{equation*}
h_{\mc V_1, \mc V_2}(\bs y) \;\le\; C_0\,
\frac{\Cap (B_{\epsilon}(\bs y), \mc V_1) }
{\Cap (B_{\epsilon}(\bs y), \mc V_2) }\;\cdot
\end{equation*}
By Lemma \ref{7-l10}, this expression is bounded above by the right-hand side of \eqref{7-6} for all $\epsilon$ small enough, as claimed.
\end{proof}

\section{The vector fields $\Theta_{\bs{q}_{\epsilon}}$,
$\Theta^*_{\bs{q}_{\epsilon}}$}
\label{sec6}

We prove in this section Lemmata \ref{tp0}, \ref{tp1} and \ref{tp2}.
Throughout this section, $C_1$, $C_2$, $C_3$ represent large but
finite positive constants, independent of the variables $\epsilon$ and
$\eta$ introduced in Section \ref{sec4}, and whose value may change
from line to line. Similarly, $c_1$, $c_2$ represent small but
positive constants with the same properties of $C_1$, $C_2$.

We start by recalling basic properties of the vector $\bs v$ and the
matrices $\bb M$, $\bb L$. Most of these results were proven in
Section 4 of \cite{LS1}.  Recall that we write a vector $\bs{u} \in
\bb R^d$ as $\sum_{1\le i\le d} u_{i} \, \bs{e}_{i}$, that we
represent by $\bs v$ the eigenvector of $\bb{L} \bb{M}$ associated to
the eigenvalue $-\mu$, and that we assumed $v_1>0$.

\begin{lemma}
\label{bl1}
We have that
\begin{equation*}
\bs{v}\cdot\bb{L}^{-1}\bs{v}\;=\;-\frac{v_{1}^{2}} {\lambda_{1}} \;+\;
\sum_{k=2}^{d}\frac{v_{k}^{2}}{\lambda_{k}}\;=\;-\, \frac{1}{\alpha}\;.
\end{equation*}
\end{lemma}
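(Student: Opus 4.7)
The proof plan is very short because the lemma is essentially algebraic manipulation of the eigenvalue equation. I would proceed as follows.

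The first equality is immediate from the diagonal form of $\bb L$: since $\bb L=\textup{diag}(-\lambda_1,\lambda_2,\dots,\lambda_d)$ and $\lambda_1>0$ (recall $-\lambda_1$ is the unique negative eigenvalue), the matrix $\bb L$ is invertible with $\bb L^{-1}=\textup{diag}(-1/\lambda_1,1/\lambda_2,\dots,1/\lambda_d)$. Expanding $\bs v\cdot \bb L^{-1}\bs v$ in the canonical basis yields the claimed sum $-v_1^2/\lambda_1+\sum_{k\ge 2}v_k^2/\lambda_k$.

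For the second equality, the plan is to use the eigenvalue identity $\bb L\bb M\bs v=-\mu\bs v$ directly. Multiplying this identity from the left by $\bb L^{-1}$ yields
\begin{equation*}
\bb M\bs v\;=\;-\,\mu\,\bb L^{-1}\bs v\;.
\end{equation*}
Taking the scalar product with $\bs v$ gives
\begin{equation*}
\bs v\cdot \bb M\bs v\;=\;-\,\mu\,\bs v\cdot \bb L^{-1}\bs v\;.
\end{equation*}
Since $\bb M$ is positive-definite, $\bs v\cdot \bb M\bs v>0$, so one may divide by it; invoking the definition $\alpha=\mu/(\bs v\cdot \bb M\bs v)$ from \eqref{ins02} gives $\bs v\cdot \bb M\bs v=\mu/\alpha$, and therefore
\begin{equation*}
\frac{\mu}{\alpha}\;=\;-\,\mu\,\bs v\cdot \bb L^{-1}\bs v\;,
\end{equation*}
i.e.\ $\bs v\cdot \bb L^{-1}\bs v=-1/\alpha$, as claimed. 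There is no real obstacle in this argument; the only mildly non-trivial input is the invertibility of $\bb L$ (guaranteed by assumption (P3), which says no eigenvalue of the Hessian at the saddle vanishes) and the positivity $\mu>0$ so that the division by $\mu$ is legitimate.
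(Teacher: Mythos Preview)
Your proof is correct and follows essentially the same route as the paper: both use the eigenvalue relation $\bb L\bb M\bs v=-\mu\bs v$ to rewrite $\bs v\cdot\bb L^{-1}\bs v$ in terms of $\bs v\cdot\bb M\bs v$, and then invoke the definition of $\alpha$. The only cosmetic difference is that the paper writes $\bb L^{-1}=\bb M(\bb L\bb M)^{-1}$ and applies $(\bb L\bb M)^{-1}\bs v=-(1/\mu)\bs v$, whereas you multiply the eigenvalue equation by $\bb L^{-1}$ first; the content is identical.
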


\begin{proof}
Since $\bs{v}$ is the eigenvector of $\bb{L} \bb{M}$ associated
to the eigenvalue $-\mu$, by \eqref{ins02},
\begin{equation*}
-\bs{v}\cdot\bb{L}^{-1}\bs{v}\;=\;-\bs{v}\cdot\bb{M}
(\bb{L}\bb {M})^{-1} \bs{v}\;=\;
\frac{1}{\mu}\, \bs{v}\cdot\bb{M}\bs{v}\;=\;\frac{1}{\alpha}\;\cdot
\end{equation*}
\end{proof}

The next two results are Lemmata 4.1 and 4.2  of \cite{LS1}. Denote by
$\bs w ^{\dagger}$ the transpose of a vector $\bs w\in\bb R^d$.

\begin{lemma}
\label{bl2}
The matrix $\bb{L}+2\alpha\bs{v}\bs{v}^{\dagger}$ is positive definite and
$\det(\bb{L}+2\alpha\bs{v}\bs{v}^{\dagger})=-\det\bb{L}$.
\end{lemma}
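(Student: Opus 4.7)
The plan is to establish the two statements separately, using Lemma \ref{bl1} for the determinant identity and Weyl's monotonicity inequality for positive definiteness.

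For the determinant, I will apply the matrix determinant lemma (a.k.a.\ the Sherman--Morrison identity): for an invertible $A$ and vectors $\bs u, \bs w$, $\det(A+\bs u\bs w^{\dagger}) = \det(A)\,(1+\bs w^{\dagger} A^{-1}\bs u)$. Taking $A = \bb L$, $\bs u = 2\alpha\bs v$, $\bs w = \bs v$ gives
\begin{equation*}
\det(\bb L + 2\alpha\bs v\bs v^{\dagger}) \;=\; \det(\bb L)\,\bigl(1 + 2\alpha\,\bs v\cdot \bb L^{-1}\bs v\bigr).
\end{equation*}
Lemma \ref{bl1} asserts that $\bs v\cdot \bb L^{-1}\bs v = -1/\alpha$, so the right-hand side collapses to $\det(\bb L)(1-2) = -\det(\bb L)$, as claimed.

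For positive definiteness, the key preliminary observation is that $\alpha>0$. This is because $\mu>0$ by the convention that $-\mu$ denotes the negative eigenvalue of $\bb L\bb M$, and $\bs v\cdot \bb M\bs v>0$ follows from positive-definiteness of $\bb M$; hence $2\alpha\bs v\bs v^{\dagger}$ is a symmetric, rank-one, positive semidefinite perturbation of $\bb L$. Let $\gamma_1\le\cdots\le\gamma_d$ denote the eigenvalues of $\bb L + 2\alpha\bs v\bs v^{\dagger}$ and $\nu_1\le\cdots\le\nu_d$ the eigenvalues of $\bb L$, so that $\nu_1 = -\lambda_1$ while $\{\nu_2,\dots,\nu_d\}$ is a permutation of $\{\lambda_2,\dots,\lambda_d\}$, each positive. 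Weyl's monotonicity inequality then yields $\gamma_k \ge \nu_k$, so $\gamma_k > 0$ for every $k\ge 2$.

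It remains to rule out $\gamma_1\le 0$. This is immediate from the determinant step: by the first part of the lemma, $\gamma_1\gamma_2\cdots\gamma_d = -\det(\bb L) = \lambda_1\lambda_2\cdots\lambda_d > 0$, and since $\gamma_2,\dots,\gamma_d>0$ we conclude $\gamma_1>0$. I do not expect any real obstacle here; the one thing requiring care is pinning down the sign of $\alpha$ and justifying that the Sherman--Morrison identity applies (i.e.\ $\bb L$ is invertible, which holds because all $\lambda_i\ne 0$).
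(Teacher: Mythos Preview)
Your proof is correct. The paper itself does not give a proof of this lemma but simply cites \cite[Lemma~4.1]{LS1}; your self-contained argument via the matrix determinant lemma plus Weyl monotonicity (with the determinant identity then forcing the bottom eigenvalue to be positive) is exactly the kind of direct computation one would expect, and nothing is missing.
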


\begin{lemma}
\label{bl3}
The matrix $\bb{L}+\alpha\bs{v}\bs{v}^{\dagger}$ is non-negative definite
and $\det(\bb{L}+\alpha\bs{v}\bs{v}^{\dagger})=0$.  The null space of the
matrix $\bb{L}+\alpha\bs{v}\bs{v}^{\dagger}$ is one-dimensional and
spanned by the vector $\bb{L}^{-1}\bs{v}$.
\end{lemma}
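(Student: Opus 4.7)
The plan is to exploit Lemma~\ref{bl1} to exhibit a concrete null vector, and then invoke Weyl's inequality together with the sign pattern of the spectrum of $\bb{L}$ to upgrade this to non-negative definiteness. First I would verify by direct computation that $\bb{L}^{-1}\bs{v}$ lies in the kernel of $\bb{L}+\alpha\bs{v}\bs{v}^{\dagger}$: applying the matrix to $\bb{L}^{-1}\bs{v}$ gives
\begin{equation*}
(\bb{L}+\alpha\bs{v}\bs{v}^{\dagger})\,\bb{L}^{-1}\bs{v}
\;=\; \bs{v} \;+\; \alpha\,(\bs{v}\cdot\bb{L}^{-1}\bs{v})\,\bs{v},
\end{equation*}
and by Lemma~\ref{bl1} the scalar in parentheses equals $-1/\alpha$, so the sum vanishes. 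This immediately yields $\det(\bb{L}+\alpha\bs{v}\bs{v}^{\dagger})=0$.

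Next I would show that the kernel is one-dimensional. If $(\bb{L}+\alpha\bs{v}\bs{v}^{\dagger})\bs{x}=0$, then $\bb{L}\bs{x}=-\alpha(\bs{v}^{\dagger}\bs{x})\bs{v}$. Since $\bb{L}$ is diagonal with nonzero entries and hence invertible, the case $\bs{v}^{\dagger}\bs{x}=0$ forces $\bs{x}=0$; otherwise $\bs{x}=-\alpha(\bs{v}^{\dagger}\bs{x})\bb{L}^{-1}\bs{v}$ is a scalar multiple of $\bb{L}^{-1}\bs{v}$. This identifies $\ker(\bb{L}+\alpha\bs{v}\bs{v}^{\dagger})$ as the line spanned by $\bb{L}^{-1}\bs{v}$.

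For non-negative definiteness I would apply Weyl's inequality to the symmetric rank-one perturbation. Since $\alpha\bs{v}\bs{v}^{\dagger}$ is positive semi-definite, its smallest eigenvalue is $0$, and Weyl's inequality yields $\lambda_{k}(\bb{L}+\alpha\bs{v}\bs{v}^{\dagger})\ge \lambda_{k}(\bb{L})$ for every $k$, where $\lambda_{k}(\cdot)$ denotes the $k$-th smallest eigenvalue. The ordered spectrum of $\bb{L}$ is $-\lambda_{1}<0<\lambda_{2}<\cdots<\lambda_{d}$, so the second-smallest eigenvalue of $\bb{L}+\alpha\bs{v}\bs{v}^{\dagger}$ is at least $\lambda_{2}>0$. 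Combined with the vanishing of the determinant, which forces at least one eigenvalue to be zero, this means the smallest eigenvalue is exactly $0$ and all others are strictly positive, giving both non-negative definiteness and (consistently with the previous step) the one-dimensionality of the kernel.

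There is no significant obstacle here: the argument is purely linear-algebraic. The only subtle point is keeping track of the sign convention $\alpha\,\bs{v}\cdot\bb{L}^{-1}\bs{v}=-1$ from Lemma~\ref{bl1}, which reflects the fact that $\bb{L}$ is indefinite with a single negative direction.
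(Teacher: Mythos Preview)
Your proof is correct. The paper does not actually give a proof of this lemma; it simply cites Lemma~4.2 of \cite{LS1}. Your self-contained argument---exhibiting the null vector via Lemma~\ref{bl1}, ruling out any further kernel direction by invertibility of $\bb{L}$, and then using Weyl's inequality on the rank-one perturbation to pin down the sign of the remaining eigenvalues---is clean and elementary. One small point worth making explicit: the Weyl step requires $\alpha>0$ so that $\alpha\bs{v}\bs{v}^{\dagger}$ is positive semi-definite, which holds because $\mu>0$ and $\bs{v}\cdot\bb{M}\bs{v}>0$ by positive-definiteness of $\bb{M}$.
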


\noindent{\bf A. Proof of Lemma \ref{tp0}.} The proof of Lemma \ref{tp0}
is based on the following estimate.

\begin{lemma}
\label{bl6}
We have that
\begin{equation*}
\int_{\mc{B}_{\epsilon}}\nabla p_{\epsilon}(\bs{z})
\cdot\bb{M}\nabla p_{\epsilon}(\bs{z})
\, e^{-\left(U(\bs{z})-H\right)/\epsilon}d\bs{z}
\;=\;\left[1+o_{\epsilon}(1)\right]\,(2\pi\epsilon)^{\frac{d}{2}-1}
\omega(\bs{0})\;.
\end{equation*}
\end{lemma}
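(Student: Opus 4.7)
}
The plan is to reduce the integral to a $d$-dimensional Gaussian integral with quadratic form $\bb{L}+2\alpha\bs{v}\bs{v}^{\dagger}$, for which Lemma \ref{bl2} identifies the determinant. First I would compute $\nabla p_{\epsilon}$ on $\mc{B}_{\epsilon}$ directly from the definition \eqref{ins03}: differentiating under the integral gives
$\nabla p_{\epsilon}(\bs{z}) = C_{\epsilon}^{-1}\exp\{-\tfrac{\alpha}{2\epsilon}(\bs{z}\cdot\bs{v})^{2}\}\,\bs{v}$, hence
\[
\nabla p_{\epsilon}(\bs{z})\cdot \bb{M}\,\nabla p_{\epsilon}(\bs{z})
\;=\;\frac{\bs{v}\cdot\bb{M}\bs{v}}{C_{\epsilon}^{2}}\,
\exp\Bigl\{-\frac{\alpha}{\epsilon}(\bs{z}\cdot\bs{v})^{2}\Bigr\}
\;=\;\frac{\mu}{\alpha\, C_{\epsilon}^{2}}\,
\exp\Bigl\{-\frac{\alpha}{\epsilon}(\bs{z}\cdot\bs{v})^{2}\Bigr\},
\]
where the last identity uses the definition \eqref{ins02} of $\alpha$.

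Next I would Taylor-expand the potential around the saddle. Since $U(\bs{0})=H$, $\nabla U(\bs{0})=0$, and $(\text{Hess }U)(\bs{0})=\bb{L}$, on $\mc{B}_{\epsilon}\subset \mc{C}_{\epsilon}$ (where $\Vert \bs{z}\Vert = O(\delta)$),
\[
U(\bs{z})-H \;=\; \tfrac{1}{2}\,\bs{z}\cdot \bb{L}\,\bs{z} \;+\; O(\delta^{3}).
\]
Combining the two exponents and using $(\bs{z}\cdot\bs{v})^{2}=\bs{z}\cdot(\bs{v}\bs{v}^{\dagger})\bs{z}$, the integrand becomes
\[
\frac{\mu}{\alpha\, C_{\epsilon}^{2}}\,\exp\Bigl\{-\frac{1}{2\epsilon}\bs{z}\cdot (\bb{L}+2\alpha\bs{v}\bs{v}^{\dagger})\bs{z}\Bigr\}\,\bigl(1+O(\delta^{3}/\epsilon)\bigr).
\]
By the definition \eqref{ins01} of $\delta$, $\delta^{3}/\epsilon = K^{3}\sqrt{\epsilon}(\log(1/\epsilon))^{3/2}=o_{\epsilon}(1)$, so the multiplicative error is $1+o_{\epsilon}(1)$ uniformly on $\mc{B}_{\epsilon}$.

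The remaining step is to replace the integration domain $\mc{B}_{\epsilon}$ by all of $\bb{R}^{d}$ and compute the resulting Gaussian integral. By Lemma \ref{bl2}, the matrix $\bb{L}+2\alpha\bs{v}\bs{v}^{\dagger}$ is positive definite with determinant $-\det \bb{L}$, so
\[
\int_{\bb{R}^{d}}\exp\Bigl\{-\frac{1}{2\epsilon}\bs{z}\cdot (\bb{L}+2\alpha\bs{v}\bs{v}^{\dagger})\bs{z}\Bigr\}d\bs{z}
\;=\;\frac{(2\pi\epsilon)^{d/2}}{\sqrt{-\det \bb{L}}}.
\]
Plugging in $C_{\epsilon}^{2}=2\pi\epsilon/\alpha$ and the definition \eqref{omega} of $\omega(\bs{0})=\mu/\sqrt{-\det \bb{L}}$ yields
\[
\frac{\mu}{\alpha C_{\epsilon}^{2}}\cdot \frac{(2\pi\epsilon)^{d/2}}{\sqrt{-\det \bb{L}}}
\;=\;\frac{\mu\,(2\pi\epsilon)^{d/2-1}}{\sqrt{-\det \bb{L}}}
\;=\;(2\pi\epsilon)^{d/2-1}\,\omega(\bs{0}),
\]
which is the desired leading order.

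The main obstacle is justifying the replacement of $\mc{B}_{\epsilon}$ by $\bb{R}^{d}$. This requires two estimates. On the one hand, since $\bb{L}+2\alpha\bs{v}\bs{v}^{\dagger}$ has a strictly positive smallest eigenvalue $\beta$, and $\mc{B}_{\epsilon}$ contains a ball $B_{c\delta}(\bs{0})$ for some $c>0$ (which follows from the geometry of $\mc{C}_{\epsilon}$ together with the fact that $|U(\bs z)-H|\le C\delta^{2}\ll \tfrac14\lambda_{1}\delta^{2}$ on a small enough sub-ball, ensuring inclusion in $\Omega_{\epsilon}$), the Gaussian tail outside $\mc{B}_{\epsilon}$ is bounded by
\[
\int_{\Vert \bs{z}\Vert>c\delta}e^{-\beta\Vert \bs{z}\Vert^{2}/(2\epsilon)}\,d\bs{z}
\;=\;O\bigl(\epsilon^{\beta c^{2}K^{2}/2}\bigr),
\]
which is $o((2\pi\epsilon)^{d/2})$ provided $K$ is chosen large enough. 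On the other hand, the Taylor error $O(\delta^{3}/\epsilon)$ is controlled uniformly in $\mc{B}_{\epsilon}$ by the bound $\Vert \bs z\Vert =O(\delta)$; on the tail region the true integrand $e^{-(U-H)/\epsilon}$ is bounded by $\exp\{-\lambda_{1}\delta^{2}/(4\epsilon)\}=\epsilon^{\lambda_{1}K^{2}/4}$ on $\mc{B}_{\epsilon}^{c}\cap \mc C_\epsilon$ and decays from global control of $U$ elsewhere, which is again negligible. Combining these controls yields the claim.
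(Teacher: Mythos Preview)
Your proof is correct and follows essentially the same route as the paper: compute $\nabla p_\epsilon$, Taylor-expand $U$ to get the quadratic form $\bb L+2\alpha\bs v\bs v^\dagger$, invoke Lemma~\ref{bl2} for the full-space Gaussian, and control the error from the domain replacement. The only cosmetic difference is that the paper exhibits an explicit sub-hyperrectangle of $\mc B_\epsilon$ rather than a sub-ball; either suffices since after rescaling $\bs z\mapsto \bs z/\sqrt\epsilon$ the inner region has sides of order $K\sqrt{\log(1/\epsilon)}\to\infty$, so the tail is $o(\epsilon^{d/2})$ for \emph{any} $K>0$---your ``provided $K$ is large enough'' is not actually needed here. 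Your last sentence about the ``true integrand on $\mc B_\epsilon^c$'' is superfluous, since the lemma only integrates over $\mc B_\epsilon$; once the Taylor replacement is made uniformly on $\mc B_\epsilon$, you only need the Gaussian tail bound.
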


\begin{proof}
By the definition \eqref{ins03} of $p_{\epsilon}$,
\begin{equation*}
\nabla p_{\epsilon}(\bs{z})\;=\;\frac{1}{C_{\epsilon}}
\exp\left\{ -\frac{\alpha}{2\epsilon}(\bs{z}\cdot\bs{v})^{2}\right\}
\,\bs{v}\;,
\end{equation*}
and by the Taylor expansion of the potential $U$ around $\bs{0}$, on
the set $\mc{B}_{\epsilon}$,
\begin{equation*}
U(\bs{z})\,-\,H\;=\;(1/2)\, \bs{z}\cdot\bb{L}\bs{z}+O(\delta^{3})\;.
\end{equation*}
Since $\exp\{\delta^3/\epsilon\} = 1 + o_{\epsilon}(1)$ and
$C_\epsilon = \sqrt{2\pi\epsilon/\alpha}$, by \eqref{ins02} and by the
two previous identities, the left-hand side of the expression appearing
in the statement of the lemma is equal to
\begin{align*}
 &  \left[1+o_{\epsilon}(1)\right]\,
\frac{\bs{v}\cdot\bb{M}\bs{v}}{C_{\epsilon}^{2}}
\int_{\mc{B}_{\epsilon}}\exp \Big\{ \frac{1}{2\epsilon}
\bs{z}\cdot [\bb{L}+2\alpha\bs{v}\bs{v}^{\dagger}]\, \bs{z}\Big\} \, d\bs{z}\\
 &  \quad =\;\left[1+o_{\epsilon}(1)\right]\,
\frac{\mu}{2\pi\epsilon} \int_{\mc{B}_{\epsilon}}
\exp\Big\{ \frac{1}{2\epsilon}\bs{z}\cdot
[\bb{L}+2\alpha\bs{v}\bs{v}^{\dagger} ]\, \bs{z}\Big\} \, d\bs{z}\;.
\end{align*}
It is easy to verify that
\begin{equation*}
[-\delta,\,\delta]\times \prod_{i=2}^d
\Big[-\sqrt{\frac{\lambda_1}{4(d-1)\lambda_i}}\delta ,\,
\sqrt{\frac{\lambda_1}{4(d-1)\lambda_i}}\delta \, \Big] \subseteq \mc{B}_\epsilon\;.
\end{equation*}
Hence, by the change of coordinates $\bs{y}= (1/\sqrt{\epsilon}) \,
\bs{z}$, and by Lemma \ref{bl2}, the last integral is equal to
\begin{equation*}
\left[1+o_{\epsilon}(1)\right]\frac{(2\pi\epsilon)^{d/2}}
{\sqrt{-\det\bb{L}}}\;\cdot
\end{equation*}
This completes the proof of the lemma since
$\omega(\bs{0})\;=\;\mu/\sqrt{-\det\bb{L}}$.
\end{proof}

We may now turn to the Proof of Lemma \ref{tp0}.

\begin{proof}[Proof of Lemma \ref{tp0}]
By the definition of $\Theta_{\bs{q}_{\epsilon}}$ and
$\Theta_{\bs{q}_{\epsilon}}^{*}$, it is easy to check that
\begin{equation*}
\Big\Vert \frac{\Theta_{\bs{q}_{\epsilon}}+
\Theta_{\bs{q}_{\epsilon}}^{*}}{2}\Big\Vert ^{2}
\;=\; \frac{\epsilon}{Z_{\epsilon}}
\int_{\mc{B}_{\epsilon}}\nabla p_{\epsilon}(\bs{x})\cdot\bb{M}
\nabla p_{\epsilon}(\bs{x})e^{-U(\bs{x})/\epsilon}d\bs{x}\;.
\end{equation*}
At this point, the assertion of Lemma \ref{tp0} follows from
Lemma \ref{bl6}.
\end{proof}

\noindent{\bf B. Proof of Lemma \ref{tp1}.} Define a mollified version
of the vector field $\bs{q}_{\epsilon}$ as $\bs{q}_{\epsilon}^{(\eta)}
= \bs{q}_{\epsilon}*\phi_{\eta}$, where $\eta = \eta(\epsilon)$ is
such that $\lim_{\epsilon\to 0} \eta(\epsilon)/ \delta(\epsilon)
=0$. Let $\Theta_{\bs{q}_{\epsilon}^{(\eta)}} (\bs{z}) = \epsilon
Z_{\epsilon}^{-1} e^{-U(\bs{z})/\epsilon} \bb{M}^{\dagger}
\bs{q}_{\epsilon}^{(\eta)} (\bs{z})$. By Young's inequality,
\begin{equation*}
\big\Vert \Phi_{p_{\epsilon}^{(\eta)}} - \Theta_{\bs{q}_{\epsilon}}
\big\Vert ^{2} \;\le\;2 \, \big\Vert \Phi_{p_{\epsilon}^{(\eta)}} -
\Theta_{\bs{q}_{\epsilon}^{(\eta)}} \big\Vert ^{2}
\;+\; 2\, \big \Vert \Theta_{\bs{q}_{\epsilon}^{(\eta)}}
-\Theta_{\bs{q}_{\epsilon}}\big\Vert ^{2}\;.
\end{equation*}
We estimate the two terms on the right-hand side separately.  Lemma
\ref{tp1} follows from Lemmata \ref{tp11} and \ref{tp12} below.

\begin{lemma}
\label{tp11}
There exist finite constants $C_{1}$, $c_{2}$, $C_{3}$, such that
\begin{equation*}
\big\Vert \Phi_{p_{\epsilon}^{(\eta)}} -
\Theta_{\bs{q}_{\epsilon}^{(\eta)}}\big\Vert ^{2}
\;\le\; \frac{C_{1}}{Z_{\epsilon}}\,
e^{-H/\epsilon}\, \frac{\epsilon^{c_{2}K^{2}}}{\eta^{d}}
\, e^{C_{3}\eta/\epsilon}\;.
\end{equation*}
\end{lemma}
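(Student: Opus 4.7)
The plan is to write the defect $\bs r := \nabla p_{\epsilon}^{(\eta)} - \bs q_\epsilon^{(\eta)}$ as a surface integral over the jump set of $p_\epsilon$, and then bound the resulting weighted $L^2$-integral by a sharp pointwise estimate at the saddle. Since $p_\epsilon$ is smooth on the interior of each of the four regions $\mc B_\epsilon$, $\mc W_1^\epsilon$, $\mc W_2^\epsilon$, $\mc X_\epsilon$, with classical gradient $\bs q_\epsilon$ on $\mc B_\epsilon$ and zero on the others, differentiating $p_\epsilon^{(\eta)} = p_\epsilon * \phi_\eta$ and integrating by parts region-by-region produces
\begin{equation*}
\bs r(\bs z) \;=\; -\int_{\partial \mc B_\epsilon} [\![p_\epsilon]\!](\bs y)\, \phi_\eta(\bs z - \bs y)\, \bs n_{\mc B_\epsilon}(\bs y)\, \sigma(d\bs y)\;,
\end{equation*}
where $[\![p_\epsilon]\!]$ denotes the jump (outer minus inner) of $p_\epsilon$ across $\partial \mc B_\epsilon$. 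By \eqref{19},
$\|\Phi_{p_\epsilon^{(\eta)}} - \Theta_{\bs q_\epsilon^{(\eta)}}\|^2 \le (C\epsilon^2/Z_\epsilon) \int_\Omega e^{-U/\epsilon}\,|\bs r|^2\, d\bs z$. Moreover $\bs r$ is supported in the $\eta$-neighborhood $N_\eta$ of $\partial \mc B_\epsilon$, and since $|U(\bs z) - U(\bs y)| \le C\eta$ for $|\bs z - \bs y| \le \eta$, the weight can be transferred from $\bs z$ to the boundary point $\bs y$ at the cost of the factor $e^{C_3 \eta/\epsilon}$.

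By Lemma \ref{lems0}, for $\epsilon$ small, $\partial_0\mc C_\epsilon \cap \Omega_\epsilon = \varnothing$, so $\partial \mc B_\epsilon$ decomposes (up to measure zero) as $\Sigma_+ \cup \Sigma_- \cup \Sigma_\Omega$, where $\Sigma_\pm := \partial_\pm \mc C_\epsilon \cap \overline{\Omega_\epsilon}$ and $\Sigma_\Omega := \mc C_\epsilon \cap \partial \Omega_\epsilon$. On $\Sigma_\Omega$, $U(\bs y)=H+(1/4)\lambda_1\delta^2$, so $|[\![p_\epsilon]\!]|^2 e^{-U/\epsilon} \le e^{-H/\epsilon}\,\epsilon^{\lambda_1 K^2/4}$ directly. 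The heart of the argument is the sharp combined estimate
\begin{equation*}
\sup_{\bs y\in \Sigma_\pm} |[\![p_\epsilon]\!](\bs y)|^2\, e^{-U(\bs y)/\epsilon} \;\le\; C\,e^{-H/\epsilon}\, \epsilon^{c_2 K^2}\;,
\end{equation*}
which follows by combining the Gaussian tail bound $|1-p_\epsilon(\bs y)| \le C\, e^{-\alpha(\bs y\cdot \bs v)^2/(2\epsilon)}$ (for $\bs y\cdot \bs v\ge 0$) with the Taylor expansion $U(\bs y)-H = (1/2)\bs y\cdot\bb L \bs y + O(\delta^3)$, the coercivity inequality from Lemma \ref{bl2} (positive definiteness of $\bb L + 2\alpha \bs v\bs v^\dagger$), and the identity $\bs v\cdot \bb L^{-1}\bs v = -1/\alpha$ from Lemma \ref{bl1}.

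With this sharp pointwise bound in hand, the remaining estimate is routine. Using $\|\phi_\eta\|_\infty \le C/\eta^d$, $\sigma(\partial \mc B_\epsilon \cap B_\eta(\bs z)) \le C\eta^{d-1}$, and $\text{vol}(N_\eta) \le C\eta$, one obtains
\begin{equation*}
\int_\Omega e^{-U(\bs z)/\epsilon}\,|\bs r(\bs z)|^2\, d\bs z \;\le\; \frac{C}{\eta^d}\, e^{C_3 \eta/\epsilon}\,\sup_{\partial \mc B_\epsilon}\!\bigl(|[\![p_\epsilon]\!]|^2\, e^{-U/\epsilon}\bigr)\,\sigma(\partial \mc B_\epsilon)
\;\le\; \frac{C}{\eta^d}\, e^{C_3 \eta/\epsilon}\, e^{-H/\epsilon}\,\epsilon^{c_2 K^2}\;,
\end{equation*}
and multiplying by $\epsilon^2/Z_\epsilon$ (and absorbing $\epsilon^2$ into the $\epsilon^{c_2 K^2}$ factor, possibly decreasing $c_2$) yields the claim.

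The main obstacle is the uniform combined bound on $\Sigma_\pm$. The subtlety is that the eigenvector $\bs v$ is in general not aligned with $\bs e_1$, so parts of $\Sigma_+ \cap \Omega_\epsilon$ may contain points where $\bs y\cdot \bs v$ is small or even negative and the Gaussian tail alone is worthless. The compensation must come from the exponential weight: in such a region one shows, using Lemmas \ref{bl1}--\ref{bl2}, that $\sum_{i\ge 2}\lambda_i y_i^2$ is forced to be comparable to $\delta^2$, hence $U(\bs y) - H$ is bounded below by a positive multiple of $\delta^2 = K^2 \epsilon \log(1/\epsilon)$, which turns $e^{-(U-H)/\epsilon}$ into the required power $\epsilon^{c_2 K^2}$. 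Making this trade-off uniform in $\bs y$ and verifying that $K$ can be chosen so that $c_2 > 0$ is the only delicate computation; everything else is bookkeeping.
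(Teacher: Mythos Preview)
Your approach is essentially the paper's: it proves precisely your surface-integral identity for $\bs r$ (its Assertion~\ref{bl4}) and your sharp pointwise bound on $[\![p_\epsilon]\!]^2 e^{-U/\epsilon}$ (its Assertion~\ref{bl5}), then combines them via Cauchy--Schwarz and the Lipschitz transfer $e^{-U(\bs z)/\epsilon}\le e^{C\eta/\epsilon}e^{-U(\bs y)/\epsilon}$.

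There is one omission. The jump set of $p_\epsilon$ is not $\partial\mc B_\epsilon$ alone but $\partial\mc X_\epsilon\cup\partial\mc B_\epsilon$: since $p_\epsilon\equiv 1$ on $\mc W_1^\epsilon$ and $p_\epsilon\equiv 0$ on $\mc X_\epsilon$, there is a jump of size $1$ across the portion of $\partial\Omega_\epsilon$ bordering $\mc W_1^\epsilon$ that lies \emph{outside} $\mc C_\epsilon$, which your decomposition $\Sigma_+\cup\Sigma_-\cup\Sigma_\Omega$ does not capture. Your own region-by-region integration by parts would in fact produce this extra boundary term. Fortunately that piece sits on $\partial\Omega_\epsilon$, where $U=H+\tfrac14\lambda_1\delta^2$, so the trivial bound $[\![p_\epsilon]\!]^2e^{-U/\epsilon}\le e^{-H/\epsilon}\epsilon^{\lambda_1 K^2/4}$ applies exactly as on $\Sigma_\Omega$; the surface area is bounded since $\Omega_\epsilon$ is bounded under (P1). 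So the fix is immediate, but the formula and the decomposition as written are incomplete. (A minor bookkeeping remark: the prefactor coming from the norm is $\epsilon/Z_\epsilon$, not $\epsilon^2/Z_\epsilon$, since the quadratic form carries $(\epsilon\bb S)^{-1}$; this only helps.)
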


The proof of this lemma is divided in several steps.  The crucial
point is the control of the discontinuity of $p_{\epsilon}$ along the
boundary $\partial\mc{X}_{\epsilon}\cup\partial\mc{B}_{\epsilon}$.
For $\bs{z}\in\partial\mc{X}_{\epsilon}$, let $\bs{n}(\bs{z})$ be the
inner normal vector to $\mc{X}_{\epsilon}$ at $\bs{z}$ (and hence the
outer normal vector to $\mc{W}_{1}^{\epsilon} \cup
\mc{W}_{2}^{\epsilon} \cup \mc{B}_{\epsilon}$). Similarly, for
$\bs{z}\in\partial \mc{B}_{\epsilon} \setminus \partial
\mc{X}_{\epsilon}$, let $\bs{n}(\bs{z})$ be the outer normal vector to
$\mc{B}_{\epsilon}$ at $\bs{z}$. In this manner, the normal vector is
defined for all $\bs{z} \in \partial \mc{X}_{\epsilon} \cup \partial
\mc{B}_{\epsilon}$.

Define the functions $\mf{d}^{+}$, $\mf{d}^{-}$ on
$\partial\mc{X}_{\epsilon}\cup\partial\mc{B}_{\epsilon}$ by
\begin{equation*}
\mf{d}^{+}(\bs{z})\;=\; \lim_{t\rightarrow0^{+}}
p_{\epsilon}\left(\bs{z}+t\, \bs{n}(\bs{z})\right)\;,\quad
\mf{d}^{-}(\bs{z})\;=\;
\lim_{t\rightarrow0^{+}}p_{\epsilon} \left(\bs{z}-t\,
\bs{n}(\bs{z})\right)\;.
\end{equation*}
Let $\mf{d} : \partial\mc{X}_{\epsilon} \cup \partial\mc{B}_{\epsilon}
\to \bb R$ be given by $\mf{d} = \mf{d}^{+} - \mf{d}^{-}$, so that
$\mf{d}(\bs{z})$ represents the discontinuity of $p_{\epsilon}$ at
$\bs{z}$.  The next assertion provides an estimate of $\mf{d}$.

\begin{asser}
\label{bl5}
There exist finite constants $C_{1}$, $c_{2}>0$,
such that
\begin{equation*}
\left[\mf{d}(\bs{z})\right]^{2}e^{-U(\bs{z})/\epsilon}
\;\le\;C_{1} \, e^{-H/\epsilon} \, \epsilon^{c_{2}K^{2}}
\end{equation*}
for all $\bs{z}\in\partial\mc{X}_{\epsilon}\cup\partial\mc{B}_{\epsilon}$.
\end{asser}

\begin{proof}
Fix $\bs{z}\in\partial\mc{X}_{\epsilon}$, so that $|\mf{d}(\bs{z})|
\le 1$, and $U(\bs{z})=H + (1/4) \lambda_{1} \delta^{2}$. In this case
Assertion \ref{bl5} follows from the definition of $\delta = K
\sqrt{\epsilon\log(1/\epsilon)}$.

Fix $\bs{z} \in \partial \mc{B}_{\epsilon} \setminus \partial
\mc{X}_{\epsilon}$ so that, by Lemma \ref{lems0}, $\bs{z}
\in \partial_{+} \mc{C}_{\epsilon} \cup \partial_{-}
\mc{C}_{\epsilon}$.  The proof in this case is similar to the one of
Lemma 4.7 in \cite{LS1}. Assume that
$\bs{z}\in \partial_{+}\mc{C}_{\epsilon}$, the proof for
$\bs{z}\in \partial_{-}\mc{C}_{\epsilon}$ being similar.  For $\bs{z}
\in \partial_{+}\mc{C}_{\epsilon}$, $\mf{d}^{+}(\bs{z})=1$ and
\begin{equation*}
\mf{d}^{-}(\bs{z})\;=\;\frac{1}{C_{\epsilon}}
\int_{-\infty}^{\bs{z}\cdot\bs{v}} \exp \left\{
-\frac{\alpha}{2\epsilon}t^{2}\right\} \, dt\;,
\end{equation*}
so that
\begin{equation}
\label{se21}
\mf{d}(\bs{z})\;=\; \frac{1}{C_{\epsilon}}
\int_{\bs{z}\cdot\bs{v}}^{\infty}\exp\left\{
-\frac{\alpha}{2\epsilon}t^{2}\right\} \, dt
\;=\;\frac{1}{\sqrt{2\pi}}
\int_{\sqrt{\frac{\alpha}{\epsilon}}(\bs{z}\cdot\bs{v})}^{\infty}
e^{- t^{2}/2 } \, dt\;.
\end{equation}

We claim that there exists a constant $c>0$ such that, for every
$\bs{z}\in\partial_{+}\mc{C}_{\epsilon}$, either $\bs{z}\cdot\bs{v}\ge
c\delta$ or $\bs{z}\cdot\bb{L}\bs{z}\ge c\delta^{2}$. Indeed,
by Lemma \ref{bl1} and since $v_1>0$, there exists $c>0$ such that
\begin{equation}
\label{se22}
(\lambda_{1}+c) \, \sum_{k=2}^{d}\frac{v_{k}^{2}}{\lambda_{k}}
\;<\;(v_{1}-c)^{2}\;.
\end{equation}
The claim is in force with this constant $c$. Assume it is not. This
means that there exists $\bs{z}\in\partial_{+}\mc{C}_{\epsilon}$ such
that $\bs{z}\cdot\bs{v} < c\delta$ and $\bs{z}\cdot\bb{L}\bs{z} <
c\delta^{2}$. Since $\bs{z}\in\partial_{+} \mc{C}_{\epsilon}$,
$\bs{z}$ can be expressed as
\begin{equation*}
\bs{z}\;=\;\delta \, \Big(\bs{e}_{1} +
\sum_{k=2}^{d} z_{k} \, \bs{e}_{k}\Big)\;.
\end{equation*}
Since $\bs{v}=\sum_{1\le i \le d} v_{k}\, \bs{e}_{k}$, the condition
$\bs{z}\cdot\bs{v}<c\delta$ is equivalent to
\begin{equation*}
v_{1}-c\;<\;-\sum_{k=2}^{d}z_{k}\, v_{k}\;.
\end{equation*}
On the other hand, the condition $\bs{z}\cdot\bb{L}\bs{z}<c\delta^{2}$
can be rewritten as
\begin{equation*}
\sum_{k=2}^{d} z^2_{k} \, \lambda_{k}\;<\;\lambda_{1} \,+\, c\;.
\end{equation*}
Inserting the two previous bounds in \eqref{se22} we obtain that
\begin{equation*}
\sum_{k=2}^{d}\frac{v_{k}^{2}}{\lambda_{k}}\,
\sum_{k=2}^{d}z^2_{k}\, \lambda_{k}\;<\;\Big(\sum_{k=2}^{d}z_{k}v_{k}\Big)^{2}\;,
\end{equation*}
which contradicts to the Cauchy-Schwarz inequality. This proves the
claim.

We are now in a position to prove Assertion \ref{bl5} for $\bs{z}
\in \partial_{+} \mc{C}_{\epsilon}$. Suppose first that
$\bs{z}\cdot\bs{v} \ge c\delta$. Since $\int_{a}^{\infty}\exp \{ -
t^{2}/2 \} \, dt \le (1/a) \exp \{ - a^{2}/2 \}$ for $a>0$, by
\eqref{se21} and since $\bs{z}\cdot\bs{v} \ge c\delta$,
\begin{equation*}
0\;\le\;\mf{d}(\bs{z})\;\le\;\frac{1}{\sqrt{2\pi}}\,
\frac{\sqrt{\epsilon}} {\sqrt{\alpha} \, (\bs{z}\cdot\bs{v})}\,
e^{-(\alpha/2\epsilon) \, (\bs{z}\cdot\bs{v})^{2}} \;\le\;
C\, \frac{\sqrt{\epsilon}}{\delta}\,
e^{-(\alpha/2\epsilon)\, (\bs{z}\cdot\bs{v})^{2}}
\end{equation*}
for some finite constant $C$. On the other hand, by the Taylor
expansion,
\begin{equation}
\label{se26}
U(\bs{z})\;=\;H+\frac{1}{2}\, \bs{z}\cdot\bb{L}\bs{z} \;+\; O(\delta^{3})\;.
\end{equation}
In view of the two previous displayed equations,
\begin{equation*}
\mf{d}(\bs{z})^{2} \, e^{-U(\bs{z})/\epsilon} \;\le\;
C\, e^{-H/\epsilon}\, \exp\Big\{ -\frac{1}{2\epsilon}\,
\bs{z}\cdot\left[\bb{L}+2\alpha\bs{v}\bs{v}^{\dagger}\right]\bs{z}\Big\} \;.
\end{equation*}
By Lemma \ref{bl2}, $\bb{L}+2\alpha\bs{v}\bs{v}^{\dagger}\ge r_{0}I$, where
$r_{0}>0$ is the smallest eigenvalue of the positive-definite matrix
$\bb{L}+2\alpha\bs{v}\bs{v}^{\dagger}$. Hence, as $z_1=\delta$,
\begin{equation*}
\bs{z}\cdot\left[\bb{L}+2\alpha\bs{v}\bs{v}^{\dagger}\right]\bs{z}
\;\ge\;r_{0}\, |\bs{z}|^{2}\;\ge\;r_{0}\, \delta^{2}\;.
\end{equation*}
In view of the previous two displayed equations, to complete the proof
of Assertion \ref{bl5}, it remains to recall the definition of
$\delta$.

Assume now that $\bs{z}$ is such that $\bs{z}\cdot\bb{L}\bs{z}\ge
c\delta^{2}$. In this case, Assertion \ref{bl5} is direct consequence
from the bound $|\mf{d}(\bs{z})|\le1$ and from \eqref{se26}.
\end{proof}

The next result expresses the difference $\nabla p_{\epsilon}^{(\eta)}
- \bs{q}_{\epsilon}^{(\eta)}$ in terms of the function $\mf{d}$.

\begin{asser}
\label{bl4}
For any $\bs{z}\in\bb R^d,$
\begin{equation*}
\nabla p_{\epsilon}^{(\eta)}(\bs{z}) -
\bs{q}_{\epsilon}^{(\eta)}(\bs{z})
\;=\;\oint_{\partial\mc{X}_{\epsilon}\cup\partial\mc{B}_{\epsilon}}
\mf{d}(\bs{y})\, \phi_{\eta}(\bs{z}-\bs{y})\, \bs{n}(\bs{y})\,
\sigma(d\bs{y})\;.
\end{equation*}
\end{asser}

\begin{proof}
We first note that
\begin{equation*}
\nabla p_{\epsilon}^{(\eta)}(\bs{z})\;=\;
\int_{\bb{R}^{d}} p_{\epsilon}(\bs{y}) \, (\nabla\phi_{\eta})
(\bs{z}-\bs{y}) \, d\bs{y}\;.
\end{equation*}
Since $p_{\epsilon}$ is smooth on each domain $\mc{B}_{\epsilon}$,
$\mc{W}_{1}^{\epsilon}$, $\mc{W}_{2}^{\epsilon}$ and $\bb{R}^{d}
\setminus \left(\mc{B}_{\epsilon} \cup \mc{W}_{1}^{\epsilon} \cup
  \mc{W}_{2}^{\epsilon}\right)$, we decompose the last integral into
four integrals in these domains, and then apply divergence theorem for
each integrals. For instance,
\begin{align*}
 &   \int_{\mc{B}_{\epsilon}} p_{\epsilon}(\bs{y}) \,
(\nabla\phi_{\eta})(\bs{z}-\bs{y}) \, d\bs{y} \\
 &   \;=\;\int_{\mc{B}_{\epsilon}} \bs{q}_{\epsilon}(\bs{y})\,
\phi_{\eta}(\bs{z}-\bs{y})\, d\bs{y}
\;+\; \oint_{\partial\mc{B}_{\epsilon}} \mf{d}^{+}(\bs{y})\,
\phi_{\eta}(\bs{z}-\bs{y})\, \bs{n}(\bs{y})\, \sigma(d\bs{y})\;.
\end{align*}
The proof is completed by adding four identities obtained in this
manner.
\end{proof}

\begin{proof}[Proof of Lemma \ref{tp11}]
There exists a finite constant $C$ such that $\bb{M} \bb{S}^{-1}
\bb{M}^{\dagger} < C \, \bb{I}$, where $\bb{I}$ stands for the $d\times d$
identity matrix. Therefore,
\begin{align*}
& \big\Vert \Phi_{p_{\epsilon}^{(\eta)}} -
\Theta_{\bs{q}_{\epsilon}^{(\eta)}} \big\Vert ^{2} \\
&  \;=\;\frac{\epsilon}{Z_{\epsilon}}\, \int_{\bb{R}^{d}}
e^{-U(\bs{z})/\epsilon} \left[\nabla p_{\epsilon}^{(\eta)}(\bs{z}) -
  \bs{q}_{\epsilon}^{(\eta)} (\bs{z})\right] \cdot
\bb{M}\bb{S}^{-1}\bb{M}^{\dagger}
\left[\nabla p_{\epsilon}^{(\eta)}(\bs{z}) -
\bs{q}_{\epsilon}^{(\eta)}(\bs{z})\right]d\bs{z}\\
&  \;\le\; \frac{C\, \epsilon}{Z_{\epsilon}}\,
\int_{\bb{R}^{d}} e^{-U(\bs{z})/\epsilon} \, \big|\nabla
p_{\epsilon}^{(\eta)}(\bs{z})
-\bs{q}_{\epsilon}^{(\eta)}(\bs{z})\big|^{2} \, d\bs{z}\;.
\end{align*}
By Assertion \ref{bl4}, this expression is equal to
\begin{equation*}
\frac{C\, \epsilon}{Z_{\epsilon}}\,
\int_{\bb{R}^{d}} e^{-U(\bs{z})/\epsilon}\,
\Big |\, \oint_{\partial\mc{X}_{\epsilon}\cup\partial\mc{B}_{\epsilon}}
\mf{d}(\bs{y}) \, \phi_{\eta}(\bs{z}-\bs{y})\, \bs{n}(\bs{y})
\, \sigma(d\bs{y}) \, \Big|^{2} \, d\bs{z}\;.
\end{equation*}

Since the surface volume of $\partial\mc{X}_{\epsilon}
\cup \partial\mc{B}_{\epsilon}$ is $[1+o_{\epsilon}(1)]M$,
where $M$ is the surface volume of $\partial\mc{W}_{1}
\cup \partial\mc{W}_{2}$, by the Cauchy-Schwarz inequality, the last
expression is bounded by
\begin{equation*}
\frac{\epsilon C}{Z_{\epsilon}}\,
\int_{\bb{R}^{d}}
\oint_{\partial\mc{X}_{\epsilon}\cup\partial\mc{B}_{\epsilon}}
e^{-U(\bs{z})/\epsilon}\, \mf{d}(\bs{y})^2 \,
\phi_{\eta}(\bs{z}-\bs{y})^{2} \, \sigma(d\bs{y})\, d\bs{z}
\end{equation*}
for some finite constant $C$, whose value may change from line to
line.

Since $U$ is Lipschitz continuous on the compact set
\begin{equation*}
\left\{ \bs{z}:|\bs{z}-\bs{y}|\le\eta\;\mbox{for some }
\bs{y}\in\partial\mc{X}_{\epsilon}\cup\partial\mc{B}_{\epsilon}\right\}
\;,
\end{equation*}
there exists a finite constant $C$, independent of $\epsilon$, such
that $U(\bs{z}) \ge U(\bs{y}) - C\eta$ for $\bs y
\in \partial\mc{X}_{\epsilon} \cup \partial \mc{B}_{\epsilon}$,
$|\bs{z}-\bs{y}| \le \eta$. As $\phi_{\eta}(\bs{z}-\bs{y})=0$ if
$|\bs{z}-\bs{y}|\ge\eta$, and as
\begin{equation*}
\int_{\bb{R}^{d}}\phi_{\eta}^{2}(\bs{z})\,
d\bs{z}\;=\;\frac{C}{\eta^{d}}\;,
\end{equation*}
the last integral is bounded by
\begin{equation*}
\frac{C\, \epsilon}{Z_{\epsilon}}\,
\frac{e^{C\eta/\epsilon}}{\eta^{d}}
\oint_{\partial\mc{X}_{\epsilon}\cup\partial\mc{B}_{\epsilon}}
e^{-U(\bs{y})/\epsilon} \, \mf{d}(\bs{y})^{2}\, \sigma(d\bs{y})\;.
\end{equation*}
To complete the proof of the lemma it remains to recall Assertion
\ref{bl5}.
\end{proof}

\begin{lemma}
\label{tp12}
Assume that $\eta\ll \delta$.  There exists a finite constant $C_{1}$,
independent of $\epsilon$ and $\eta$, such that
\begin{equation*}
\big\Vert \Theta_{\bs{q}_{\epsilon}^{(\eta)}}
-\Theta_{\bs{q}_{\epsilon}}\big\Vert ^{2}
\;\le\; o_{\epsilon}(1) \, \frac 1{Z_{\epsilon}} \,
e^{- H/\epsilon}\, \epsilon^{d/2} \, \frac{\eta}{\epsilon} \,
\Big( 1 + \frac{\eta}{\epsilon} \Big) \,
\Big( 1+ e^{C_1\eta\delta/\epsilon}\Big)\;.
\end{equation*}
\end{lemma}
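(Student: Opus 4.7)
The plan is to reduce the weighted norm to a standard $L^{2}$ integral, split the domain into the bulk of $\mc B_\epsilon$ and an $\eta$-neighbourhood of $\partial \mc B_\epsilon$, exploit the explicit Gaussian structure of $p_\epsilon$ in the bulk, and use the choice $\delta = K\sqrt{\epsilon\log(1/\epsilon)}$ with $K$ large to dispose of the boundary layer. Since $\Theta_{\bs{q}_{\epsilon}^{(\eta)}} - \Theta_{\bs{q}_{\epsilon}} = (\epsilon/Z_\epsilon)\,e^{-U/\epsilon}\bb M^\dagger[\bs q_\epsilon^{(\eta)} - \bs q_\epsilon]$, I will first invoke $\bb M \bb S^{-1}\bb M^\dagger \le C_0\bb I$ exactly as in the opening of the proof of Lemma \ref{tp11} to get
\[
\big\Vert \Theta_{\bs q_\epsilon^{(\eta)}} - \Theta_{\bs q_\epsilon}\big\Vert^{2} \;\le\; \frac{C_0\,\epsilon}{Z_\epsilon}\int e^{-U(\bs z)/\epsilon}\,|\bs q_\epsilon^{(\eta)}(\bs z) - \bs q_\epsilon(\bs z)|^{2}\, d\bs z \;.
\]
The integrand vanishes outside an $\eta$-neighbourhood of $\overline{\mc B_\epsilon}$, so I split the full integral according to the bulk $R_1 = \{\bs z\in \mc B_\epsilon : d(\bs z,\partial \mc B_\epsilon) > \eta\}$ and the boundary layer $R_2$ of thickness $\eta$ around $\partial \mc B_\epsilon$.

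On $R_1$ the mollifier support lies entirely inside $\mc B_\epsilon$, hence $\bs q_\epsilon^{(\eta)}(\bs z) - \bs q_\epsilon(\bs z) = \int \phi_\eta(\bs w)[\nabla p_\epsilon(\bs z-\bs w) - \nabla p_\epsilon(\bs z)]\,d\bs w$, and a first-order Taylor estimate gives $|\bs q_\epsilon^{(\eta)}(\bs z) - \bs q_\epsilon(\bs z)| \le \eta\,\sup_{B_\eta(\bs z)}\|D^{2} p_\epsilon\|$. Direct differentiation of \eqref{ins03} yields $\|D^{2} p_\epsilon(\bs y)\| \le C|\bs y\cdot \bs v|\,\epsilon^{-1}|\nabla p_\epsilon(\bs y)|$; for $\bs y\in B_\eta(\bs z)$ I then plan to bound $|\bs y\cdot \bs v|\le |\bs z\cdot \bs v|+\eta$ and control the ratio $|\nabla p_\epsilon(\bs y)|/|\nabla p_\epsilon(\bs z)|$ by $e^{C\eta\delta/2\epsilon}$ using the explicit Gaussian and the assumption $\eta\ll \delta$. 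Squaring and combining will produce
\[
|\bs q_\epsilon^{(\eta)}(\bs z) - \bs q_\epsilon(\bs z)|^{2} \;\le\; C\,\frac{\eta^{2}(|\bs z\cdot \bs v|+\eta)^{2}}{\epsilon^{2}}\,e^{C\eta\delta/\epsilon}|\nabla p_\epsilon(\bs z)|^{2}\;.
\]
Evaluating the moments $\int_{\mc B_\epsilon}|\bs z\cdot \bs v|^{2k}|\nabla p_\epsilon|^{2} e^{-U/\epsilon}\,d\bs z$ for $k = 0,1,2$ via the rescaling $\bs z = \sqrt\epsilon\,\bs y$ as in the proof of Lemma \ref{bl6} gives integrals of order $\epsilon^{d/2+k-1}e^{-H/\epsilon}$, so the $R_1$ contribution is at most $e^{C\eta\delta/\epsilon}Z_\epsilon^{-1}e^{-H/\epsilon}\epsilon^{d/2}[\eta^{2}/\epsilon + \eta^{3}/\epsilon^{3/2} + \eta^{4}/\epsilon^{2}]$, each summand of which is $o_\epsilon(1)\cdot(\eta/\epsilon)(1+\eta/\epsilon)$ and hence absorbed into the right-hand side of the lemma.

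For $R_2$ I will use only the trivial pointwise bound $|\bs q_\epsilon^{(\eta)}(\bs z) - \bs q_\epsilon(\bs z)|^{2} \le C\,\sup_{\mc B_\epsilon \cap B_\eta(\bs z)}|\nabla p_\epsilon|^{2}$. Splitting $\partial \mc B_\epsilon$ into $(\partial_{\pm}\mc C_\epsilon\cap\Omega_\epsilon)$, $(\partial_{0} \mc C_\epsilon \cap \Omega_\epsilon)$ and $(\mc C_\epsilon\cap\partial \Omega_\epsilon)$, I observe that on $\partial_{\pm}\mc C_\epsilon$ the Gaussian factor forces $|\nabla p_\epsilon|^{2} \le C\epsilon^{-1}\epsilon^{cK^{2}}$, while on the two remaining pieces Lemma \ref{lems0} and the definition of $\Omega_\epsilon$ give $U(\bs z) \ge H + c\lambda_{1}\delta^{2}$, hence $e^{-U/\epsilon} \le e^{-H/\epsilon}\epsilon^{cK^{2}}$. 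Since $\partial\mc B_\epsilon$ has bounded surface area and $R_2$ has volume $O(\eta)$, the $R_2$ contribution is at most $C\eta\,\epsilon^{cK^{2}}Z_\epsilon^{-1}e^{-H/\epsilon}$, which is dominated by the target right-hand side as soon as $K$ is chosen large enough. The main obstacle is the bulk estimate: tracking precisely the two factors $(|\bs z\cdot\bs v|+\eta)$ and $e^{C\eta\delta/\epsilon}$ is crucial, since any cruder bound on $\|D^{2}p_\epsilon\|$ either loses a power of $\epsilon$ or fails to reproduce the required structure of the right-hand side.
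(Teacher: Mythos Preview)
Your overall architecture matches the paper's: reduce via $\bb M\bb S^{-1}\bb M^\dagger\le C\bb I$, split into the bulk $R_1=\mc B_\epsilon^\eta$ and the boundary layer $R_2=\partial^\eta\mc B_\epsilon$, and handle the bulk by a second-derivative Taylor bound. Your bulk argument is in fact a slight refinement of the paper's Assertion~\ref{as01}: the paper simply bounds $|\bs z\cdot\bs v|\le C\delta$ to get $|\bs q_\epsilon^{(\eta)}-\bs q_\epsilon|^2\le C(\delta^2/\epsilon^3)\,\eta^2 e^{C\eta\delta/\epsilon}e^{-(\alpha/\epsilon)(\bs z\cdot\bs v)^2}$, whereas you keep the factor $(|\bs z\cdot\bs v|+\eta)^2$ and compute Gaussian moments. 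Both feed into the stated right-hand side.

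There is, however, a genuine gap in your boundary-layer step. You assert that on $\partial_\pm\mc C_\epsilon\cap\Omega_\epsilon$ the Gaussian factor forces $|\nabla p_\epsilon|^2\le C\epsilon^{-1}\epsilon^{cK^2}$. This is false: on $\partial_+\mc C_\epsilon$ one has $z_1=\delta$, but $\bs z\cdot\bs v=v_1\delta+\sum_{k\ge 2}v_kz_k$ can vanish for suitable $z_2,\dots,z_d$, and a short computation with Lemma~\ref{bl1} shows such points can still satisfy $U(\bs z)<H+(1/4)\lambda_1\delta^2$, hence lie in $\partial_+\mc B_\epsilon$. At those points $|\nabla p_\epsilon|^2\sim C/\epsilon$ with no extra decay. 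Moreover, even where your gradient bound would hold, you have not controlled $e^{-U/\epsilon}$ on $\partial_\pm\mc B_\epsilon$: there $U$ can dip as low as $H-(1/2)\lambda_1\delta^2+O(\delta^3)$, producing a factor $\epsilon^{-\lambda_1K^2/2}$ that would have to be beaten.

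The fix is both simpler and what the paper does in Assertion~\ref{as02}: treat the \emph{product} $e^{-U/\epsilon}|\nabla p_\epsilon|^2$. By the Taylor expansion and the explicit form of $\nabla p_\epsilon$ this is bounded by $C\epsilon^{-1}e^{-H/\epsilon}\exp\{-(1/2\epsilon)\,\bs z\cdot[\bb L+2\alpha\bs v\bs v^\dagger]\bs z\}\le C\epsilon^{-1}e^{-H/\epsilon}$, using only that $\bb L+2\alpha\bs v\bs v^\dagger$ is positive definite (Lemma~\ref{bl2}). One then multiplies by the boundary-layer volume $O(\eta\,\delta^{d-1})$, not merely $O(\eta)$; the factor $\delta^{d-1}/\epsilon^{d/2-1}=O\big(\epsilon^{1/2}(\log(1/\epsilon))^{(d-1)/2}\big)=o_\epsilon(1)$ already supplies the required smallness, so no $\epsilon^{cK^2}$ gain is needed on $R_2$.
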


Denote by $\partial^{\eta}\mc{B}_{\epsilon} $ the neighborhood of the
boundary $\partial\mc{B}_{\epsilon}$ defined by
\begin{equation*}
\partial^{\eta}\mc{B}_{\epsilon}\;=\;
\big\{ \bs{z} : |\bs{z}-\bs{y}|\le\eta\;\mbox{for some }
\bs{y}\in\partial\mc{B}_{\epsilon}\big\}\;,
\end{equation*}
and let $\mc{B}_{\epsilon}^{\eta} = \mc{B}_{\epsilon}
\setminus \partial^{\eta} \mc{B}_{\epsilon}$.  Since
$\bs{q}_{\epsilon}^{(\eta)} (\bs{z}) = \bs{q}_{\epsilon}(\bs{z}) =0$
if $\bs{z}\notin\mc{B}_{\epsilon}^{\eta} \cup \partial^{\eta}
\mc{B}_{\epsilon}$,
\begin{align*}
& \big\Vert \Theta_{\bs{q}_{\epsilon}^{(\eta)}}
-\Theta_{\bs{q}_{\epsilon}}\big\Vert ^{2} \\
& \quad =\; \frac{\epsilon}{Z_{\epsilon}}\,
\int_{\mc{B}_{\epsilon}^{\eta}\cup\partial^{\eta}\mc{B}_{\epsilon}}
e^{-U(\bs{z})/\epsilon} \, \big[\bs{q}_{\epsilon}^{(\eta)}(\bs{z})-
\bs{q}_{\epsilon}(\bs{z})\big] \cdot \bb{MS}^{-1}\bb{M}^{\dagger}
\big[\bs{q}_{\epsilon}^{(\eta)}(\bs{z})-
\bs{q}_{\epsilon}(\bs{z})\big]\;.
\end{align*}
In particular, since $\bb{M} \bb{S}^{-1} \bb{M}^{\dagger} < C \, \bb{I}$ for
some finite constant,
\begin{equation*}
\big\Vert \Theta_{\bs{q}_{\epsilon}^{(\eta)}}
-\Theta_{\bs{q}_{\epsilon}}\big\Vert ^{2}
\;\le\; \frac{C \epsilon}{Z_{\epsilon}}\,
\int_{\mc{B}_{\epsilon}^{\eta}\cup\partial^{\eta}\mc{B}_{\epsilon}}e^{-U(\bs{z})/\epsilon}
\big |\bs{q}_{\epsilon}^{(\eta)}(\bs{z})-
\bs{q}_{\epsilon}(\bs{z})\big|^{2} \, d\bs{z}\;.
\end{equation*}

In Assertions \ref{as01} and \ref{as02} below, we estimate the last
integral on $\mc{B}_{\epsilon}^{\eta}$ and $\partial^{\eta}
\mc{B}_{\epsilon}$, respectively. Lemma \ref{tp12} follows from these
two assertions.

\begin{asser}
\label{as01}
Assume that $\eta\ll \delta$.  There exist finite constants $C_{1}$,
$C_{2}$, independent of $\epsilon$ and $\eta$, such that
\begin{equation*}
\int_{\mc{B}_{\epsilon}^{\eta}}
e^{-U(\bs{z})/\epsilon} \, \big|\bs{q}_{\epsilon}^{(\eta)}(\bs{z})
-\bs{q}_{\epsilon}(\bs{z})\big|^{2} \, d\bs{z}\;\le\;
\frac{C_2 \, \delta^{2}}{\epsilon^{3}}\, \epsilon^{d/2} \, \eta^{2}\,
e^{-H/\epsilon} \, e^{C_1 \eta\, \delta/\epsilon}\;.
\end{equation*}
\end{asser}

\begin{proof}
We first derive a pointwise estimate of
$|\bs{q}_{\epsilon}^{(\eta)}(\bs{z}) - \bs{q}_{\epsilon}(\bs{z})|$ for
$\bs{z}\in\mc{B}_{\epsilon}^{\eta}$. Recall that $B_\eta(\bs{x})$ the
ball of radius $\eta$ centered at $\bs{x}\in\bb{R}^{d}$. Note
that $B_\eta(\bs{z})\subset\mc{B}_{\epsilon}$ for all
$\bs{z}\in\mc{B}_{\epsilon}^{\eta}$.  Hence, by the Cauchy-Schwarz
inequality and the mean value theorem,
\begin{equation}
\label{se30}
\begin{aligned}
& \big|\bs{q}_{\epsilon}^{(\eta)}(\bs{z})
-\bs{q}_{\epsilon}(\bs{z})\big|^{2} \;=\;
\Big|\int_{\bb{R}^{d}}\phi_{\eta}(\bs{y})\,
\big[\nabla p_{\epsilon}(\bs{z}+\bs{y})
-\nabla p_{\epsilon}(\bs{z})\big]\, d\bs{y}\Big|^{2} \\
&  \;\le\; \int_{\bb{R}^{d}}\phi_{\eta}(\bs{y})
\, \big|\nabla p_{\epsilon}(\bs{z}+\bs{y})
-\nabla p_{\epsilon}(\bs{z})\big|^{2}\, d\bs{y} \\
& \;\le\; \int_{\bb{R}^{d}} \phi_{\eta}(\bs{y})\, |\bs{y}|^{2}\,
\sum_{i,j=1}^d \sup_{\bs{x}\in B_\eta(\bs{z})}
[\nabla^{2}_{x_i,x_j}p_{\epsilon}(\bs{x})]^{2}\, d\bs{y}\;.
\end{aligned}
\end{equation}

By a direct computation,
\begin{equation*}
\nabla^{2}_{x_i,x_j} p_{\epsilon}(\bs{x})
\;=\;-\frac{\alpha}{C_{\epsilon}\epsilon}\,
(\bs{x}\cdot\bs{v})\,
e^{-(\alpha/2\epsilon) \, (\bs{x}\cdot\bs{v})^{2}} \, v_{i}\, v_{j}\;.
\end{equation*}
Since $\eta\ll \delta$ and $\bs z \in \mc{B}_{\epsilon}^{\eta}$, there
exists a finite constant $C_1$ such that $(\bs{x}\cdot\bs{v})^{2} \ge
(\bs{z}\cdot\bs{v})^{2} - C_1\, \eta\, \delta$ for all $\bs{x}\in
B_\eta(\bs{z})$. Hence, as $C_{\epsilon} = \sqrt{2\pi\epsilon/\alpha}$ and
$(\bs{x}\cdot\bs{v})^2 \le C_2 \delta^2$, there exists a finite constant
$C$, independent of $\epsilon$ and $\eta$ such that
\begin{equation*}
[\nabla^{2}_{x_i,x_j} p_{\epsilon}(\bs{x})]^2 \;\le\;
\frac{C_2 \, \delta^{2}}{\epsilon^{3}}\,
e^{-(\alpha/\epsilon)(\bs{z}\cdot\bs{v})^{2}}
e^{C_1\delta\eta/\epsilon}
\end{equation*}
for all $\bs x \in B_\eta(\bs{z})$.

Therefore, in view of \eqref{se30},
\begin{align*}
\big|\bs{q}_{\epsilon}^{(\eta)}(\bs{z})
-\bs{q}_{\epsilon}(\bs{z})\big|^{2}
\; &\le\; \frac{C_2 \, \delta^{2}}{\epsilon^{3}}\,
e^{-(\alpha/\epsilon)(\bs{z}\cdot\bs{v})^{2}}
e^{C_1\delta\eta/\epsilon}
\int_{\bb{R}^{d}}\phi_{\eta}(\bs{y})\, |\bs{y}|^{2}\, d\bs{y}\\
\;&=\; \frac{C_2 \, \delta^{2}}{\epsilon^{3}}\,
e^{-(\alpha/\epsilon)(\bs{z}\cdot\bs{v})^{2}}
e^{C_1\delta\eta/\epsilon} \, \eta^{2}\;.
\end{align*}
In consequence,
\begin{equation*}
\int_{\mc{B}_{\epsilon}^{\eta}} e^{-U(\bs{z})/\epsilon}\,
\big|\bs{q}_{\epsilon}^{(\eta)}(\bs{z})
-\bs{q}_{\epsilon}(\bs{z})\big|^{2}\, d\bs{z}
\;\le\; \frac{C_2 \, \delta^{2}}{\epsilon^{3}}\,
e^{C_1\delta\eta/\epsilon} \, \eta^{2} \,
\int_{\mc{B}_{\epsilon}^{\eta}} e^{-U(\bs{z})/\epsilon}\,
e^{-(\alpha/\epsilon)(\bs{z}\cdot\bs{v})^{2}} \, d\bs{z}\;.
\end{equation*}
By the Taylor expansion, $U(\bs{z})=H+(1/2)\, \bs{z} \cdot \bb{L}
\bs{z} +O(\delta^{3})$ for $\bs{z}\in\mc{B}_{\epsilon}^{\eta}$. Hence,
by Lemma \ref{bl2}, the last integral is bounded by
\begin{equation*}
C_3 \, e^{-H/\epsilon} \int_{\bb{R}^{d}}
\exp\Big\{ -\frac{1}{2\epsilon} \bs{z}\cdot \big[\bb{L} +
2\alpha\bs{v}\bs {v}^{\dagger}\big]\bs{z}\Big\} \, d\bs{z}
\;=\;C_3\,  e^{-H/\epsilon} \, \epsilon^{d/2}\, \sqrt{-\det\bb{L}}
\end{equation*}
for some finite constant $C_3$, independent on $\epsilon$ and $\eta$,
This completes the proof.
\end{proof}

\begin{asser}
\label{as02}
Assume that $\eta\ll \delta$.  There exist finite constants $C_{1}$,
$C_{2}$, independent of $\epsilon$, $\eta$, such that
\begin{equation*}
\int_{\partial^{\eta}\mc{B}_{\epsilon}}
e^{-U(\bs{z})/\epsilon} \, \big|\bs{q}_{\epsilon}^{(\eta)}(\bs{z})
-\bs{q}_{\epsilon}(\bs{z})\big|^{2} \, d\bs{z}\;\le\;
C_{1}\frac{\eta\, \delta^{d-1}}{\epsilon}\,
e^{-H/\epsilon} \Big(1+e^{C_{2}\eta\delta/\epsilon}\Big)\;.
\end{equation*}
\end{asser}

\begin{proof}
We first derive pointwise bounds for $|\bs{q}_{\epsilon}(\bs{z})|$
and $|\bs{q}_{\epsilon}^{(\eta)}(\bs{z})|$. For the former, we have
the trivial bound
\begin{equation}
\label{se41}
|\bs{q}_{\epsilon}(\bs{z})| \;\le\; \frac{C_1}{C_{\epsilon}}\,
e^{- (\alpha/2\epsilon)\, (\bs{z}\cdot\bs{v})^{2}}
\end{equation}
for some finite constant $C_1$.  For the latter, by definition, by the
previous inequality and by the bound on $(\bs{x}\cdot\bs{v})^{2}$ in
terms of $(\bs{z}\cdot\bs{v})^{2}$, obtained in the proof of the
previous assertion,
\begin{align*}
|\bs{q}_{\epsilon}^{(\eta)}(\bs{z})|
\; &\le\;\int_{\bb{R}^{d}} |\bs{q}_{\epsilon}(\bs{z}+\bs{y})|
\, \phi_{\eta}(\bs{y})\, d\bs{y}
\;\le\;\int_{\bb{R}^{d}}  \frac{C_1}{C_{\epsilon}}
\, e^{- (\alpha/2\epsilon)\, ((\bs{z}+\bs{y})\cdot\bs{v})^{2}} \,
\phi_{\eta}(\bs{y}) \, d\bs{y} \\
\; &\le\; \int_{\bb{R}^{d}} \frac{C_1}{C_{\epsilon}} \,
e^{- (\alpha/2\epsilon)\, (\bs{z}\cdot\bs{v})^{2}}
e^{C_2 \eta\delta/\epsilon}
\, \phi_{\eta}(\bs{y})\, d\bs{y}
\;\le\; \frac{C_1}{C_{\epsilon}} \,
e^{- (\alpha/2\epsilon)\, (\bs{z}\cdot\bs{v})^{2}}
e^{C_2 \eta\delta/\epsilon}
\end{align*}
for some finite constant $C_2$.

By the two previous estimates of $|\bs{q}_{\epsilon}(\bs{z})|$ and
$|\bs{q}_{\epsilon}^{(\eta)}(\bs{z})|$, and by the Taylor expansion,
\begin{align*}
e^{-U(\bs{z})/\epsilon} \, \big|\bs{q}_{\epsilon}^{(\eta)}(\bs{z})
-\bs{q}_{\epsilon}(\bs{z})\big|^{2} \; & \le\;
\frac{C_1}{\epsilon}\, e^{-H/\epsilon} \, e^{-(1/2\epsilon)\bs{z}\cdot
[\bb{L}+2\alpha\bs{v}\bs{v}^{\dagger}]\bs{z}} \,
\Big( 1+e^{C_2 \eta\delta/\epsilon} \Big) \\
\; & \le\; \frac{C_1}{\epsilon}\, e^{-H/\epsilon} \,
\Big( 1+e^{C_2 \eta\delta/\epsilon} \Big)\;.
\end{align*}
This pointwise estimate completes the proof of the assertion since the
volume of $\partial^{\eta}\mc{B}_{\epsilon}$ is bounded by
$C_1\eta\, \delta^{d-1}$.
\end{proof}

\noindent{\bf C. Proof of Lemma \ref{tp2}.} Since $\bs{q}_{\epsilon}$
vanishes everywhere, but on the set $\mc{B}_{\epsilon}$,
\begin{equation*}
\big\langle \Theta_{\bs{q}_{\epsilon}}^{*} ,\,
\Psi_{h_{\mc{V}_{1},\mc{V}_{2}}}\big\rangle \;=\;
\int_{\mc{B}_{\epsilon}} \nabla h_{\mc{V}_{1},\mc{V}_{2}}(\bs{z})
\cdot\Theta_{\bs{q}_{\epsilon}}^{*}\, d \bs{z}\;.
\end{equation*}
Note that the inner product of vector fields has been defined only for
smooth vector fields, but it can be extended to weakly
differentiable vector fields.

By the divergence theorem, the last integral is equal to
\begin{equation}
\label{se50}
-\, \frac 1{Z_{\epsilon}}\, \int_{\mc{B}_{\epsilon}}
h_{\mc{V}_{1},\mc{V}_{2}}(\bs{z}) \, e^{-U(\bs{z})/\epsilon}\,
[\mc{L}_{\epsilon}p_{\epsilon}](\bs{z}) \, d\bs{z}
\;+\; \int_{\partial\mc{B}_{\epsilon}}
h_{\mc{V}_{1},\mc{V}_{2}}(\bs{z}) \,
\big[\Theta_{\bs{q}_{\epsilon}}^{*}\cdot\bs{n}(\bs{z})\big]
\, \sigma(d\bs{z})\;,
\end{equation}
where, we recall, $\bs{n}(\bs{z})$ stands for the outer normal vector
to $\mc B_{\epsilon}$ at $\bs z$. The next lemma states that the first
term is negligible. This result holds because
$p_{\epsilon}$ has been defined as an approximation in $\mc
B_{\epsilon}$ of the solution of the equation $\mc{L}_{\epsilon} f =0$
with some boundary conditions.

\begin{lemma}
\label{pt21}
We have that
\begin{equation*}
Z_{\epsilon}^{-1}\int_{\mc{B}_{\epsilon}} e^{-U(\bs{z})/\epsilon}
\, \big|\mc{L}_{\epsilon} p_{\epsilon}(\bs{z})\big| \, d\bs{z}
\;=\;o_{\epsilon}(1)\,T_\epsilon\;.
\end{equation*}
\end{lemma}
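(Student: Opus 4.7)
The plan is to exploit the fact that $p_\epsilon$ has been constructed precisely so that it is annihilated by the linearized generator $\widetilde{\mc L}_\epsilon$ at the saddle point. First I would verify, by direct computation, that $\widetilde{\mc L}_\epsilon p_\epsilon \equiv 0$ on $\mc B_\epsilon$. Writing $a=\bs z\cdot \bs v$ and $g(a)=C_\epsilon^{-1} e^{-\alpha a^2/2\epsilon}$, one has $\nabla p_\epsilon = g(a)\bs v$ and $\partial^2_{ij} p_\epsilon = -(\alpha a/\epsilon)\, g(a)\, v_i v_j$. Using that $\bb L\bb M \bs v=-\mu \bs v$ (with $\bb L$ symmetric) and $\alpha(\bs v\cdot \bb M\bs v)=\mu$, the drift term contributes $\mu a\, g(a)$ and the diffusion term contributes $-\mu a\, g(a)$, so they cancel.

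Next, since $\mc L_\epsilon p_\epsilon = \widetilde{\mc L}_\epsilon p_\epsilon - [\nabla U(\bs z) - \bb L \bs z]\cdot \bb M \nabla p_\epsilon$, and since the Taylor expansion of $U$ around the saddle yields $\|\nabla U(\bs z)-\bb L \bs z\|\le C\|\bs z\|^2\le C'\delta^2$ on $\mc B_\epsilon$, we obtain the pointwise bound
\begin{equation*}
|\mc L_\epsilon p_\epsilon(\bs z)| \;\le\; C\,\delta^2\, \|\nabla p_\epsilon(\bs z)\| \;\le\;\frac{C\,\delta^2}{C_\epsilon}\, e^{-\alpha(\bs z\cdot\bs v)^2/2\epsilon}\;.
\end{equation*}
Combining this with the Taylor expansion $U(\bs z)=H+(1/2)\bs z\cdot \bb L\bs z + O(\delta^3)$ and $e^{O(\delta^3/\epsilon)}=1+o_\epsilon(1)$ (valid for our choice of $\delta$), the integral in question is bounded, up to a $[1+o_\epsilon(1)]$ factor, by
\begin{equation*}
\frac{C\,\delta^2}{Z_\epsilon\, C_\epsilon}\, e^{-H/\epsilon}
\int_{\mc B_\epsilon} \exp\Big\{-\frac{1}{2\epsilon}\, \bs z\cdot[\bb L + \alpha\bs v\bs v^\dagger]\bs z\Big\}\, d\bs z\;.
\end{equation*}

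The delicate step — and the main obstacle — is evaluating this Gaussian integral, because by Lemma \ref{bl3} the quadratic form $\bb L+\alpha\bs v\bs v^\dagger$ is only positive semi-definite, with a one-dimensional kernel spanned by $\bs w=\bb L^{-1}\bs v$. I would handle this by decomposing $\bs z = s\bs w/\|\bs w\| + \bs y$ with $\bs y\perp\bs w$: on the orthogonal complement of $\bs w$ the form is strictly positive definite, yielding a standard Gaussian factor $(2\pi\epsilon)^{(d-1)/2}/\sqrt{\det A}$ after integrating out $\bs y$, while the $s$-integration is saved by the compactness $\mc B_\epsilon\subset \mc C_\epsilon$, which forces $|s|\le C\delta$. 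This yields the bound
\begin{equation*}
\int_{\mc B_\epsilon} e^{-\bs z\cdot[\bb L + \alpha\bs v\bs v^\dagger]\bs z/2\epsilon}\, d\bs z \;\le\; C\, \delta\,(2\pi\epsilon)^{(d-1)/2}\;.
\end{equation*}

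Finally, substituting back and using $C_\epsilon=\sqrt{2\pi\epsilon/\alpha}$ and the definition of $T_\epsilon$, the whole expression is bounded by $C\,\delta^3\,\epsilon^{-1}\, T_\epsilon$. Recalling $\delta = K\sqrt{\epsilon\log(1/\epsilon)}$, we have $\delta^3/\epsilon = K^3\,\epsilon^{1/2}(\log(1/\epsilon))^{3/2} = o_\epsilon(1)$, completing the proof. As a side remark, this calculation also gives the quantitative $O(\epsilon^{1/2}(\log\epsilon)^{3/2})$ error claimed in Remark \ref{rem53}.
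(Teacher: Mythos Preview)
Your proof is correct and follows essentially the same route as the paper: the same pointwise bound $|\mc L_\epsilon p_\epsilon|\le C\delta^2 C_\epsilon^{-1} e^{-\alpha(\bs z\cdot\bs v)^2/2\epsilon}$ via the cancellation $\widetilde{\mc L}_\epsilon p_\epsilon=0$, the same Taylor expansion of $U$, and the same treatment of the degenerate Gaussian integral by diagonalizing $\bb L+\alpha\bs v\bs v^\dagger$ and exploiting the $O(\delta)$ bound in the null direction coming from $\mc B_\epsilon\subset\mc C_\epsilon$, arriving at the same $\delta^3/\epsilon$ error.
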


\begin{proof}
By definition of $p_{\epsilon}$,
\begin{equation*}
\nabla p_{\epsilon}(\bs{z})\;=\;\frac{1}{C_{\epsilon}}\,
e^{-(\alpha/2\epsilon) (\bs{z}\cdot\bs{v})^{2}}\, \bs{v}\;, \quad
\partial^2_{z_i,z_j} p_{\epsilon} (\bs{z})\;=\;-\,
\frac{\alpha}{\epsilon\, C_{\epsilon}}e^{- (\alpha/2\epsilon) (\bs{z}\cdot\bs{v})^{2}}
\, (\bs{z}\cdot\bs{v})\, v_{i}\, v_{j}\;.
\end{equation*}
By the Taylor expansion, $\nabla
U(\bs{z})=\bb{L}\bs{z}+O(\delta^{2})$. Hence, since $\bs v$ is the
eigenvector of $\bb L \bb M = \bb L^* \bb M$ associated to $-\mu$, the
first order part of $(\mc{L}_{\epsilon} p_{\epsilon}) (\bs{z})$ is
equal to
\begin{equation*}
-\, \frac{1}{C_{\epsilon}}\, \Big\{ \bb L \bs z  \cdot \bb M \bs v +
O(\delta^{2}) \Big\} \, e^{- (\alpha/2\epsilon)
  (\bs{z}\cdot\bs{v})^{2}}
\;=\; \frac{1}{C_{\epsilon}}\, \Big\{ \mu \, (\bs z \cdot \bs v) +
O(\delta^{2}) \Big\} \, e^{- (\alpha/2\epsilon)
  (\bs{z}\cdot\bs{v})^{2}}\;.
\end{equation*}
By \eqref{ins02}, the second order part of $(\mc{L}_{\epsilon}
p_{\epsilon}) (\bs{z})$ is equal to
\begin{equation*}
-\, \frac{\alpha}{C_{\epsilon}}\,  (\bs z \cdot \bs v) \,
(\bs v  \cdot \bb M \bs v) \,
e^{- (\alpha/2\epsilon)  (\bs{z}\cdot\bs{v})^{2}} \;=\;
-\, \frac{\mu}{C_{\epsilon}}\,  (\bs z \cdot \bs v) \,
e^{- (\alpha/2\epsilon)  (\bs{z}\cdot\bs{v})^{2}}\;.
\end{equation*}
Since $C_{\epsilon}=O(\sqrt{\epsilon})$, it follows from the two
previous identities that there exists a finite constant $C_1$ such
that
\begin{equation*}
\big|\mc{L}_{\epsilon} p_{\epsilon}(\bs{z})\big|
\;\le\;\frac{C_1\, \delta^{2}}{\sqrt{\epsilon}}
e^{- (\alpha/2\epsilon) (\bs{z}\cdot\bs{v})^{2}}
\end{equation*}
for all $\bs z\in \mc{B}_{\epsilon}$.

As $U(\bs{z})=H+ (1/2)\, \bs{z}\cdot\bb{L}\bs{z}+O(\delta^{3})$, for
$\bs{z}\in\mc{B}_{\epsilon}$, by the previous estimate,
\begin{equation}
\label{se52}
\int_{\mc{B}_{\epsilon}}e^{- U(\bs{z})/\epsilon}\,
\big|\mc{L}_{\epsilon}p_{\epsilon}(\bs{z})\big|\, d\bs{z}
\;\le\;\frac{C_1 \delta^{2}}{\sqrt{\epsilon}}\, e^{-H/\epsilon}
\, \int_{\mc{B}_{\epsilon}}
e^{-(1/2\epsilon) \bs{z}\cdot
  (\bb{L}+\alpha\bs{v}\bs{v}^{\dagger})\bs{z}} \, d\bs{z} \;.
\end{equation}

It remains to estimate the last integral. Recall Lemma \ref{bl3}, and
denote by $\theta_{1}=0, \theta_{2}, \dots, \theta_{d}>0$ the
eigenvalues of the matrix $\bb{L}+\alpha\bs{v}\bs{v}^{\dagger}$, and by
$\bs{w}_{i}$, $1\le i\le d$, the normal eigenvector corresponding to
$\theta_{i}$. Let $\mathscr{P}_{a}$, $a\in\bb{R}$, be the
$(d-1)$-dimensional space given by
\begin{equation*}
\mathscr{P}_{a}\;=\;a\bs{w}_{1} \;+\;
\left\langle \bs{w}_{2},\,\bs{w}_{3},\,\dots,\,\bs{w}_{d}\right\rangle \;,
\end{equation*}
where $\langle \bs{x}_{1},  \dots, \bs{x}_{n}\rangle$ stands for the
linear space generated by the vectors $\bs{x}_{1},  \dots, \bs{x}_{n}
\in \bb R^d$.

Since $\mc{B}_{\epsilon}\subset\mc{C}_{\epsilon}$, there exists $M>0$
such that
\begin{equation*}
\mc{B}_{\epsilon}\;\subseteq\;\bigcup_{a:|a|\le M\delta}\mathscr{P}_{a}\;.
\end{equation*}

Consider the last integral of \eqref{se52}. Perform the change of
variable $\bs{z}=\sum x_{i}\bs{w}_{i}$, and extend the region of
integration to $\bigcup_{a:|a|\le M\delta}\mathscr{P}_{a}$, to obtain
that
\begin{align*}
\int_{\mc{B}_{\epsilon}} e^{-(1/2\epsilon) \bs{z}\cdot
  (\bb{L}+\alpha\bs{v}\bs{v}^{\dagger})\bs{z}} \, d\bs{z}
\; & \le\; C_1 \, \int_{-M\delta}^{M\delta}dx_{1} \int_{\bb{R}^{d-1}}
\exp\Big\{ -\frac{1}{2\epsilon}\sum_{i=2}^{d}\theta_{i}x_{i}^{2}\Big\}
\, dx_{2}\cdots dx_{d} \\
& =\;C_1 M\delta \, (2\pi\epsilon)^{(d-1)/{2}}
\, \prod_{i=2}^{d} \frac 1{\theta_{i}^{1/2}} \; .
\end{align*}
Therefore, by \eqref{se52},
\begin{equation*}
\int_{\mc{B}_{\epsilon}}e^{- U(\bs{z})/\epsilon}\,
\big|\mc{L}_{\epsilon}p_{\epsilon}(\bs{z})\big|\, d\bs{z}
\;\le\;\frac{C_1 \delta^{3}}{\epsilon}\, e^{-H/\epsilon}
\, \epsilon^{d/{2}} \;.
\end{equation*}
This completes the proof of the lemma since
$\delta^{3}/\epsilon=o_{\epsilon}(1)$.
\end{proof}

We turn to the second integral of \eqref{se50}.

\begin{lemma}
\label{pt22}
We have that
\begin{equation*}
\int_{\partial\mc{B}_{\epsilon}}
h_{\mc{V}_{1},\mc{V}_{2}}(\bs{z}) \,
\big[\Theta_{\bs{q}_{\epsilon}}^{*}\cdot\bs{n}(\bs{z})\big]
\, \sigma(d\bs{z})
\;=\;\left[1+o_{\epsilon}(1)\right]\,T_{\epsilon}\,\omega(\bs{0})\;.
\end{equation*}
\end{lemma}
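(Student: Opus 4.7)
The plan is to decompose $\partial \mc B_\epsilon = \Gamma_+ \sqcup \Gamma_- \sqcup \Gamma_0$, where $\Gamma_\pm := \partial_\pm \mc C_\epsilon \cap \Omega_\epsilon$ are the two cylinder faces at $z_1 = \pm\delta$ and $\Gamma_0 := \partial \Omega_\epsilon \cap \mc C_\epsilon$ is the level-set portion. Lemma \ref{lems0} ensures that the lateral face $\partial_0 \mc C_\epsilon$ does not meet $\Omega_\epsilon$ for small $\epsilon$, so this is an exhaustive decomposition. I will show that $\Gamma_+$ produces the leading term $[1+o(1)]\,T_\epsilon\,\omega(\bs 0)$ and that $\Gamma_-$ and $\Gamma_0$ contribute $o(T_\epsilon)$. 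On $\Gamma_0$ the level-set bound $U \ge H + (\lambda_1/4)\delta^2$ gives $e^{-U/\epsilon} \le e^{-H/\epsilon}\epsilon^{K^2\lambda_1/4}$; combined with $|h| \le 1$, the pointwise bound $|\bs q_\epsilon| \le C/C_\epsilon$, and $\sigma(\Gamma_0) \le C\delta^{d-1}$, this makes the $\Gamma_0$-contribution $o(T_\epsilon)$ for $K$ sufficiently large.

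On $\Gamma_+$ the outer normal is $\bs e_1$ and the integrand is
\begin{equation*}
\Theta^*_{\bs q_\epsilon}\cdot\bs e_1 \;=\; \frac{\epsilon}{Z_\epsilon C_\epsilon}\, e^{-U/\epsilon}\, e^{-\alpha(\bs z\cdot\bs v)^2/(2\epsilon)}\, (\bb M\bs v)_1,
\end{equation*}
and the identity $\bb L\bb M\bs v = -\mu\bs v$ reduces the constant to $(\bb M\bs v)_1 = \mu v_1/\lambda_1$. Taylor-expanding $U$ around the saddle and combining with the $\alpha(\bs z\cdot\bs v)^2/(2\epsilon)$ term yields $e^{-H/\epsilon}\, e^{-(1/2\epsilon)\,\bs z\cdot(\bb L+\alpha\bs v\bs v^\dagger)\bs z}$ up to a $[1+o(1)]$ factor. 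Restricting to $z_1=\delta$ and completing the square in $\tilde{\bs z}=(z_2,\dots,z_d)$ with the block $\tilde A := \tilde{\bb L}+\alpha\tilde{\bs v}\tilde{\bs v}^\dagger$, the constant term of the completed square \emph{vanishes} since the null direction $\bb L^{-1}\bs v$ of $\bb L+\alpha\bs v\bs v^\dagger$ (Lemma \ref{bl3}) has nonzero first component. A Sherman--Morrison computation together with Lemma \ref{bl1} gives $\det\tilde A = (\alpha v_1^2/\lambda_1)\prod_{k\ge 2}\lambda_k$; performing the $\bb R^{d-1}$-Gaussian (justified because the concentration width $\sqrt{\epsilon}$ is much less than the slice radius $\delta$) and inserting $C_\epsilon = \sqrt{2\pi\epsilon/\alpha}$ together with $-\det\bb L = \lambda_1\prod_{k\ge 2}\lambda_k$ collapses every algebraic constant into $\omega(\bs 0)\,T_\epsilon$.

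The main obstacle is replacing $h$ by $1$ on $\Gamma_+$ and by $0$ on $\Gamma_-$ with $o(T_\epsilon)$ error. A bare application of Proposition \ref{7-l11} is too lossy because of its $\epsilon^{-d}$ prefactor, so I will exploit Gaussian concentration. The Schur-complement computation above identifies the Gaussian center $\bs z_c$ on $\Gamma_\pm$ with $U(\bs z_c) = H - (\lambda_1^2/(2\alpha v_1^2))\delta^2 + O(\delta^3)$, so $\bs z_c$ lies in $\mc W_1$ on $\Gamma_+$ and in $\mc W_2$ on $\Gamma_-$. Throughout an $O(\sqrt{\epsilon\log(1/\epsilon)})$-neighborhood of $\bs z_c$ one still has $H - U \gtrsim \delta^2$, so Proposition \ref{7-l11} applied to $h_{\mc V_2,\mc V_1} = 1-h$ on $\Gamma_+$ (resp.\ to $h$ on $\Gamma_-$) gives a uniform bound of order $\epsilon^{-d+cK^2}$ that beats $T_\epsilon$ once $K$ is fixed large. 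The Gaussian tails outside this neighborhood are controlled by the $e^{-c\delta^2/\epsilon}$-decay of $|\Theta^*_{\bs q_\epsilon}\cdot\bs n|$ together with $0\le h \le 1$. Summing the three pieces then delivers the claim.
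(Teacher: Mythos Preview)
Your proof is correct and follows the same architecture as the paper: the same decomposition $\partial\mc B_\epsilon=\Gamma_+\cup\Gamma_-\cup\Gamma_0$, the same bound on $\Gamma_0$ via $U\ge H+(\lambda_1/4)\delta^2$, and the same leading-order Gaussian computation on $\Gamma_+$ (including the change of variables, the Sherman--Morrison determinant, and the appeal to Lemmata~\ref{bl1} and~\ref{bl3}).

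The one difference is in how you kill the error $(1-h_{\mc V_1,\mc V_2})$ on $\Gamma_+$ (and $h_{\mc V_1,\mc V_2}$ on $\Gamma_-$). You do a near/far split around the Gaussian center $\bs z_c$, using Proposition~\ref{7-l11} inside and Gaussian tails outside; this works but requires balancing the neighborhood radius against both constraints. The paper's approach is cleaner and avoids any splitting: it observes (Assertion~\ref{bl10}) that Proposition~\ref{7-l11} yields the weaker uniform bound $1-h_{\mc V_1,\mc V_2}(\bs z)\le C_0\epsilon^{-d}\exp\{(U(\bs z)-H)/(2\epsilon)\}$ on all of $\Gamma_+$ (the case $U(\bs z)\ge H$ being trivial). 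Keeping only \emph{half} the exponent leaves the remaining factor $e^{-(U-H)/(2\epsilon)-\alpha(\bs z\cdot\bs v)^2/(2\epsilon)}\sim e^{-(1/4\epsilon)\bs z\cdot(\bb L+2\alpha\bs v\bs v^\dagger)\bs z}$, and now the quadratic form is the \emph{strictly} positive-definite one of Lemma~\ref{bl2}; since $\|\bs z\|\ge\delta$ on $\Gamma_+$, this immediately gives $\epsilon^{\gamma K^2/4}$ decay, beating the $\epsilon^{-d}$ prefactor for $K$ large. So your worry that a direct application of Proposition~\ref{7-l11} is ``too lossy'' is slightly off: it is not lossy if one reserves half of its exponent to upgrade $\bb L+\alpha\bs v\bs v^\dagger$ to $\bb L+2\alpha\bs v\bs v^\dagger$.
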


Decompose the boundary $\partial\mc{B}_{\epsilon}$ in three pieces,
denoted by $\partial_{+}\mc{B}_{\epsilon}$, $\partial_{-}
\mc{B}_{\epsilon}$ and $\partial_{0} \mc{B}_{\epsilon}$, where
$\partial_{+}\mc{B}_{\epsilon} = \partial\mc{B}_{\epsilon}
\cap \partial_{+} \mc{C}_{\epsilon}, \partial_{-}
\mc{B}_{\epsilon}= \partial \mc{B}_{\epsilon}
\cap\partial_{-}\mc{C}_{\epsilon}$, and $\partial_{0}
\mc{B}_{\epsilon} =\partial \mc{B}_{\epsilon} \setminus(\partial_{+}
\mc{B}_{\epsilon} \cup\partial_{-} \mc{B}_{\epsilon})$.

We claim that the contribution to the integral of the piece
corresponding to the boundary $\partial_{0}\mc{B}_{\epsilon}$ is
negligible. Since $|h_{\mc{V}_{1},\mc{V}_{2}}|\le 1$, this claim
follows from the next assertion.

\begin{asser}
\label{bs1}
We have that
\begin{equation*}
\int_{\partial_{0}B_{\epsilon}}
\big| \Theta_{\bs{q}_{\epsilon}}^{*} (\bs{z})\big|
\, \sigma(d\bs{z})\;=\;o_{\epsilon}(1)\,T_{\epsilon}\;.
\end{equation*}
\end{asser}

\begin{proof}
Since $\partial_{0}\mc{B}_{\epsilon}\subset\partial\mc{X}_{\epsilon}$,
\begin{equation}
U(\bs{z})\;=\;H \,+\,
(\lambda_{1}/4)\, \delta^{2}\;\;\mbox{for all}
\;\;\bs{z}\;\in\;\partial_{0}\mc{B}_{\epsilon}\;.
\end{equation}
Therefore, since $\exp\left\{ - (\alpha/2\epsilon)
  (\bs{z}\cdot\bs{v})^{2}\right\} \le 1$ and $C_\epsilon =
\sqrt{2\pi\epsilon/\alpha}$,  by definition of $\delta$,
\begin{equation*}
\big|\Theta^*_{\bs{q}_{\epsilon}}(\bs{z})\big| \;\le\;
\frac {C_1}{Z_{\epsilon}}\, e^{-H/\epsilon}\, \sqrt{\epsilon}
\, e^{-(\lambda_{1}\delta^{2}/4\epsilon)}
\;=\; \frac {C_1}{Z_{\epsilon}}\,  e^{-H/\epsilon}\,
\epsilon^{(1/2)+(\lambda_{1}\, K^{2}/4)}
\end{equation*}
for some finite constant $C_1$. Since the surface volume of
$\partial_{0}\mc{B}_{\epsilon}$ is of order $\delta^{d-1}$, the
statement of the assertion is straightforward consequence from this
uniform bound on $\Theta^*_{\bs{q}_{\epsilon}}$.
\end{proof}

We turn to the boundaries $\partial_{+}\mc{B}_{\epsilon}$ and
$\partial_{-}\mc{B}_{\epsilon}$. To estimate the integral appearing in
the statement of Lemma \ref{pt22} on these sets, bounds on the
equilibrium potential $h_{\mc{V}_{1},\mc{V}_{2}}$ are needed.

\begin{asser}
\label{bl10}
There exist a finite constant $C_0$ and $\epsilon_0>0$ such that for
all $\epsilon<\epsilon_0$,
\begin{equation*}
1-h_{\mc{V}_{1},\mc{V}_{2}}(\bs{y})\;\le\; \frac{C_0}{\epsilon^d}
\exp\Big\{ \frac{U(\bs{y})-H}{2\epsilon}\Big\} \quad
\text{for all $\bs{y}\in\partial_{+}\mc{B}_{\epsilon}$}
\end{equation*}
and
\begin{equation}
\label{se71}
h_{\mc{V}_{1},\mc{V}_{2}}(\bs{y})\;\le\;\frac{C_0}{\epsilon^{d}}
\exp\Big\{ \frac{U(\bs{y})-H}{2\epsilon}\Big\} \quad
\text{for all $\bs{y}\in\partial_{-}\mc{B}_{\epsilon}$}\;.
\end{equation}
\end{asser}

\begin{proof}
Consider first \eqref{se71}. If $\bs{y} \in\partial_{-}
\mc{B}_{\epsilon}$ satisfies $U(\bs{y})\ge H$, then \eqref{se71} is
obvious for all sufficiently small $\epsilon$. Otherwise, $\bs{y}
\in \mc W_2$, and the result follows by Proposition \ref{7-l11}.

The proof of the first claim of the Assertion is analogous. The previous
arguments provide an upper bound for $h_{\mc{V}_{2},\mc{V}_{1}}$
which is the function $1-h_{\mc{V}_{1},\mc{V}_{2}}$.
\end{proof}

We are now in a position to prove Lemma \ref{pt22} at the boundaries
$\partial_{+}\mc{B}_{\epsilon}$ and $\partial_{-}\mc{B}_{\epsilon}$.

\begin{asser}
\label{bs2}
For sufficiently large $K$, we have that
\begin{align*}
& \int_{\partial_{+}\mc{B}_{\epsilon}}
\big[1-h_{\mc{V}_{1},\mc{V}_{2}}(\bs{z})\big]\,
\big[\Theta_{\bs{q}_{\epsilon}}^{*}\cdot\bs{n}(\bs{z})\big]\,
\sigma(d\bs{z})
\;=\;o_{\epsilon}(1)\, T_{\epsilon}\;,
\\
&  \quad \int_{\partial_{-}\mc{B}_{\epsilon}}
h_{\mc{V}_{1},\mc{V}_{2}}(\bs{z}) \,
\big[\Theta_{\bs{q}_{\epsilon}}^{*}
\cdot\bs{n}(\bs{z})\big]\, \sigma(d\bs{z})
\;=\;o_{\epsilon}(1)\, T_{\epsilon}\;.
\end{align*}
\end{asser}

\begin{proof}
We concentrate on the first claim, the proof of the second one being
similar. By Assertion \ref{bl10} and since $C_\epsilon =
O(\sqrt{\epsilon})$,
\begin{align*}
&  \Big|\int_{\partial_{+}\mc{B}_{\epsilon}}
\big[1-h_{\mc{V}_{1},\mc{V}_{2}}(\bs{z})\big] \,
\big[\Theta_{\bs{q}_{\epsilon}}^{*}\cdot\bs{n}(\bs{z})\big]\,
\sigma(d\bs{z})\Big| \\
&  \quad \le\; \frac {C_1 \, \epsilon^{1/2-d}} {Z_{\epsilon}}
\int_{\partial_{+}\mc{B}_{\epsilon}}
\exp\Big\{ \frac{U(\bs{z})-H}{2\epsilon}\Big\} \,
\exp\Big\{ -\frac{U(\bs{z})}{\epsilon}-\frac{\alpha}{2\epsilon}
(\bs{z}\cdot\bs{v})^{2}\Big\} \, \sigma(d\bs{z})
\end{align*}
for some finite constant $C_1$. Note that the exponential terms in the last integral can be written
as
$$-\frac{H}{\epsilon} - \frac{U(\bs{z})-H}{2\epsilon}  - \frac{\alpha}{2\epsilon}
(\bs{z}\cdot\bs{v})^{2}\;.$$
By applying the Taylor expansion of $U$ at the second term, we are able to deduce that the right-hand side of the penultimate displayed equation
is bounded above by
\begin{align*}
& \frac {C_1 \, \epsilon^{1/2-d}} {Z_{\epsilon}} \, e^{- H/\epsilon}
\int_{\partial_{+}\mc{B}_{\epsilon}} \exp\Big\{
-\frac{1}{4\epsilon}\, \bs{z}\cdot
\big[ \bb{L}+2\alpha\bs{v}\bs{v}^{\dagger}\big]\bs{z}\Big\} \, \sigma(d\bs{z})\\
&\quad \le\; \frac {C_1 \, \epsilon^{1/2-d}} {Z_{\epsilon}} \, e^{- H/\epsilon}
\int_{\partial_{+}\mc{B}_{\epsilon}} \exp\Big\{
-\frac{\gamma}{4\epsilon} \, \Vert \bs{z}\Vert^{2}\Big\} \,
\sigma(d\bs{z})\;,
\end{align*}
where $\gamma>0$ is the smallest eigenvalue of the positive-definite
matrix $\bb{L} + 2\alpha\bs{v}\bs{v}^{\dagger}$.  Since $\Vert\bs{z}\|^{2}
\ge \delta^{2}$ for $\bs{z}\in\partial_{+}\mc{B}_{\epsilon}$, the last
integral is less than or equal to $C_1 \, \epsilon^{\gamma K^{2}/4}\,
\delta^{d-1}$, which completes the proof of the assertion in view of
the definition of $T_\epsilon$, provided that $K$ is sufficiently large.
\end{proof}

Next assertion completes the proof of Lemma \ref{pt22}.

\begin{asser}
We have that
\begin{equation*}
\int_{\partial_{+}\mc{B}_{\epsilon}}
\Theta_{\bs{q}_{\epsilon}}^{*}\cdot\bs{n}(\bs{z})
\, \sigma(d\bs{z})\;=\; [1+o_{\epsilon}(1)]\,
T_{\epsilon}\,\omega(\bs{0})
\end{equation*}
\end{asser}

\begin{proof}
Since $\bs{n}(\bs{z})=\bs{e}_{1}$ for
$\bs{z}\in\partial_{+}\mc{B}_{\epsilon},$ by the Taylor expansion, the
lef-hand side of the previous equation can be written as
\begin{equation}
\label{se94}
[1+o_{\epsilon}(1)]\,
\frac {\epsilon} {Z_{\epsilon}} \, \sqrt{\frac{\alpha}{2\pi\epsilon}}
\, e^{- H/\epsilon} \, (\bs{e}_{1}\cdot\bb{M}\bs{v})\,
\int_{\partial_{+}\mc{B}_{\epsilon}}
e^{- (1/2\epsilon) \, \bs{z}\cdot [ \bb{L}+\alpha\bs{v} \bs{v}^{\dagger} ] \bs{z}}
\, \sigma(d\bs{z})\;.
\end{equation}

Let $\theta_{k}=(\delta v_{k}\lambda_{1})/(v_{1}\lambda_{k})$, $2\le k\le d$,
and define the variable $\bs y = (y_{2},\,\dots,\,y_{d})$ by
\begin{equation*}
\bs{z}\;=\; \delta \,\bs{e}_{1} \;+\;
\sum_{k=2}^{d}(y_{k}-\theta_{k})\, \bs{e}_{k}\;,
\end{equation*}
An elementary computation, based on the identity provided by Lemma
\ref{bl1}, yields that
\begin{equation*}
\bs{z}\cdot\left(\bb{L}+\alpha\, \bs{v}\bs {v}^{\dagger}\right)\bs{z}
\;=\;\bs{y}\cdot (\widetilde{\bb{L}} +
\alpha \, {\bs w} {\bs w}^{\dagger})\, \bs{y}\;,
\end{equation*}
where ${\bs{w}}$ is the $(d-1)$-dimensional vector given by $\bs w =
(v_{2},\,\dots,\,v_{d})$ and $\widetilde{\bb{L}}$ is the
$(d-1)\times(d-1)$ diagonal matrix $\textup{diag }
(\lambda_{2},\,\dots,\,\lambda_{d})$.

Perform the change of variables presented in the penultimate displayed
equation to write the last integral in \eqref{se94} as
\begin{equation*}
\int_{D_\epsilon} e^{- (1/2\epsilon) \, \bs{y}\cdot [
  \widetilde{\bb{L}} + \alpha \bs{w} \bs {w}^{\dagger} ] \bs{y} }
\, d\bs{y}\;,
\end{equation*}
where $D_\epsilon\subset \bb R^{d-1}$ is the domain of integration
obtained from $\partial_{+}\mc{B}_{\epsilon}$ by the change of
variables. By Lemma \ref{bl1} and a Taylor expansion $U(\delta,
-  \theta_1, \dots, -  \theta_d) < H + (1/4) \lambda_1
\delta^2$ for all $\epsilon$ small enough. In particular, for
$\epsilon$ small enough, $D_\epsilon$ contains a ball centered at the
origin and of radius $r\, \delta$ for some $r>0$, $D_\epsilon \supset
B(\bs 0, r\, \delta)$. Furthermore, it is easy to verify that
$\widetilde{\bb{L}} \,+\, \alpha\, {\bs w} {\bs w}^{\dagger}$ is
positive definite and hence the last integral is equal to
\begin{equation*}
\left[1+o_{\epsilon}(1)\right] \, (2\pi\epsilon)^{(d-1)/2}
\, \Big\{\det \big( \widetilde{\bb{L}} \,+\,
\alpha\, {\bs w} {\bs w}^{\dagger}\big)\Big\}^{-1/2}\;.
\end{equation*}
Since $\det(\bb{A} + \bs{x} \bs {y}^{\dagger}) =
(1+\bs{y}^{\dagger}\bb{A}^{-1}\bs{x})\det\bb{A}$, by Lemma \ref{bl1},
$\det \big( \widetilde{\bb{L}} \,+\, \alpha\, {\bs w} {\bs
  w}^{\dagger}\big)$ is equal to
\begin{equation*}
(1+\alpha\, {{\bs w}}^{\dagger} \widetilde{\bb{L}}^{-1} \bs{w}) \,
\det{\widetilde{\bb L}}\;=\;\alpha \, \Big( \frac{1}{\alpha}
\,+\, \sum_{k=2}^{d}\frac{v_{k}^{2}}{\lambda_{k}}\Big)
\prod_{i=2}^{d}\lambda_{i}\;=\;
\alpha\, \frac{v_{1}^{2}}{\lambda_{1}}\, \prod_{i=2}^{d}\lambda_{i}\;.
\end{equation*}

On the other hand, since $\bs v$ is the eigenvector of $\bb{L}\,
\bb{M}$ associated to the eigenvalue $-\mu$,
\begin{equation*}
\bs{e}_{1}\cdot\bb{M}\, \bs{v}\;=\;
\bs{e}_{1}\cdot\bb{L}^{-1}\, \bb{L}\, \bb{M}\, \bs{v}
\;=\; -\, \mu\, \bs{e}_{1} \cdot \bb{L}^{-1}\, \bs{v}
\;=\;\frac{\mu}{\lambda_{1}}\, v_{1}\;.
\end{equation*}
To complete the proof of the assertion, it remains to recollect all
estimates, and to recall the definition of $\omega(\bs 0)$, introduced
in \eqref{omega}, and the one of $T_{\epsilon}$, given in \eqref{f03}.
\end{proof}

\section{Proof of Theorem \ref{thmp2}}
\label{sec8}

Recall from \eqref{2-5} that we denote by $H_{\bs x, \bs y}$ the
height of the saddle point between $\bs x$ and $\bs y\in\bb R^d$.

For $U(\bs m_1) < r< U(\bs \sigma_1)$, let $\mc N_r$ be the
neighborhood of $\bs m_1$ given by all points which are connected to
$\bs m_1$ by a continuous path whose height lies below $r$:
\begin{equation*}
\mc N_r \;=\; \big\{\bs x \in\bb R^d : H_{\bs x, \bs m_1} \le
r\big\}\;.
\end{equation*}
An elementary computation shows that there exists a finite constant
$C_0 = C_0(r)$ such that $(\mc L_\epsilon U)(\bs x) \le C_0$ on $\mathscr{N}_r$. Thus, if $H_r$ stands for the hitting time of the boundary of
$\mc N_r$, which is finite because, by condition (P4), the process is
positive recurrent,
\begin{equation*}
\bb E_{\bs x}[U(X^\epsilon_{t\wedge H_r})] \;-\; U(\bs x) \le\; C_0 \,
\bb E_{\bs x}[H_r]
\end{equation*}
for all $t\ge 0$. Letting $t\to\infty$, we obtain that for all
$\epsilon >0$, $U(\bs m_1) < s< r< U(\bs \sigma_1)$, $\bs x \in \mc
N_s$,
\begin{equation}
\label{7-7}
\bb E_{\bs x}[H_r] \;\ge\; (r\;-\; s)/C_0\;.
\end{equation}

Since $\bs m_1$ is a non-degenerate critical point of $U$, there
exists a finite constant $C_1$ such that $\Vert \nabla U (\bs x) \Vert
\le C_1 \Vert \bs x -\bs m_1\Vert$ for all $\bs x\in B_1(\bs m_1)$. In
particular, on $B_{r\sqrt{\epsilon}}(\bs m_1)$, $r>0$, $\Vert \nabla U
\Vert \le C_1 r \sqrt{\epsilon}$. Hence, by \eqref{7-1}, $\nu_{B_{r
    \sqrt{\epsilon}}(\bs m_1)} \le C_1 r^2 \epsilon^{-1}$, so that
\begin{equation}
\label{7-2}
\nu_{B_{r \sqrt{\epsilon}}(\bs m_1)} \, \epsilon \;\le\; C_0(r)\;.
\end{equation}

\begin{lemma}
\label{7-l3}
Let $w(\bs{x})=\mathbb{E}_{\bs{x}}[H_{\mc V_2}]$.  There exists a
finite constant $C_0$ such that
\begin{equation*}
\sup_{\bs x \in B_{\sqrt{\epsilon}}(\bs m_1)} w(\bs x)  \;\le\; C_0
\inf_{\bs x \in B_{\sqrt{\epsilon}}(\bs m_1)} w(\bs x) \;.
\end{equation*}
\end{lemma}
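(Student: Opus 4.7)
The plan is to compare $w$ on $B_{\sqrt{\epsilon}}(\bs m_1)$ to an $\mc L_\epsilon$-harmonic function to which Lemma \ref{7-l4} can be applied. Set $B=B_{3\sqrt{\epsilon}}(\bs m_1)$ and observe that, for $\epsilon$ small, $B\subset \mc V_2^c$ and $B\subset \mc N_{s_0}$ for some $U(\bs m_1)<s_0<U(\bs\sigma_1)$. On $B$, decompose $w=u_1+u_2$, where
\begin{equation*}
\begin{cases} \mc L_\epsilon u_1 = -1 & \text{in }B,\\ u_1 = 0 & \text{on } \partial B, \end{cases}
\qquad
\begin{cases} \mc L_\epsilon u_2 = 0 & \text{in }B,\\ u_2 = w & \text{on } \partial B. \end{cases}
\end{equation*}
By uniqueness of the Dirichlet problem (discussed around \eqref{22}) applied on $B$ with $\mf g=1$, $\mf b=w_{|\partial B}$, we get $u_1+u_2 = w$ on $B$. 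Since $w\ge 0$ and $1\ge 0$, the maximum principle \eqref{25} (and the representation \eqref{23}) yield $u_1,u_2\ge 0$.

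Next, I would apply Lemma \ref{7-l4} to $u_2$ with $\Omega=B$, $\bs x=\bs m_1$ and $R=\sqrt{\epsilon}$, since $B_{2R}(\bs m_1)\subset B$. By estimate \eqref{7-2} we have $\nu_{B}\cdot R^2 \le C_0(3)$ independently of $\epsilon$, so the Harnack constant in Lemma \ref{7-l4} is a fixed $K_0<\infty$, and
\begin{equation*}
\sup_{\bs x\in B_{\sqrt{\epsilon}}(\bs m_1)} u_2(\bs x) \;\le\; K_0 \inf_{\bs x\in B_{\sqrt{\epsilon}}(\bs m_1)} u_2(\bs x)\;.
\end{equation*}

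For the non-harmonic component, $u_1(\bs x)=\bb E_\bs x[H_{\partial B}]$. Rescaling $Y_t=(X_t-\bs m_1)/\sqrt\epsilon$, one has
\begin{equation*}
dY_t = -\,\frac{1}{\sqrt{\epsilon}}\,\bb M\,\nabla U(\bs m_1+\sqrt\epsilon\,Y_t)\,dt + \sqrt{2}\,\bb K\, dW_t\;,
\end{equation*}
and since $\nabla U(\bs m_1)=0$ and $U\in C^3$, the drift converges uniformly on $B_3(\bs 0)$ to the bounded linear field $-\bb M\bb L\bs y$, while $\bb K$ is a fixed positive-definite matrix. Thus $Y$ is a uniformly elliptic diffusion on $B_3(\bs 0)$ with uniformly bounded drift (uniformly in $\epsilon$), and classical estimates for exit times of such diffusions from bounded domains give a constant $C_1$, independent of $\epsilon$, with $\bb E_\bs y[H^Y_{\partial B_3(\bs 0)}]\le C_1$ for $\bs y\in B_1(\bs 0)$. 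Since this exit time equals $H_{\partial B}$ under the correspondence $\bs x=\bs m_1+\sqrt{\epsilon}\bs y$, we obtain $\|u_1\|_{L^\infty(B_{\sqrt\epsilon}(\bs m_1))}\le C_1$. The main obstacle is to make this exit-time bound rigorous with a constant uniform in $\epsilon$; this is exactly where condition (P2) (non-degeneracy of the Hessian at $\bs m_1$) enters, ensuring the rescaled drift is genuinely bounded.

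Finally, a lower bound on $w$ on $B_{\sqrt\epsilon}(\bs m_1)$: choose $s_0 < s<r<U(\bs\sigma_1)$ with $B_{\sqrt\epsilon}(\bs m_1)\subset \mc N_s$ for small $\epsilon$; any continuous path from $\mc N_s$ to $\mc V_2$ must cross $\partial\mc N_r$, so $H_{\mc V_2}\ge H_r$ and hence by \eqref{7-7}
\begin{equation*}
w(\bs y)\;\ge\;\bb E_\bs y[H_r]\;\ge\;(r-s)/C_0\;=:\;c_0\;>\;0\qquad \text{for all }\bs y\in B_{\sqrt\epsilon}(\bs m_1)\;.
\end{equation*}
Combining everything, for $\bs x,\bs y\in B_{\sqrt\epsilon}(\bs m_1)$, using $u_2(\bs y)\le w(\bs y)$ (as $u_1\ge 0$),
\begin{equation*}
w(\bs x)=u_1(\bs x)+u_2(\bs x) \;\le\; C_1 + K_0\, u_2(\bs y) \;\le\; C_1 + K_0\, w(\bs y) \;\le\; \Bigl(\frac{C_1}{c_0}+K_0\Bigr) w(\bs y)\;,
\end{equation*}
which is the desired inequality.
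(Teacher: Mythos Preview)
Your proof is correct and arrives at the same conclusion via the same two pillars as the paper: the Harnack inequality (Lemma~\ref{7-l4} together with \eqref{7-2}) for the harmonic part, and the lower bound \eqref{7-7} to absorb the remaining bounded error. The route, however, is genuinely different. The paper does not decompose $w$ into a harmonic and an inhomogeneous piece; instead it writes $w(\bs x)=\int_{\mc V_2^c} G_{\mc V_2^c}(\bs x,\bs y)\,d\bs y$, applies Harnack to $G_{\mc V_2^c}(\cdot,\bs y)$ on $B_{2\sqrt\epsilon}(\bs m_1)$ for each $\bs y$ outside this ball, and controls the contribution of $\bs y\in B_{2\sqrt\epsilon}(\bs m_1)$ via the near-diagonal asymptotics of the Green function (Lemma~\ref{7-l5}), which gives an $O(\epsilon)$ (or $O(\epsilon\log\epsilon^{-1})$ in $d=2$) remainder absorbed by \eqref{7-7}.

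Your decomposition $w=u_1+u_2$ trades the Green-function asymptotics for a direct exit-time bound after rescaling. The rescaled process has diffusion matrix $\bb S$ independent of $\epsilon$ and drift uniformly bounded on $\overline{B_3(\bs 0)}$ because $\nabla U(\bs m_1)=0$ and $U\in C^3$; the uniform bound on $\bb E[H^Y_{\partial B_3(\bs 0)}]$ then follows from any standard estimate for exit times of uniformly elliptic diffusions with bounded drift from a bounded domain (e.g.\ a supersolution of the form $A-\Vert\bs y\Vert^2$). This makes the role of condition~(P2) very explicit and avoids importing Lemma~\ref{7-l5}, at the cost of invoking an exit-time estimate not stated elsewhere in the paper. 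Both approaches are of comparable length and difficulty.
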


\begin{proof}
Fix $\bs x$, $\bs x' \in B_{\sqrt{\epsilon}}(\bs m_1)$.
Let $G_{\mc V^c_2}$ be the Green function associated to the diffusion
killed at $\mc V_2$. Since $w$ solves \eqref{22} with $\mf g=1$, $\mf
b=0$, by Lemma \ref{7-l14},
\begin{equation*}
w(\bs{x})\;=\;\int_{\mc V_2^c} G_{\mc V^c_2}(\bs{x},\,\bs{y}) \,
d\bs{y}\;.
\end{equation*}
Fix $\bs y\in \mc V_2^c \setminus B_{2\sqrt{\epsilon}}(\bs m_1)$. The
function $G_{\mc V^c_2}(\cdot,\,\bs{y})$ is non-negative and harmonic in
$B_{2\sqrt{\epsilon}(\bs m_1)}$. Hence, by Lemma \ref{7-l4} and
\eqref{7-2}, $G_{\mc V^c_2}(\bs x, \bs{y}) \le C_0 G_{\mc V^c_2}(\bs x',
\bs{y})$. The right-hand side of the previous formula is thus bounded
above by
\begin{equation*}
C_0 \int_{\mc V_2^c \setminus B_{2\sqrt{\epsilon}}(\bs m_1)} G_{\mc V^c_2}(\bs{x}',\,\bs{y}) \,
d\bs{y} \;+\; \int_{B_{2\sqrt{\epsilon}}(\bs m_1)} G_{\mc V^c_2}(\bs{x},\,\bs{y}) \,
d\bs{y} \;.
\end{equation*}
The first term is bounded by $C_0 w(\bs{x}')$, while the second one,
in view of Lemma \ref{7-l5}, is less than or equal to $C_0
a_d(\epsilon)$, where $a_d(\epsilon) = \epsilon$, $d\ge 3$, and
$a_2(\epsilon) = \epsilon \log \epsilon^{-1}$. Fix $U(\bs m_1) < r <
U(\bs \sigma_1)$. Since $H_r \le H_{\mc V_2}$, where $H_r$ has been
introduced above \eqref{7-7}, $w(\bs{x}') \ge [U(\bs \sigma_1) - U(\bs
m_1)]/C_0$. We may therefore bound $a_d(\epsilon)$ by
$w(\bs{x}')$ to complete the proof of the lemma.
\end{proof}

\begin{lemma}
\label{7-l1}
We have that
\begin{equation*}
\bb E_{\bs m_1} [H_{\mc V_2}] \;=\; \big( 1 + o_\epsilon(1)
\big) \, \frac 1{\Cap (B_\epsilon(\bs m_1), \mc V_2)}
\int_{\bb R^d} h^*_{B_\epsilon(\bs m_1), \mc V_2} (\bs y)
\, \mu_{\epsilon}(d\bs y) \;.
\end{equation*}
\end{lemma}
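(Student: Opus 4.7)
The plan is to identify the right-hand side of the lemma, via Proposition~\ref{prop3}, with $\bb E_{\nu_\epsilon}[H_{\mc V_2}]$ where $\nu_\epsilon := \nu_{B_\epsilon(\bs m_1), \mc V_2}$ is the equilibrium measure supported on $\partial B_\epsilon(\bs m_1)$, and then to show that the function $w(\bs x) := \bb E_{\bs x}[H_{\mc V_2}]$ is essentially constant on the sphere $\partial B_\epsilon(\bs m_1)$, with value $w(\bs m_1) = \bb E_{\bs m_1}[H_{\mc V_2}]$.

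First I would note that $(B_\epsilon(\bs m_1), \mc V_2)$ satisfies Assumption~S for every $\epsilon$ small enough, and invoke the analogue of Proposition~\ref{prop3} for the generator $\mc L_\epsilon$ with reference probability $\mu_\epsilon$ (the time rescaling $\mc L_\epsilon = \epsilon\, \tilde{\mc L}$ multiplies capacities by $\epsilon$ and hitting times by $1/\epsilon$, which is consistent with the capacity formula~\eqref{7-16} and keeps the displayed identity intact). Applied with $f\equiv 1$, this gives
$$\bb E_{\nu_\epsilon}[H_{\mc V_2}] \;=\; \frac{1}{\Cap(B_\epsilon(\bs m_1), \mc V_2)} \int_{\bb R^d} h^*_{B_\epsilon(\bs m_1), \mc V_2}(\bs y)\, \mu_\epsilon(d\bs y).$$
As $\nu_\epsilon$ is a probability measure on $\partial B_\epsilon(\bs m_1)$, the lemma follows once $w(\bs x) = (1+o_\epsilon(1))\, w(\bs m_1)$ uniformly on $\partial B_\epsilon(\bs m_1)$ and we integrate against $\nu_\epsilon$.

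For the uniform approximation, recall that $w$ solves $\mc L_\epsilon w = -1$ on $\mc V_2^c$ with $w = 0$ on $\partial \mc V_2$, and that $B_{\sqrt\epsilon}(\bs m_1) \subset \mc V_2^c$ for $\epsilon$ small. The estimate~\eqref{7-2} ensures that $\nu_{B_{\sqrt\epsilon}(\bs m_1)} \cdot (\sqrt\epsilon)^{2}$ is bounded uniformly in $\epsilon$, so Lemma~\ref{7-l2} applied with $R=\epsilon$ and $R_0 = \sqrt\epsilon$ yields constants $C_0, \alpha > 0$ independent of $\epsilon$ such that
$$\mathrm{osc}(w, B_\epsilon(\bs m_1)) \;\le\; C_0\, \epsilon^{\alpha/2} \Big( \mathrm{osc}(w, B_{\sqrt\epsilon}(\bs m_1)) \,+\, \sqrt\epsilon\, \|1\|_{d, B_{\sqrt\epsilon}(\bs m_1)} \Big),$$
the last term being $O(\epsilon)$ since $\|1\|_{d, B_{\sqrt\epsilon}(\bs m_1)} = O(\sqrt\epsilon)$. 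By Lemma~\ref{7-l3}, $\mathrm{osc}(w, B_{\sqrt\epsilon}(\bs m_1)) \le C_0\, w(\bs m_1)$, and by~\eqref{7-7}, $w(\bs m_1)$ is bounded below by a positive constant independent of $\epsilon$. Dividing by $w(\bs m_1)$ therefore yields $\mathrm{osc}(w, B_\epsilon(\bs m_1))/w(\bs m_1) = O(\epsilon^{\alpha/2})$, the required uniform estimate.

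The only subtle point, and the main technical obstacle, is to keep the argument non-circular: one cannot invoke the exponential lower bound $w(\bs m_1) \gtrsim e^{(H - h_1)/\epsilon}$ from Theorem~\ref{thmp2}, since this lemma feeds into its proof. Fortunately the elementary lower bound~\eqref{7-7}, providing only $w(\bs m_1) \ge c > 0$, is already enough to absorb the additive $O(\epsilon)$ term produced by the H\"older estimate.
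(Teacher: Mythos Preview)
Your proposal is correct and follows essentially the same approach as the paper: both arguments apply the H\"older estimate (Lemma~\ref{7-l2}) at scale $R=\epsilon$ inside $R_0=\sqrt{\epsilon}$, invoke Lemma~\ref{7-l3} to control $\mathrm{osc}(w,B_{\sqrt{\epsilon}}(\bs m_1))$ by $w(\bs m_1)$, absorb the additive term using the elementary lower bound~\eqref{7-7}, and finish via Proposition~\ref{prop3} with $f\equiv 1$. Your remark on non-circularity is well taken and implicit in the paper's choice to cite~\eqref{7-7} rather than Theorem~\ref{thmp2}.
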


\begin{proof}
Recall that $w(\bs{x})=\mathbb{E}_{\bs{x}}[H_{\mc V_2}]$.  Since $w$
solves $\mc L_\epsilon w =-1$ on $B_{\sqrt{\epsilon}}(\bs m_1)$, and
since, by \eqref{7-2}, $\nu_{B_{\sqrt{\epsilon}}(\bs m_1)}
(\sqrt{\epsilon})^2 \le C_0$, by Lemma \ref{7-l2} with $R=\epsilon$,
\begin{align*}
\sup_{\bs x \in B_{\epsilon}(\bs m_1)} w(\bs x)
\; & \le\; w(\bs m_1)  \;+\; C_0 \, \epsilon^{\alpha/2}
\, \Big( \text{\rm osc } (w,B_{\sqrt{\epsilon}}(\bs x))
+ R_0 \Big) \\
& \le \; w(\bs m_1)  \;+\; C_0 \, \epsilon^{\alpha/2}
\, \Big( \sup_{\bs x' \in B_{\sqrt{\epsilon}}(\bs m_1)} w (\bs x')
+  \sqrt{\epsilon} \Big)\;,
\end{align*}
where we used the fact that $w$ is non-negative in the last inequality
and we replaced $R_0$ by $\sqrt{\epsilon}$. Recall from the proof of the
previous lemma that $w(\bs m_1) > c_0 >0$ for some positive constant
$c_0$ independent of $\epsilon$. Hence, by Lemma \ref{7-l3}, the
previous expression is bounded by
\begin{equation*}
w(\bs m_1)  \;+\; C_0 \, \epsilon^{\alpha/2}
\, \Big( w (\bs m_1) + \sqrt{\epsilon} \Big)
\;\le\; w(\bs m_1)  \;+\; C_0 \, \epsilon^{\alpha/2}
\, w (\bs m_1) \;,
\end{equation*}
so that
\begin{equation*}
\sup_{\bs x \in B_{\epsilon}(\bs m_1)} w(\bs x)
\;\le \; \big( 1 + o_\epsilon(1) \big) \, w (\bs m_1) \;.
\end{equation*}
A lower bound for $\inf_{\bs x \in B_{\epsilon}(\bs m_1)} w(\bs x)$ is
derived analogously.

Recall from \eqref{2-2} the definition of the equilirbium measure $\nu_{
  B_{\epsilon}(\bs m_1), \mc V_2}$.  Since it is concentrated on
$\partial B_{\epsilon}(\bs m_1)$, it follows from the previous
estimates that
\begin{equation*}
w (\bs m_1) \;=\; \big( 1 + o_\epsilon(1) \big)
\int_{\partial B_\epsilon(\bs m_1)} w (\bs{y}) \,
\nu_{ B_{\epsilon}(\bs m_1), \mc V_2} \, (d\bs{y}) \;,
\end{equation*}
To complete the proof of the lemma, it remains to recall identity
\eqref{2-1}.
\end{proof}

\begin{lemma}
\label{7-l12}
We have that
\begin{equation}
\label{7-8a}
\int_{\bb R^d} h^*_{B_\epsilon(\bs m_1), \mc V_2} (\bs y) \,
e^{-U(\bs y)/\epsilon}\, d\bs y \;=\; (1+o_\epsilon(1)\big) \,
\frac{(2\pi \epsilon)^{d/2}  \, e^{-U(\bs m_1)/\epsilon}}
{\sqrt{\det [(\text{\rm Hess }U)\, (\bs{m}_{1})]}}
\;.
\end{equation}
\end{lemma}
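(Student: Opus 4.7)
The idea is a three-region decomposition combined with Laplace's method at $\bs m_1$. Write $h^{*}=h^*_{B_\epsilon(\bs m_1),\mc V_2}$ for brevity, fix $\kappa>0$ small enough that $\sup_{\mc V_2}U\le H-2\kappa$, and let $\mc A_\kappa$ be the connected component of $\{U<H-\kappa\}$ that contains $\bs m_1$ (so $\mc A_\kappa\subset\mc W_1$). Decompose
\begin{equation*}
\int_{\bb R^d} h^{*}(\bs y)\,e^{-U(\bs y)/\epsilon}\,d\bs y
\;=\; I_1+I_2+I_3\;,
\end{equation*}
where $I_1$ is the integral over $\mc A_\kappa$, $I_2$ is the integral over $\mc W_2$, and $I_3$ is the integral over $\{U\ge H-\kappa\}\setminus\mc W_2$. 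The bound $h^{*}\le 1$, Laplace's method at the nondegenerate minimum $\bs m_1$, and the asymptotic
\begin{equation*}
\int_{\mc A_\kappa} e^{-U/\epsilon}\,d\bs y \;=\; \bigl[1+o_\epsilon(1)\bigr]\,\frac{(2\pi\epsilon)^{d/2}\,e^{-U(\bs m_1)/\epsilon}}{\sqrt{\det[(\text{Hess }U)(\bs m_1)]}}
\end{equation*}
give the correct upper bound for $I_1$ and hence for the full integral.

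For $I_2$, the $h^{*}$-analogue of Proposition \ref{7-l11} (obtained by running the same proof with $\mc L$ replaced by $\mc L^*$ and $\mc V_1$ replaced by $B_\epsilon(\bs m_1)$) yields $h^{*}(\bs y)\le C\epsilon^{-d}e^{-(H-U(\bs y))/\epsilon}$ for $\bs y\in\mc W_2$; combined with boundedness of $\mc W_2$ this gives $I_2\le C\epsilon^{-d}e^{-H/\epsilon}$, which is exponentially negligible compared with the leading term since $H>U(\bs m_1)$. For $I_3$, using $h^{*}\le 1$ and the tightness estimate \eqref{tight2} yields $I_3\le C\,e^{-(H-\kappa)/\epsilon}$, again exponentially negligible.

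For the matching lower bound, I need to show $h^{*}(\bs y)\ge 1-o_\epsilon(1)$ uniformly on some ball $B_{R\sqrt{\epsilon\log(1/\epsilon)}}(\bs m_1)$, so that Laplace's method at $\bs m_1$ applied to $\int h^{*}e^{-U/\epsilon}d\bs y$ recovers the same prefactor up to $1+o_\epsilon(1)$. Since $1-h^{*}=h^{*}_{\mc V_2,B_\epsilon(\bs m_1)}$, I apply Proposition \ref{7-l6} (in the form adapted to the adjoint operator, whose derivation is identical) with $r$ fixed, obtaining
\begin{equation*}
1-h^{*}(\bs y)\;\le\; C\,\frac{\Cap^*(B_{r\epsilon}(\bs y),\mc V_2)}{\Cap^*(B_{r\epsilon}(\bs y),\mc V_2\cup B_\epsilon(\bs m_1))}
\end{equation*}
for $\bs y\in B_{R\sqrt{\epsilon\log(1/\epsilon)}}(\bs m_1)\setminus B_{2r\epsilon}(\bs m_1)$. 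The numerator is $O(\epsilon^{-d}e^{-H/\epsilon}/Z_\epsilon)$ by the argument giving the first inequality of Lemma \ref{7-l10} (the test function constructed there connects the ball to the outside of $\mc W_1$ and crosses a region where $U\approx H$). The denominator is bounded below by the capacity to $B_\epsilon(\bs m_1)$ alone, which by the Dirichlet principle for the symmetric part and a linear interpolation along the straight segment from $\bs y$ to $B_\epsilon(\bs m_1)$ (entirely inside the quadratic region of $\bs m_1$, where $U\le U(\bs m_1)+CR^2\epsilon\log(1/\epsilon)$) is at least $c\,\epsilon^{d-2}(\log(1/\epsilon))^{-1}\,e^{-U(\bs m_1)/\epsilon}/Z_\epsilon$. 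The ratio is therefore at most $Ce^{-(H-U(\bs m_1))/\epsilon}$ times a power of $\epsilon$, which vanishes as $\epsilon\to 0$.

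The main obstacle is this last step: producing a sharp enough lower bound on $\Cap^*(B_{r\epsilon}(\bs y),B_\epsilon(\bs m_1))$ when $\bs y$ ranges over a Laplace-sized shell around $\bs m_1$, uniformly in $\bs y$. Once this is in hand, combining the uniform estimate $h^{*}=1-o_\epsilon(1)$ on $B_{R\sqrt{\epsilon\log(1/\epsilon)}}(\bs m_1)$ with Laplace's method (noting that the complementary part of $\mc A_\kappa$ contributes $o_\epsilon(1)$ times the leading asymptotic, by standard Gaussian tail estimates) completes the lower bound and hence the proof.
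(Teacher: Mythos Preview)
Your proposal is correct and follows essentially the same route as the paper: a region decomposition, Laplace's method near $\bs m_1$, the tightness bound \eqref{tight2} far from both wells, and a pointwise bound on $h^*$ in the $\bs m_2$-well via Proposition~\ref{7-l6} plus capacity estimates; the paper simply packages your ``main obstacle'' into a direct appeal to (the obvious analogue of) Proposition~\ref{7-l11} to get $1-h^*=h^*_{\mc V_2,B_\epsilon(\bs m_1)}=o_\epsilon(1)$ on $B_\delta(\bs m_1)$. One small slip: the capacity \emph{lower} bound you need for $\Cap^*(B_{r\epsilon}(\bs y),B_\epsilon(\bs m_1))$ comes from a Thomson-type flow argument along the segment (as in the second half of the proof of Lemma~\ref{7-l10}), not from the Dirichlet principle, which only yields upper bounds.
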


\begin{proof}
We estimate separately the integral on different parts. Recall from
\eqref{ins01} the definition of $\delta$. We claim that
\begin{equation}
\label{7-8}
\int_{B_\delta(\bs m_1)} h^*_{B_\epsilon(\bs m_1), \mc V_2} (\bs y) \,
e^{-U(\bs y)/\epsilon}\, d\bs y \;=\; (1+o_\epsilon(1)\big) \,
\frac{(2\pi \epsilon)^{d/2}  \, e^{-U(\bs m_1)/\epsilon}}
{\sqrt{\det [(\text{\rm Hess }U)\, (\bs{m}_{1})]}}
\;.
\end{equation}
Indeed, by Proposition \ref{7-l11}, on $B_\delta(\bs m_1)$,
$h^*_{B_\epsilon(\bs m_1), \mc V_2} (\bs y) = 1 -  h^*_{\mc V_2,
  B_\epsilon(\bs m_1)} (\bs y) = 1 + o_\epsilon(1)$. A Taylor
expansion of $U$ around $\bs m_1$ together with Gaussian estimates
permits to conclude.

Let $\kappa_1$ the smallest eigenvalue of $(\text{\rm Hess }U)\,
(\bs{m}_{1})$. There exists $r_0>0$ such that $U(\bs x) - U(\bs m_1)
\ge (1/4) \kappa_1 \Vert \bs x\Vert^2$ for all $\bs x\in B_{r_0}(\bs
m_1)$. We claim that
\begin{equation}
\label{7-9}
\int_{ B_{r_0}(\bs m_1) \setminus B_\delta(\bs m_1)}
h^*_{B_\epsilon(\bs m_1), \mc V_2} \, (\bs y) \,
e^{-U(\bs y)/\epsilon}\, d\bs y \;=\; o_\epsilon(1) \,
\epsilon^{d/2}  \, e^{-U(\bs m_1)/\epsilon}\;.
\end{equation}
By the bound on $U$ and since the harmonic function is bounded by $1$,
the integral is less than or equal to
\begin{equation*}
e^{-U(\bs m_1)/\epsilon} \int_{ B_{r_0}(\bs m_1) \setminus B_\delta(\bs m_1)}
e^{-(1/4) (\kappa_1/\epsilon) \Vert \bs y \Vert^2 }\, d\bs y \;.
\end{equation*}
A change of variables and an elementary computation yields that the
integral is equal to $o_\epsilon(1) \, \epsilon^{d/2}$, which
completes the proof of \eqref{7-9}.

Let $a_0 = \inf_{\bs x\in \partial B_{r_0}(\bs m_1)} U(\bs x) > U(\bs
m_1)$. Assume that $a_0 < U(\bs\sigma_1)$. If this is not the case
replace $a_0$ by $a'_0$ where $U(\bs m_1) < a'_0 < U(\bs
\sigma_1)$. Let $\mc S(a_0)= \{\bs x\in\bb R^d : U(\bs x) \ge a_0\}$. It
follows from this bound and \eqref{tight2} that
\begin{equation}
\label{7-10}
\int_{\mc S(a_0)}
h^*_{B_\epsilon(\bs m_1), \mc V_2} \, (\bs y) \,
e^{-U(\bs y)/\epsilon}\, d\bs y \;=\;
e^{-U(\bs m_1)/\epsilon} e^{-a/\epsilon}
\end{equation}
for some $a>0$, which is exponentially smaller than the right-hand side of \eqref{7-8a}

It remains to estimate the integral over the set $\mc A = \{\bs
x\in\bb R^d : U(\bs x) \le a_0 , U(\bs x) = U(\bs z(\bs x , \bs m_2))\}$.
By property (P1), the set $\mc A$ is bounded.  On this set, by
Proposition \ref{7-6}, $h^*_{B_\epsilon(\bs m_1), \mc V_2} \, (\bs x)
\le C_0 \epsilon^{-d}\, \exp \big\{- \epsilon^{-1}\, \big[ U(\bs
\sigma_1) - U(\bs x) \big] \big\}$. Therefore,
\begin{equation*}
\int_{\mc A} h^*_{B_\epsilon(\bs m_1), \mc V_2} \, (\bs y) \,
e^{-U(\bs y)/\epsilon}\, d\bs y \;\le \;
C_0  \, \epsilon^{-d}
\int_{\mc A} e^{-U(\bs \sigma_1)/\epsilon}\, d\bs y
\;\le\; C_0\epsilon^{-d}\, e^{-U(\bs m_1)/\epsilon} e^{-a/\epsilon}
\end{equation*}
for some $a>0$. This completes the proof of the lemma.
\end{proof}

\begin{proof}[Proof of Theorem \ref{thmp2}]
It is enough to put together the estimates of Lemmata \ref{7-l1},
\ref{7-l12} with the estimate of the capacity, stated in Theorem
\ref{thmp1} with $\mc V_1 = B_\epsilon(\bs m_1)$.
\end{proof}

\smallskip\noindent{\bf Grants:} C. Landim has been partially
supported by FAPERJ CNE E-26/201.207/2014, by CNPq Bolsa de
Produtividade em Pesquisa PQ 303538/2014-7, and by ANR-15-CE40-0020-01
LSD of the French National Research Agency.  M.\ Mariani's visit to IMPA was supported by the grant FAPERJ CNE E-26/102.338/2013, and he also acknowledges Russian Academic Excellence Project ’5-100'. I. Seo was supported by the National Research Foundation of Korea(NRF) grant funded by the Korea government(MSIT) (No. 2018R1C1B6006896) and by POSCO Science Fellowship from the POSCO TJ Park Foundation.

\smallskip\noindent{\bf Conflict of Interest:} The authors declare
that they have no conflict of interest.

\end{document}